\documentclass[11pt]{article}

\usepackage[a4paper,margin=1in]{geometry}
\usepackage{amsmath,amssymb,amsthm,mathtools}
\usepackage{microtype}
\usepackage{enumitem}
\usepackage{cite}
\usepackage[hidelinks]{hyperref}
\hypersetup{
    pdftitle={Sharp Propagation and the Local-to-Nonlocal Transition for Fractional Fisher--KPP Equations on Integer Lattices},
    pdfauthor={Yuanyang Hu},
    pdfkeywords={fractional graph Laplacian, Fisher--KPP equation, integer lattice, Wulff shape, accelerating propagation, singular transition}
}

\numberwithin{equation}{section}
\allowdisplaybreaks

\newtheorem{theorem}{Theorem}[section]
\newtheorem{proposition}[theorem]{Proposition}
\newtheorem{lemma}[theorem]{Lemma}
\newtheorem{corollary}[theorem]{Corollary}
\theoremstyle{definition}
\newtheorem{definition}[theorem]{Definition}
\theoremstyle{remark}
\newtheorem{remark}[theorem]{Remark}

\newcommand{\Z}{\mathbb Z}
\newcommand{\R}{\mathbb R}
\newcommand{\N}{\mathbb N}
\newcommand{\one}{\mathbf 1}
\newcommand{\ellinf}{\ell^\infty(\Z^d)}

\newcommand{\supp}{\operatorname{supp}}
\newcommand{\ee}{\mathrm e}

\title{Sharp Propagation and the Local-to-Nonlocal Transition for\\
Fractional Fisher--KPP Equations on Integer Lattices}
\author{Yuanyang Hu\thanks{School of Mathematics and Statistics, Henan University,
Kaifeng 475004, China.}}
\date{}

\begin{document}

\maketitle

\begin{abstract}
Let $P$ denote normalized nearest-neighbor averaging on $\Z^d$ and set
$A=I-P$. We consider
$
    \partial_tu+A^su=f(u),
    ~~ t>0,~~ x\in\Z^d, 0<s<1.
$
For a Fisher--KPP reaction with $a=f'(0)>0$, compactly supported data have
logarithmic propagation rate $a/(d+2s)$. In the logistic case, each fixed
level set lies between two constant multiples of
$\exp(at/(d+2s))$; this localization remains valid for data with the
critical algebraic tail. Nondecreasing front-like data in one dimension
instead propagate at rate $a/(2s)$, and the resulting acceleration rules
out planar traveling fronts of finite speed in every dimension. We also
analyze the singular limit $s\uparrow1$. If
$\tau_s=-\log(1-s)$, then throughout $at<\tau_s$ compactly supported
solutions follow the Wulff shape of the local nearest-neighbor equation.
After a constant multiple of $\tau_s$, their propagation is exponential.
Half-space data exhibit the corresponding transition from the directional
local speed to the exponent $a/(2s)$. The analysis rests on sharp bounds for
the subordinated kernel, algebraic barriers, a compactness--Liouville
argument, and a decomposition of the fractional walk into its
nearest-neighbor and rare long-jump parts.
\end{abstract}

\noindent\textbf{Keywords.}
Fractional graph Laplacian; Fisher--KPP equation; integer lattice; Wulff
shape; accelerating propagation; singular transition; traveling fronts.

\medskip

\noindent\textbf{2020 Mathematics Subject Classification.}
35R11, 35B40, 35C07, 39A12, 60J76.

\section{Introduction}

Let
\[
    (P\phi)(x)=\frac1{2d}\sum_{|e|=1}\phi(x+e),
    \qquad A=I-P=-\frac1{2d}\Delta_d,
\]
on $\Z^d$. This paper concerns the Fisher--KPP equation
\begin{equation}\label{eq:intro-equation}
    \partial_tu(t,x)+A^su(t,x)=f(u(t,x)),
    \qquad t>0,\quad x\in\Z^d.
\end{equation}
Here $0<s<1$, so $A^s=(2d)^{-s}(-\Delta_d)^s$. Although $A$ only permits
nearest-neighbor motion, its fractional power generates jumps of every
length. The corresponding kernel decays algebraically, and this heavy tail
changes the propagation scale from linear to exponential.

For the classical Fisher--KPP equation, compactly supported populations
invade at a linear speed characterized by traveling fronts
\cite{Fisher1937,Kolmogorov1937,AronsonWeinberger1978,LiangZhao2010}.
Nearest-neighbor lattice diffusion has the same finite-speed character
\cite{ZinnerHarrisHudson1993,LiangZhou2020}, though the lattice affects the
directional minimal speed \cite{FangLiLouWang2026} and the finer front
asymptotics \cite{BesseFayeRoquejoffreZhang2023}. By contrast, Cabr\'e and
Roquejoffre showed that for fractional diffusion in $\R^d$, localized data
have logarithmic propagation rate $f'(0)/(d+2s)$, while one-dimensional
front-like data have rate $f'(0)/(2s)$
\cite{CabreRoquejoffre2013}. The periodic and singular-limit counterparts
were developed in \cite{CabreCoulonRoquejoffre2012,CoulonRoquejoffre2012},
and slowly decaying initial data were treated in \cite{FelmerYangari2013}.

Equation \eqref{eq:intro-equation} also lies within the analysis of evolution
equations on graphs. On a weighted graph, heat-kernel bounds reflect volume
growth, Poincar\'e inequalities, and curvature properties
\cite{Dodziuk1984,Delmotte1999,BauerEtAl2015,HornLinLiuYau2019}; these
estimates enter nonlinear questions such as the Fujita alternative
\cite{LinWu2017}. Variational graph methods have likewise been developed for
Kazdan--Warner and Yamabe-type equations
\cite{GrigoryanLinYang2016KW,GrigoryanLinYang2016Yamabe}. Fractional powers
of graph Laplacians are defined by spectral calculus or heat-semigroup
subordination and have been studied from operator-theoretic and evolutionary
viewpoints in
\cite{BendikovSaloffCoste2012,CiaurriEtAl2018,LizamaRoncal2018,KeyantuoLizamaWarma2019,BenziEtAl2020};
see also \cite{KellerLenzWojciechowski2021}. Our concern is the nonlinear
propagation law produced when this subordinated lattice walk is coupled to a
KPP reaction.

Heavy-tailed dispersal does not by itself determine a universal rate. The
behavior near the unstable state, the decay of the initial datum, and the
geometry of the occupied region all matter. Nonlinear fractional diffusion,
cooperative systems, and periodic media retain exponential propagation in
appropriate KPP regimes
\cite{StanVazquez2014,CoulonYangari2017,MeleardMirrahimi2015,Leculier2019,SouganidisTarfulea2019,LeculierMirrahimiRoquejoffre2021}.
For general monostable, ignition, or weak-Allee reactions, the rate may
instead be algebraic or finite
\cite{CovilleGuiZhao2021,ZhangZlatos2023,BouinCovilleLegendre2025}.
Related distinctions occur for convolution equations, slowly decaying data,
and nonlocal free-boundary problems
\cite{Garnier2011,HamelRoques2010,FinkelshteinKondratievTkachov2019,XuLiRuan2021,CaoDuLiLi2019,DuLiZhou2021}.

Our first aim is to obtain the sharp fixed-order propagation laws on
$\Z^d$. For compactly supported data, the jump kernel satisfies
\[
    K_s(z)\asymp |z|^{-d-2s}.
\]
Balancing this tail against the linear KPP growth $\ee^{f'(0)t}$ suggests
the radius $\exp(f'(0)t/(d+2s))$. Theorem~\ref{thm:main} confirms this
logarithmic rate under the KPP upper bound $f(r)\leq f'(0)r$, without a
concavity assumption. For the logistic reaction, the conclusion can be
sharpened to
\[
    R_\theta(t)\asymp\exp\left(\frac{at}{d+2s}\right),
    \qquad 0<\theta<1,
\]
as stated in Theorem~\ref{thm:logistic-localization}. The same
constant-factor localization holds for nonzero data bounded above by the
critical tail $(1+|x|)^{-d-2s}$; see
Theorem~\ref{thm:critical-tail-localization}.

The exponent is different for an initially occupied half-line. When $d=1$,
the pointwise decay $K_s(n)\asymp n^{-1-2s}$ has the cumulative tail
$\sum_{n\geq |j|}K_s(n)\asymp |j|^{-2s}$. Accordingly,
Theorem~\ref{thm:monotone-sharp-propagation} gives the rate $f'(0)/(2s)$
for nondecreasing front-like data. The acceleration also precludes planar
traveling fronts of finite speed in every direction. This agrees with the
general obstruction caused by the absence of positive exponential moments
\cite{Yagisita2009}, but our proof follows directly from the spreading
estimates. In the continuous fractional problem, acceleration is also tied
to stretching and asymptotic changes in front geometry
\cite{GarnierHamelRoques2017,RoquejoffreTarfulea2017}; related lattice models
with algebraically decaying dispersal likewise exhibit infinite invasion
speed \cite{LiXinZhao2025}.

Our second aim is to describe how these exponential laws emerge as
$s\uparrow1$. The fractional jump kernel splits into a nearest-neighbor part
and a long-jump part of total mass comparable to $1-s$. The reaction
amplifies the latter by $\ee^{at}$, so the balance
\[
    (1-s)\ee^{at}\asymp1
\]
selects the time $\tau_s/a$, where $\tau_s=-\log(1-s)$. Before this time,
compactly supported solutions follow the anisotropic Wulff shape of the
local lattice equation. After a constant multiple of $\tau_s$, the
fixed-order exponential law holds uniformly near $s=1$. Half-space data
undergo the analogous transition from the directional local speed to the
exponent $a/(2s)$. This parallels the continuous transition in
\cite{CoulonRoquejoffre2012}, with the Euclidean ball replaced by the Wulff
shape determined by the nearest-neighbor walk.

Two lattice features require separate arguments. There is no exact stable
scaling or explicit fractional heat kernel, and a prescribed level set may
contain no lattice point. We therefore work with superlevel sets and derive
the necessary kernel estimates from subordination and a compound-Poisson
representation. One large jump gives the lower tail, while a convolution
estimate controls repeated jumps. Expanding algebraic barriers create a
positive plateau on subcritical exponential regions, and a
compactness--Liouville argument upgrades that plateau to convergence to one.
In the logistic case the profile
$\Theta_R(x)=(1+(|x|/R)^{d+2s})^{-1}$ yields the sharper bounded-error
localization. Near $s=1$, separating the nearest-neighbor walk from the rare
long jumps makes the factor $1-s$ explicit and permits estimates up to the
full early range $at<\tau_s$.

The paper is organized as follows. Section~\ref{sec:setting} introduces the
fractional lattice operator, fixes the hypotheses and notation, establishes
well-posedness, and states both the fixed-order and transition results. The
fixed-order kernel and semigroup estimates are established in
Section~\ref{sec:kernel}, followed by the comparison profiles in
Section~\ref{sec:barrier}. Sections~\ref{sec:lower}--\ref{sec:monotone-data}
contain the spreading proofs, and Section~\ref{sec:traveling-fronts} treats
planar traveling fronts. The local Wulff regime is proved in
Section~\ref{tr:sec:local}. The estimates uniform near $s=1$ and the onset of
exponential propagation are developed in Sections~\ref{tr:sec:uniform} and
\ref{tr:sec:acceleration}.

\section{Setting and main results}\label{sec:setting}

Throughout the paper, $d\in\N$ and $d\geq1$. The order $s\in(0,1)$ is
fixed in the first group of results and varies only in the singular-transition
results stated below. The symbols $|x|$ and $|x|_1$ denote, respectively,
the Euclidean and $\ell^1$ norms of $x\in\Z^d$, and
$B_R:=\{x\in\Z^d:|x|\leq R\}$.
Since all norms on $\R^d$ are equivalent, the propagation exponent is independent of the chosen norm. For $1\leq p<\infty$, $\ell^p(\Z^d)$ is equipped with the norm
$\|v\|_{\ell^p}:=(\sum_{x\in\Z^d}|v(x)|^p)^{1/p}$,
while $\|v\|_{\ell^\infty}:=\sup_{x\in\Z^d}|v(x)|$. For functions $g,h$ on $\Z^d$, whenever the sum is absolutely convergent, their convolution is
$(g*h)(x):=\sum_{y\in\Z^d}g(x-y)h(y)$.
We write $\delta_0$ for the unit mass at the origin and adopt the convention $g^{*0}=\delta_0$.

For two nonnegative quantities $X$ and $Y$, the notation $X\lesssim Y$
means that $X\leq CY$ for a constant $C>0$ independent of the displayed
variables; $X\asymp Y$ means that both inequalities hold. Thus
$K_s(z)\asymp |z|^{-d-2s}$ as $|z|\to\infty$ means that, for suitable
$c,C,R_0>0$,
$c|z|^{-d-2s}\leq K_s(z)\leq C|z|^{-d-2s}$ whenever $|z|\geq R_0$.
The letters $c$ and $C$ denote positive constants that may change from line
to line. Unless stated otherwise, they are independent of $t,x,z$ and the
scale $R$, but may depend on fixed data such as $d,s,f$, and $u_0$. We set
$q:=d+2s$.

The operator $P$ is a positive contraction on every $\ell^p(\Z^d)$, $1\leq p\leq\infty$. Hence $A=I-P$ is bounded on these spaces, and it is nonnegative and self-adjoint on $\ell^2(\Z^d)$. Its fractional power on $\ell^2$ can be defined by spectral calculus. On bounded functions, we use the equivalent Balakrishnan formula \cite{Balakrishnan1960,SchillingSongVondracek2012}
\begin{equation}\label{eq:balakrishnan}
    A^s\phi
    =c_s\int_0^\infty
    \bigl(\phi-\ee^{-rA}\phi\bigr)
    \frac{dr}{r^{1+s}},
    \qquad
    c_s:=\frac{s}{\Gamma(1-s)}.
\end{equation}
Indeed,
\begin{equation}\label{eq:ordinary-semigroup-series}
    \ee^{-rA}
    =\ee^{-r}\ee^{rP}
    =\ee^{-r}\sum_{n=0}^{\infty}\frac{r^n}{n!}P^n
\end{equation}
is a positive contraction on $\ell^\infty$. Moreover,
$\|I-\ee^{-rA}\|_{\ell^\infty\to\ell^\infty}
\leq r\|A\|_{\ell^\infty\to\ell^\infty}$ for $0<r\leq1$,
whereas the same norm is at most $2$ for $r\geq1$. Since $0<s<1$, the integral in \eqref{eq:balakrishnan} therefore converges in $\ell^\infty$. On $\ell^2\cap\ell^\infty$, it agrees with the spectral definition by the scalar identity
$\lambda^s=c_s\int_0^\infty(1-\ee^{-r\lambda})r^{-1-s}\,dr$ for
$\lambda\geq0$.

We impose the following hypotheses.

\medskip

\noindent\textbf{Assumption (F).}
\begin{enumerate}
\renewcommand{\labelenumi}{(F\arabic{enumi})}
\item The function $f\in C^1([0,1])$ satisfies
\begin{equation}\label{eq:F-hypotheses}
    f(0)=f(1)=0,
    \qquad
    f(r)>0\quad\text{for }0<r<1.
\end{equation}
\item With $a:=f'(0)$, one has $a>0$ and
\begin{equation}\label{eq:KPP-inequality}
    f(r)\leq ar
    \quad\text{for }0\leq r\leq1.
\end{equation}
\end{enumerate}
When only assumption \emph{(F)} is in force, we use an arbitrary locally
Lipschitz extension of $f$ to $\R$ to construct the local Banach-space
solution. The invariant-interval argument below shows that the
resulting solution takes values in $[0,1]$ and is independent of this extension.

\medskip

\noindent\textbf{Assumption (I).}
The initial datum $u_0\in\ellinf$ is finitely supported and satisfies
\begin{equation}\label{eq:initial-hypotheses}
    0\leq u_0\leq1,
    \qquad
    u_0\not\equiv0.
\end{equation}

\begin{definition}\label{def:classical-solution}
Let $T\in(0,\infty]$. A function $u$ is a classical solution of \eqref{eq:intro-equation} on $[0,T)$ with initial datum $u_0$ if
${u\in C^1([0,T);\ellinf)}$, $u(0)=u_0$,
and the equation holds in $\ellinf$, equivalently at every $x\in\Z^d$, for all $t\in(0,T)$.
\end{definition}
Here $C^1([0,\infty);\ellinf)$ is understood locally on every compact time
interval when $T=\infty$.

\begin{proposition}\label{prop:well-posedness}
Under assumption \emph{(F)}, every $u_0\in\ellinf$ satisfying
$0\leq u_0\leq1$ generates a unique global classical solution of
\eqref{eq:intro-equation}. This solution satisfies $0\leq u(t,x)\leq1$ for
$t\geq0$ and $x\in\Z^d$.
\end{proposition}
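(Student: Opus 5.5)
The plan is to treat \eqref{eq:intro-equation} as an ODE in the Banach space $\ellinf$, obtain a local solution from Picard--Lindelöf, and then globalize it using an invariant-interval (comparison) argument that keeps the solution in $[0,1]$.

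First, $A^s$ is a bounded linear operator on $\ellinf$. By \eqref{eq:balakrishnan} and the operator-norm bounds on $I-\ee^{-rA}$ recorded after it,
\[
\|A^s\|_{\ellinf\to\ellinf}\le c_s\Bigl(\|A\|\int_0^1 r^{-s}\,dr+2\int_1^\infty r^{-1-s}\,dr\Bigr)<\infty .
\]
Next, extend $f$ to a globally Lipschitz function $\tilde f$ on $\R$, for instance by setting $\tilde f=0$ outside $[0,1]$. Then $f\in C^1([0,1])$ makes $\tilde f$ Lipschitz, and the Nemytskii map $u\mapsto \tilde f(u)$, acting pointwise, is globally Lipschitz on $\ellinf$. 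The vector field $F(u)=-A^su+\tilde f(u)$ is therefore globally Lipschitz on $\ellinf$, so Picard iteration gives a unique global solution $u\in C^1([0,\infty);\ellinf)$. For an arbitrary locally Lipschitz extension we only get a local solution. Once we show that any solution with data in $[0,1]$ stays in $[0,1]$, uniqueness on $[0,1]$-valued solutions shows it coincides with the $\tilde f$-solution, and the blow-up alternative then excludes finite-time blow-up.

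The key step, and the main obstacle, is the invariance of $[0,1]$. We must show this directly, since no finite-support maximum principle argument applies on the infinite lattice. I would use the positivity of the semigroup $\ee^{-tA^s}$ on $\ellinf$. Write $A^s=\lambda I-B$ with $B\ge0$, that is, with a nonnegative kernel. This decomposition should follow from the Balakrishnan formula: since $\ee^{-rA}$ has a nonnegative kernel, $A^s\phi(x)=\sum_{y\neq x}K_s(x-y)(\phi(x)-\phi(y))$ with $K_s\ge0$ summable. Then $\ee^{-tA^s}=\ee^{-\lambda t}\ee^{tB}$ is positive and fixes constants, because $A^s\mathbf 1=0$.

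Choose $L$ larger than the Lipschitz constant of $\tilde f$. The solution satisfies the Duhamel form
\[
u(t)=\ee^{-t(A^s+L)}u_0+\int_0^t \ee^{-(t-\sigma)(A^s+L)}\bigl(\tilde f(u)+Lu\bigr)(\sigma)\,d\sigma .
\]
The map $g(r)=\tilde f(r)+Lr$ is nondecreasing, with $g(0)=0$ and $g(1)=L$. The Picard iteration for this integral equation, started from $u_0$, therefore maps the order interval $\{0\le v\le1\}$ into itself: positivity, together with $\ee^{-t(A^s+L)}\mathbf 1=\ee^{-Lt}\mathbf 1$, gives the bound $1$ from $\ee^{-Lt}+\int_0^t L\ee^{-L(t-\sigma)}\,d\sigma=1$. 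Since the iteration converges in $C([0,T];\ellinf)$ and the order interval is closed, the limit $u$ satisfies $0\le u\le1$.

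Uniqueness is immediate from Gronwall and the Lipschitz bound. Finally, the solution is classical because $F$ is continuous on $\ellinf$, and pointwise validity of the equation follows from the $\ellinf$ identity.
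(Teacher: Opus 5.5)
Your proposal is correct and essentially follows the paper: both treat the equation as a Lipschitz ODE in $\ell^\infty(\Z^d)$ using boundedness of $A^s$, and both get invariance of $[0,1]$ from positivity of the shifted semigroup $\ee^{-t(A^s+L)}$ together with monotonicity of $r\mapsto f(r)+Lr$. The only difference is packaging: the paper applies its comparison lemma (a Gronwall estimate on the positive part of the difference) with the constant solutions $0$ and $1$ on compact subintervals of the maximal interval and then continues, whereas you take the globally Lipschitz zero extension and show directly that the Picard map preserves the order interval, which gives global existence at once.
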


\begin{proof}
Extend $f$ from $[0,1]$ to a locally Lipschitz function on $\R$. By the estimate following \eqref{eq:balakrishnan}, $A^s$ is bounded on $\ellinf$. Hence the map
$v\mapsto-A^sv+f(v)$
is locally Lipschitz on $\ellinf$, with $f(v)$ understood pointwise. The Banach-space Picard theorem therefore gives a unique maximal classical solution.

Let $[0,T]$ be a compact subinterval of the maximal interval. The range of
the solution on $[0,T]$, together with $0$ and $1$, lies in a compact
interval on which the chosen extension of $f$ is Lipschitz. We may therefore
apply Lemma~\ref{lem:comparison} on $[0,T]$. Since the constant functions
$0$ and $1$ are solutions, comparison yields
$0\leq u(t,x)\leq1$ for $0\leq t\leq T$ and $x\in\Z^d$.
Thus the solution remains in a fixed bounded set throughout its maximal
interval. The continuation criterion for ODEs in $\ellinf$ gives global
existence, and the same invariant bound shows that the solution is independent
of the chosen extension of $f$.
\end{proof}

For $0<\theta<1$, define the outer radius of the $\theta$-superlevel set by
\begin{equation}\label{eq:level-radius}
    R_\theta(t)
    :=\sup\bigl\{|x|:u(t,x)\geq\theta\bigr\}.
\end{equation}
We use the convention $\sup\varnothing=0$. The pointwise tail bound
\eqref{eq:nonlinear-tail-bound} below implies that $R_\theta(t)<\infty$ for
every $t>0$, and the lower estimate implies that $R_\theta(t)>0$ for all
sufficiently large $t$.

We begin with the fixed-order law for compactly supported data.

\begin{theorem}\label{thm:main}
Assume \emph{(F)} and \emph{(I)}. Then, for every $\varepsilon>0$,
\begin{equation}\label{eq:main-lower}
    \lim_{t\to\infty}
    \inf_{\substack{x\in\Z^d\\ |x|\leq
    \exp((a-\varepsilon)t/(d+2s))}}
    u(t,x)=1,
\end{equation}
whereas
\begin{equation}\label{eq:main-upper}
    \lim_{t\to\infty}
    \sup_{\substack{x\in\Z^d\\ |x|\geq
    \exp((a+\varepsilon)t/(d+2s))}}
    u(t,x)=0.
\end{equation}
Consequently,
$\lim_{t\to\infty}t^{-1}\log R_\theta(t)=f'(0)/(d+2s)$ for every
$0<\theta<1$.
\end{theorem}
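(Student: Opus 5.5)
The plan is to treat the two halves separately, following the Cabré--Roquejoffre strategy adapted to the lattice. Write $A^s\phi(x)=\sum_{z\neq0}K_s(z)(\phi(x)-\phi(x+z))$, where $K_s(z)\asymp|z|^{-d-2s}$ comes from the subordination formula \eqref{eq:balakrishnan} and \eqref{eq:ordinary-semigroup-series}. The first step is a kernel estimate for the linear semigroup $p_t=\ee^{-tA^s}\delta_0$: uniformly for $t\geq1$,
\[
  p_t(x)\asymp \min\bigl(t^{-d/(2s)},\,t|x|^{-d-2s}\bigr).
\]
I expect to need the upper bound everywhere and the lower bound only for $|x|\geq t^{1/(2s)}$. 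I would prove it via the compound-Poisson representation $p_t=\ee^{-\lambda t}\sum_n (\lambda t)^n (K_s/\lambda)^{*n}/n!$, where $\lambda=\sum K_s$. The lower tail comes from the single-jump term. For the upper tail I would use the convolution estimate $(K_s)^{*n}(x)\leq C^n n\,|x|^{-d-2s}$, obtained by splitting $x$ according to which step is largest. This kernel bound is the technical core.

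\textbf{Upper bound \eqref{eq:main-upper}.} By \eqref{eq:KPP-inequality}, $u$ is a subsolution of $\partial_tv+A^sv=av$. Comparison (Lemma~\ref{lem:comparison}) then gives
\[
  u(t)\leq \ee^{at}p_t*u_0\leq C\ee^{at}\,t\,(1+|x|)^{-d-2s},
\]
since $u_0$ is finitely supported. For $|x|\geq\exp((a+\varepsilon)t/(d+2s))$ the right side is at most $Ct\ee^{-\varepsilon t}\to0$. This is also the tail bound \eqref{eq:nonlinear-tail-bound} quoted earlier.

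\textbf{Lower bound \eqref{eq:main-lower}.} This is the main obstacle, and I would split it into three steps.

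\emph{Initial positivity.} After time $1$, the lower kernel bound and comparison with $\partial_t v+A^sv=-Mv$ give $u(1,x)\geq c(1+|x|)^{-d-2s}$.

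\emph{Plateau via an algebraic subsolution.} Fix $\delta>0$ small and $\mu=a-\delta$, and choose $\eta>0$ with $f(r)\geq(a-\delta/2)r$ for $r\leq\eta$. Consider
\[
  w(t,x)=\frac{\kappa}{1+\ee^{-\mu t}|x|^{d+2s}}.
\]
I would show $A^s w\leq C\,w\,\ee^{-\mu' t}+\dots$ with a small relative error; more precisely $A^s\Theta_R\leq CR^{-2s}\Theta_R$ for the profile $\Theta_R(x)=(1+(|x|/R)^{d+2s})^{-1}$. This is the algebraic barrier computation, done by splitting $z$ into $|z|<|x|/2$ and the rest and using $K_s(z)\asymp|z|^{-q}$. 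With it, $w$ is a subsolution where $w\leq\eta$ for large times, with $\kappa$ small. Since $w(t_0)\leq u(t_0)$ by the positivity step, comparison gives $u(t,x)\geq c_0>0$ on $|x|\leq\exp((a-\varepsilon/2)t/(d+2s))$. To handle the region where $w$ would exceed $\eta$, I would cap the subsolution as $\min(w,\eta)$, which remains a subsolution by the maximum-of-subsolutions-type argument, or equivalently take $\kappa\leq\eta$.

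\emph{Upgrade to one.} Argue by contradiction: suppose $t_n\to\infty$ and $|x_n|\leq\exp((a-\varepsilon)t_n/(d+2s))$ with $u(t_n,x_n)\leq1-\sigma$. The shifted functions $u(t_n+\cdot,x_n+\cdot)$ are uniformly bounded with bounded time derivative, because $A^s$ is bounded on $\ell^\infty$. By diagonal compactness they converge pointwise to an entire solution $U$ on $\R\times\Z^d$ with $U\geq c_0$ on all of $\R\times\Z^d$. The plateau covers every fixed neighbourhood: since $\varepsilon/2<\varepsilon$, the plateau region contains $B_{L}(x_n)$ for times $t_n-T\leq t\leq t_n$, for any fixed $L,T$. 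The Liouville step then compares $U$ with the spatially constant ODE solution $\dot y=f(y)$, $y(-T)=c_0$, which tends to $1$ as $T\to\infty$. This forces $U\equiv1$, contradicting $U(0,0)\leq1-\sigma$. Pointwise convergence passes through $A^s$ by dominated convergence, since $K_s$ is summable.

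\textbf{Consequence for $R_\theta$.} The two limits imply that, for large $t$, $\exp((a-\varepsilon)t/q)\leq R_\theta(t)\leq\exp((a+\varepsilon)t/q)$. Taking logarithms and letting $\varepsilon\to0$ gives $t^{-1}\log R_\theta(t)\to a/(d+2s)$.
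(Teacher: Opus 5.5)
Your overall architecture matches the paper's. You use a one-jump seed at a fixed time, and the linear comparison $u\le\ee^{at}S(t)u_0$ for the outer estimate. For the inner estimate you use an expanding algebraic subsolution controlled by a weighted bound $|A^s\Theta_R|\le CR^{-2s}\Theta_R$, giving a plateau at an exponent strictly above the target, followed by a compactness--Liouville upgrade. The paper uses the smooth profile $\Phi_R=(1+|x|^2/R^2)^{-q/2}$ for this theorem and starts from a large radius at $t=1$ rather than waiting until a large $t_0$; those differences are cosmetic.

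The step that fails as written is the one you call the technical core. Inserting $(K_s)^{*n}(x)\le C^n n|x|^{-q}$ into $p_t=\ee^{-\lambda t}\sum_n t^nK_s^{*n}/n!$ gives
\[
  p_t(x)\le Ct\,\ee^{(C-\lambda)t}|x|^{-q},
\]
not $Ct|x|^{-q}$. Splitting by the largest step yields $n^{q+1}\lambda^{n-1}|x|^{-q}$, and absorbing $n^{q}$ into a geometric factor forces $C>\lambda$. With a fixed gap $C-\lambda>0$, your outer estimate only reaches the exponent $(a+C-\lambda)/q$.

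The fix is that the geometric base must be exactly the total mass. Normalize $j=K_s/\lambda$, so that in the splitting the other $n-1$ factors have total mass one. This gives $j^{*n}(x)\le Cn^{q+1}(1+|x|)^{-q}$, a loss that is only polynomial in $n$. The Poisson moment $\ee^{-\lambda t}\sum_n(\lambda t)^nn^{q+1}/n!\le C(t+t^m)$ then yields $p_t(x)\le C(t+t^m)(1+|x|)^{-q}$. That is all you need, since $C(t+t^m)\ee^{-\varepsilon t}\to0$.

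The sharp form $\min(t^{-d/(2s)},t|x|^{-q})$ is neither produced by this argument nor used. Likewise, the one-jump term gives a lower bound only with a factor $\ee^{-\lambda t}$, which is harmless because you use it at a fixed time.

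Two smaller points.

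\emph{Capping by $\min(w,\eta)$ is not justified.} Where the minimum equals $w$, one has $A^s\min(w,\eta)\ge A^s w$, which is the wrong direction (compare Remark~\ref{rem:no-cutoff}). The choice $\kappa\le\eta$ is correct and sufficient, since $w\le\kappa$, but it is not equivalent to capping.

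\emph{The weighted estimate needs two adjustments.} First, split at the scale $L=R+|x|$ rather than $|x|/2$; for $|x|\lesssim R$ the far sum is otherwise only $O(1)$. Second, when $q<2$ (that is, $d=1$ and $s<1/2$), $\Theta_R$ is not $C^2$ at the origin. The near field then needs a second-difference bound with H\"older exponent in $(2s,\min\{q,2\})$, as in Lemma~\ref{lem:logistic-weighted-profile}. The smooth profile $\Phi_R$ avoids this.
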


The first-order limit in Theorem~\ref{thm:main} does not imply a bounded
error in $\log R_\theta(t)$. For the logistic reaction we obtain the stronger
constant-factor localization below.

\medskip

\noindent\textbf{Assumption (L).}
For some $a>0$, the reaction is the globally defined logistic polynomial
\begin{equation}\label{eq:logistic-reaction}
    f(r)=ar(1-r),
    \qquad r\in\R.
\end{equation}

\begin{theorem}\label{thm:logistic-localization}
Assume \emph{(I)} and \emph{(L)}. For every $0<\theta<1$, there exist
$c_\theta,C_\theta>0$ and $T_\theta>0$ such that
\begin{equation}\label{eq:logistic-level-inclusions}
    B_{c_\theta\exp(at/q)}
    \subseteq
    \{x\in\Z^d:u(t,x)\geq\theta\}
    \subseteq
    B_{C_\theta\exp(at/q)}
\end{equation}
for every $t\geq T_\theta$. In particular,
\begin{equation}\label{eq:logistic-radius-O-one}
    \log R_\theta(t)=\frac{a}{d+2s}t+O(1)
    \qquad\text{as }t\to\infty.
\end{equation}
\end{theorem}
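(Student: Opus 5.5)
The plan is to prove the two inclusions in \eqref{eq:logistic-level-inclusions} separately. Both rest on the profile $\Theta_R(x)=(1+(|x|/R)^{q})^{-1}$ with $R=R(t)$ growing like $\exp(at/q)$, together with the comparison principle (Lemma~\ref{lem:comparison}) and kernel bounds of the form $K_s(z)\asymp|z|^{-q}$. Then \eqref{eq:logistic-radius-O-one} follows at once, since $\log R_\theta(t)$ is trapped between $\log c_\theta+at/q$ and $\log C_\theta+at/q$.

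\emph{Upper inclusion.} Since $f(u)\le au$, $u$ is a subsolution of the linear problem $\partial_t v+A^sv=av$. Hence $u(t)\le \ee^{at}\ee^{-tA^s}u_0$. The upper heat-kernel bound for the subordinated walk gives
\[
\ee^{-tA^s}\delta_0(x)\lesssim \min\{1,\,t\,|x|^{-q}\}
\]
for $t\ge1$ (possibly with $t^{-d/(2s)}$ in the near region). So, because $u_0$ is finitely supported, we get $u(t,x)\lesssim \ee^{at}t|x|^{-q}$. This carries a spurious factor $t$, which would yield only $\log R=at/q+O(\log t)$.

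To remove it I would use the logistic structure instead. The function $w=\ee^{-at}u$ satisfies $\partial_tw+A^sw=-a\ee^{at}w^2\le0$. Better, I would look for a supersolution of the form $\bar u(t,x)=\min\{1,\,M\Theta_{R(t)}(x)\}$ with $R(t)=R_0\ee^{at/q}$. The key computation is that $A^s\Theta_R\ge -C R^{-2s}\Theta_R$ in the region $|x|\gtrsim R$, while at large $|x|$ one has $|A^s\Theta_R(x)|\lesssim R^{q}|x|^{-q}\cdot R^{-q}\cdot(\text{mass})\approx \Theta_R$. Since $\partial_t\Theta_R=a\,(R^{-q}|x|^q)\Theta_R^2\cdot q/q$, we obtain $\partial_t\Theta_R=a\Theta_R(1-\Theta_R)$ exactly. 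So $\Theta_R$ solves the logistic ODE pointwise, and the only task is to absorb $A^s\Theta_R$. I would handle this by the time shift $R(t)=R_0\ee^{(a+\eta(t))t/q}$ with an integrable correction, using the decay of $R^{-2s}$ as $R$ grows. It is exactly the integrability of $R(t)^{-2s}\sim \ee^{-2sat/q}$ that keeps the error bounded, giving $\bar u\ge u$ for $t\ge T$ once $M$ is large relative to $u(T)$. The requirement $u(T)\le M\Theta_{R(T)}$ comes from the algebraic tail bound on $u(T)$ from the linear estimate at the fixed time $T$. Then $u\ge\theta$ forces $M\Theta_R\ge\theta$, i.e.\ $|x|\le C_\theta \ee^{at/q}$.

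\emph{Lower inclusion.} I would build a subsolution $\underline u=\varepsilon\Theta_{r(t)}$, or $\varepsilon(\Theta_{r}-\text{small})_+$, with $r(t)=r_0\ee^{at/q}$. A lower bound for $u(T_0)$ of the form $u(T_0,x)\ge c|x|^{-q}$ follows from the one-large-jump lower tail $K_s(z)\gtrsim|z|^{-q}$. For the subsolution I need $\partial_t\underline u+A^s\underline u\le a\underline u(1-\underline u)$; using the exact ODE identity and $A^s\Theta_r\le Cr^{-2s}\Theta_r$, this holds for small $\varepsilon$ after a similar integrable time correction. This gives $u\ge \varepsilon\Theta_{r(t)}$, which is a plateau of fixed height $\varepsilon/2$ on $B_{r(t)}$. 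To lift the plateau from $\varepsilon/2$ to level $\theta$, I would note that the plateau persists on balls of radius comparable to $r(t-\tau)$ for a fixed $\tau$. Because the logistic ODE takes a bounded time $\tau_\theta$ to go from $\varepsilon/2$ to above $\theta$, and because the diffusive loss over a bounded time is controlled on the interior of a large ball (via the Liouville-type argument or a direct comparison with $\min$ of a plateau subsolution), we get $u(t)\ge\theta$ on $B_{c_\theta \ee^{at/q}}$ with $c_\theta=r_0\ee^{-a\tau_\theta/q}/2$.

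The main obstacle is the precise estimate $|A^s\Theta_R|\le C R^{-2s}\Theta_R$ uniformly on $\Z^d$, including the discrete near-field with $R\gg1$ and the far field where the convolution of two $|\cdot|^{-q}$ tails must again be $\lesssim\Theta_R$. I would prove it by splitting $A^s\Theta_R(x)=\sum_z K_s(z)(\Theta_R(x)-\Theta_R(x+z))$ into jumps with $|z|<|x|/2$ (Taylor/second-difference, using the smoothness of $\Theta_R$ at scale $R$ and symmetry of $K_s$) and jumps with $|z|\ge|x|/2$ (tail mass $\lesssim|x|^{-2s}$ times $\Theta_R$, plus the term $\sum_{|z|\ge|x|/2}K_s(z)\Theta_R(x+z)\lesssim |x|^{-q}\sum_y\Theta_R(y)\lesssim |x|^{-q}R^d$, which is $\lesssim R^{-2s}\Theta_R(x)$).
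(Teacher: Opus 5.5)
Your architecture is the paper's: the weighted bound $|A^s\Theta_R|\le C_4R^{-2s}\Theta_R$, the exact identity $\partial_t\Theta_{\rho(t)}=q(\rho'/\rho)\Theta_\rho(1-\Theta_\rho)$, a radius with $q\rho'/\rho=a\pm\mu\rho^{-2s}$ and $\mu\ge C_4$ (an integrable correction), comparison, and one-jump seeding. Freezing the amplitude, instead of coupling it to the radius as in Lemma~\ref{lem:coupled-logistic-barriers}, is a legitimate simplification. For $w=M\Theta_\rho$ with $q\rho'/\rho=a+\mu\rho^{-2s}$,
\[
  \partial_tw+A^sw-aw(1-w)\ge M\Theta_\rho\bigl[a(M-1)\Theta_\rho+\mu\rho^{-2s}(1-\Theta_\rho)-C_4\rho^{-2s}\bigr].
\]
The bracket is affine in $\Theta_\rho\in(0,1]$ and nonnegative at both ends once $\mu\ge C_4$ and $a(M-1)\ge C_4\rho_0^{-2s}$. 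The same computation makes $\varepsilon\Theta_r$ with $qr'/r=a-\mu r^{-2s}$ a subsolution when $a(1-\varepsilon)\ge C_4r_0^{-2s}$ and $\mu r_0^{-2s}<a$. Drop the truncation $\min\{1,\cdot\}$: the logistic polynomial is globally defined, and the minimum is not $C^1$ in time, which Lemma~\ref{lem:comparison} requires. What the paper's coupling buys is $\alpha_-\to1$, so its lower barrier reaches every level $\theta<1$ without a separate lifting step.

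Two steps fail as written. First, in your proof of the weighted estimate, splitting at $|z|=|x|/2$ is adequate only for $|x|\gtrsim R$. At $x=0$ every jump falls in your far region, where your bounds give at best $O(1)$. For $1\ll|x|\ll R$, the tail mass $|x|^{-2s}$ is much larger than $R^{-2s}\Theta_R(x)\asymp R^{-2s}$. Split instead at $|z|=L/4$ with $L=R+|x|$, so that all jumps up to scale $R$ are handled by the symmetrized second difference. Moreover, if $q<2$ (that is, $d=1$, $s<1/2$), $\Theta_R$ is only $C^{1,q-1}$ at the origin, so a second-order Taylor bound is unavailable. Use instead $|2\Theta_R(x)-\Theta_R(x+z)-\Theta_R(x-z)|\le C\Theta_R(x)(|z|/L)^\alpha$ with $2s<\alpha<\min\{q,2\}$. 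The condition $\alpha>2s$ is what yields $\sum_{0<|z|\le L/4}|z|^\alpha K_s(z)\lesssim L^{\alpha-2s}$. The far field then closes through $R^q+|x|^q\le L^q$, i.e.\ $R^dL^{-q}\le R^{-2s}\Theta_R(x)$.

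Second, the lift from $\varepsilon/2$ to $\theta$ is not justified by either mechanism you name. A minimum of subsolutions is not a subsolution (a maximum is). The compactness--Liouville argument of Proposition~\ref{prop:plateau-upgrade} requires a plateau at a strictly larger exponent, which is unavailable at the critical scale; only a finite-time variant could work (a limit with $U\ge\varepsilon/2$ on $[-\tau_\theta,\infty)\times\Z^d$, then comparison with the logistic ODE on $[-\tau_\theta,0]$). The direct repair uses what you already have. Since $u(T_0+t',\cdot)\ge\varepsilon\Theta_{r(t')}$ on all of $\Z^d$, restart at time $T_0+t'$ with the fixed-radius subsolution $\beta(\tau)\Theta_{r(t')}$, where $\beta'=a\beta(1-\beta)-C_4r(t')^{-2s}\beta$ and $\beta(0)=\varepsilon$. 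Its defect is at most $-a\beta^2\Theta_{r(t')}(1-\Theta_{r(t')})\le0$. Moreover, $\beta$ reaches $(1+\theta)/2$ within a time bounded uniformly for large $t'$, which gives $u\ge\theta$ on $|x|\le((1-\theta)/(2\theta))^{1/q}r(t')$. Alternatively, perform this fixed-radius lift first, as in Lemma~\ref{lem:logistic-barrier-initialization}, and then run your frozen-amplitude expanding barrier with $\varepsilon=(1+\theta)/2$.
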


The compact-support assumption in Theorem~\ref{thm:logistic-localization}
can be relaxed to the critical algebraic decay dictated by the fractional
jump kernel.

\begin{theorem}
\label{thm:critical-tail-localization}
Assume \emph{(L)}. Let $u_0\in\ellinf$ satisfy
\begin{equation}\label{eq:critical-tail-initial-data}
    0\leq u_0\leq1,
    \qquad
    u_0\not\equiv0,
\end{equation}
and suppose that, for some $C_0>0$,
\begin{equation}\label{eq:critical-tail-upper-bound}
    u_0(x)\leq C_0(1+|x|)^{-q},
    \qquad x\in\Z^d.
\end{equation}
Then, for every $0<\theta<1$, there exist
$c_\theta,C_\theta>0$ and $T_\theta>0$ such that
\begin{equation}\label{eq:critical-tail-level-inclusions}
    B_{c_\theta\exp(at/q)}
    \subseteq
    \{x\in\Z^d:u(t,x)\geq\theta\}
    \subseteq
    B_{C_\theta\exp(at/q)}
\end{equation}
for every $t\geq T_\theta$. Consequently,
\begin{equation}\label{eq:critical-tail-radius}
    \log R_\theta(t)
    =\frac{a}{d+2s}t+O(1)
    \qquad\text{as }t\to\infty.
\end{equation}
\end{theorem}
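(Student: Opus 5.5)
The inclusions \eqref{eq:critical-tail-level-inclusions} split into an inner part and an outer part, and I would treat them separately. The inner inclusion only needs a lower bound on the solution. Since $u_0\not\equiv0$, we can pick $x_0$ with $u_0(x_0)>0$ and set $\underline u_0:=\min\{u_0(x_0),1\}\one_{\{x_0\}}$. This datum is finitely supported and lies below $u_0$. By comparison (Lemma~\ref{lem:comparison}) the solution $\underline u$ issued from $\underline u_0$ satisfies $\underline u\leq u$. Translation invariance lets us move $x_0$ to the origin, at the price of a bounded shift that is absorbed into the constant $c_\theta$. Theorem~\ref{thm:logistic-localization} then gives $B_{c_\theta\exp(at/q)}\subseteq\{\underline u\geq\theta\}\subseteq\{u\geq\theta\}$ for $t\geq T_\theta$.

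For the outer inclusion, the KPP bound $f(u)\leq au$ makes $u$ a subsolution of the linear problem $\partial_tv+A^sv=av$. Comparison gives
\[
u(t)\leq \ee^{at}\,\ee^{-tA^s}u_0 .
\]
We have $\ee^{-tA^s}u_0=p_t*u_0$, where $p_t$ is the fractional heat kernel. Section~\ref{sec:kernel} should provide the bound $p_t(z)\lesssim \min\{1,\,t|z|^{-q}\}$, or at least $p_t(z)\leq \ee^{Ct}\,(1+|z|)^{-q}$ uniformly in $z$.

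The key step is a convolution estimate in which the critical tail is preserved under convolution with an algebraic profile:
\[
\sum_y (1+|x-y|)^{-q}(1+|y|)^{-q}\lesssim (1+|x|)^{-q}.
\]
This holds because $q>d$, so each factor is summable. To prove it, split the sum according to whether $|y|\geq|x|/2$ or $|x-y|\geq|x|/2$. In each region one factor is comparable to $|x|^{-q}$ and the other is summed.

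Applying this with the kernel bound $p_t(z)\lesssim (1+t)(1+|z|)^{-q}$ yields
\[
u(t,x)\lesssim (1+t)\,\ee^{at}(1+|x|)^{-q}.
\]
This falls short by the polynomial factor $1+t$, so it only gives $|x|\lesssim (t\ee^{at})^{1/q}$, which is not constant-factor sharp. I expect this to be the main obstacle.

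To remove the loss I would use the logistic barrier from Section~\ref{sec:barrier}, which is presumably built on $\Theta_R(x)=(1+(|x|/R)^{q})^{-1}$. The plan is to show that $\bar u(t,x)=M\,\Theta_{R(t)}(x)$, with $R(t)=R_0\ee^{at/q}$ and $M$ large, is a supersolution of the logistic equation. Two facts drive this. First, $A^s\Theta_R\geq -C R^{-2s}\Theta_R$ plus a small correction near the center, so diffusion costs only a vanishing rate for large $R$. Second, $\partial_t\Theta_R=(a)(1-\Theta_R)\Theta_R$ exactly, because $\partial_t (|x|/R)^q=-a(|x|/R)^q$. The term $-aM\Theta_R^2$ absorbed by the logistic nonlinearity then compensates the error near the center, which is presumably exactly how Theorem~\ref{thm:logistic-localization} was proved.

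The initial ordering holds once we choose $R_0\geq1$ and $M\geq C_0$ so that $u_0\leq M\Theta_{R_0}$. This is the only place where the critical tail assumption \eqref{eq:critical-tail-upper-bound} enters, and it enters exactly at the decay rate $q$ that the barrier carries. Comparison then gives $u(t,x)\leq M\bigl(1+(|x|\ee^{-at/q}/R_0)^q\bigr)^{-1}$. This is below $\theta$ whenever $|x|\geq C_\theta\ee^{at/q}$, with $C_\theta=R_0(M/\theta-1)^{1/q}$.

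If the barrier lemma of Section~\ref{sec:barrier} requires $M\leq1$ or a compactly supported start, I would instead run the linear estimate on a unit time interval to regularize. Then I would restart from time $1$, observing that $u(1)\leq 1$ and $u(1)\lesssim(1+|x|)^{-q}$, so that $u(1)\leq \min\{1,C(1+|x|)^{-q}\}\leq C'\Theta_{R_0}$. The constants in both inclusions are then fixed, and \eqref{eq:critical-tail-radius} follows immediately.
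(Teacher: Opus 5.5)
Your inner inclusion is correct, and it is shorter than the paper's route. Compare $u$ with the solution issued from $u_0(x_0)\one_{\{x_0\}}$, which satisfies \emph{(I)}, and invoke Theorem~\ref{thm:logistic-localization}; this replaces the paper's re-initialization (seed at time $1$, fixed-radius amplification to $\frac12\Theta_{\rho_0}$, then the coupled subsolution $w_-$).

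The outer inclusion, however, rests on a false claim: $\bar u=M\Theta_{R(t)}$ with $R(t)=R_0\ee^{at/q}$ is not a supersolution. Since $\partial_t\Theta_{R(t)}=a\Theta_{R(t)}(1-\Theta_{R(t)})$, the residual is
\[
\partial_t\bar u+A^s\bar u-a\bar u(1-\bar u)=aM(M-1)\Theta_R^2+MA^s\Theta_R .
\]
The harmful part of $A^s\Theta_R$ is not a correction near the center; it sits in the far field. Take $|x|\geq CR$. The second differences with $|z|\leq|x|/4$, together with the term $\Theta_R(x)\sum_{|z|>|x|/4}K_s(z)$, contribute only $O(|x|^{-2s}\Theta_R(x))$. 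Jumps from $x$ into the core $B_R$, where $\Theta_R\geq\frac12$, contribute at most $-cR^d|x|^{-q}\leq-cR^{-2s}\Theta_R(x)$ by the lower bound in \eqref{eq:jump-kernel-bounds}. Hence $A^s\Theta_R\leq-cR^{-2s}\Theta_R$ there. The residual is then bounded by $M\Theta_R\bigl(a(M-1)\Theta_R-cR^{-2s}\bigr)$. This is negative wherever $\Theta_R\ll R^{-2s}$, i.e.\ for $|x|\gg R^{1+2s/q}$, and this happens at every time. The quadratic gain is negligible exactly where $\Theta_R$ is small. So no choice of $M$ or $R_0$ repairs it, and neither does restarting at time $1$.

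The missing idea is to let the radius outrun $\ee^{at/q}$ by an integrable amount. This is what Lemma~\ref{lem:coupled-logistic-barriers} does: there $q\rho_+'/\rho_+=a\alpha_++\mu\rho_+^{-2s}$ with $\mu>C_4$, which produces the identity \eqref{eq:coupled-barrier-identity}. Even with a constant amplitude $M\geq2$ this works. If $qR'/R=a+\mu R^{-2s}$ and $\mu R_0^{-2s}\leq a(M-1)$, then Lemma~\ref{lem:logistic-weighted-profile} gives
\begin{align*}
\partial_t\bar u+A^s\bar u-a\bar u(1-\bar u)
&\geq M\Theta_R\bigl(a(M-1)\Theta_R+\mu R^{-2s}(1-\Theta_R)-C_4R^{-2s}\bigr)\\
&\geq(\mu-C_4)R^{-2s}M\Theta_R\geq0.
\end{align*}
Since $R\geq R_0\ee^{at/q}$, the correction $\int_0^\infty\mu R^{-2s}\,dt$ is finite. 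Therefore $R(t)\ee^{-at/q}$ converges to a finite limit and $C_\theta$ stays finite.

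One smaller point: the initial ordering $u_0\leq M\Theta_{R_0}$ requires $M\geq2$ and $MR_0^q\geq2C_0$, not merely $M\geq C_0$.
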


For the lattice problem it is preferable to formulate the location of the
interface through superlevel sets: for a prescribed $\theta$, the exact level
set $\{u(t,\cdot)=\theta\}$ may contain no lattice point. Define
\begin{equation}\label{eq:inner-level-radius}
    r_\theta(t)
    :=\sup\{r\geq0:B_r\subseteq\{x\in\Z^d:u(t,x)\geq\theta\}\}.
\end{equation}
For a set $E\subseteq\Z^d$, let
\[
    \partial_{\rm d}E
    :=\{x\in E:\text{there exists }y\notin E
    \text{ with }|x-y|_1=1\}
\]
denote its inner nearest-neighbor boundary.

\begin{corollary}
\label{cor:critical-tail-interface}
Under the assumptions of Theorem~\ref{thm:critical-tail-localization}, for
every $0<\theta<1$,
\begin{equation}\label{eq:inner-outer-radius-asymptotics}
    \log r_\theta(t)
    =\frac{a}{d+2s}t+O(1),
    \qquad
    \log R_\theta(t)
    =\frac{a}{d+2s}t+O(1).
\end{equation}
Moreover, if
\[
    E_\theta(t):=\{x\in\Z^d:u(t,x)\geq\theta\},
\]
then there exist $\underline C_\theta,\overline C_\theta>0$ and
$T_\theta'>0$ such that
\begin{equation}\label{eq:discrete-interface-annulus}
    \partial_{\rm d}E_\theta(t)
    \subseteq
    \left\{x\in\Z^d:
    \underline C_\theta\ee^{at/q}
    \leq |x|\leq
    \overline C_\theta\ee^{at/q}\right\}
\end{equation}
for every $t\geq T_\theta'$.
\end{corollary}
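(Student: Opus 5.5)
The corollary should follow directly from the two inclusions \eqref{eq:critical-tail-level-inclusions} of Theorem~\ref{thm:critical-tail-localization}. Fix $0<\theta<1$ and take $c_\theta,C_\theta,T_\theta$ from that theorem. For $t\geq T_\theta$ write $\rho(t):=\ee^{at/q}$.

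\emph{Radius asymptotics.} The first inclusion $B_{c_\theta\rho(t)}\subseteq E_\theta(t)$ gives, by the definition \eqref{eq:inner-level-radius}, $r_\theta(t)\geq c_\theta\rho(t)$. The second inclusion gives $|x|\leq C_\theta\rho(t)$ for every $x\in E_\theta(t)$. Hence $R_\theta(t)\leq C_\theta\rho(t)$.

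It remains to compare $r_\theta$ and $R_\theta$. When $E_\theta(t)\neq\varnothing$, every lattice point of $B_r$ lies in $E_\theta(t)$ for $r<r_\theta(t)$, so $R_\theta(t)\geq\sup\{|x|:x\in B_r\}$. For $r\geq1$ this supremum exceeds $r-1$, because the lattice point nearest to $(r,0,\dots,0)$ has norm at least $\lfloor r\rfloor$. We would also check that $r_\theta(t)$ is finite. This holds because $B_r\subseteq E_\theta(t)\subseteq B_{C_\theta\rho}$ forces $r\leq C_\theta\rho+1$, since $B_r$ contains points of norm $\geq r-1$. Altogether,
\[
c_\theta\rho(t)-1\leq r_\theta(t)-1\leq R_\theta(t)\leq C_\theta\rho(t),\qquad r_\theta(t)\leq C_\theta\rho(t)+1 .
\]
Taking logarithms for $t$ large, so that $c_\theta\rho(t)\geq2$, gives both statements in \eqref{eq:inner-outer-radius-asymptotics}, which are just \eqref{eq:critical-tail-radius} and its inner counterpart.

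\emph{Interface annulus.} Let $x\in\partial_{\rm d}E_\theta(t)$. Since $x\in E_\theta(t)$, we have $|x|\leq C_\theta\rho(t)$, so we may take $\overline C_\theta:=C_\theta$.

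For the lower bound, there is $y\notin E_\theta(t)$ with $|x-y|_1=1$, hence $|x-y|=1$. Because $B_{c_\theta\rho}\subseteq E_\theta(t)$, the point $y$ satisfies $|y|>c_\theta\rho(t)$. Therefore $|x|\geq|y|-1>c_\theta\rho(t)-1\geq\tfrac12c_\theta\rho(t)$ once $c_\theta\rho(t)\geq2$. We may thus take $\underline C_\theta:=c_\theta/2$ and let $T'_\theta$ be the larger of $T_\theta$ and the time at which $c_\theta\ee^{at/q}=2$.

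\emph{Main obstacle.} There is no serious obstacle here. The only care needed is the lattice bookkeeping: the balls $B_r$ are sets of lattice points, so $\sup$-type radii can differ from the geometric ones by an additive constant. Such an additive constant is absorbed by the exponential scale once $t$ is large.
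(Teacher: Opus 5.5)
Your proposal is correct and follows the paper's proof: both radius bounds come straight from the two inclusions of Theorem~\ref{thm:critical-tail-localization}, and the annulus follows from $|x|\geq|y|-1>c_\theta\ee^{at/q}-1\geq\tfrac12c_\theta\ee^{at/q}$ for a nearest neighbor $y\notin E_\theta(t)$. Your explicit bounds $r_\theta(t)-1\leq R_\theta(t)$ and $r_\theta(t)\leq C_\theta\ee^{at/q}+1$ make precise the ``lattice-rounding constant'' that the paper leaves implicit; one small correction is that the witness point in $B_r$ should be $(\lfloor r\rfloor,0,\dots,0)$, not the lattice point nearest to $(r,0,\dots,0)$, which may lie outside $B_r$.
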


We also consider an initially occupied half-lattice. In this case the
relevant tail is cumulative. The outer estimate uses the critical left-tail
bound below; the inner estimate does not.

\begin{theorem}\label{thm:monotone-sharp-propagation}
Let $d=1$ and assume \emph{(F)}. Suppose that $u_0\in\ell^\infty(\Z)$ is
nonzero and satisfies
\begin{equation}\label{eq:monotone-initial-data}
    0\leq u_0\leq1,
    \qquad
    u_0(j)\leq u_0(j+1)
    \quad (j\in\Z),
\end{equation}
together with
\begin{equation}\label{eq:monotone-left-tail}
    u_0(j)\leq C_0(-j)^{-2s},
    \qquad j\leq-1,
\end{equation}
for some $C_0>0$. Then
\begin{align}
    \lim_{t\to\infty}
    \sup_{\substack{j\in\Z\\j\leq-\exp(\sigma t)}}u(t,j)&=0
    &&\text{if }\sigma>\frac{a}{2s},
    \label{eq:monotone-outer-conclusion}\\
    \lim_{t\to\infty}
    \inf_{\substack{j\in\Z\\j\geq-\exp(\sigma t)}}u(t,j)&=1
    &&\text{if }\sigma<\frac{a}{2s}.
    \label{eq:monotone-inner-conclusion}
\end{align}
For $0<\theta<1$, let
\begin{equation}\label{eq:monotone-interface-definition}
    X_\theta(t):=\inf\{j\in\Z:u(t,j)\geq\theta\}.
\end{equation}
Then $X_\theta(t)$ is a finite negative integer for all sufficiently large
$t$, and
\begin{equation}\label{eq:monotone-interface-law}
    \lim_{t\to\infty}
    \frac{1}{t}\log\bigl(-X_\theta(t)\bigr)
    =\frac{f'(0)}{2s}.
\end{equation}
\end{theorem}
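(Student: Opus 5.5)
The plan is to prove the outer bound \eqref{eq:monotone-outer-conclusion} by a linear supersolution argument and the inner bound \eqref{eq:monotone-inner-conclusion} by an expanding subsolution built from the kernel lower tail. The interface law \eqref{eq:monotone-interface-law} then follows by combining the two.

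\emph{Outer bound.} By \eqref{eq:KPP-inequality} and comparison (Lemma~\ref{lem:comparison}), we have $u\le \ee^{at}\ee^{-tA^s}u_0$. It therefore suffices to bound $\ee^{-tA^s}u_0(j)$ for $j\le -\ee^{\sigma t}$. I would use the fixed-order heat-kernel bound of Section~\ref{sec:kernel}, namely $p_t(z)\lesssim \min\{t^{-1/(2s)},\,t|z|^{-1-2s}\}$ for the semigroup kernel in $d=1$. Write $u_0\le \one_{\{j\ge -M\}}+C_0(-j)^{-2s}\one_{\{j<-M\}}$. Convolving, the half-line part contributes $\sum_{n\ge |j|-M}p_t(n)\lesssim t(|j|-M)^{-2s}$. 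For the algebraic part, split the convolution into the region $|y|\ge |j|/2$ and its complement. On the first region the datum is $\lesssim |j|^{-2s}$ and $p_t$ has total mass at most $1$. On the complement $|j-y|\ge|j|/2$, and the sum is $\lesssim t|j|^{-1-2s}\sum_{|y|\le|j|}|y|^{-2s}$, which is $\lesssim t|j|^{-4s}$ if $2s<1$ and $\lesssim t|j|^{-1-2s}\log|j|$ or better otherwise. In all cases the algebraic part is $\lesssim(1+t)|j|^{-2s}$. Hence $u(t,j)\lesssim (1+t)\ee^{at}|j|^{-2s}$, which tends to $0$ uniformly on $j\le-\ee^{\sigma t}$ when $2s\sigma>a$.

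\emph{Inner bound.} Because $u_0$ is nondecreasing and nonzero, there are $\delta>0$ and $j_0$ with $u_0\ge\delta\one_{[j_0,\infty)}$. Translation invariance and comparison with the datum $\delta\one_{[j_0,\infty)}$ preserve monotonicity in $j$, so $u(t,\cdot)$ is nondecreasing. The argument has three steps.
\begin{enumerate}[label=(\roman*)]
\item The solution with datum $\delta\one_{\{j_0\}}$ converges to $1$ locally uniformly; this is Theorem~\ref{thm:main}. Monotonicity then gives $u(t,j)\ge 1-\eta$ for $j\ge j_0$ and $t\ge T_\eta$.
\item Linearize near $0$: for small $\varepsilon$, $f(r)\ge(a-\varepsilon)r$ on $[0,\rho_\varepsilon]$. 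A single large jump supplies the lower tail $K_s(n)\gtrsim n^{-1-2s}$, so the occupied half-line gives $-A^s u(t,j)\gtrsim |j|^{-2s}$ for $j$ far to the left. Together with the growth $\ee^{(a-\varepsilon)t}$, this yields $u(T+t,j)\ge c\min\{\rho_\varepsilon,\ee^{(a-\varepsilon)t}|j|^{-2s}\}$.
\item This gives a positive plateau on $j\ge-\ee^{\sigma t}$ for every $\sigma<a/(2s)$.
\end{enumerate}
I expect step (ii) to be the main obstacle, because it needs a subsolution that stays valid at all times. Following the algebraic barriers of Section~\ref{sec:barrier}, I would use $\underline u(t,j)=\kappa\,\Theta(j\,\ee^{-\sigma' t})$ with $\Theta(y)=(1+|y|^{2s})^{-1}$ for $y<0$ and $\Theta=1$ for $y\ge0$. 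Here $\sigma<\sigma'<a/(2s)$. Under the rescaling, $A^s\underline u$ is bounded by $C\ee^{-2s\sigma' t}$ on the plateau and by $C\Theta$ in the tail region. The time derivative satisfies $\partial_t\underline u\le 2s\sigma'\underline u$. Since $2s\sigma'+C\ee^{-2s\sigma' t}<a-\varepsilon$ for large $t$, $\underline u$ is a subsolution below $\rho_\varepsilon$ after shifting the time origin, and the initial ordering is given by (ii). Finally, I would upgrade the plateau to convergence to $1$ with the compactness–Liouville argument used for \eqref{eq:main-lower}. Take $t_n\to\infty$ and points $j_n\ge -\ee^{\sigma t_n}$, and translate $u(t_n+\cdot,j_n+\cdot)$. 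The limit is an entire solution bounded below by a positive constant, and such a solution must be identically $1$.

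\emph{Interface law.} For $j\le-1$ and all $t$ large, monotonicity shows that $X_\theta(t)$ is finite: it is at most $j_0$ by (i) and bounded below by the outer bound. It is negative because the plateau extends past $-1$. The two conclusions then place $-X_\theta(t)$ between $\ee^{\sigma_-t}$ and $\ee^{\sigma_+t}$ for any $\sigma_-<a/(2s)<\sigma_+$, which gives \eqref{eq:monotone-interface-law}.
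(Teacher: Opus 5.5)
Your overall route is the paper's. The outer bound comes from $u\le \ee^{at}S(t)u_0$ and the cumulative tail of $G_t$; the paper packages your splitting at $|j|/2$ as $S(t)\Xi_1\le C(1+t+t^m)\Xi_1$ in Lemma~\ref{lem:cumulative-heat-kernel}. The inner bound comes from a one-jump seed, an expanding algebraic subsolution on the scale $\ee^{\gamma t/(2s)}$, a plateau, and the compactness--Liouville upgrade, and the interface law from sandwiching. Two small remarks. Section~\ref{sec:kernel} proves only $G_t(z)\le C(t+t^m)(1+|z|)^{-1-2s}$, not the bound linear in $t$ that you quote; this is harmless, because any polynomial factor is absorbed by $\ee^{(a-2s\sigma)t}$. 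Your step (i) is not needed.

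The barrier step, however, does not close as written.

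First, the subsolution inequality needs $A^s\underline u\le\epsilon(R(t))\,\underline u$ at \emph{every} site, with $\epsilon(R)\to0$ as $R\to\infty$. A bound by $C\Theta$ in the tail region with fixed $C$ would force $2s\sigma'+C<a-\varepsilon$, which fails. The decaying factor in the tail is the heart of the matter. In the paper it comes from Lemma~\ref{lem:one-sided-weighted-profile}: with $L=R+|j|$, a relative Hessian bound $\lesssim L^{-2s}\Xi_R(j)$ controls $|z|\le L/4$, and the kernel tail $\sum_{|z|>L/4}K_s(z)\lesssim L^{-2s}$ together with $L^{-2s}\le R^{-2s}\Xi_R(j)$ controls the rest.

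Second, your profile $(1+|y|^{2s})^{-1}$ glued to $1$ is not $C^{1,1}$ at $y=0$. At $j=0$ one finds
\[
A^s\underline u(0)=\kappa\sum_{n\geq1}K_s(n)\,\frac{(n/R)^{2s}}{1+(n/R)^{2s}}\asymp\kappa R^{-2s}\log R ,
\]
so even your plateau bound $C\ee^{-2s\sigma' t}$ is off by a logarithm. This is harmless, since $\sup_{R\geq R_0}R^{-2s}\log R\to0$. The paper's $\Xi_R(r)=(1+(r^-)^2/R^2)^{-s}$ satisfies $1-\Xi_R(r)\lesssim r^2/R^2$ near $0$ and avoids the loss.

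Third, the initial ordering cannot be taken from (ii), since (ii) at positive times is exactly what the barrier is meant to produce. Only a fixed-time seed is needed. The one-jump term and $\sum_{n\geq|r|}(1+n)^{-1-2s}\gtrsim(1+|r|)^{-2s}$ give $u(1,\cdot)\ge \mu_0S(1)H_{k_*}\ge c_*\Xi_1(\cdot-k_*)$, as in Lemma~\ref{lem:monotonicity-and-seeding}. The amplitude must then be chosen \emph{after} the starting radius: fix $R$, take $\eta\le\min\{\eta_*,c_*R^{-2s}\}$, and use $\Xi_R\le R^{2s}\Xi_1$. With these repairs your argument coincides with the paper's proof.
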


\subsection{The singular transition as \texorpdfstring{$s\uparrow1$}{s to 1}}

We next let the order vary and specialize to assumption \emph{(L)}. For each
$s\in(0,1)$, let $u_s$ denote the solution of \eqref{eq:intro-equation} with
the same initial datum $u_0$. The small mass of the non-nearest-neighbor part
of the fractional kernel determines the transition time. We first record the
local lattice problem that governs the limit $s\uparrow1$.

The early propagation set is determined by the local equation
\begin{equation}\label{tr:eq:local-equation}
  \partial_tu_1+Au_1=au_1(1-u_1).
\end{equation}
Set
\begin{align}
  \Lambda_a(\xi)
  &:=a-1+\frac1d\sum_{j=1}^d\cosh\xi_j,
  \label{tr:eq:cumulant}\\
  I_a(v)&:=\sup_{\xi\in\R^d}\{\xi\cdot v-\Lambda_a(\xi)\},
  \qquad
  \mathcal W_a:=\{v\in\R^d:I_a(v)\leq0\}.
  \label{tr:eq:wulff}
\end{align}
The set $\mathcal W_a$ is compact, convex, symmetric under coordinate
reflections, and has nonempty interior. For $e\in\mathbb S^{d-1}$, define
\begin{equation}\label{tr:eq:directional-speed}
  c_a(e):=\inf_{\lambda>0}\frac{\Lambda_a(\lambda e)}{\lambda}.
\end{equation}

We write
\begin{equation}\label{tr:eq:transition-time}
  \tau_s:=-\log(1-s).
\end{equation}
All limits below are joint limits subject to the displayed restrictions. For
example, \eqref{tr:eq:late-localized-inner} is taken as $s\uparrow1$ and
$t\to\infty$ under the condition
$t\geq C_{\sigma_-,\sigma_+}\tau_s$.

\begin{theorem}\label{tr:thm:localized-transition}
Let $u_0\in\ell^\infty(\Z^d)$ be nonzero and finitely supported, with
$0\leq u_0\leq1$. For $s\in(0,1)$, let $u_s$ be the solution of
\eqref{eq:intro-equation} with $u_s(0)=u_0$.

If $s\uparrow1$, $t\to\infty$, and
\begin{equation}\label{tr:eq:early-condition}
  at<\tau_s,
\end{equation}
then, for every $\delta\in(0,1)$,
\begin{align}
  \inf_{x\in(1-\delta)t\mathcal W_a\cap\Z^d}
  u_s(t,x)&\to1,
  \label{tr:eq:early-localized-inner}\\
  \sup_{\substack{x\in\Z^d\\
  x\notin(1+\delta)t\mathcal W_a}}
  u_s(t,x)&\to0.
  \label{tr:eq:early-localized-outer}
\end{align}

Let
\[
  0<\sigma_-<\frac{a}{d+2}<\sigma_+.
\]
Equivalently, $\sigma_-<a/(d+2s)<\sigma_+$ for all $s$ sufficiently close
to one. There is $C_{\sigma_-,\sigma_+}>0$, independent of $s$, such that
\begin{align}
  \lim_{\substack{s\uparrow1,\ t\to\infty\\
  t\geq C_{\sigma_-,\sigma_+}\tau_s}}
  \inf_{\substack{x\in\Z^d\\ |x|\leq\exp(\sigma_-t)}}
  u_s(t,x)&=1,
  \label{tr:eq:late-localized-inner}\\
  \lim_{\substack{s\uparrow1,\ t\to\infty\\
  t\geq C_{\sigma_-,\sigma_+}\tau_s}}
  \sup_{\substack{x\in\Z^d\\ |x|\geq\exp(\sigma_+t)}}
  u_s(t,x)&=0.
  \label{tr:eq:late-localized-outer}
\end{align}
\end{theorem}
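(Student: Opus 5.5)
We would prove Theorem~\ref{tr:thm:localized-transition} by splitting the fractional generator into its local and nonlocal parts with constants explicit in $1-s$. The first step is a kernel decomposition uniform near $s=1$. From \eqref{eq:balakrishnan} and \eqref{eq:ordinary-semigroup-series} we get
\[
A^s\phi=\sum_{n\geq1}w_s(n)(\phi-P^n\phi),\qquad w_s(n)=\frac{c_s\Gamma(n-s)}{n!},
\]
so $A^s=w_s(1)A+L_s$. Here $w_s(1)=c_s\Gamma(1-s)=s\to1$, and $L_s=\sum_{n\geq2}w_s(n)(I-P^n)$ has total jump mass $M_s=\sum_{n\geq2}w_s(n)\asymp1-s$, because $c_s\asymp(1-s)$ and $\sum_n\Gamma(n-s)/n!$ converges uniformly for $s$ near $1$. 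We also need the uniform tail
\[
\sum_{|z|\geq r}K_s(z)\lesssim(1-s)r^{-2s}
\]
and the pointwise bound $K_s(z)\lesssim(1-s)|z|^{-d-2s}$ for $|z|\geq2$. Both follow from the large-$n$ behaviour $w_s(n)\asymp(1-s)n^{-1-s}$ combined with Gaussian/local-CLT bounds for $P^n$.

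\textbf{Early regime ($at<\tau_s$).} For the outer bound \eqref{tr:eq:early-localized-outer}, the KPP inequality and comparison give $u_s\leq\min(1,\ee^{at}\ee^{-tA^s}u_0)$. We write the semigroup as a compound Poisson mixture: $\ee^{-tA^s}=\ee^{-tsA}\ee^{-tL_s}$, and $\ee^{-tL_s}=\ee^{-tM_s}\sum_k\frac{t^k}{k!}J_s^{*k}$ with $J_s$ the long-jump kernel. The $k=0$ term is $\ee^{at}\ee^{-tsA}u_0$. A Chernoff bound with cumulant $\Lambda_a$ makes it $\leq\exp(-t\,I_a(x/t)+o(t))$, which tends to $0$ outside $(1+\delta)t\mathcal W_a$; the factor $s$ in $tsA$ only perturbs $\Lambda$ by $O(1-s)$. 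For $k\geq1$ we have at least one long jump, costing a factor $(1-s)t$. The convolution estimate from Section~\ref{sec:kernel}, made uniform in $s$, bounds these terms by
\[
\sum_{k\geq1}\frac{(C(1-s)t)^k}{k!}\,\ee^{at}\,(\text{heat-kernel displacement}).
\]
Since $(1-s)\ee^{at}<1$, the single-jump contribution at a point outside $(1+\delta)t\mathcal W_a$ is $\lesssim t\,(1-s)\ee^{at}\ee^{-ct}\cdot$(polynomial)$\to0$. The mechanism is that a jump landing far away must be followed or preceded by local growth that is still subcritical, and we would make this precise by splitting at the time of the last long jump. The inner bound \eqref{tr:eq:early-localized-inner} follows from $\partial_tu+sAu\geq au(1-u)-L_su$ together with $L_su\leq M_su\lesssim(1-s)u$. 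So $u_s$ is a supersolution-type comparison with the local equation at growth rate $a-C(1-s)$, and the known Wulff spreading for \eqref{tr:eq:local-equation} (a Freidlin--Gärtner/Aronson--Weinberger type result for the lattice, run with uniform-in-parameter subsolutions) yields the inner limit.

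\textbf{Late regime ($t\geq C\tau_s$).} We use the fixed-order machinery from Sections~\ref{sec:lower}--\ref{sec:barrier}, tracking the dependence on $1-s$. For the outer bound \eqref{tr:eq:late-localized-outer}, $u_s\leq\ee^{at}\ee^{-tA^s}u_0$ with
\[
\ee^{-tA^s}(x)\lesssim \text{local part}+(1-s)\,t\,|x|^{-d-2s}.
\]
For $|x|\geq\ee^{\sigma_+t}$ the local part is superexponentially small. The other term is $\lesssim(1-s)t\,\ee^{(a-(d+2s)\sigma_+)t}\to0$, since $\sigma_+>a/(d+2)$ and $s$ is near $1$. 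For the inner bound \eqref{tr:eq:late-localized-inner} we proceed in two stages. First, by the early-regime result, at time $t_1=\tau_s/(2a)$ the solution is near $1$ on a ball of radius $ct_1$. Second, over a further time $\asymp\tau_s$, a single long jump from that ball seeds mass $\gtrsim(1-s)|x|^{-d-2s}$, and reaction amplifies it by $\ee^{(a-\epsilon)(t-t_1)}$. We then reuse the expanding algebraic barriers and the compactness--Liouville upgrade, with constants uniform in $s\in[s_0,1)$. The loss $(1-s)=\ee^{-\tau_s}$ is absorbed once $t\geq C\tau_s$ with $C$ depending on $\sigma_-$ and the gap $a/(d+2)-\sigma_-$.

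\textbf{Main obstacle.} The hardest step will be the uniform-in-$s$ kernel bounds, in particular showing that the multi-jump convolution estimate does not degenerate as $s\uparrow1$. Closely tied to this is the early outer bound up to the sharp threshold $at<\tau_s$. To handle it we would first bound $J_s^{*k}$ by $(C(1-s))^k$ times a fixed $|z|^{-d-2s}$-profile via the convolution inequality $h*h\lesssim h$ for $h(z)=(1+|z|)^{-d-2s}$, where the constant is uniform for $s\geq s_0$.
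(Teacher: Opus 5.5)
Your architecture matches the paper's on three of the four assertions. The splitting $\ee^{-tA^s}=\ee^{-tsA}\ee^{-tL_s}$ is the factorization $G_{s,t}=H_{s,t}*Q_{s,t}$ of Lemma~\ref{tr:lem:two-scale-kernel}. The late outer bound is the KPP linear bound with an $s$-uniform algebraic heat-kernel tail; for $(1-s)t\gtrsim1$ the long-jump factor is a polynomial in $t$ rather than $(1-s)t$, which is harmless there. The late inner bound rests on a one-jump seed of amplitude $\asymp1-s$, raised in time $O(\tau_s)$ and then expanded.

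\textbf{The early outer bound is misdiagnosed.} The one-long-jump part of the linear majorant $\ee^{at}S_s(t)u_0$ is \emph{not} exponentially small. For $|x|\asymp t$, a jump from near the origin straight to $x$, with typical local motion before and after, gives $\ee^{at}(S_s(t)\delta_0)(x)\gtrsim\ee^{at}(1-s)t\cdot t^{-d-2s}$. This is of exact order $t^{1-d-2s}$ when $at=\tau_s-O(1)$. So there is no factor $\ee^{-ct}$, and splitting at the last long jump will not produce one.

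The correct accounting, as in Proposition~\ref{tr:prop:early-linear-estimates}, splits according to the local displacement $y$:
\begin{itemize}
\item if $y\notin(1+\delta/2)t\mathcal W_a$, then $\ee^{at}H_{s,t}(y)\le C\ee^{-ct}$;
\item otherwise the jump has length at least $c_\delta t$, and $\ee^{at}Q_{s,t}(x-y)\le C\ee^{at}(1-s)t\,t^{-d-2s}\le Ct^{1-d-2s}$.
\end{itemize}
The gain is only polynomial. It closes only because exactly one factor of $t$ appears, which is where $(1-s)t\to0$ (i.e.\ $\widetilde\kappa_st\le1$) is used, and because $d+2s>1$. A cruder time factor such as $t+t^m$ would not close at the threshold $at<\tau_s$.

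\textbf{Your early inner bound is a genuinely different route, and a good one.} Since $u_s\ge0$ gives $L_su_s\le(1-s)u_s$, $u_s$ is a supersolution of the local equation $\partial_tv+sAv=(a-1+s)v-av^2$ for all times. Hence the inner Wulff bound holds without the restriction $at<\tau_s$ at all. The paper instead compares $u_s$ with $u_1$ via Lemma~\ref{tr:lem:operator-convergence} at the intermediate time $(1-\gamma)t$, then uses a retention estimate on the last $\gamma t$; that is exactly where $at<\tau_s$ enters.

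The cost of your route is that you need the local Wulff theorem uniformly in $s$. One way to get it:
\begin{itemize}
\item Apply the time change $\tau=st$, which gives $\partial_\tau\tilde v+A\tilde v=g_s(\tilde v)$ with $g_s(r)=s^{-1}r(a-1+s-ar)$.
\item For $s\ge s_1$, compare with the single equation whose reaction is $\min_{s'\in[s_1,1]}g_{s'}$, and apply Weinberger's theorem once.
\item Use continuity of the Wulff shape in the growth rate, and let $s_1\uparrow1$.
\end{itemize}

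\textbf{Late inner bound.} Your detour through the early regime at $t_1=\tau_s/(2a)$ is unnecessary; the paper seeds $u_s(1)\ge c(1-s)(1+|x|)^{-d-2s}$ directly from $u_0$. Your amplification ``by $\ee^{(a-\epsilon)t}$'' must be realized as an actual fixed-radius subsolution $\beta(t)\Theta_{\rho_0,s}$ with $\beta(0)\asymp1-s$, as in Lemma~\ref{tr:lem:localized-amplification}. That construction relies on the $s$-uniform estimate $|A^s\Theta_{R,s}|\le CR^{-2s}\Theta_{R,s}$ of Lemma~\ref{tr:lem:uniform-radial-profile}. Its near field depends on the cancellation $(1-s)\sum_{1\le|z|\le L}|z|^{2-d-2s}\lesssim L^{2-2s}$. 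Reusing the barrier of Proposition~\ref{prop:expanding-barrier} verbatim would give only a plateau of height $\asymp1-s$, which is useless for a Liouville argument.

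Once you have an $s$-independent plateau on $|x|\le\ee^{b_0t}$ with $b_0>\sigma_-$, for $t\ge C'\tau_s$, your compactness--Liouville finish does work in the joint limit. Limits solve the local equation because $\|A^{s_n}-A\|\le2(1-s_n)$. The paper avoids compactness altogether: it uses the coupled amplitude--radius barrier, whose amplitude tends to $1$ uniformly in $s$.
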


For $e\in\mathbb S^{d-1}$ and $\kappa\in\R$, put
\[
  H_{e,\kappa}(x):=\one_{\{x\cdot e\geq\kappa\}},
  \qquad x\in\Z^d.
\]

\begin{theorem}\label{tr:thm:front-transition}
Fix $e\in\mathbb S^{d-1}$. Suppose that $u_0$ is independent of $s$ and
that, for some $\mu\in(0,1]$ and $\kappa_-,\kappa_+\in\R$,
\begin{equation}\label{tr:eq:front-data}
  \mu H_{e,\kappa_+}\leq u_0\leq H_{e,\kappa_-},
  \qquad 0\leq u_0\leq1.
\end{equation}
Let $u_s$ solve \eqref{eq:intro-equation}. If $s\uparrow1$,
$t\to\infty$, and $at<\tau_s$, then, for every
$\delta\in(0,c_a(e))$,
\begin{align}
  \sup_{\substack{x\in\Z^d\\ x\cdot e\leq-(c_a(e)+\delta)t}}
  u_s(t,x)&\to0,
  \label{tr:eq:early-front-outer}\\
  \inf_{\substack{x\in\Z^d\\ x\cdot e\geq-(c_a(e)-\delta)t}}
  u_s(t,x)&\to1.
  \label{tr:eq:early-front-inner}
\end{align}
Let
\[
  0<\sigma_-<\frac a2<\sigma_+.
\]
Equivalently, $\sigma_-<a/(2s)<\sigma_+$ for all $s$ sufficiently close
to one. There is $\widehat C_{\sigma_-,\sigma_+}>0$, independent of $s$,
such that
\begin{align}
  \lim_{\substack{s\uparrow1,\ t\to\infty\\
  t\geq \widehat C_{\sigma_-,\sigma_+}\tau_s}}
  \sup_{\substack{x\in\Z^d\\ x\cdot e\leq-\exp(\sigma_+t)}}
  u_s(t,x)&=0,
  \label{tr:eq:late-front-outer}\\
  \lim_{\substack{s\uparrow1,\ t\to\infty\\
  t\geq \widehat C_{\sigma_-,\sigma_+}\tau_s}}
  \inf_{\substack{x\in\Z^d\\ x\cdot e\geq-\exp(\sigma_-t)}}
  u_s(t,x)&=1.
  \label{tr:eq:late-front-inner}
\end{align}
\end{theorem}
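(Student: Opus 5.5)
We prove Theorem~\ref{tr:thm:front-transition} in two regimes, treating $u_s$ through the decomposition
\[
A^s=(1-\eta_s)A+L_s,
\]
where $L_s$ is the long-jump operator: a nonnegative kernel $J_s$ on $\Z^d\setminus\{0\}$ of total mass $\lesssim 1-s$. The nearest-neighbour part of the subordinated kernel equals $A$ up to a factor $1-\eta_s$ with $\eta_s\to0$. The tail of $J_s$ satisfies $J_s(z)\lesssim(1-s)|z|^{-d-2s}$ uniformly for $s$ near one. This uniform kernel bound is the input we expect to take from the estimates of Section~\ref{tr:sec:uniform}. The comparison principle (Lemma~\ref{lem:comparison}) reduces all four claims to one-dimensional barriers in the variable $x\cdot e$. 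For the upper bounds we may replace $u_0$ by $H_{e,\kappa_-}$; for the lower bounds, by $\mu H_{e,\kappa_+}$.

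\emph{Early regime, $at<\tau_s$.} For \eqref{tr:eq:early-front-outer} we linearize: $u_s\le \overline u$, where $\partial_t\overline u+A^s\overline u=a\overline u$. By Duhamel, $\overline u=\ee^{at}\ee^{-tA^s}H_{e,\kappa_-}$, and $\ee^{-tA^s}$ is the law of a compound walk. We split according to the number $N$ of long jumps. Contributions with $N\ge1$ carry the factor $1-\ee^{-t\|J_s\|}\lesssim(1-s)t$. Since $at<\tau_s$ gives $\ee^{at}(1-s)<1$, the extra factor $t$ has to be beaten by the spatial decay. A long jump of the needed size $\gtrsim\delta t$ has probability $\lesssim(1-s)t\cdot t^{-2s}$, and a single large jump dominates repeated jumps by the convolution estimate. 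Multiplied by $\ee^{at}$, this gives $\lesssim t^{1-2s}\to0$ as $s\uparrow1$ and $t\to\infty$. The $N=0$ part is the nearest-neighbour walk at rate $1-\eta_s$. The exponential Chebyshev bound with $\ee^{\lambda x\cdot e}$ yields
\[
\ee^{t(\Lambda_a(\lambda e)-\lambda(c_a(e)+\delta))+O(\eta_s\lambda t)}\to0 .
\]
For \eqref{tr:eq:early-front-inner}, note that $L_s$ only adds mass, and the loss term $\|J_s\|u\le C(1-s)$ is negligible. Hence $u_s$ dominates the solution of the local equation with reaction $a u(1-u)-C(1-s)u$ and diffusion $(1-\eta_s)A$. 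The known spreading of the local lattice KPP equation with directional speed $c_a(e)$ gives the claim once it is made uniform under a small perturbation of $a$. We do this with compactly supported subsolutions moving at speed $c_a(e)-\delta$, built from $\Lambda_a$ as in Section~\ref{tr:sec:local}.

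\emph{Late regime, $t\ge\widehat C\tau_s$.} For \eqref{tr:eq:late-front-outer} we use a barrier $\min\{1,\ee^{at}\,\Phi(x\cdot e)\}$ with $\Phi(y)\asymp(1-s)\,(1+|y|)^{-2s}$ for $y\to-\infty$. Applying $A^s$ to $\Phi$ and using the cumulative tail $\sum_{n\ge|j|}J_s(n)\lesssim(1-s)|j|^{-2s}$ gives $A^s\Phi\ge -\varepsilon\Phi$ up to an additive error, which the $\min$ absorbs. Consequently $u_s\le C(1-s)\ee^{(a+\varepsilon)t}|x\cdot e|^{-2s}$. This tends to $0$ when $x\cdot e\le-\ee^{\sigma_+t}$, since $2s\sigma_+>a$ for $s$ close to $1$.

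For \eqref{tr:eq:late-front-inner} we first reach the invaded region. By the early-regime result, at time $t_0=\tau_s/(2a)$ we have $u_s\ge\frac12$ on $\{x\cdot e\ge -c t_0\}$. From then on a single long jump seeds mass $\gtrsim(1-s)|y|^{-2s}$ at distance $|y|$. Growth at rate $a-\varepsilon$ amplifies this to a positive plateau once
\[
(1-s)\ee^{(a-\varepsilon)t}|y|^{-2s}\gtrsim1 .
\]
Taking $|y|=\ee^{\sigma_-t}$, this requires $t\bigl(a-\varepsilon-2s\sigma_-\bigr)\ge\tau_s+O(1)$, which holds when $t\ge\widehat C\tau_s$ with $\widehat C$ depending only on $a-2\sigma_-$. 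We make this rigorous with the expanding algebraic subsolution barriers of Section~\ref{sec:barrier}, tracking the $(1-s)$ prefactor. The plateau is then upgraded to convergence to $1$ by the compactness--Liouville argument, applied uniformly in $s$ near one.

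\emph{Main obstacle.} The hardest step is the late lower bound. The subsolution must work uniformly as $s\uparrow1$, even though the long-jump mass degenerates like $1-s$. The constant $\widehat C$ must not depend on $s$, so every barrier constant has to be tracked through the factor $1-s$. The Liouville step likewise needs limits along sequences $s_n\uparrow1$, where the limit operator is $A$ and not $A^s$.
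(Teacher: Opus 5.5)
Your early regime is sound, and its inner half takes a different route from the paper's (discussed at the end). The late regime, however, contains one step that fails as written and one that is underspecified at the point that matters.

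\emph{Late outer bound: the barrier fails.} The barrier must dominate $H_{e,\kappa_-}$, so $\Phi\geq1$ on $\{y\geq\kappa_-\}$. The truncation in $\bar w=\min\{1,\ee^{(a+\varepsilon)t}\Phi\}$ acts only where $\bar w$ already equals one. For $t$ of order one that is just the occupied half-space, so the truncation absorbs nothing below the interface. A defect does occur there. Take a site $x$ with $x\cdot e<\kappa_-$, a lattice neighbour in $\{x\cdot e\geq\kappa_-\}$, and $\Phi(x)$ of size $1-s$. The nearest-neighbour part of $A^s\bar w(x)$ is then at most $s\bar w(x)-\frac{s}{2d}$. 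This order-one negative term cannot be offset by anything proportional to the small quantity $\bar w(x)$, so the supersolution inequality fails. Correspondingly, the bound $u_s\leq C(1-s)\ee^{(a+\varepsilon)t}|x\cdot e|^{-2s}$ is false at $t=1$ a bounded distance below the interface: for the datum $H_{e,\kappa_-}$, nearest-neighbour motion alone gives $u_s(1,x)\geq c$ there. A time-independent profile could be repaired with a slowly decaying layer, for instance $\min\{1,\ee^{\mu(x\cdot e-\kappa_-)}\}$ with $s\bigl(\frac1d\sum_j\cosh(\mu e_j)-1\bigr)\leq\varepsilon/4$, added to the $(1-s)$-tail. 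Simpler still, the factor $1-s$ is not needed for this bound at all. You already have $u_s\leq\ee^{at}S_s(t)H_{e,\kappa_-}$. Summing the uniform compound-Poisson bound $G_{s,t}(z)\leq C(t+t^m)(1+|z|)^{-d-2s}$ over a half-space gives \eqref{tr:eq:uniform-cumulative-upper}. Hence $u_s\leq C(1+t+t^m)\ee^{(a-2s\sigma_+)t}\to0$ on $\{x\cdot e\leq-\ee^{\sigma_+t}\}$, which is the paper's argument.

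\emph{Late inner bound: the heuristic needs a growing amplitude.} The balance $(1-s)\ee^{(a-\varepsilon)t}|y|^{-2s}\gtrsim1$ is right, but the barriers of Section~\ref{sec:barrier} do not realize it. Their amplitude $\eta$ is fixed and must lie below the seed, so here $\eta\lesssim1-s$. They therefore only certify $u_s\gtrsim1-s$ on the expanding half-space, never a positive plateau. You need an amplitude that grows logistically from $c(1-s)$. One option is $\beta(t)\Psi_{R(t),e,\kappa_+,s}$ with $R(t)=R\ee^{\gamma t/(2s)}$ and $\beta'=(a-\gamma-C_eR^{-2s})\beta-a\beta^2$. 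Another is the paper's two-stage scheme: fixed-radius amplification as in Lemma~\ref{tr:lem:fixed-scale-amplification}, then expansion as in Lemma~\ref{tr:lem:front-amplification}. The time $O(1+\tau_s)$ needed to reach a fixed amplitude is exactly what produces $\widehat C_{\sigma_-,\sigma_+}$.

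Both versions also require $|A^s\Psi_{R,e,\kappa,s}|\leq C_eR^{-2s}\Psi_{R,e,\kappa,s}$ with $C_e$ independent of $s$. This comes from splitting $K_s=sp_1+\widetilde K_s$ and cancelling the factor $1-s$ in $\widetilde K_s$ against $\sum_{1\leq|z|\leq R}|z|^{2-d-2s}\lesssim R^{2-2s}/(1-s)$ (Lemma~\ref{tr:lem:uniform-directional-profile}). Tracking the $1-s$ in the seed does not address this.

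Your Liouville upgrade along $s_n\uparrow1$ is sound. Since $\|A^{s_n}-A\|\leq2(1-s_n)$, the translates converge to an entire solution of the local equation, and the ODE comparison of Lemma~\ref{lem:liouville} applies to it. You need the plateau at some $\sigma_0\in(\sigma_-,a/2)$ from a time $\widehat C_0\tau_s$ onward, and then $\widehat C>\widehat C_0$. The paper instead lifts the plateau directly to $\theta$ with a bounded-time fixed-radius subsolution and lets $\theta\uparrow1$. Invoking the early regime at $t_0=\tau_s/(2a)$ is unnecessary: $u_0\geq\mu H_{e,\kappa_+}$ and the one-jump term already give $u_s(1)\geq c(1-s)\Psi_{1,e,\kappa_+,s}$.

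\emph{Early regime.} Your early outer argument is essentially the paper's. The case of at least one long jump, all of them short, is covered by the same Chernoff bound, by independence of the two parts of the walk.

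Your early inner argument is genuinely different and simpler. The paper compares $u_s$ with $u_1$ only at the intermediate time $(1-\gamma)t$, where the stability error is $O(\ee^{-\gamma\tau_s})$. It then uses the two-scale factorization to control the remaining interval, and both steps use $at<\tau_s$. You instead discard the long-jump gain. Since $\sum_z\widetilde K_s(z)(u_s(x)-u_s(x+z))\leq(1-s)u_s(x)$, $u_s$ is a supersolution of $\partial_tv+sAv=av(1-v)-(1-s)v$ for all times. This yields the inner bound with no restriction $at<\tau_s$ at all.

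The uniformity you postpone is cheap and needs no hand-built compactly supported subsolutions. Section~\ref{tr:sec:local} does not construct any; it invokes Weinberger's theorem. Put $k_s=1-(1-s)/a$ and $W(\tau)=k_s^{-1}v(\tau/s)$. Then $W$ solves $\partial_\tau W+AW=r_sW(1-W)$ with $r_s=(a-1+s)/s\to a$. For $s$ near one, $W$ dominates the solution of the local equation with rate $a-\epsilon$ started from $\min\{\mu,\frac12\}H_{e,\kappa_+}$. Proposition~\ref{tr:prop:local-spreading} at rate $a-\epsilon$ then gives the bound. Continuity of $a\mapsto c_a(e)$, which is an infimum of affine functions of $a$, lets you take $\epsilon\to0$.
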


\begin{remark}\label{tr:rem:normalization}
The local speed in \eqref{tr:eq:directional-speed} reflects the normalization
$A=I-P$. Even in one dimension it is not the Euclidean value $2\sqrt a$.
Thus replacing the early Wulff shape by a ball of radius $2\sqrt a\,t$
would give an incorrect lattice statement.
\end{remark}

\begin{remark}\label{tr:rem:sigma-quantifiers}
The late conclusions are often written in the shorter form
\[
  \sigma>\frac{a}{d+2s}
  \quad\text{or}\quad
  0<\sigma<\frac{a}{d+2s}
\]
for localized data, and with $a/(2s)$ for front-like data. In a joint limit
$s\uparrow1$, this notation means that the fixed rate $\sigma$ remains on
the indicated side of the critical rate with a positive margin. The
constant after which the exponential regime is valid may depend on this
margin, and hence on $\sigma$, but it is independent of $s$. No single
transition constant is asserted to work uniformly for rates converging to
the critical one.
\end{remark}

\begin{remark}\label{rem:initial-data}
Suppose that $f$ is defined on $[0,\infty)$, is locally Lipschitz, satisfies
$f(r)<0$ for $r>1$, and obeys $f(r)\leq ar$ for every $r\geq0$. Then
Theorem~\ref{thm:main} extends to every bounded, nonzero, nonnegative,
finitely supported initial datum. The scalar equation $y'=f(y)$, started
above $\max\{1,\|u_0\|_{\ell^\infty}\}$, gives a spatially constant upper
solution that converges to one. For the lower estimate, compare with the
solution arising from a nonzero truncation of $u_0$ with values in $[0,1]$.
\end{remark}

\section{The fractional jump kernel and its semigroup}\label{sec:kernel}

Let $h_r(z)$ be the convolution kernel of $\ee^{-rA}$. By
\eqref{eq:ordinary-semigroup-series}, it is the transition kernel of the
continuous-time normalized nearest-neighbor random walk with total jump rate
one. We use it to estimate the fractional jump kernel and its heat kernel;
background on random-walk kernels can be found in \cite{LawlerLimic2010}.

\subsection{A one-dimensional estimate}

Let $g_\tau(k)$ be the transition probability at $k\in\Z$ for the continuous-time one-dimensional symmetric random walk with total jump rate one. Its two oriented jump counts are independent Poisson random variables with mean $\tau/2$. Conditioning on the total number of jumps gives
\begin{equation}\label{eq:one-dimensional-kernel}
    g_\tau(k)
    =\ee^{-\tau}
    \sum_{\substack{m\geq |k|\\m\equiv k\ ({\rm mod}\ 2)}}
    \frac{\tau^m}{m!}\,2^{-m}
    \binom{m}{(m+k)/2}.
\end{equation}

\begin{lemma}\label{lem:one-dimensional-walk}
There exist constants $c,C>0$ such that
\begin{equation}\label{eq:one-dimensional-lower}
    g_\tau(k)\geq c\tau^{-1/2}
    \quad\text{whenever}\quad
    \tau\geq C(1+k^2).
\end{equation}
Moreover, for every fixed $b>0$, there are constants $c_b,C_b>0$ such that
\begin{equation}\label{eq:one-dimensional-upper}
    g_\tau(k)
    \leq C_b\tau^{-1/2}
    \exp\left(-c_b\frac{k^2}{\tau}\right)
\end{equation}
whenever $\tau>0$ and $\tau\geq b|k|$.
\end{lemma}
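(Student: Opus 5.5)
Since both oriented jump counts are independent Poisson$(\tau/2)$ variables, I will not work with the combinatorial series \eqref{eq:one-dimensional-kernel} directly. Instead I will use the Fourier representation
\[
    g_\tau(k)=\frac1{2\pi}\int_{-\pi}^{\pi}\ee^{-\tau(1-\cos\theta)}\cos(k\theta)\,d\theta ,
\]
together with the Bessel form $g_\tau(k)=\ee^{-\tau}I_{|k|}(\tau)$. The two estimates are then proved separately.

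\textbf{Upper bound.} I will use a Chernoff (exponential tilting) argument. For $\lambda\geq0$, the generating function of $X_\tau$, the position of the walk at time $\tau$, is $\mathbb E\ee^{\lambda X_\tau}=\exp(\tau(\cosh\lambda-1))$. By Markov's inequality and symmetry,
\[
    g_\tau(k)\leq\exp\bigl(-\lambda|k|+\tau(\cosh\lambda-1)\bigr).
\]
Under $\tau\geq b|k|$, I take $\lambda=|k|/\tau\leq 1/b$. On this bounded range $\cosh\lambda-1\leq C_b'\lambda^2$, and for small $\lambda$ one can sharpen this to $(1/2+o(1))\lambda^2$. Choosing instead $\lambda=\eta |k|/\tau$ with $\eta$ small depending on $b$ gives $\exp(-c_bk^2/\tau)$.

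This bound lacks the prefactor $\tau^{-1/2}$. To recover it I split the walk at time $\tau/2$:
\[
    g_\tau(k)=\sum_j g_{\tau/2}(j)\,g_{\tau/2}(k-j).
\]
At least one of $|j|$ and $|k-j|$ is at least $|k|/2$. On that factor I use the Gaussian-type Chernoff bound, and on the other the uniform bound $\sup_j g_{\tau/2}(j)\leq C\tau^{-1/2}$. The uniform bound follows from the Fourier integral, since $\int_{-\pi}^{\pi}\ee^{-\tau(1-\cos\theta)}d\theta\leq\int\ee^{-2\tau\theta^2/\pi^2}d\theta\lesssim\tau^{-1/2}$. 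This gives
\[
    g_\tau(k)\leq 2\,\sup g_{\tau/2}\cdot \max_{|j|\geq|k|/2}g_{\tau/2}(j)\cdot\#\{\text{summands}\},
\]
but summing naively over all $j$ loses a factor. The correct approach is to write $g_\tau(k)\leq \sum_{|j|\geq |k|/2}g_{\tau/2}(j)\sup g_{\tau/2} + \sum_{|k-j|\geq|k|/2}(\cdots)$ and bound $\sum_{|j|\geq|k|/2}g_{\tau/2}(j)=\mathbb P(|X_{\tau/2}|\geq|k|/2)$ by the same Chernoff estimate. For $\tau<1$ the condition forces $|k|\leq \tau/b<1/b$, so the bound is trivial after adjusting $C_b$.

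\textbf{Lower bound.} I will use the Fourier integral again. Write
\[
    g_\tau(k)=\frac1{2\pi}\int\ee^{-\tau(1-\cos\theta)}\,d\theta-\frac1{2\pi}\int\ee^{-\tau(1-\cos\theta)}\bigl(1-\cos k\theta\bigr)\,d\theta .
\]
Using $1-\cos\theta\leq\theta^2/2$, the first term is at least $c_1\tau^{-1/2}$ for $\tau\geq1$. For the second, $1-\cos k\theta\leq k^2\theta^2/2$ and $1-\cos\theta\geq 2\theta^2/\pi^2$ give a bound of at most $C_2k^2\tau^{-3/2}$. Hence $g_\tau(k)\geq\tau^{-1/2}(c_1-C_2k^2/\tau)\geq (c_1/2)\tau^{-1/2}$ once $\tau\geq C(1+k^2)$ with $C$ large.

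\textbf{Main obstacle.} The lower bound is routine. The step needing care is getting the upper bound with both the $\tau^{-1/2}$ prefactor and the Gaussian factor, uniformly in the regime $\tau\geq b|k|$. This is handled by the half-time convolution splitting combined with Chernoff tail bounds, as described above.
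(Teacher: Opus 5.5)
Your proof is correct, but it takes a different route from the paper on both halves.

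\textbf{Lower bound.} The paper works directly with the series \eqref{eq:one-dimensional-kernel}. It keeps only the indices $m$ with $|m-\tau|\le\sqrt\tau$ and $m\equiv k \pmod 2$, uses two-sided Stirling bounds to show that each such term is at least $c\tau^{-1}$, and then counts $\gtrsim\sqrt\tau$ of them. Your Fourier estimate $g_\tau(k)\ge\tau^{-1/2}(c_1-C_2k^2/\tau)$ is shorter and avoids both Stirling and the parity bookkeeping. It also shows plainly that the hypothesis $\tau\ge C(1+k^2)$ is exactly what absorbs the error term.

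\textbf{Upper bound.} The paper uses the exact tilting identity $g_\tau(k)=\exp(-\tau I(k/\tau))\,g_{\tau,\lambda}(k)$ with $\lambda=\operatorname{arsinh}(k/\tau)$. It bounds the tilted point mass by $C\tau^{-1/2}$ through Fourier inversion, since the tilted characteristic function has modulus $\exp(\tau\cosh\lambda(\cos\xi-1))$, and then uses $I(v)\ge c_bv^2$ on $[0,1/b]$. That is a one-step local large-deviation bound which produces the prefactor and the sharp rate together. You instead combine a Chernoff tail bound, which carries no prefactor, with the Chapman--Kolmogorov split at time $\tau/2$ and the on-diagonal bound $\sup_j g_{\tau/2}(j)\lesssim\tau^{-1/2}$. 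This standard tail-times-on-diagonal device needs no exact tilting formula and is more robust. Its only cost is a non-sharp Gaussian constant, which is all the lemma requires. Your Chernoff step at time $\tau/2$ with threshold $|k|/2$ involves the same ratio $|k|/\tau\le 1/b$, so the constraint $\tau\ge b|k|$ carries over unchanged, and the argument in fact works for every $\tau>0$.

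\textbf{Presentation.} Drop the intermediate display involving $\#\{\text{summands}\}$. It is vacuous because the sum over $j$ is infinite, and it is superseded by the bound you actually use, $g_\tau(k)\le 2\sup_j g_{\tau/2}(j)\,\mathbb P(|X_{\tau/2}|\ge|k|/2)$.
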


\begin{proof}
For the lower estimate, choose $\tau_0\geq16$ and then choose
$C_0\geq\tau_0$ sufficiently large. Assume that
\[
    \tau\geq C_0(1+k^2).
\]
Let $\mathcal M_\tau$ consist of the integers $m$ satisfying
\[
    |m-\tau|\leq\sqrt\tau,
    \qquad m\equiv k\pmod 2.
\]
The interval $[\tau-\sqrt\tau,\tau+\sqrt\tau]$ contains at least
$c\sqrt\tau$ integers of either prescribed parity. Hence
\[
    |\mathcal M_\tau|\geq c\sqrt\tau.
\]
For every $m\in\mathcal M_\tau$, we have
\[
    \frac{\tau}{2}\leq m\leq2\tau,
    \qquad
    |k|\leq\sqrt{\frac{\tau}{C_0}}\leq\sqrt m.
\]
In particular, $m\geq|k|$, so every $m\in\mathcal M_\tau$ is an
admissible index in \eqref{eq:one-dimensional-kernel}. Stirling's two-sided
inequalities give, uniformly for $m\in\mathcal M_\tau$,
\[
    \ee^{-\tau}\frac{\tau^m}{m!}
    \geq c\tau^{-1/2}
    \exp\left(-C\frac{(m-\tau)^2}{\tau}\right)
    \geq c\tau^{-1/2}.
\]
The same inequalities, together with $m\equiv k\pmod 2$, yield
\[
    2^{-m}\binom{m}{(m+k)/2}
    \geq cm^{-1/2}
    \exp\left(-C\frac{k^2}{m}\right)
    \geq c\tau^{-1/2},
\]
because $k^2/m\leq2/C_0$. Thus every term indexed by
$\mathcal M_\tau$ is bounded below by $c\tau^{-1}$. Summing these terms
gives
\[
    g_\tau(k)\geq c|\mathcal M_\tau|\tau^{-1}
    \geq c\tau^{-1/2},
\]
which proves \eqref{eq:one-dimensional-lower}.

For the upper estimate, assume by symmetry that $k\geq0$ and set
\[
    \lambda:=\operatorname{arsinh}(k/\tau).
\]
Exponential tilting of the law of the walk gives
\begin{equation}\label{eq:tilted-point-mass}
    g_\tau(k)
    =\exp\bigl(-\lambda k+\tau(\cosh\lambda-1)\bigr)
    g_{\tau,\lambda}(k),
\end{equation}
where $g_{\tau,\lambda}$ is the point-mass function under the tilted
law. Its characteristic function has modulus
\[
    \exp\bigl(\tau\cosh\lambda(\cos\xi-1)\bigr).
\]
Because $\tau\geq bk$, the parameter $\lambda$ remains in a compact
interval depending only on $b$. Fourier inversion and
\[
    1-\cos\xi\geq c\xi^2,
    \qquad |\xi|\leq\pi,
\]
therefore give
\[
    g_{\tau,\lambda}(k)
    \leq\frac{1}{2\pi}
    \int_{-\pi}^{\pi}
    \exp\bigl(-c_b\tau\xi^2\bigr)\,d\xi
    \leq C_b\tau^{-1/2}.
\]
Writing $v=k/\tau$, the exponential factor in
\eqref{eq:tilted-point-mass} equals
\[
    \exp\bigl(-\tau I(v)\bigr),
    \qquad
    I(v):=v\operatorname{arsinh}v-\sqrt{1+v^2}+1.
\]
On the compact interval $0\leq v\leq1/b$, one has
\[
    I(v)\geq c_bv^2,
\]
because $I(0)=I'(0)=0$ and
\[
    I''(v)=(1+v^2)^{-1/2}>0.
\]
Substitution into \eqref{eq:tilted-point-mass} proves
\eqref{eq:one-dimensional-upper}.
\end{proof}

By Poisson thinning, the $2d$ oriented jump counts of the $d$-dimensional walk are independent. Therefore
\begin{equation}\label{eq:product-kernel}
    h_r(z)=\prod_{j=1}^d g_{r/d}(z_j),
    \qquad z=(z_1,\ldots,z_d)\in\Z^d.
\end{equation}

\subsection{The L\'evy kernel}

The fractional discrete-Laplacian formulas in \cite{CiaurriEtAl2018} have the
following multidimensional analogue. We include the proof because the later
arguments require a two-sided bound uniform in the lattice point.

\begin{proposition}\label{prop:jump-kernel}
There exists a symmetric function
\[
    K_s:\Z^d\setminus\{0\}\to(0,\infty)
\]
such that, for every bounded function $\phi$,
\begin{equation}\label{eq:jump-representation}
    A^s\phi(x)
    =\sum_{z\neq0}K_s(z)
    \bigl(\phi(x)-\phi(x+z)\bigr).
\end{equation}
The series in \eqref{eq:jump-representation} is absolutely convergent for
all $x$, uniformly in $x$ when $\phi$ ranges over a bounded subset of
$\ell^\infty(\Z^d)$.
Moreover, there exist constants $c_1,C_1>0$ such that
\begin{equation}\label{eq:jump-kernel-bounds}
    c_1(1+|z|)^{-d-2s}
    \leq K_s(z)
    \leq C_1(1+|z|)^{-d-2s},
    \qquad z\neq0.
\end{equation}
In particular,
\begin{equation}\label{eq:kappa-definition}
    \kappa_s:=\sum_{z\neq0}K_s(z)<\infty.
\end{equation}
\end{proposition}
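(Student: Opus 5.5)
We plan to derive the jump representation directly from the Balakrishnan formula \eqref{eq:balakrishnan} together with the heat-kernel representation $\ee^{-rA}\phi(x)=\sum_{y}h_r(y)\phi(x+y)$. Since $h_r$ is a probability kernel, we have $\phi(x)-\ee^{-rA}\phi(x)=\sum_{z\neq0}h_r(z)(\phi(x)-\phi(x+z))$. Substituting this into \eqref{eq:balakrishnan} and exchanging sum and integral gives \eqref{eq:jump-representation} with
\[
    K_s(z):=c_s\int_0^\infty h_r(z)\,\frac{dr}{r^{1+s}},\qquad z\neq0.
\]
Symmetry of $K_s$ follows from the symmetry $h_r(z)=h_r(-z)$. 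Positivity follows because $h_r(z)>0$ for every $r>0$, by \eqref{eq:ordinary-semigroup-series}.

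To justify the exchange, and to obtain absolute and uniform convergence, we use Tonelli on the nonnegative integrand $h_r(z)|\phi(x)-\phi(x+z)|r^{-1-s}\le 2\|\phi\|_{\ell^\infty}h_r(z)r^{-1-s}$. It therefore suffices to show that $\kappa_s=c_s\int_0^\infty(1-h_r(0))r^{-1-s}\,dr<\infty$. This holds because $1-h_r(0)\le\min\{r,1\}$: the walk leaves $0$ only if at least one jump occurs, which has probability $1-\ee^{-r}\le r$. The resulting bound $2\|\phi\|_{\ell^\infty}\kappa_s$ is independent of $x$, which gives the uniformity. Once the two-sided bounds \eqref{eq:jump-kernel-bounds} are established, \eqref{eq:kappa-definition} also follows from $\sum_{z\ne0}(1+|z|)^{-d-2s}<\infty$.

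The main work is the two-sided estimate \eqref{eq:jump-kernel-bounds}. For bounded $|z|$ it is enough to observe that $K_s(z)$ is positive and finite, so we may assume $|z|$ is large.

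For the \emph{lower bound}, we restrict the integral to $r\in[C|z|^2,2C|z|^2]$. By \eqref{eq:product-kernel} and \eqref{eq:one-dimensional-lower}, applied with $\tau=r/d\ge C(1+z_j^2)$, we get $h_r(z)\ge c r^{-d/2}$ there. The integral over this range is therefore at least $c|z|^{2}\cdot|z|^{-d}\cdot|z|^{-2-2s}=c|z|^{-d-2s}$.

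For the \emph{upper bound}, we split at $r=|z|$.
\begin{itemize}
\item On $r\ge|z|$, each coordinate satisfies $\tau=r/d\ge|z_j|/d$. We may then apply \eqref{eq:one-dimensional-upper} with $b=1/d$ in every coordinate, which gives $h_r(z)\le Cr^{-d/2}\ee^{-c|z|^2/r}$. Integrating $r^{-d/2-1-s}\ee^{-c|z|^2/r}$ over $(0,\infty)$ and substituting $r=|z|^2\rho$ yields $C|z|^{-d-2s}$.
\item On $r<|z|$, the Gaussian bound is unavailable. Here we need a large-deviation bound: the walk must make at least $|z|_1\ge|z|$ jumps, so $h_r(z)\le \mathbb P(\mathrm{Poisson}(r)\ge|z|)\le (\ee r/|z|)^{|z|}\ee^{-r}$. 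For $r\le |z|/\ee^{2}$ this is at most $\ee^{-|z|}$. Its contribution $\int_0^{|z|/\ee^2}\min\{1,\ee^{-|z|}\}r^{-1-s}\,dr$ is harmless once we also use $h_r(z)\le \mathbb P(N_r\ge1)\le r$ near $r=0$ for integrability. The resulting term is exponentially small, hence $\le C|z|^{-d-2s}$.
\item On the intermediate range $|z|/\ee^2\le r\le |z|$, the tilting bound \eqref{eq:one-dimensional-upper} still applies, now with $b=\ee^{-2}/d$. It gives $h_r(z)\le Cr^{-d/2}\ee^{-c|z|}$, again exponentially small.
\end{itemize}

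The step we expect to be the main obstacle is this bookkeeping in the regime $r\lesssim|z|$, where Lemma~\ref{lem:one-dimensional-walk} does not directly apply. The plan is to handle it with the Poisson tail bound and a coordinate with $|z_j|\ge|z|/\sqrt d$. Combined with the bounded-$|z|$ case, these estimates prove \eqref{eq:jump-kernel-bounds}.
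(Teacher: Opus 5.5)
Your proposal is correct and is essentially the paper's proof: the same subordinated kernel, the lower bound from Lemma~\ref{lem:one-dimensional-walk} on the window $r\asymp|z|^2$, and an upper bound that splits into a Poisson-tail regime and a Gaussian regime. Your cut at $r=|z|/\ee^2$ is the paper's cut at $\alpha|z|_1$ with $\ee\alpha<1$, so the Gaussian bound with $b=\ee^{-2}/d$ already covers all $r\geq|z|/\ee^2$ and the extra split at $r=|z|$ is unnecessary. The only slip is the integrand $\min\{1,\ee^{-|z|}\}$, which should read $\min\{r,\ee^{-|z|}\}$ as your text intends (or integrate $(\ee r/|z|)^{|z|}r^{-1-s}$ directly, as the paper does); proving $\kappa_s<\infty$ first via $1-h_r(0)\leq\min\{r,1\}$ is a harmless reordering.
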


\begin{proof}
Define
\begin{equation}\label{eq:jump-kernel-integral}
    K_s(z)
    =c_s\int_0^\infty h_r(z)\frac{dr}{r^{1+s}},
    \qquad z\neq0.
\end{equation}
We first prove \eqref{eq:jump-kernel-bounds}. Once this estimate is known,
$K_s\in\ell^1(\Z^d)$, so Fubini's theorem justifies substituting the Markov
kernel $h_r$ into \eqref{eq:balakrishnan} and gives
\eqref{eq:jump-representation}.

Set $n:=|z|_1\geq1$. From Lemma~\ref{lem:one-dimensional-walk} and \eqref{eq:product-kernel},
\[
    h_r(z)\geq cr^{-d/2}
\]
whenever
\[
    C(1+n^2)\leq r\leq2C(1+n^2).
\]
Here $C$ is chosen so that $r/d\geq C_0(1+z_j^2)$ for every coordinate $j$, where $C_0$ is the constant in \eqref{eq:one-dimensional-lower}.
Consequently,
\begin{equation}\label{eq:jump-kernel-lower-proof}
    K_s(z)\geq c\int_{C(1+n^2)}^{2C(1+n^2)} r^{-1-s-d/2}\,dr\geq c(1+n)^{-d-2s}.
\end{equation}

For the upper bound, choose $\alpha>0$ so small that
$\ee\alpha<1$. If $0<r\leq\alpha n$, reaching $z$ requires at
least $n$ jumps. The total number of jumps is Poisson with mean $r$,
and therefore
\begin{equation}\label{eq:small-time-poisson}
    h_r(z)
    \leq\mathbb P\{\operatorname{Poisson}(r)\geq n\}
    \leq\left(\frac{\ee r}{n}\right)^n.
\end{equation}
It follows that
\begin{align}
    \int_0^{\alpha n}h_r(z)\frac{dr}{r^{1+s}}
    &\leq
    \left(\frac{\ee}{n}\right)^n
    \int_0^{\alpha n}r^{n-1-s}\,dr \notag\\
    &=\alpha^{-s}(\ee\alpha)^n\frac{n^{-s}}{n-s} \notag\\
    &\leq C(\ee\alpha)^n n^{-1-s}
    \leq C(1+n)^{-d-2s}.
    \label{eq:jump-small-time}
\end{align}
Here $n-s\geq(1-s)n$, and the exponential factor $(\ee\alpha)^n$
dominates every fixed negative power of $n$.
For $r\geq\alpha n$, Lemma~\ref{lem:one-dimensional-walk} and \eqref{eq:product-kernel} imply
\[
    h_r(z)
    \leq Cr^{-d/2}
    \exp\left(-c\frac{n^2}{r}\right).
\]
Here $r/d\geq(\alpha/d)|z_j|$ for every $j$, so
\eqref{eq:one-dimensional-upper} applies to each factor in
\eqref{eq:product-kernel}; equivalence of $|z|$ and $|z|_1$ gives the
displayed exponent. Hence, with the change of variables $\rho=n^2/r$,
\begin{align}
    \int_{\alpha n}^\infty h_r(z)\frac{dr}{r^{1+s}}
    &\leq C\int_{\alpha n}^\infty
    r^{-1-s-d/2}\exp\left(-c\frac{n^2}{r}\right)\,dr \notag\\
    &=Cn^{-d-2s}\int_0^{n/\alpha}
    \rho^{s+d/2-1}\ee^{-c\rho}\,d\rho
    \leq Cn^{-d-2s}.
    \label{eq:jump-large-time}
\end{align}
Equivalence of the Euclidean and $\ell^1$ norms proves
\eqref{eq:jump-kernel-bounds}.

Since $d+2s>d$, the upper bound in \eqref{eq:jump-kernel-bounds} gives \eqref{eq:kappa-definition}. In particular, for every bounded $\phi$,
\[
    \sum_{z\neq0}K_s(z)
    |\phi(x)-\phi(x+z)|
    \leq2\kappa_s\|\phi\|_{\ell^\infty},
\]
uniformly in $x$. The series in \eqref{eq:jump-representation} is therefore absolutely convergent, and Fubini's theorem applies in the Balakrishnan formula.
More explicitly, Tonelli's theorem and \eqref{eq:jump-kernel-integral} give
\[
    c_s\int_0^\infty\sum_{z\neq0}h_r(z)
    |\phi(x)-\phi(x+z)|\frac{dr}{r^{1+s}}
    =\sum_{z\neq0}K_s(z)|\phi(x)-\phi(x+z)|,
\]
and the right-hand side is bounded by
$2\kappa_s\|\phi\|_{\ell^\infty}$.
\end{proof}

\begin{remark}\label{rem:bounded-generator}
Proposition~\ref{prop:jump-kernel} implies
\begin{equation}\label{eq:operator-bound}
    \|A^s\phi\|_{\ell^\infty}
    \leq2\kappa_s\|\phi\|_{\ell^\infty}.
\end{equation}
Unlike the continuous fractional Laplacian, $A^s$ is therefore bounded on $\ellinf$. This discrete feature will be used in the compactness argument below.
\end{remark}

\subsection{The compound-Poisson heat kernel}

Let $G_t(z)$ denote the convolution kernel of
\[
    S(t):=\ee^{-tA^s}.
\]

\begin{proposition}\label{prop:heat-kernel}
There exist constants $c_2,C_2>0$ and an integer $m\in\N$ satisfying $m\geq d+2s+1$ such that
\begin{equation}\label{eq:fixed-time-heat-lower}
    G_1(z)\geq c_2(1+|z|)^{-d-2s},
    \qquad z\in\Z^d,
\end{equation}
and
\begin{equation}\label{eq:heat-upper}
    G_t(z)
    \leq C_2(t+t^m)(1+|z|)^{-d-2s},
    \qquad t>0,\quad z\neq0.
\end{equation}
\end{proposition}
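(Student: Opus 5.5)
The plan is to use the compound-Poisson structure furnished by Proposition~\ref{prop:jump-kernel}. Write $J:=K_s/\kappa_s$, a symmetric probability kernel on $\Z^d\setminus\{0\}$. Then
\[
    A^s\phi=\kappa_s(\phi-J*\phi),
\]
and hence
\begin{equation*}
    G_t=\ee^{-\kappa_st}\sum_{k=0}^\infty\frac{(\kappa_st)^k}{k!}J^{*k}.
\end{equation*}
This is a convergent series of nonnegative kernels, since $A^s$ is bounded on $\ellinf$ by \eqref{eq:operator-bound}. Both estimates will be read off from this expansion.

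\textbf{Lower bound.} Keep only the $k=1$ term at $t=1$. This gives $G_1(z)\geq\kappa_s\ee^{-\kappa_s}J(z)=\ee^{-\kappa_s}K_s(z)$, and \eqref{eq:jump-kernel-bounds} yields \eqref{eq:fixed-time-heat-lower} for $z\neq0$. For $z=0$, use the $k=0$ term, which contributes $\ee^{-\kappa_s}\delta_0$.

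\textbf{Upper bound.} This is one large jump plus a convolution estimate for repeated jumps. The key lemma is a convolution inequality for the weight
\[
    w(z):=(1+|z|)^{-q},\qquad q=d+2s>d,
\]
namely $w*w\leq C_*w$. To prove it, split the sum over $y$ in $\sum_y w(z-y)w(y)$ into the regions $|y|\geq|z|/2$ and $|z-y|\geq|z|/2$. On each region one factor is at most $2^qw(z)$, up to a constant, while the other factor sums to $\|w\|_{\ell^1}<\infty$. By induction, $J^{*k}\leq (C_1/\kappa_s)^kC_*^{k-1}w$ for every $k\geq1$. This bound is geometric in $k$, and it must be tamed to get a polynomial bound in $t$; that is the main obstacle. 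Summing the naive bound gives $\ee^{-\kappa_st}\sum_k (Ct)^k/k!\,w=\ee^{(C-\kappa_s)t}w$, which is exponential rather than $t^m$.

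To avoid this, I would use a sharper counting argument. A path of $k$ jumps ending at $z$ must contain some jump of length at least $|z|/k$. That jump contributes at most $C(1+|z|/k)^{-q}\leq Ck^qw(z)$, and the remaining $k-1$ jumps form a probability measure. This gives
\[
    J^{*k}(z)\leq k\cdot Ck^{q}\,w(z)/\kappa_s\leq Ck^{q+1}w(z).
\]
(Here the factor $k$ counts which jump is the large one.) Then
\[
    G_t(z)\leq C\,w(z)\,\mathbb E\bigl[N\,N^{q+1}\bigr],\qquad N\sim\operatorname{Poisson}(\kappa_st),
\]
where we use $N\geq1$ for $z\neq0$; more precisely, the bound is $C w(z)\sum_{k\geq1}\ee^{-\kappa_st}\frac{(\kappa_st)^k}{k!}k^{q+1}$. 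Bounding $k^{q+1}\leq k^{m}$ with the integer $m\geq q+1$, and using $\sum_{k\geq1}\ee^{-\lambda}\lambda^kk^m/k!\leq C_m(\lambda+\lambda^m)$ (a Poisson moment bound via Touchard polynomials), gives \eqref{eq:heat-upper}.

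The one subtle point is the pointwise bound "some jump has length $\geq|z|/k$" in convolution form. I would write
\[
    J^{*k}(z)\leq\sum_{i=1}^k\sum J(y_1)\cdots J(y_k)\one_{\{|y_i|\geq|z|/k\}},
\]
and for the $i$th term sum out the other variables after fixing $y_i=z-\sum_{j\neq i}y_j$. Then $J(y_i)\leq\sup_{|y|\geq|z|/k}J(y)$, and the remaining weights sum to one.
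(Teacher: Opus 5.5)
Your proposal is correct and follows essentially the same route as the paper. Both arguments use the compound-Poisson expansion of $G_t$, the one-jump term for the lower bound (with the zero-jump term at $z=0$), the ``some increment has length at least $|z|/k$'' estimate $J^{*k}(z)\leq Ck^{q+1}(1+|z|)^{-q}$, and the Poisson moment bound $\mathbb E[N^m]\leq C_m(\lambda+\lambda^m)$. One small correction: the expression $\mathbb E[N\,N^{q+1}]$ should read $\sum_{k\geq1}\ee^{-\lambda}\lambda^k k^{q+1}/k!$, as you state right afterward.
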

Fix, for the rest of the paper, one admissible $m\in\N$ supplied by
Proposition~\ref{prop:heat-kernel}.

\begin{proof}
Define
\[
    j_s(z):=
    \begin{cases}
        \kappa_s^{-1}K_s(z),&z\neq0,\\
        0,&z=0.
    \end{cases}
\]
Then $j_s$ is a probability mass function and
\[
    -A^s\phi(x)
    =\kappa_s\sum_zj_s(z)\phi(x+z)-\kappa_s\phi(x).
\]
Therefore
\begin{equation}\label{eq:compound-poisson}
    G_t
    =\ee^{-\kappa_st}
    \sum_{\ell=0}^\infty
    \frac{(\kappa_st)^\ell}{\ell!}\,j_s^{*\ell}.
\end{equation}
For $z\neq0$, the one-jump contribution gives
\[
    G_1(z)\geq\ee^{-\kappa_s}K_s(z),
\]
while $G_1(0)\geq\ee^{-\kappa_s}$ follows from the zero-jump term. Proposition~\ref{prop:jump-kernel} proves \eqref{eq:fixed-time-heat-lower}.

We claim that, for $\ell\geq1$ and $z\neq0$,
\begin{equation}\label{eq:convolution-bound}
    j_s^{*\ell}(z)
    \leq C\ell^{d+2s+1}(1+|z|)^{-d-2s}.
\end{equation}
In each representation
\[
    z=z_1+\cdots+z_\ell,
\]
at least one increment satisfies $|z_i|\geq|z|/\ell$. For fixed $i$, fixing the other $\ell-1$ increments uniquely determines $z_i$. Hence
\begin{align*}
    j_s^{*\ell}(z)
    &\leq
    \sum_{i=1}^{\ell}
    \sum_{\substack{z_1+\cdots+z_\ell=z\\
    |z_i|\geq |z|/\ell}}
    \prod_{k=1}^{\ell}j_s(z_k)\\
    &\leq C\ell^{d+2s+1}(1+|z|)^{-d-2s},
\end{align*}
where we used
\[
    j_s(z_i)
    \leq C(1+|z|/\ell)^{-d-2s}
    \leq C\ell^{d+2s}(1+|z|)^{-d-2s}
\]
and summed the remaining $\ell-1$ jump variables, whose total mass is one. This proves \eqref{eq:convolution-bound}.

Using \eqref{eq:compound-poisson} and \eqref{eq:convolution-bound}, we obtain
\[
    G_t(z)\leq C(1+|z|)^{-d-2s}\ee^{-\kappa_st}\sum_{\ell=1}^\infty\frac{(\kappa_st)^\ell}{\ell!}\ell^{d+2s+1}.
\]
Let $N_\lambda$ be a Poisson random variable with mean $\lambda$. Choose an integer $m\geq d+2s+1$. Since $\ell^{d+2s+1}\leq\ell^m$ for $\ell\geq1$ and
\[
    \mathbb E[N_\lambda^m]
    =\sum_{k=1}^m S(m,k)\lambda^k
    \leq C_m(\lambda+\lambda^m),
\]
where $S(m,k)$ are the Stirling numbers of the second kind, the Poisson moment in the preceding display satisfies
\[
    \ee^{-\kappa_st}
    \sum_{\ell=1}^\infty
    \frac{(\kappa_st)^\ell}{\ell!}\ell^{d+2s+1}
    \leq C(t+t^m),
\]
which proves \eqref{eq:heat-upper}.
\end{proof}

\section{Preliminary estimates for the spreading analysis}\label{sec:barrier}

Throughout Sections~\ref{sec:barrier}--\ref{sec:upper}, assumptions
\emph{(F)} and \emph{(I)} are in force, and $u$ denotes the unique global
solution corresponding to $u_0$. This section collects the comparison and
profile estimates used below.

\begin{lemma}\label{lem:comparison}
Let $g:\R\to\R$ be locally Lipschitz, let $T>0$, and let
$v,w\in C^1([0,T];\ellinf)$ be bounded. Suppose
${\partial_tv+A^sv\leq g(v)}$ and
${\partial_tw+A^sw\geq g(w)}$
on $(0,T]\times\Z^d$, and $v(0,\cdot)\leq w(0,\cdot)$. Then
$v(t,x)\leq w(t,x)$ for $0\leq t\leq T$ and $x\in\Z^d$.
\end{lemma}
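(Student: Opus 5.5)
Set $z:=v-w\in C^1([0,T];\ellinf)$; it is bounded with $z(0)\leq0$, and the goal is to show that $z^+$ vanishes identically. Because $v$ and $w$ are bounded on $[0,T]\times\Z^d$, their values lie in a compact interval $J$ on which $g$ is Lipschitz with some constant $L$. Subtracting the two differential inequalities gives, pointwise,
\[
  \partial_tz+A^sz\leq g(v)-g(w)\leq L|z|.
\]
Using the jump representation \eqref{eq:jump-representation} from Proposition~\ref{prop:jump-kernel}, $A^sz(x)=\kappa_s z(x)-\sum_{y\neq0}K_s(y)z(x+y)$. The operator $\phi\mapsto\sum_{y\neq0}K_s(y)\phi(\cdot+y)$ is positive, with $\ell^\infty$ norm equal to $\kappa_s$.

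The argument is then an integrating-factor step combined with a Gronwall bound on $M(t):=\sup_x z^+(t,x)$. Where $z(t,x)>0$ we have $|z|=z$, and hence
\[
  \partial_tz(t,x)\leq -\kappa_s z(t,x)+\sum_{y\neq0}K_s(y)z(t,x+y)+Lz(t,x)\leq (L-\kappa_s)z(t,x)+\kappa_s M(t).
\]
To avoid differentiating a supremum, I will work in integrated form. Fix $x$. Let $t_0$ be the last time in $[0,t]$ at which $z(t_0,x)\leq0$; either $t_0=0$ or $z(t_0,x)=0$ by continuity, since $z(\cdot,x)$ is $C^1$. On $(t_0,t]$ the coordinate $z(\cdot,x)$ is positive, so the inequality above applies. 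Integrating it gives
\[
  z^+(t,x)\leq\int_{t_0}^t\bigl(|L-\kappa_s|+\kappa_s\bigr)M(\tau)\,d\tau\leq C\int_0^tM(\tau)\,d\tau .
\]
Taking the supremum over $x$ yields $M(t)\leq C\int_0^tM$. The function $M$ is bounded and measurable; in fact it is continuous, because $z\in C([0,T];\ellinf)$ and $\phi\mapsto\sup\phi^+$ is $1$-Lipschitz on $\ellinf$. Gronwall's inequality together with $M(0)=0$ then gives $M\equiv0$, which is exactly $v\leq w$.

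The only delicate point I expect is the lack of a maximum principle on the unbounded lattice: the supremum of $z$ need not be attained. The plan avoids this by never evaluating at a maximizer. It only uses that the off-diagonal part of $A^s$ is a positive bounded operator of norm $\kappa_s$, which is finite by \eqref{eq:kappa-definition}, and that each coordinate $z(\cdot,x)$ is a scalar $C^1$ function. In detail, I must check that the pointwise bound $\sum_yK_s(y)z(t,x+y)\leq\kappa_sM(t)$ holds even though $z$ may take negative values; it does, since $K_s\geq0$ and $z\leq z^+\leq M$. I must also check that the last-zero-time argument is legitimate, which holds because the set $\{\tau\in[0,t]:z(\tau,x)\leq0\}$ is closed and contains $0$.
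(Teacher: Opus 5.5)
Your proof is correct, and it takes a different route from the paper's.

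The paper shifts the nonlinearity so that $H(r)=g(r)+Lr$ is nondecreasing on $J$. It then writes both differential inequalities in variation-of-constants form with the semigroup $T_L(t)=\ee^{-Lt}S(t)$, whose positivity and contractivity come from the compound-Poisson representation \eqref{eq:compound-poisson}. Using $(T_L\psi)_+\leq T_L\psi_+$ and $H(v)-H(w)\leq M(v-w)_+$, it obtains $D(t)\leq M\int_0^tD$ for $D=\|(v-w)_+\|_{\ell^\infty}$.

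You work one lattice site at a time instead. You split $A^s=\kappa_sI-\mathcal K$ with $\mathcal K\geq0$ of norm $\kappa_s$, and you use a last-nonpositive-time argument. This removes the need for the monotone shift of $g$ and for the mild formulation. On the interval where $z(\cdot,x)>0$, the crude bounds $Lz\leq LM$ and $\mathcal Kz\leq\kappa_sM$ suffice.

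Your version is fully elementary. It uses only the scalar fundamental theorem of calculus at each site and continuity of $t\mapsto\|z(t)^+\|_{\ell^\infty}$. It avoids integrating a differential inequality against $T_L(t-r)$, a step that implicitly needs positivity of the semigroup. It also shows exactly where the finiteness of $\kappa_s$ enters: it is the standard argument for cooperative systems with a bounded positive off-diagonal part.

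The paper's version packages the same sign structure as positivity of $\ee^{-tA^s}$, which is the more portable formulation. Its Gronwall constant $M$ does not involve $\kappa_s$, whereas yours is $|L-\kappa_s|+\kappa_s$. For fixed $s$ this difference is immaterial.
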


\begin{proof}
Choose a compact interval $J\subset\R$ containing the ranges of $v$ and
$w$, and choose $L>0$ at least as large as a Lipschitz constant of $g$ on
$J$. Then $H(r):=g(r)+Lr$ is nondecreasing on $J$. Let $M$ be a
Lipschitz constant of $H$ on $J$. The semigroup
$T_L(t):=\ee^{-t(A^s+LI)}=\ee^{-Lt}S(t)$
is positive and satisfies $\|T_L(t)\|_{\ell^\infty\to\ell^\infty}\leq1$; this follows from the compound-Poisson representation \eqref{eq:compound-poisson} and $S(t)\one=\one$.

Integrating the two differential inequalities by the variation-of-constants formula gives
\begin{align*}
    v(t)&\leq T_L(t)v(0)
    +\int_0^tT_L(t-r)H(v(r))\,dr,\\
    w(t)&\geq T_L(t)w(0)
    +\int_0^tT_L(t-r)H(w(r))\,dr.
\end{align*}
Set $D(t):=\|(v(t)-w(t))_+\|_{\ell^\infty}$. For every bounded sequence
$\psi$, positivity gives $(T_L(t)\psi)_+\leq T_L(t)\psi_+$. Using this,
the initial ordering, and $H(v)-H(w)\leq M(v-w)_+$, we obtain
$D(t)\leq M\int_0^tD(r)\,dr$.
Gronwall's inequality yields $D(t)=0$ on $[0,T]$, which proves the assertion.
\end{proof}

\begin{lemma}\label{lem:polynomial-seeding}
There exists $c_0>0$ such that
\begin{equation}\label{eq:polynomial-seed}
    u(1,x)
    \geq c_0(1+|x|^2)^{-(d+2s)/2},
    \qquad x\in\Z^d.
\end{equation}
\end{lemma}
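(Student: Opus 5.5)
The plan is to compare $u$ on $[0,1]$ with the solution of the linear equation $\partial_t v + A^s v = -Lv$, where $L \ge 0$ bounds the negative part of the reaction. Its explicit solution $v(t) = \ee^{-Lt}S(t)u_0$ will then be bounded below using the heat-kernel lower bound \eqref{eq:fixed-time-heat-lower}.

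\textbf{Step 1: choice of $L$.} Since $f\in C^1([0,1])$ and $f\ge0$ on $[0,1]$ by (F1), we may simply take $L=0$. Indeed, the solution takes values in $[0,1]$ by Proposition~\ref{prop:well-posedness}, so $f(u)\geq0$ along it. Hence $u$ satisfies
\[
\partial_t u+A^su\geq 0 .
\]

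\textbf{Step 2: comparison.} Set $v(t):=S(t)u_0$. Then $v\in C^1([0,1];\ellinf)$ because $A^s$ is bounded (Remark~\ref{rem:bounded-generator}), $v$ is bounded, and $\partial_tv+A^sv=0$. To apply Lemma~\ref{lem:comparison} with $g\equiv0$, let $v$ be the subsolution and $u$ the supersolution, with $u(0)=u_0=v(0)$. This gives $u(1,\cdot)\geq S(1)u_0$.

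A slight subtlety here is that the equation for $u$ uses the extension of $f$. This causes no trouble, since the extension agrees with $f$ on $[0,1]$ and we only need the pointwise inequality $f(u(t,x))\ge0$.

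\textbf{Step 3: lower bound on $S(1)u_0$.} Pick $y_0$ with $u_0(y_0)=\eta>0$, which exists by (I). Since $G_1\geq0$,
\[
S(1)u_0(x)=\sum_yG_1(x-y)u_0(y)\geq \eta\, G_1(x-y_0)\geq \eta c_2(1+|x-y_0|)^{-d-2s}.
\]
Since $1+|x-y_0|\leq(1+|y_0|)(1+|x|)$, this yields
\[
S(1)u_0(x)\geq c(1+|x|)^{-d-2s}.
\]
Finally, $(1+|x|)^2\leq 2(1+|x|^2)$, so $(1+|x|)^{-q}\geq 2^{-q/2}(1+|x|^2)^{-q/2}$. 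This gives \eqref{eq:polynomial-seed} with $c_0=2^{-q/2}c\,\eta$.

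The argument is essentially routine; the only step requiring care is the justification of the comparison in Step~2, namely the regularity of $v$ and the sign of the reaction. Note that $c_0$ depends on $u_0$ through $\eta$ and $y_0$, which the paper's conventions allow.
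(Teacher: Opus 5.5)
Your proof is correct and matches the paper's argument. The only difference is how you obtain $u(1)\geq S(1)u_0$: the paper gets it directly from Duhamel's formula, positivity of $S(t)$ and $f(u)\geq0$, while you use Lemma~\ref{lem:comparison} with $g\equiv0$. You then apply the one-site bound from Proposition~\ref{prop:heat-kernel} as the paper does, and your shift estimate $1+|x-y_0|\leq(1+|y_0|)(1+|x|)$ fills in a step the paper leaves implicit.
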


\begin{proof}
Since $f(u)\geq0$ for $0\leq u\leq1$, Duhamel's formula gives
\[
    u(t)
    =S(t)u_0+
    \int_0^tS(t-r)f(u(r))\,dr
    \geq S(t)u_0.
\]
Choose $x_*\in\Z^d$ such that $u_0(x_*)>0$. Proposition~\ref{prop:heat-kernel} yields
\[
    u(1,x)\geq u_0(x_*)G_1(x-x_*)\geq c(1+|x-x_*|)^{-d-2s}\geq c_0(1+|x|^2)^{-(d+2s)/2}.
\]
\end{proof}

For $R\geq1$, introduce the algebraic profile
\begin{equation}\label{eq:profile-definition}
    \Phi_R(x)
    :=\left(1+\frac{|x|^2}{R^2}\right)^{-q/2}.
\end{equation}

\begin{lemma}\label{lem:weighted-profile}
There exists a constant $C_3>0$, depending only on $d$ and $s$, such that
\begin{equation}\label{eq:weighted-profile-estimate}
    |A^s\Phi_R(x)|
    \leq C_3R^{-2s}\Phi_R(x)
\end{equation}
for every $R\geq1$ and $x\in\Z^d$.
\end{lemma}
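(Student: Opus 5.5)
We will estimate $A^s\Phi_R(x)$ through the jump representation \eqref{eq:jump-representation},
\[
  A^s\Phi_R(x)=\sum_{z\neq0}K_s(z)\bigl(\Phi_R(x)-\Phi_R(x+z)\bigr),
\]
using only the two-sided bound \eqref{eq:jump-kernel-bounds} and the structure of $\Phi_R$. Write $\Phi(y)=(1+|y|^2)^{-q/2}$ on $\R^d$, so that $\Phi_R(x)=\Phi(x/R)$. This function is smooth, with $|\nabla\Phi(y)|\lesssim(1+|y|)^{-q-1}$ and $|D^2\Phi(y)|\lesssim(1+|y|)^{-q-2}$, so $|D^2\Phi_R(y)|\lesssim R^{-2}(1+|y|/R)^{-q-2}$. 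Symmetrizing in $z$ (since $K_s$ is symmetric) gives
\[
  A^s\Phi_R(x)=-\tfrac12\sum_{z\neq0}K_s(z)\bigl(\Phi_R(x+z)+\Phi_R(x-z)-2\Phi_R(x)\bigr),
\]
which removes the first-order term. We split the sum at $|z|=\rho:=\tfrac12(R+|x|)$.

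\emph{Near region $|z|\le\rho$.} By Taylor's theorem, the second difference is bounded by $|z|^2\sup_{|y-x|\le\rho}|D^2\Phi_R(y)|$. On the ball $|y-x|\le\rho$ we have $R+|y|\ge\tfrac12(R+|x|)$, so $1+|y|/R\gtrsim 1+|x|/R$. Hence
\[
  \text{second difference}\lesssim |z|^2R^{-2}(1+|x|/R)^{-q-2}.
\]
Summing against $K_s(z)\lesssim|z|^{-q}$ gives $\sum_{|z|\le\rho}|z|^{2-q}\lesssim\rho^{2-2s}$, since $2-2s>0$. The contribution is therefore
\[
  \lesssim R^{-2}(R+|x|)^{2-2s}(1+|x|/R)^{-q-2}
  =R^{-2s}(1+|x|/R)^{-q-2s}\le R^{-2s}\Phi_R(x),
\]
using $\Phi_R(x)\asymp(1+|x|/R)^{-q}$. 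The lattice sum $\sum_{0<|z|\le\rho}|z|^{2-q}$ is comparable to the integral because $\rho\ge1/2$.

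\emph{Far region $|z|>\rho$.} Here we do not symmetrize; we bound the original differences. The term $\Phi_R(x)\sum_{|z|>\rho}K_s(z)$ is $\lesssim\Phi_R(x)\rho^{-2s}\le\Phi_R(x)R^{-2s}$. For the term $\sum_{|z|>\rho}K_s(z)\Phi_R(x+z)$, we use $K_s(z)\lesssim\rho^{-q}$ together with $\sum_y\Phi_R(y)\lesssim R^d$, which holds because $q>d$ and $R\ge1$. This gives a bound $\lesssim R^d(R+|x|)^{-q}=R^{-2s}(1+|x|/R)^{-q}\asymp R^{-2s}\Phi_R(x)$. This step is exactly where the choice of exponent $q=d+2s$ matters. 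Adding the near and far contributions yields \eqref{eq:weighted-profile-estimate}.

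The main technical care is in the near region. First, Taylor's theorem is applied on $\R^d$ to the smooth extension while the sum runs over lattice $z$; this is harmless. Second, the constants must stay uniform in both $R\ge1$ and $x$. This is secured by choosing $\rho$ proportional to $R+|x|$, which keeps the Hessian bound comparable across the whole segment. It also ensures the near region contains at least the nonzero lattice points with $|z|\le1/2+\dots$, so the case of small $\rho$ needs no separate treatment.
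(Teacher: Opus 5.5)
Your proposal is correct and follows essentially the same route as the paper: you symmetrize, split at a scale proportional to $L=R+|x|$ (you use $L/2$, the paper uses $L/4$), and bound the near field by Taylor's formula with a relative Hessian estimate. On the far field you use the kernel tail together with $\sum_y\Phi_R(y)\lesssim R^d$ and the critical comparison $R^dL^{-q}\lesssim R^{-2s}\Phi_R(x)$. The only difference is cosmetic: you obtain the Hessian bound at once from scaling and the decay $|D^2\Phi(y)|\lesssim(1+|y|)^{-q-2}$, whereas the paper splits into the cases $|x|\le 2R$ and $|x|>2R$.
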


\begin{proof}
Extend $\Phi_R$ smoothly to $\R^d$. Symmetry of $K_s$ gives
\begin{equation}\label{eq:symmetrized-operator}
    A^s\Phi_R(x)
    =\frac12\sum_{z\neq0}K_s(z)
    \bigl(2\Phi_R(x)-\Phi_R(x+z)-\Phi_R(x-z)\bigr).
\end{equation}
Set $L:=R+|x|$. Direct differentiation shows that, whenever $|z|\leq L/4$,
\begin{equation}\label{eq:relative-hessian}
    \sup_{|\vartheta|\leq1}
    |D^2\Phi_R(x+\vartheta z)|
    \leq CL^{-2}\Phi_R(x).
\end{equation}
If $|x|\leq2R$, then $\Phi_R(x)$ is bounded below and $|D^2\Phi_R|\leq CR^{-2}$. If $|x|>2R$, the segment $x+\vartheta z$ stays at distance comparable to $|x|$, and differentiation of \eqref{eq:profile-definition} gives \eqref{eq:relative-hessian}.

Taylor's formula, Proposition~\ref{prop:jump-kernel}, and $s<1$ now imply
\begin{align}
    \sum_{0<|z|\leq L/4}K_s(z)|2\Phi_R(x)-\Phi_R(x+z)-\Phi_R(x-z)|
    &\leq CL^{-2}\Phi_R(x)\sum_{0<|z|\leq L/4}|z|^2K_s(z)\notag\\
    &\leq CL^{-2s}\Phi_R(x)\leq CR^{-2s}\Phi_R(x).
    \label{eq:profile-near-field}
\end{align}

For the far field, Proposition~\ref{prop:jump-kernel} gives
\begin{equation}\label{eq:kernel-tail}
    \sum_{|z|>L/4}K_s(z)\leq CL^{-2s}.
\end{equation}
Moreover, since $q>d$,
\begin{equation}\label{eq:profile-mass}
    \sum_{y\in\Z^d}\Phi_R(y)\leq CR^d.
\end{equation}
These estimates follow by decomposing the lattice into radial shells, or by
comparison with the corresponding radial integrals. If $|z|>L/4$, then
$K_s(z)\leq CL^{-q}$. Consequently,
\begin{align}
    \sum_{|z|>L/4}K_s(z)\Phi_R(x+z)
    &\leq CL^{-q}\sum_{y\in\Z^d}\Phi_R(y)
      \leq CR^dL^{-q},
      \label{eq:profile-far-convolution-plus}\\
    \sum_{|z|>L/4}K_s(z)\Phi_R(x-z)
    &\leq CR^dL^{-q}.
      \label{eq:profile-far-convolution-minus}
\end{align}
Since
\[
    \Phi_R(x)
    =R^q(R^2+|x|^2)^{-q/2}
    \geq R^qL^{-q}
\]
and $q=d+2s$, we have
\begin{equation}\label{eq:critical-exponent-identity}
    R^dL^{-q}\leq R^{-2s}\Phi_R(x).
\end{equation}
Therefore
\begin{align*}
 &\sum_{|z|>L/4}K_s(z)
 \left|2\Phi_R(x)-\Phi_R(x+z)-\Phi_R(x-z)\right|\\
 &\qquad\leq
 2\Phi_R(x)\sum_{|z|>L/4}K_s(z)
 +\sum_{|z|>L/4}K_s(z)\Phi_R(x+z)
 +\sum_{|z|>L/4}K_s(z)\Phi_R(x-z)\\
 &\qquad\leq CL^{-2s}\Phi_R(x)+CR^dL^{-q}
 \leq CR^{-2s}\Phi_R(x).
\end{align*}
Combining this estimate with \eqref{eq:symmetrized-operator} and
\eqref{eq:profile-near-field} proves
\eqref{eq:weighted-profile-estimate}.
\end{proof}

\begin{remark}\label{rem:no-cutoff}
The profile $\Phi_R$ is kept positive on the whole lattice. A pointwise truncation such as $\min\{\eta,C|x|^{-q}\}$ does not automatically preserve the subsolution inequality, because values outside the truncation region contribute to $A^s$ at every point.
\end{remark}

\section{Sharp lower spreading estimate}\label{sec:lower}

An expanding algebraic subsolution gives a positive lower bound on strictly
subcritical exponential balls. The compactness--Liouville argument below
then strengthens this bound to convergence to one.

\subsection{An expanding algebraic subsolution}

\begin{proposition}\label{prop:expanding-barrier}
For every $\gamma\in(0,a)$, there exist $R\geq1$ and $\eta\in(0,1)$ such that
\begin{equation}\label{eq:expanding-lower-barrier}
    u(1+t,x)
    \geq\eta
    \left(
    1+\frac{|x|^2}{R^2\exp(2\gamma t/q)}
    \right)^{-q/2}
\end{equation}
for all $t\geq0$ and $x\in\Z^d$.
\end{proposition}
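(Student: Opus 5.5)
We will construct an explicit expanding subsolution $w(t,x):=\eta\,\Phi_{R(t)}(x)$ with $R(t):=R\,\ee^{\gamma t/q}$ and $\Phi_R$ as in \eqref{eq:profile-definition}. We will compare it with $v(t,x):=u(1+t,x)$ through Lemma~\ref{lem:comparison}, and obtain the initial ordering from Lemma~\ref{lem:polynomial-seeding}. Note that $w(t,x)$ is exactly the right-hand side of \eqref{eq:expanding-lower-barrier}.

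\emph{Step 1: time derivative.} Differentiating \eqref{eq:profile-definition} in $R$ gives
\[
\partial_R\Phi_R(x)=\frac{q|x|^2}{R^3}\Bigl(1+\frac{|x|^2}{R^2}\Bigr)^{-q/2-1}\le \frac qR\,\Phi_R(x),
\]
because $\frac{|x|^2}{R^2}\bigl(1+\frac{|x|^2}{R^2}\bigr)^{-1}\le1$. Since $R'(t)=(\gamma/q)R(t)$, this yields $0\le\partial_tw\le\gamma w$ pointwise. The bound is uniform in $x$, so $t\mapsto w(t)$ is $C^1$ into $\ellinf$.

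\emph{Step 2: diffusion term.} Lemma~\ref{lem:weighted-profile} applied with radius $R(t)\ge R\ge1$ gives
\[
|A^sw|\le C_3R(t)^{-2s}w\le C_3R^{-2s}w.
\]

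\emph{Step 3: reaction term.} Put $\varepsilon:=(a-\gamma)/3>0$. Since $f\in C^1$ and $f'(0)=a$, there is $\eta_0\in(0,1)$ with $f(r)\ge(a-\varepsilon)r$ for $0\le r\le\eta_0$. Fix $R\ge1$ so large that $C_3R^{-2s}\le\varepsilon$. Then for any $\eta\le\eta_0$ we have $0\le w\le\eta\le\eta_0$, and therefore
\[
\partial_tw+A^sw\le(\gamma+\varepsilon)w\le(a-\varepsilon)w\le f(w).
\]
So $w$ is a subsolution on every $[0,T]$, while $v$ solves the equation exactly. Both take values in $[0,1]$, so the extension of $f$ used in Lemma~\ref{lem:comparison} is immaterial.

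\emph{Step 4: initial ordering.} We have the elementary estimate
\[
\frac{\Phi_R(x)}{(1+|x|^2)^{-q/2}}=\Bigl(\frac{1+|x|^2}{1+|x|^2/R^2}\Bigr)^{q/2}\le R^q .
\]
Choose $\eta:=\min\{\eta_0,\,c_0R^{-q}\}$, with $c_0$ from Lemma~\ref{lem:polynomial-seeding}. Then
\[
w(0,x)\le c_0(1+|x|^2)^{-q/2}\le u(1,x).
\]
Lemma~\ref{lem:comparison} on each $[0,T]$ now gives $u(1+t,x)\ge w(t,x)$, which is \eqref{eq:expanding-lower-barrier}.

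The main point to get right is the order of choices. $R$ must be fixed first, from the $\gamma$-margin, to absorb the diffusion error $C_3R^{-2s}$. Only then is $\eta$ shrunk, both to enter the linear regime of $f$ and to fit below the seed. Shrinking $\eta$ does not disturb Steps 1--2, because both estimates are homogeneous in $\eta$. The growth of $R(t)$ only improves Step 2.
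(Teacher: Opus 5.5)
Your proof is correct and is essentially the paper's argument. It uses the same barrier $\eta\Phi_{R(t)}$ with $R(t)=R\ee^{\gamma t/q}$ and the same order of choices: $R$ first, so that $C_3R^{-2s}$ fits inside the margin $a-\gamma$, then $\eta\le\min\{\eta_0,c_0R^{-q}\}$. The seed comparison is the same too, via Lemma~\ref{lem:polynomial-seeding} and $\Phi_R\le R^q(1+|x|^2)^{-q/2}$. The only differences are cosmetic (a margin of $(a-\gamma)/3$ instead of $(a-\gamma)/2$) plus your extra remark on $C^1$ regularity in $\ellinf$. That remark is right, but a bound on $\partial_tw$ uniform in $x$ alone gives only Lipschitz continuity; you also need, e.g., $\partial_t^2w$ bounded uniformly in $x$, which does hold here.
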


\begin{proof}
Fix $\gamma\in(0,a)$ and put $\sigma:=(a-\gamma)/2$.
Since $f'(0)=a$, there exists $\eta_*>0$ such that
\begin{equation}\label{eq:reaction-lower-linearization}
    f(r)\geq(a-\sigma)r,
    \qquad 0\leq r\leq\eta_*.
\end{equation}
Choose $R\geq1$ sufficiently large that
\begin{equation}\label{eq:R-choice}
    C_3R^{-2s}
    \leq a-\sigma-\gamma
    =\frac{a-\gamma}{2},
\end{equation}
where $C_3$ is the constant in Lemma~\ref{lem:weighted-profile}. After $R$ has been fixed, choose
\begin{equation}\label{eq:eta-choice}
    0<\eta\leq
    \min\{\eta_*,c_0R^{-q}\},
\end{equation}
where $c_0$ is the constant in Lemma~\ref{lem:polynomial-seeding}.

Define $R(t):=R\exp(\gamma t/q)$ and
\begin{equation}\label{eq:subsolution-definition}
    \underline u(t,x):=\eta\Phi_{R(t)}(x).
\end{equation}
Writing $\zeta=|x|^2/R(t)^2$, we calculate
\begin{equation}\label{eq:subsolution-time-derivative}
    \partial_t\underline u(t,x)
    =\gamma\underline u(t,x)\frac{\zeta}{1+\zeta}
    \leq\gamma\underline u(t,x).
\end{equation}
Lemma~\ref{lem:weighted-profile} gives
\begin{equation}\label{eq:subsolution-diffusion}
    A^s\underline u(t,x)
    \leq C_3R(t)^{-2s}\underline u(t,x)
    \leq C_3R^{-2s}\underline u(t,x).
\end{equation}
Since $0<\Phi_{R(t)}\leq1$, the choice of $\eta$ gives
$0\leq\underline u(t,x)\leq\eta\leq\eta_*$ for $t\geq0$ and $x\in\Z^d$.
Hence \eqref{eq:reaction-lower-linearization} is applicable at every point. Combining it with \eqref{eq:R-choice}, \eqref{eq:subsolution-time-derivative}, and \eqref{eq:subsolution-diffusion}, we obtain
\[
    \partial_t\underline u+A^s\underline u\leq(\gamma+C_3R^{-2s})\underline u\leq(a-\sigma)\underline u\leq f(\underline u).
\]
Thus $\underline u$ is a global subsolution.

Since $\Phi_R(x)\leq R^q(1+|x|^2)^{-q/2}$,
the choices in \eqref{eq:eta-choice} and Lemma~\ref{lem:polynomial-seeding} imply
$\underline u(0,x)\leq c_0(1+|x|^2)^{-q/2}\leq u(1,x)$.
Lemma~\ref{lem:comparison} completes the proof.
\end{proof}

\begin{corollary}\label{cor:positive-plateau}
For every $b<a/q$, there exist $\eta_b>0$ and $T_b>0$ such that
\begin{equation}\label{eq:positive-plateau}
    u(t,x)\geq\eta_b
\end{equation}
whenever $t\geq T_b$ and $|x|\leq\ee^{bt}$.
\end{corollary}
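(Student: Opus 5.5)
The corollary follows directly from Proposition~\ref{prop:expanding-barrier}. We choose an intermediate rate $\gamma$ with $qb<\gamma<a$, which is possible since $b<a/q$. If $b\leq0$ we may simply raise $b$ to a small positive value, because the set $\{|x|\leq\ee^{bt}\}$ only grows with $b$. The proposition then supplies $R\geq1$ and $\eta\in(0,1)$ with
\[
    u(1+t,x)\geq\eta\Bigl(1+\frac{|x|^2}{R^2\ee^{2\gamma t/q}}\Bigr)^{-q/2}
    \qquad(t\geq0,\ x\in\Z^d).
\]

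Now let $t\geq1$ and $|x|\leq\ee^{bt}$. Applying the bound at time $t-1$ gives
\[
    \frac{|x|^2}{R^2\ee^{2\gamma(t-1)/q}}
    \leq R^{-2}\ee^{2\gamma/q}\exp\bigl(2(b-\gamma/q)t\bigr).
\]
Since $b-\gamma/q<0$, the right-hand side tends to $0$ as $t\to\infty$. We therefore choose $T_b\geq1$ so large that it is at most $1$ for $t\geq T_b$. This yields
\[
    u(t,x)\geq\eta\,2^{-q/2}=:\eta_b>0
\]
whenever $t\geq T_b$ and $|x|\leq\ee^{bt}$.

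There is no real obstacle here. The only point requiring care is the strict gap $qb<\gamma$: it absorbs the unit time shift and makes the ratio $|x|/R(t-1)$ uniformly bounded, indeed small, on the stated ball.
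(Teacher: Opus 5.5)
Your proposal is correct and follows essentially the same route as the paper: pick $\gamma$ with $qb<\gamma<a$, apply Proposition~\ref{prop:expanding-barrier} at time $t-1$, and use the strict gap to make the scaled ratio small on $|x|\leq\ee^{bt}$. The paper takes $\eta_b=\eta/2$ via uniform convergence of the barrier to $\eta$; you take $\eta_b=\eta\,2^{-q/2}$ by bounding the ratio by $1$. Your remark about raising $b$ when $b\leq0$ is harmless but unnecessary, since $qb<\gamma$ already holds for any $\gamma\in(0,a)$.
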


\begin{proof}
Choose $\gamma$ so that $qb<\gamma<a$.
For $|x|\leq\ee^{bt}$, Proposition~\ref{prop:expanding-barrier} gives
\[
    \frac{|x|}{R\exp(\gamma(t-1)/q)}
    \leq C\exp\left(-\left(\frac{\gamma}{q}-b\right)t\right)
    \to0.
\]
The right-hand side of \eqref{eq:expanding-lower-barrier} therefore converges uniformly to $\eta$. Taking $\eta_b=\eta/2$ proves the result.
\end{proof}

\subsection{Compactness and rigidity of positive entire solutions}

\begin{lemma}\label{lem:liouville}
Let $U\in C^1(\R;\ellinf)$ be an entire solution of
$\partial_tU+A^sU=f(U)$ on $\R\times\Z^d$.
If there exists $\eta_0>0$ such that
\begin{equation}\label{eq:entire-bounds}
    \eta_0\leq U(t,x)\leq1
    \quad\text{for all }(t,x)\in\R\times\Z^d,
\end{equation}
then $U\equiv1$.
\end{lemma}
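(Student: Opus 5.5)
The plan is to compare $U$ with the spatially homogeneous ODE, using the fact that $U$ is defined for all negative times. Set
\[
m(t):=\inf_{x\in\Z^d}U(t,x)\in[\eta_0,1].
\]
We want a differential inequality for $m$. Because $A^s$ is bounded on $\ellinf$ (Remark~\ref{rem:bounded-generator}) and $U\in C^1(\R;\ellinf)$, an exact infimum argument is awkward on an infinite lattice where the infimum need not be attained. We therefore avoid pointwise differentiation of $m$ and use comparison instead.

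\textbf{Step 1: ODE subsolution.} Fix $t_0\in\R$ and $\tau>0$. Let $y$ solve $y'=f(y)$ with $y(0)=\eta_0$. Since $\eta_0\in(0,1]$ and $f>0$ on $(0,1)$ with $f(1)=0$, $y$ is nondecreasing and $y(\tau)\to1$ as $\tau\to\infty$; if $\eta_0=1$ there is nothing to prove. The spatially constant function $w(t,x):=y(t-(t_0-\tau))$ satisfies $A^sw=0$, because $K_s$ acts on differences (equation~\eqref{eq:jump-representation}). Hence $w$ solves the equation exactly on $[t_0-\tau,t_0]$. At the initial time $t_0-\tau$ we have $w=\eta_0\le U(t_0-\tau,\cdot)$ by \eqref{eq:entire-bounds}.

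\textbf{Step 2: Comparison.} Apply Lemma~\ref{lem:comparison}, after translating time, on $[t_0-\tau,t_0]$, with $g$ a locally Lipschitz extension of $f$. Both $w$ and $U$ are bounded $C^1$ solutions with values in $[0,1]$, so the lemma applies and gives $U(t_0,x)\ge y(\tau)$ for all $x$.

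\textbf{Step 3: Conclusion.} Since $\tau>0$ is arbitrary, which is exactly where entireness is used, letting $\tau\to\infty$ yields $U(t_0,x)\ge1$. Combined with $U\le1$, this gives $U\equiv1$.

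The only point requiring care is Step 1: we must check that $y(\tau)\to1$ using only (F). The function $y$ increases and is bounded by $1$, since $1$ is an equilibrium and uniqueness follows from $f\in C^1$. Its limit $\ell\in[\eta_0,1]$ must be a zero of $f$, and $f>0$ on $(0,1)$ forces $\ell=1$. No regularity beyond $C^1$ and the positivity of $f$ is needed, and the lattice structure enters only through $A^s(\text{const})=0$ and the comparison principle.
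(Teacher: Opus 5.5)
Your proposal is correct and is essentially the paper's own argument. Both compare $U$ on $[t_0-T,t_0]$ with the spatially constant solution of $v'=f(v)$, $v(0)=\eta_0$, via Lemma~\ref{lem:comparison}, and then let $T\to\infty$ using $v(T)\to1$; the opening discussion of $m(t)=\inf_x U(t,x)$ is never used and can be dropped.
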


\begin{proof}
Let $v$ solve
\begin{equation}\label{eq:scalar-ode}
    v'(t)=f(v(t)),
    \qquad v(0)=\eta_0.
\end{equation}
By \eqref{eq:F-hypotheses}, $v$ is nondecreasing (strictly so unless
$\eta_0=1$), and
\begin{equation}\label{eq:scalar-limit}
    \lim_{t\to\infty}v(t)=1.
\end{equation}
Indeed, $v(t)\in[\eta_0,1]$ has a limit $\ell\in[\eta_0,1]$. If
$\ell<1$, continuity and positivity of $f$ on $(0,1)$ give
$v'(t)\geq c>0$ for all sufficiently large $t$, a contradiction. Hence
$\ell=1$.
For any $T>0$, the spatially constant function
\[
    \underline U_T(t,x):=v(t+T),
    \qquad -T\leq t\leq0,
\]
is a solution of the lattice equation and satisfies
\[
    \underline U_T(-T,x)=\eta_0\leq U(-T,x).
\]
The comparison principle yields
\[
    U(0,x)\geq v(T).
\]
Letting $T\to\infty$ and using \eqref{eq:scalar-limit}, we obtain $U(0,x)\geq1$. In view of \eqref{eq:entire-bounds}, $U(0,x)=1$. Applying the same argument after an arbitrary time translation proves $U\equiv1$.
\end{proof}

\begin{proposition}\label{prop:plateau-upgrade}
Let $b_0>\max\{b,0\}$. Suppose that there are $\eta_0,T_0>0$ such that
\begin{equation}\label{eq:faster-plateau}
    u(t,x)\geq\eta_0
    \quad\text{for }t\geq T_0,\quad |x|\leq\ee^{b_0t}.
\end{equation}
Then
\begin{equation}\label{eq:upgrade-conclusion}
    \lim_{t\to\infty}
    \inf_{\substack{x\in\Z^d\\|x|\leq\ee^{bt}}}u(t,x)=1.
\end{equation}
\end{proposition}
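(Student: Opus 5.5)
We argue by contradiction with a compactness argument. The goal is to extract an entire solution that violates Lemma~\ref{lem:liouville}.

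Suppose \eqref{eq:upgrade-conclusion} fails. Then there are $\varepsilon>0$, times $t_n\to\infty$ and points $x_n$ with $|x_n|\leq\ee^{bt_n}$ such that $u(t_n,x_n)\leq1-\varepsilon$. Define the shifted functions
\[
  U_n(t,x):=u(t_n+t,x_n+x),\qquad t\geq T_0-t_n.
\]
By translation invariance of $A^s$ (Proposition~\ref{prop:jump-kernel}), each $U_n$ solves the same equation. Each also satisfies $0\leq U_n\leq1$ by Proposition~\ref{prop:well-posedness}.

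The key geometric check is that the plateau \eqref{eq:faster-plateau} covers every fixed window $[-T,T]\times B_\rho$ for large $n$. For $|t|\leq T$ and $|x|\leq\rho$ we have
\[
  |x_n+x|\leq \ee^{bt_n}+\rho .
\]
Since $b_0>b$ and $b_0>0$, the right-hand side is eventually at most $\ee^{b_0(t_n-T)}\leq\ee^{b_0(t_n+t)}$; here $b_0>0$ handles the case $b\leq0$. Also $t_n+t\geq T_0$ for large $n$. Hence $U_n\geq\eta_0$ on $[-T,T]\times B_\rho$ for all $n$ large.

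Next comes compactness. Remark~\ref{rem:bounded-generator} gives
\[
  |\partial_tU_n|\leq 2\kappa_s+\max_{[0,1]}|f|,
\]
so the $U_n$ are uniformly Lipschitz in $t$ at every lattice point. By Arzel\`a--Ascoli and a diagonal argument over the countable set $\Z^d$, a subsequence converges locally uniformly in $t$, pointwise in $x$, to some $U$.

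To pass to the limit in the equation, use its integrated form
\[
  U_n(t,x)=U_n(0,x)+\int_0^t\bigl(-A^sU_n+f(U_n)\bigr)\,dr .
\]
The series for $A^sU_n(r,x)$ converges pointwise by dominated convergence, since $K_s\in\ell^1$ and $|U_n|\leq1$. Hence $U$ satisfies the integrated equation. The integrand $-A^sU+f(U)$ is continuous in $t$, uniformly in $x$: this follows from the uniform Lipschitz bound and the boundedness of $A^s$. Therefore $U\in C^1(\R;\ellinf)$ and $U$ is an entire solution. This justification of $C^1$ regularity into $\ellinf$, rather than just pointwise, is the only mildly delicate point. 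It is handled because the time-Lipschitz bound is uniform in $x$, which passes to the limit.

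Finally, $U$ satisfies $\eta_0\leq U\leq1$ everywhere on $\R\times\Z^d$, because every window is eventually covered by the plateau. Lemma~\ref{lem:liouville} then gives $U\equiv1$. But
\[
  U(0,0)=\lim_n u(t_n,x_n)\leq1-\varepsilon,
\]
which is a contradiction. This proves \eqref{eq:upgrade-conclusion}.
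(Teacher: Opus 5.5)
Your proposal is correct and follows essentially the same route as the paper: you translate along a bad sequence, extract a limit via the uniform time-Lipschitz bound and a diagonal Arzel\`a--Ascoli argument, use $b_0>\max\{b,0\}$ to show that the plateau eventually covers every fixed space--time window, and conclude with Lemma~\ref{lem:liouville}. The differences are cosmetic: you pass to the limit in $A^s$ by dominated convergence instead of the paper's truncation at $|y|\le M$, and you obtain $U\in C^1(\R;\ell^\infty)$ from the continuity of the integrand instead of identifying the coordinatewise and Bochner integrals.
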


\begin{proof}
Suppose that \eqref{eq:upgrade-conclusion} is false. Since $u\leq1$, there exist $\delta>0$, a sequence $t_n\to\infty$, and sites $x_n\in\Z^d$ such that
\begin{equation}\label{eq:bad-sequence}
    |x_n|\leq\ee^{bt_n},
    \qquad
    u(t_n,x_n)\leq1-\delta.
\end{equation}
Define the translated solutions
\begin{equation}\label{eq:translated-solutions}
    U_n(\tau,z):=u(t_n+\tau,x_n+z),
    \qquad \tau>-t_n,\quad z\in\Z^d.
\end{equation}
By \eqref{eq:operator-bound} and $0\leq U_n\leq1$,
\begin{equation}\label{eq:uniform-time-lipschitz}
    |\partial_\tau U_n(\tau,z)|
    \leq2\kappa_s+\max_{0\leq r\leq1}|f(r)|.
\end{equation}
Fix a compact interval $I\subset\R$ and a finite set $E\subset\Z^d$. For
all sufficiently large $n$, $I\subset(-t_n,\infty)$, and
\eqref{eq:uniform-time-lipschitz} gives a common Lipschitz constant on $I$
for every $z\in E$. Since $0\leq U_n\leq1$, a diagonal
Arzel\`a--Ascoli argument yields a subsequence and a function
\[
    U:\R\times\Z^d\to[0,1]
\]
such that
\begin{equation}\label{eq:translated-convergence}
    U_n(\tau,z)\to U(\tau,z)
\end{equation}
locally uniformly in $\tau$, for every fixed $z$. Moreover, if $\tau_1,\tau_2\in\R$, then for every $z\in\Z^d$,
\[
    |U(\tau_1,z)-U(\tau_2,z)|
    \leq C|\tau_1-\tau_2|,
\]
where $C$ is the constant in \eqref{eq:uniform-time-lipschitz}. Taking the
supremum over $z$ shows that $\tau\mapsto U(\tau,\cdot)$ is globally
Lipschitz as an $\ell^\infty$-valued map.

It remains to pass to the limit in the nonlocal term. For fixed $z$ and
$M>0$, write
\begin{align*}
    A^sU_n(\tau,z)
    &=%
    \sum_{0<|y|\leq M}K_s(y)
    \bigl(U_n(\tau,z)-U_n(\tau,z+y)\bigr)\\
    &\quad+
    \sum_{|y|>M}K_s(y)
    \bigl(U_n(\tau,z)-U_n(\tau,z+y)\bigr).
\end{align*}
The first sum converges locally uniformly in $\tau$. The second sum is bounded, uniformly in $n$ and $\tau$, by
\[
    2\sum_{|y|>M}K_s(y),
\]
which tends to zero as $M\to\infty$. Consequently,
\[
    A^sU_n(\cdot,z)\to A^sU(\cdot,z)
\]
locally uniformly in time for each fixed $z$. For $\sigma<\tau$, the translated equation has the integral form
\[
    U_n(\tau,z)-U_n(\sigma,z)
    =\int_\sigma^\tau
    \bigl[-A^sU_n(r,z)+f(U_n(r,z))\bigr],dr.
\]
Passing to the limit in the integral identity gives, for every $z\in\Z^d$,
\[
    U(\tau,z)-U(\sigma,z)
    =\int_\sigma^\tau
    \bigl[-A^sU(r,z)+f(U(r,z))\bigr]dr.
\]
Set
\[
    F(r):=-A^sU(r)+f(U(r)).
\]
Since $U$ is continuous in the $\ell^\infty$ norm, $A^s$ is bounded, and
$f$ is Lipschitz on $[0,1]$, one has $F\in C(\R;\ell^\infty)$. Thus the
Bochner integral $\int_\sigma^\tau F(r)\,dr$ is well defined in
$\ell^\infty$. The coordinate functionals on $\ell^\infty$ separate points,
so the coordinatewise identities above yield
\[
    U(\tau)-U(\sigma)=\int_\sigma^\tau F(r)\,dr
    \quad\text{in }\ell^\infty.
\]
It follows that $U\in C^1(\R;\ellinf)$ and
\[
    \partial_\tau U+A^sU=f(U)
    \quad\text{in }\ell^\infty.
\]
Hence $U$ is an entire classical solution in the sense of Definition~\ref{def:classical-solution}.

We next show that the limit is bounded away from zero. Fix $T,N>0$. If
$|\tau|\leq T$ and $|z|\leq N$, then
\[
    |x_n+z|\leq\ee^{bt_n}+N.
\]
Because $b_0>b$,
\[
    \frac{\ee^{bt_n}+N}{\ee^{b_0(t_n-T)}}\to0.
\]
Also $t_n-T\geq T_0$ for all sufficiently large $n$. Hence, uniformly for $|\tau|\leq T$ and $|z|\leq N$,
\[
    |x_n+z|\leq\ee^{b_0(t_n+\tau)}.
\]
The persistent bound \eqref{eq:faster-plateau} gives
\[
    U_n(\tau,z)\geq\eta_0.
\]
Passing to the limit gives $U(\tau,z)\geq\eta_0$ on every fixed cylinder $[-T,T]\times\{z:|z|\leq N\}$. Since $T$ and $N$ are arbitrary,
\[
    \eta_0\leq U(\tau,z)\leq1
    \quad\text{on }\R\times\Z^d.
\]
Lemma~\ref{lem:liouville} implies $U\equiv1$. This contradicts \eqref{eq:bad-sequence}, since
\[
    U(0,0)
    =\lim_{n\to\infty}u(t_n,x_n)
    \leq1-\delta.
\]
\end{proof}

\medskip

\begin{theorem}\label{thm:lower-spreading}
For every $\varepsilon>0$,
\begin{equation}\label{eq:lower-spreading}
    \lim_{t\to\infty}
    \inf_{\substack{x\in\Z^d\\ |x|\leq
    \exp((a-\varepsilon)t/q)}}
    u(t,x)=1.
\end{equation}
\end{theorem}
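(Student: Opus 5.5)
The plan is to combine Corollary~\ref{cor:positive-plateau} with Proposition~\ref{prop:plateau-upgrade}. Fix $\varepsilon>0$; we may assume $\varepsilon<a$, since otherwise $\exp((a-\varepsilon)t/q)\leq1$ and the ball is contained in $B_1$, which is covered by the case of any smaller $\varepsilon$. Set $b:=(a-\varepsilon)/q$, which satisfies $0<b<a/q$. Then choose an intermediate rate $b_0$ with $b<b_0<a/q$, for instance $b_0:=(a-\varepsilon/2)/q$.

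Corollary~\ref{cor:positive-plateau}, applied with $b_0$ in place of $b$, gives $\eta_{b_0}>0$ and $T_{b_0}>0$ such that $u(t,x)\geq\eta_{b_0}$ whenever $t\geq T_{b_0}$ and $|x|\leq\ee^{b_0t}$. This is exactly the hypothesis \eqref{eq:faster-plateau} of Proposition~\ref{prop:plateau-upgrade}, with $\eta_0=\eta_{b_0}$ and $T_0=T_{b_0}$, and the requirement $b_0>\max\{b,0\}$ holds by construction. The proposition then yields
\[
    \lim_{t\to\infty}\inf_{|x|\leq\ee^{bt}}u(t,x)=1,
\]
and $\ee^{bt}=\exp((a-\varepsilon)t/q)$. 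This is \eqref{eq:lower-spreading}.

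There is no real obstacle here. The substantive work, namely the expanding algebraic subsolution and the compactness--Liouville rigidity, was already done in Proposition~\ref{prop:expanding-barrier}, Corollary~\ref{cor:positive-plateau} and Proposition~\ref{prop:plateau-upgrade}. The only point that needs care is the choice of the strict gap $b<b_0<a/q$. This gap is what lets translated cylinders around points $x_n$ with $|x_n|\leq\ee^{bt_n}$ stay inside the region covered by the plateau, which is how Proposition~\ref{prop:plateau-upgrade} uses it.
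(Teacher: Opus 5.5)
Your proposal is correct and matches the paper's proof: you take $b=(a-\varepsilon)/q$ and $b_0=(a-\varepsilon/2)/q$, get the plateau at rate $b_0$ from Corollary~\ref{cor:positive-plateau}, and upgrade it with Proposition~\ref{prop:plateau-upgrade}. You handle the case $\varepsilon\geq a$ slightly differently, via the containment in $B_1$ rather than the paper's reduction to $\varepsilon=a/2$, but this is equivalent and also valid.
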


\begin{proof}
First suppose that $0<\varepsilon<a$. Set
\[
    b:=\frac{a-\varepsilon}{q}
\]
and choose
\[
    b_0:=\frac{a-\varepsilon/2}{q}.
\]
Then $b_0>b$ and $b_0<a/q$. Corollary~\ref{cor:positive-plateau} gives a
persistent positive lower bound on $|x|\leq\ee^{b_0t}$, and
Proposition~\ref{prop:plateau-upgrade} yields \eqref{eq:lower-spreading}.

If $\varepsilon\geq a$, then
\[
    \exp\left(\frac{a-\varepsilon}{q}t\right)
    \leq\exp\left(\frac{a}{2q}t\right).
\]
The conclusion follows from the already proved case $\varepsilon=a/2$, because the requested ball is contained in a larger ball on which convergence to one is uniform.
\end{proof}

\section{Upper spreading estimate and the logarithmic level-set law}\label{sec:upper}

The KPP linearization and Proposition~\ref{prop:heat-kernel} give the upper
estimate. Combined with Theorem~\ref{thm:lower-spreading}, it completes the
proof of Theorem~\ref{thm:main}.

\begin{proposition}\label{prop:upper-spreading}
For every $\varepsilon>0$,
\begin{equation}\label{eq:upper-spreading}
    \lim_{t\to\infty}
    \sup_{\substack{x\in\Z^d\\ |x|\geq
    \exp((a+\varepsilon)t/q)}}
    u(t,x)=0.
\end{equation}
\end{proposition}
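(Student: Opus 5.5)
The plan is to use the KPP inequality $f(r)\le ar$ to dominate $u$ by the solution of the linearized problem, and then bound that solution with the heat-kernel estimate of Proposition~\ref{prop:heat-kernel}. By \eqref{eq:KPP-inequality} and $0\le u\le1$, $u$ is a subsolution of $\partial_t w+A^sw=aw$. The function
\[
\overline u(t):=\ee^{at}S(t)u_0
\]
solves this linear equation exactly. Lemma~\ref{lem:comparison}, applied with the globally Lipschitz reaction $g(r)=ar$ and equal initial data, therefore gives
\[
u(t,x)\le \ee^{at}\sum_{y\in\supp u_0}G_t(x-y)u_0(y).
\]
Both $u$ and $\overline u$ lie in $C^1([0,T];\ellinf)$ and are bounded, since $A^s$ is bounded on $\ellinf$.

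Next I would insert \eqref{eq:heat-upper}. Since $u_0$ is finitely supported, take $\supp u_0\subseteq B_{\rho}$. For $|x|\ge 2\rho$ and every $y\in\supp u_0$ we have $x-y\ne0$ and $1+|x-y|\ge (1+|x|)/2$. Combined with $u_0\le1$ and $|\supp u_0|<\infty$, this yields the pointwise tail bound
\[
u(t,x)\le C\,\ee^{at}(t+t^m)(1+|x|)^{-q},\qquad |x|\ge2\rho,\ t>0 .
\]
This is presumably the estimate \eqref{eq:nonlinear-tail-bound} referred to earlier.

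To conclude, restrict to $|x|\ge \exp((a+\varepsilon)t/q)$; for large $t$ this region lies in $|x|\ge2\rho$. Then $(1+|x|)^{-q}\le \ee^{-(a+\varepsilon)t}$, so
\[
\sup_{|x|\ge \exp((a+\varepsilon)t/q)}u(t,x)\le C(t+t^m)\ee^{-\varepsilon t}\to0 .
\]
The polynomial factor $t+t^m$ is harmless because $\varepsilon>0$ supplies exponential decay, which proves \eqref{eq:upper-spreading}.

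No step here is a real obstacle. The only point needing care is the comparison step. Lemma~\ref{lem:comparison} requires $C^1$ bounded functions on a compact interval $[0,T]$, and $\overline u$ is bounded there, so the comparison can be run on each $[0,T]$ separately. It also requires the differential inequality for $u$, which follows from $\partial_tu+A^su=f(u)\le au$ since $u\in[0,1]$.
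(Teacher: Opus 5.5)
Your proposal is correct and follows the paper's proof essentially step for step: the KPP bound gives $u(t)\le \ee^{at}S(t)u_0$, the heat-kernel estimate \eqref{eq:heat-upper} away from the support gives the tail bound, and $\ee^{-\varepsilon t}$ absorbs $t+t^m$. The paper gets the linear comparison through $v=\ee^{-at}u$ and variation of constants rather than Lemma~\ref{lem:comparison} with $g(r)=ar$, which is an immaterial difference; one small fix is to take $\rho>0$ (the paper uses $|x|\geq 2R_0+1$), so that $x-y\neq0$ is guaranteed when you apply \eqref{eq:heat-upper}.
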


\begin{proof}
The KPP inequality \eqref{eq:KPP-inequality} gives
$\partial_tu+A^su=f(u)\leq au$. Set $v(t):=\ee^{-at}u(t)$. Then
$\partial_tv+A^sv=\ee^{-at}(f(u)-au)\leq0$.
The variation-of-constants formula and positivity of $S(t)$ therefore give
\begin{equation}\label{eq:linear-comparison}
    v(t)\leq S(t)u_0,
    \qquad\text{and hence}\qquad
    u(t)\leq\ee^{at}S(t)u_0.
\end{equation}
Choose $R_0>0$ such that $\supp u_0\subset B_{R_0}$.
If $|x|\geq2R_0+1$, then Proposition~\ref{prop:heat-kernel} gives
\begin{equation}\label{eq:linear-tail-bound}
    (S(t)u_0)(x)=\sum_yG_t(x-y)u_0(y)\leq C\|u_0\|_{\ell^1}(t+t^m)(1+|x|)^{-q}.
\end{equation}
Indeed, $|x-y|\geq|x|/2$ on the support of $u_0$, and
$\|u_0\|_{\ell^1}<\infty$ by assumption \emph{(I)}.
Combining \eqref{eq:linear-comparison} and \eqref{eq:linear-tail-bound}, we obtain
\begin{equation}\label{eq:nonlinear-tail-bound}
    u(t,x)
    \leq C\ee^{at}(t+t^m)(1+|x|)^{-q}.
\end{equation}
For $|x|\geq\exp((a+\varepsilon)t/q)$, the right-hand side of
\eqref{eq:nonlinear-tail-bound} is bounded by
$C(t+t^m)\ee^{-\varepsilon t}$,
which converges to zero. This proves \eqref{eq:upper-spreading}.
\end{proof}

\begin{proof}[Proof of Theorem~\ref{thm:main}]
The lower assertion \eqref{eq:main-lower} is Theorem~\ref{thm:lower-spreading}, and the upper assertion \eqref{eq:main-upper} is Proposition~\ref{prop:upper-spreading}.

Fix $0<\theta<1$ and $\varepsilon\in(0,a)$. The lower estimate and a
lattice point on a coordinate axis give
$\liminf_{t\to\infty}t^{-1}\log R_\theta(t)\geq(a-\varepsilon)/q$.
The upper estimate gives
$\limsup_{t\to\infty}t^{-1}\log R_\theta(t)\leq(a+\varepsilon)/q$.
Letting $\varepsilon\downarrow0$ and recalling $q=d+2s$ proves
$\lim_{t\to\infty}t^{-1}\log R_\theta(t)=f'(0)/(d+2s)$.
\end{proof}

\section{Critical-scale localization for logistic reactions}
\label{sec:logistic-localization}

The polynomial factor in \eqref{eq:nonlinear-tail-bound} prevents that
estimate from giving a bounded error at the critical scale. Under assumption
\emph{(L)}, a global algebraic profile with coupled amplitude and radius
removes this loss. The construction also applies to data satisfying
\eqref{eq:critical-tail-upper-bound}, with a different initialization. We do
not introduce a spatial cutoff.

For $R\geq1$, set
\begin{equation}\label{eq:logistic-profile}
    \Theta_R(x):=
    \frac{1}{1+(|x|/R)^q},
    \qquad x\in\Z^d.
\end{equation}

\begin{lemma}
\label{lem:logistic-weighted-profile}
There exists $C_4>0$, depending only on $d$ and $s$, such that
\begin{equation}\label{eq:logistic-weighted-estimate}
    |A^s\Theta_R(x)|
    \leq C_4R^{-2s}\Theta_R(x)
\end{equation}
for every $R\geq1$ and $x\in\Z^d$.
\end{lemma}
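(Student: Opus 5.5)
We follow the proof of Lemma~\ref{lem:weighted-profile} almost verbatim, replacing $\Phi_R$ by $\Theta_R$. By symmetry of $K_s$ we use the symmetrized form \eqref{eq:symmetrized-operator},
\[
    A^s\Theta_R(x)=\frac12\sum_{z\neq0}K_s(z)\bigl(2\Theta_R(x)-\Theta_R(x+z)-\Theta_R(x-z)\bigr),
\]
set $L:=R+|x|$, and split the sum into the near field $0<|z|\leq L/4$ and the far field $|z|>L/4$. The two facts we must check are a relative Hessian bound for $\Theta_R$ and the comparability $\Theta_R(x)\asymp R^q L^{-q}$ (up to constants depending only on $d,s$) for $|x|\geq R$. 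The latter follows from $1+(|x|/R)^q\asymp (1+|x|/R)^q$.

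\textbf{Near field.} We extend $\Theta_R$ to $\R^d$ by the same formula. The difficulty is that $|y|^q$ is only $C^{1,q-1}$ at the origin when $q<2$, which can happen for $d=1$ and $s<1/2$. So we treat the region $|x|\leq 2R$ separately. There $\Theta_R(x)\geq c$, and we only need $|2\Theta_R(x)-\Theta_R(x+z)-\Theta_R(x-z)|\leq C\min\{(|z|/R)^{\min(q,2)},1\}$. This follows because $\Theta_R(y)=\vartheta(y/R)$ with $\vartheta(w)=(1+|w|^q)^{-1}$, and $\vartheta$ is globally $C^{1,\beta}$ with bounded second differences of order $\min(|h|^{\min(q,2)},1)$. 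If $q\geq2$, $\vartheta$ is $C^2$ with bounded Hessian. If $1<q<2$, the second difference of $|w|^q$ is $O(|h|^q)$, and $q>2s$. If $q\leq1$ (only $d=1$, $s\leq 1/2$), then $q=1+2s>2s$ still and $|w|^q$ is Hölder of order $q$. In all cases the exponent $\beta_*:=\min(q,2)$ exceeds $2s$. Summing against $K_s(z)\leq C|z|^{-d-2s}$ gives
\[
\sum_{z\neq0}K_s(z)\min\{(|z|/R)^{\beta_*},1\}\leq CR^{-2s},
\]
since $\beta_*>2s$ gives convergence near $0$ and the tail contributes $R^{-2s}$. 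This handles $|x|\leq2R$ entirely, with no far-field split needed. For $|x|>2R$ and $|z|\leq L/4$, the segment $x+\vartheta z$ stays at distance $\asymp|x|$ from the origin, where $\Theta_R$ is smooth. Direct differentiation gives $|D^2\Theta_R|\leq C|x|^{-2}\Theta_R(x)\leq CL^{-2}\Theta_R(x)$. Taylor's formula and $\sum_{|z|\leq L/4}|z|^2K_s(z)\leq CL^{2-2s}$ then give $CL^{-2s}\Theta_R(x)\leq CR^{-2s}\Theta_R(x)$.

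\textbf{Far field, $|x|>2R$, $|z|>L/4$.} We use \eqref{eq:kernel-tail} for the $2\Theta_R(x)$ term. For the translates we use $K_s(z)\leq CL^{-q}$ together with
\[
\sum_y\Theta_R(y)\leq CR^d,
\]
valid since $q>d$. This gives $CR^dL^{-q}$, and $R^dL^{-q}=R^{-2s}R^qL^{-q}\leq CR^{-2s}\Theta_R(x)$ by the comparability above.

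The only genuine obstacle is the limited smoothness of $\Theta_R$ at the origin when $q<2$. The plan resolves it by the separate $\min(|z|/R,1)^{\beta_*}$ estimate on $|x|\leq2R$, which works precisely because $q=d+2s>2s$.
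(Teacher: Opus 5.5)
Your proposal is correct and follows the paper's proof: the same symmetrized form, the same scale $L=R+|x|$, a second-difference bound of order $(|z|/R)^{\beta}$ with $2s<\beta\le\min\{q,2\}$ on $|x|\le2R$, the relative Hessian bound with Taylor's formula for $|x|>2R$, and the mass estimate $\sum_y\Theta_R(y)\le CR^d$ combined with $R^qL^{-q}\le\Theta_R(x)$ in the far field. The only differences are minor: on $|x|\le2R$ you sum the capped bound $\min\{(|z|/R)^{\beta_*},1\}$ over all $z$ instead of splitting at $L/4$, and your case $q\le1$ is vacuous because $q=d+2s\ge1+2s>1$ always (the paper uses $q>1$ to get $F\in C^{1,\alpha-1}_{\rm loc}$).
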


\begin{proof}
Choose
\begin{equation}\label{eq:holder-exponent-choice}
    2s<\alpha<\min\{q,2\}.
\end{equation}
Such a choice is possible because $q=d+2s>2s$ and $s<1$. Set
$L:=R+|x|$. For $0<|z|\leq L/4$,
\begin{equation}\label{eq:logistic-relative-difference}
    |2\Theta_R(x)-\Theta_R(x+z)-\Theta_R(x-z)|
    \leq
    C\Theta_R(x)\left(\frac{|z|}{L}\right)^\alpha.
\end{equation}
Set $F(y):=(1+|y|^q)^{-1}$, so that $\Theta_R(x)=F(x/R)$.
Since $q>1$ and $\alpha<\min\{q,2\}$, the function $F$ belongs to
$C^\alpha_{\rm loc}(\R^d)$, where for $\alpha>1$ this means
$C^{1,\alpha-1}_{\rm loc}$. Hence, on every fixed ball,
\[
    |2F(y)-F(y+h)-F(y-h)|\leq C|h|^\alpha.
\]
If $|x|\leq2R$ and $|z|\leq L/4$, then $x/R$ and
$(x\pm z)/R$ belong to a fixed ball, $L\asymp R$, and
$\Theta_R(x)\geq(1+2^q)^{-1}$. The preceding estimate therefore proves
\eqref{eq:logistic-relative-difference} in this case. If
$|x|>2R$, every point on either segment from $x$ to $x\pm z$ has distance
comparable to $|x|$. Direct differentiation away from the origin gives
\[
    |D^2\Theta_R(x+\vartheta z)|
    \leq CL^{-2}\Theta_R(x),
    \qquad |\vartheta|\leq1.
\]
Taylor's formula proves \eqref{eq:logistic-relative-difference}, because
$|z|/L\leq1/4$ and $\alpha<2$.

By symmetry of $K_s$, \eqref{eq:symmetrized-operator} holds with
$\Theta_R$ in place of $\Phi_R$. Proposition~\ref{prop:jump-kernel},
\eqref{eq:holder-exponent-choice}, and a decomposition into lattice shells
give
\[
    \sum_{0<|z|\leq L/4}|z|^\alpha K_s(z)
    \leq CL^{\alpha-2s}.
\]
Consequently, the contribution of $0<|z|\leq L/4$ is bounded by
\begin{equation}\label{eq:logistic-near-field}
    CL^{-2s}\Theta_R(x)
    \leq CR^{-2s}\Theta_R(x).
\end{equation}

For the complementary sum,
\[
    \sum_{|z|>L/4}K_s(z)\leq CL^{-2s},
    \qquad
    K_s(z)\leq CL^{-q}\quad\text{if }|z|>L/4.
\]
Since $q>d$,
\begin{equation}\label{eq:logistic-profile-mass}
    \sum_{y\in\Z^d}\Theta_R(y)\leq CR^d.
\end{equation}
It follows that
\begin{equation}\label{eq:logistic-far-convolution}
    \sum_{|z|>L/4}K_s(z)\Theta_R(x+z)\leq CL^{-q}\sum_{y\in\Z^d}\Theta_R(y)\leq CR^dL^{-q}.
\end{equation}
The same estimate holds with $x-z$ in place of $x+z$. Because $q>1$,
\[
    R^q+|x|^q\leq(R+|x|)^q=L^q,
\]
and hence
\begin{equation}\label{eq:logistic-critical-comparison}
    R^dL^{-q}
    \leq R^{-2s}\Theta_R(x).
\end{equation}
The term containing $\Theta_R(x)$ itself is at most
$CL^{-2s}\Theta_R(x)$. Combining
\eqref{eq:logistic-near-field}--\eqref{eq:logistic-critical-comparison}
proves the lemma.
\end{proof}

\begin{lemma}
\label{lem:coupled-logistic-barriers}
Fix $\mu>C_4$. There exists $\rho_*\geq1$ such that, for every
$\rho_0\geq\rho_*$, the systems
\begin{align}
    \alpha_+' &=
    a\alpha_+(1-\alpha_+)
    +\mu\rho_+^{-2s}\alpha_+,
    &
    q\frac{\rho_+'}{\rho_+}
    &=a\alpha_++\mu\rho_+^{-2s},
    \label{eq:plus-ode}\\
    \alpha_-' &=
    a\alpha_-(1-\alpha_-)
    -\mu\rho_-^{-2s}\alpha_-,
    &
    q\frac{\rho_-'}{\rho_-}
    &=a\alpha_--\mu\rho_-^{-2s},
    \label{eq:minus-ode}
\end{align}
with initial values
\begin{equation}\label{eq:coupled-initial-values}
    \alpha_+(0)=2,\qquad
    \alpha_-(0)=\frac12,\qquad
    \rho_+(0)=\rho_-(0)=\rho_0,
\end{equation}
have global solutions $\alpha_\pm,\rho_\pm\in C^1([0,\infty))$ with $\rho_\pm(t)>0$ for every $t\geq0$, satisfying
\begin{equation}\label{eq:amplitude-invariant-regions}
    1\leq\alpha_+(t)\leq2,
    \qquad
    \frac12\leq\alpha_-(t)\leq1.
\end{equation}
The functions
\begin{equation}\label{eq:coupled-barrier-functions}
    w_\pm(t,x):=\alpha_\pm(t)\Theta_{\rho_\pm(t)}(x)
\end{equation}
are, respectively, a supersolution and a subsolution of the logistic
equation. Moreover, there exist $\ell_\pm\in(0,\infty)$ such that
\begin{equation}\label{eq:coupled-ode-asymptotics}
    \alpha_\pm(t)\to1,
    \qquad
    \rho_\pm(t)\exp(-at/q)\to\ell_\pm
\end{equation}
as $t\to\infty$.
\end{lemma}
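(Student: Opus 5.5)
The plan has three parts: the ODE analysis (invariant regions, global existence, asymptotics), the verification of the barrier inequalities, and the limits.

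\emph{ODE analysis.} I would first choose $\rho_*$ so large that $\mu\rho_*^{-2s}\leq a/4$. For the plus system, $q\rho_+'/\rho_+>0$, so $\rho_+$ is nondecreasing and $\rho_+\geq\rho_0$. Then the equation $\alpha_+'=\alpha_+(a(1-\alpha_+)+\mu\rho_+^{-2s})$ gives $\alpha_+'>0$ whenever $\alpha_+=1$. It also gives $\alpha_+'\leq\alpha_+(-a+a/4)<0$ whenever $\alpha_+=2$. Hence $[1,2]$ is forward invariant.

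For the minus system the argument is a bootstrap. As long as $\alpha_-\geq1/2$ and $\rho_-\geq\rho_0$, we have $q\rho_-'/\rho_-\geq a/2-a/4>0$, so $\rho_-$ increases. At $\alpha_-=1/2$ we get $\alpha_-'=\tfrac12(a/2-\mu\rho_-^{-2s})\geq\tfrac12\cdot a/4>0$, and at $\alpha_-=1$ we get $\alpha_-'=-\mu\rho_-^{-2s}<0$. Thus $\alpha_-$ stays in $[1/2,1]$ and $\rho_-\geq\rho_0$ stays true. Since both right-hand sides are locally Lipschitz and the solutions remain bounded (with $\log\rho_\pm$ having bounded derivative), they exist globally, and $\rho_\pm>0$.

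\emph{Barrier inequalities.} Write $\zeta=(|x|/\rho)^q$, so $\Theta_\rho=(1+\zeta)^{-1}$. Then
\[
\partial_t\Theta_{\rho}=q\frac{\rho'}{\rho}\,\frac{\zeta}{1+\zeta}\,\Theta_\rho=q\frac{\rho'}{\rho}(1-\Theta_\rho)\Theta_\rho .
\]
For $w_+=\alpha_+\Theta_{\rho_+}$, Lemma~\ref{lem:logistic-weighted-profile} gives $A^sw_+\geq-C_4\rho_+^{-2s}w_+$. It follows that
\[
\partial_tw_++A^sw_+-aw_+(1-w_+)\geq w_+\Bigl[\tfrac{\alpha_+'}{\alpha_+}+q\tfrac{\rho_+'}{\rho_+}(1-\Theta)-C_4\rho_+^{-2s}-a+a\alpha_+\Theta\Bigr].
\]
Substituting the ODEs, the bracket becomes
\[
a(1-\alpha_+)+\mu\rho_+^{-2s}+(a\alpha_++\mu\rho_+^{-2s})(1-\Theta)-C_4\rho_+^{-2s}-a+a\alpha_+\Theta .
\]
The coefficient of $\Theta$ is $-\mu\rho_+^{-2s}$, so the bracket equals $\mu\rho_+^{-2s}(2-\Theta)-C_4\rho_+^{-2s}\geq(\mu-C_4)\rho_+^{-2s}>0$, using $\Theta\leq1$ and $\mu>C_4$.

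The minus case is symmetric. With $A^sw_-\leq C_4\rho_-^{-2s}w_-$, the bracket equals $-\mu\rho_-^{-2s}(2-\Theta)+C_4\rho_-^{-2s}\leq(C_4-\mu)\rho_-^{-2s}<0$. This exact cancellation of the $\Theta$-dependent terms is what the coupling in \eqref{eq:plus-ode}--\eqref{eq:minus-ode} is designed for, and it is where I would be careful with the algebra. Both $w_\pm$ lie in $C^1([0,\infty);\ellinf)$, since the $x$-derivative of $\Theta_\rho$ in $\rho$ is bounded by $C\rho^{-1}\Theta_\rho$ uniformly.

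\emph{Limits.} I expect this to be the main obstacle, because of the $\mu\rho^{-2s}$ feedback. Since $q\rho_\pm'/\rho_\pm\geq a/4$, $\rho_\pm$ grows at least exponentially, so $\mu\rho_\pm^{-2s}\leq Ce^{-ct}$ is integrable. Set $\beta=\alpha_\pm-1$. Then $\beta'=-a\alpha_\pm\beta\pm\mu\rho^{-2s}\alpha_\pm$ with $\alpha_\pm\geq1/2$, and Gronwall gives $|\beta(t)|\leq Ce^{-ct}$, so $\alpha_\pm\to1$ exponentially. Finally,
\[
q\frac{d}{dt}\bigl(\log\rho_\pm-at/q\bigr)=a(\alpha_\pm-1)\pm\mu\rho_\pm^{-2s},
\]
and the right-hand side is integrable on $[0,\infty)$. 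Hence $\log\rho_\pm-at/q$ converges to a finite limit, and $\rho_\pm(t)e^{-at/q}\to\ell_\pm\in(0,\infty)$.
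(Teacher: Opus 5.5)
Your proposal is correct and follows essentially the same route as the paper. You use the same choice $\mu\rho_*^{-2s}\le a/4$, the same first-exit argument keeping $\rho_\pm\ge\rho_0$ and the amplitudes in $[1,2]$ and $[1/2,1]$, and the same exponential decay of $\alpha_\pm-1$ driven by the integrable forcing $\mu\rho_\pm^{-2s}$. Your limit argument via integrability of the derivative of $\log\rho_\pm-at/q$ also matches the paper. So does the barrier step: the exact cancellation leaves $\pm\mu\rho_\pm^{-2s}w_\pm(2-\Theta_{\rho_\pm})$, and then Lemma~\ref{lem:logistic-weighted-profile} is applied with $\mu>C_4$. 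Your added remark on the $C^1([0,\infty);\ellinf)$ regularity of $w_\pm$ is a small point that the paper leaves implicit; it is needed to apply the comparison principle.
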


\begin{proof}
Set
\[
    \rho_*:=\max\left\{1,\left(\frac{4\mu}{a}\right)^{1/(2s)}\right\},
\]
and fix $\rho_0\geq\rho_*$. Then
\begin{equation}\label{eq:rho-zero-choice}
    \mu\rho_0^{-2s}\leq\frac{a}{4}.
\end{equation}
Consider a first exit from the region
\[
    \rho_\pm\geq\rho_0,\qquad
    1\leq\alpha_+\leq2,\qquad
    \frac12\leq\alpha_-\leq1.
\]
As long as the solution lies in this region,
\[
    q\frac{\rho_+'}{\rho_+}\geq a,
    \qquad
    q\frac{\rho_-'}{\rho_-}
    \geq\frac{a}{2}-\mu\rho_0^{-2s}
    \geq\frac{a}{4}.
\]
Neither radius can cross below $\rho_0$. On the four amplitude boundaries,
\begin{align*}
    \alpha_+=1&:\quad \alpha_+'=\mu\rho_+^{-2s}>0,\\
    \alpha_+=2&:\quad \alpha_+'=-2a+2\mu\rho_+^{-2s}\leq-\frac{3a}{2}<0,\\
    \alpha_-=\frac12&:\quad \alpha_-'
        =\frac{a}{4}-\frac{\mu}{2}\rho_-^{-2s}
        \geq\frac{a}{8}>0,\\
    \alpha_-=1&:\quad \alpha_-'=-\mu\rho_-^{-2s}<0.
\end{align*}
These inequalities rule out a first exit from the stated region.
Within this invariant region, the right-hand sides satisfy
$|\alpha_\pm'|\leq C$ and $|\rho_\pm'|\leq C\rho_\pm$. The continuation
criterion therefore makes both solutions global. In particular,
\begin{equation}\label{eq:rho-forcing-decay}
    \rho_+(t)^{-2s}\leq
    \rho_0^{-2s}\exp(-2sat/q),
    \qquad
    \rho_-(t)^{-2s}\leq
    \rho_0^{-2s}\exp(-sat/(2q)).
\end{equation}

Set $D_+:=\alpha_+-1$ and $D_-:=1-\alpha_-$. The amplitude equations and
\eqref{eq:amplitude-invariant-regions} yield
\begin{align*}
    D_+' &=-a\alpha_+D_++\mu\rho_+^{-2s}\alpha_+
    \leq-aD_++2\mu\rho_+^{-2s},\\
    D_-' &=-a\alpha_-D_-+\mu\rho_-^{-2s}\alpha_-
    \leq-\frac{a}{2}D_-+\mu\rho_-^{-2s}.
\end{align*}
The variation-of-constants formula and \eqref{eq:rho-forcing-decay} give an exponentially decaying bound for each $D_\pm$; hence
$D_\pm(t)\to0$ and
\begin{equation}\label{eq:amplitude-integrability}
    \int_0^\infty D_\pm(t)\,dt<\infty.
\end{equation}
Finally,
\begin{align*}
    \frac{d}{dt}\left(\log\rho_+-\frac{a}{q}t\right)
    &=\frac{aD_++\mu\rho_+^{-2s}}{q},\\
    \frac{d}{dt}\left(\log\rho_--\frac{a}{q}t\right)
    &=-\frac{aD_-+\mu\rho_-^{-2s}}{q}.
\end{align*}
The right-hand sides are integrable by
\eqref{eq:rho-forcing-decay} and \eqref{eq:amplitude-integrability}.
Both logarithmic differences therefore converge to finite real numbers,
which proves \eqref{eq:coupled-ode-asymptotics}.

To check the differential inequalities, differentiate
\eqref{eq:logistic-profile}:
\begin{equation}\label{eq:profile-radius-derivative}
    \partial_t\Theta_{\rho(t)}(x)
    =
    q\frac{\rho'(t)}{\rho(t)}
    \Theta_{\rho(t)}(x)
    \bigl(1-\Theta_{\rho(t)}(x)\bigr).
\end{equation}
Substitution of \eqref{eq:plus-ode} and \eqref{eq:minus-ode} gives the exact
identity
\begin{equation}\label{eq:coupled-barrier-identity}
    \partial_tw_\pm-aw_\pm(1-w_\pm)
    =
    \pm\mu\rho_\pm^{-2s}w_\pm
    \bigl(2-\Theta_{\rho_\pm}\bigr).
\end{equation}
Since $2-\Theta_{\rho_\pm}\geq1$, Lemma~\ref{lem:logistic-weighted-profile}
and $\mu>C_4$ give explicitly
\begin{align*}
    \partial_tw_++A^sw_+-aw_+(1-w_+)
    &\geq(\mu-C_4)\rho_+^{-2s}w_+\geq0,\\
    \partial_tw_-+A^sw_--aw_-(1-w_-)
    &\leq-(\mu-C_4)\rho_-^{-2s}w_-\leq0.
\end{align*}
This gives the asserted signs of $w_+$ and $w_-$.
\end{proof}

\begin{lemma}
\label{lem:logistic-barrier-initialization}
Assume \emph{(I)} and \emph{(L)}, and let $u$ be the corresponding global
solution.
For every fixed $\mu>C_4$, the initial radius $\rho_0$ and the corresponding
global coupled solution in Lemma~\ref{lem:coupled-logistic-barriers} may be
chosen simultaneously so that
\begin{equation}\label{eq:upper-critical-initialization}
    u_0(x)\leq2\Theta_{\rho_0}(x)
    \qquad (x\in\Z^d).
\end{equation}
For the same $\rho_0$, there exists $T_0>0$ such that
\begin{equation}\label{eq:lower-critical-initialization}
    u(T_0,x)\geq\frac12\Theta_{\rho_0}(x)
    \qquad (x\in\Z^d).
\end{equation}
\end{lemma}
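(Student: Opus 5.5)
The plan is to fix $\mu>C_4$, handle the upper initialization by choosing $\rho_0$ large, and then obtain the lower one at some time $T_0$ by combining the polynomial seed with the plateau estimate.

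\emph{Upper initialization.} By (I), $u_0$ is supported in some ball $B_{R_0}$ and satisfies $0\le u_0\le1$. For $|x|\le R_0$ and $\rho_0\ge R_0$ we have
\[
  2\Theta_{\rho_0}(x)=\frac{2}{1+(|x|/\rho_0)^q}\ge\frac{2}{1+1}=1\ge u_0(x).
\]
Outside the ball, $u_0=0\le2\Theta_{\rho_0}$. So \eqref{eq:upper-critical-initialization} holds as soon as $\rho_0\ge\max\{\rho_*,R_0\}$, where $\rho_*$ comes from Lemma~\ref{lem:coupled-logistic-barriers}. For this $\rho_0$ that lemma then supplies the global coupled solution.

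\emph{Lower initialization: the near region.} Now $\rho_0$ is fixed, and I look for $T_0$ with $u(T_0)\ge\frac12\Theta_{\rho_0}$ on all of $\Z^d$. Split into $|x|\le M\rho_0$ and $|x|>M\rho_0$, with $M$ to be chosen. On the bounded set $|x|\le M\rho_0$, Theorem~\ref{thm:lower-spreading} (or Theorem~\ref{thm:main}) gives $u(t,x)\to1$ uniformly. Since $\Theta_{\rho_0}\le1$, the bound $u(t,x)\ge\frac12\ge\frac12\Theta_{\rho_0}(x)$ holds there for all large $t$.

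\emph{Lower initialization: the far region.} This is the main obstacle: we need the algebraic tail at infinity with constant $\frac12\rho_0^q$. For $|x|$ large,
\[
  \tfrac12\Theta_{\rho_0}(x)\le\tfrac12\rho_0^q|x|^{-q},
\]
whereas Lemma~\ref{lem:polynomial-seeding} only gives $u(1,x)\ge c_0(1+|x|^2)^{-q/2}$, with a possibly small constant $c_0$. The remedy is Proposition~\ref{prop:expanding-barrier}, which holds under (L) since the logistic reaction satisfies (F). Pick $\gamma\in(0,a)$; it gives
\[
  u(1+t,x)\ge\eta\,\Phi_{R\exp(\gamma t/q)}(x).
\]
For $|x|\ge R(t):=R\exp(\gamma t/q)$ this is at least $\eta2^{-q/2}R(t)^q|x|^{-q}$. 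Choose $t$ so large that $\eta2^{-q/2}R(t)^q\ge\rho_0^q$. Then $u(1+t,x)\ge\rho_0^q|x|^{-q}\ge\Theta_{\rho_0}(x)$ on $|x|\ge R(t)$.

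On the intermediate region $|x|\le R(t)$, the same barrier gives $u(1+t,x)\ge\eta2^{-q/2}$. This is not enough on its own if $\eta$ is small, so the two estimates must be combined more carefully. Instead of a single time, use Corollary~\ref{cor:positive-plateau} together with Proposition~\ref{prop:plateau-upgrade}, i.e.\ Theorem~\ref{thm:lower-spreading}, which gives $u\ge\frac12$ on $|x|\le\exp(bt)$ for some $b<\gamma/q$. Then compare scales: for $\exp(bt)\le|x|\le R(t)$ we have
\[
  \Theta_{\rho_0}(x)\le\rho_0^q e^{-bqt}\to0,
\]
while the barrier gives $u\ge\eta2^{-q/2}$. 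Hence $u\ge\frac12\Theta_{\rho_0}$ there once $t$ is large. Taking $T_0$ large enough that all three regions are covered simultaneously yields \eqref{eq:lower-critical-initialization}.
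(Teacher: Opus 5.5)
Your proof is correct. The upper initialization matches the paper's, but you reach the lower initialization by a genuinely different route.

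The paper stays at the fixed scale $\rho_0$. It places $\beta_0\Theta_{\rho_0}$ below the seed $u(1,\cdot)$ of Lemma~\ref{lem:polynomial-seeding}, taking $\beta_0\le\min\{1/4,c_0/(C\rho_0^q)\}$. It then uses the weighted estimate of Lemma~\ref{lem:logistic-weighted-profile} to show that $\beta(t)\Theta_{\rho_0}$ is a subsolution, where $\beta'=a\beta(1-\beta)-C_4\rho_0^{-2s}\beta$. Since the positive equilibrium of this ODE exceeds $1/2$, the amplitude reaches $1/2$ at a finite time $\tau_0$, and $T_0=1+\tau_0$.

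You instead combine two results already proved under (F), which the logistic reaction satisfies. Proposition~\ref{prop:expanding-barrier} gives an algebraic tail whose constant $\eta2^{-q/2}R(t)^q$ eventually exceeds $\rho_0^q$. Theorem~\ref{thm:lower-spreading} gives $u\ge 1/2$ on the core $|x|\le e^{bt}$. Your three-region bookkeeping is sound provided you take $0<b<\gamma/q$. The core handles $|x|\le e^{bt}$. On the annulus up to $R(t)$, $\Theta_{\rho_0}\le\rho_0^q e^{-bqt}$ while the barrier stays above $\eta2^{-q/2}$. Beyond $R(t)$, the barrier dominates $\rho_0^q|x|^{-q}\ge\Theta_{\rho_0}$. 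Every condition is monotone in $t$, so a common $T_0$ exists, and your separate ``near region'' step is subsumed by the core.

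What your route buys is that it needs neither the logistic structure nor the weighted estimate for $\Theta_R$. What it costs is that it passes through the compactness--Liouville argument, so $T_0$ is non-explicit and you have no control on how it depends on the seed amplitude. The paper's scalar-amplitude mechanism gives an explicit time of order $1+|\log\beta_0|$. That quantitative feature is what Lemma~\ref{tr:lem:localized-amplification} exploits with $\beta_0\asymp1-s$ to obtain $T_s\le C_0(1+\tau_s)$ uniformly in $s$; your argument would not transfer there. It would, however, adapt to Lemma~\ref{lem:critical-tail-barrier-initialization}, but only after you compare from below with a nonzero finitely supported truncation of $u_0$, since Theorem~\ref{thm:lower-spreading} assumes (I).
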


\begin{proof}
Choose $\rho_0$ as in \eqref{eq:rho-zero-choice} and large enough that
$\supp u_0\subseteq B_{\rho_0}$. If $x\in B_{\rho_0}$, then
$2\Theta_{\rho_0}(x)\geq1\geq u_0(x)$; outside this ball, $u_0=0$.
This proves \eqref{eq:upper-critical-initialization}.

For the lower bound, Lemma~\ref{lem:polynomial-seeding} gives
\[
    u(1,x)\geq c_0(1+|x|^2)^{-q/2}.
\]
For $\rho_0\geq1$,
\begin{equation}\label{eq:profile-comparison-for-initialization}
    \Theta_{\rho_0}(x)
    \leq C\rho_0^q(1+|x|^2)^{-q/2}.
\end{equation}
This follows from $(1+r^2)^{q/2}\leq C(1+r^q)$ and
\[
    \frac{1+r^q}{1+r^q/\rho_0^q}\leq\rho_0^q.
\]
Let $C$ be the constant in \eqref{eq:profile-comparison-for-initialization}. Choose
\[
    0<\beta_0\leq
    \min\left\{\frac14,\frac{c_0}{C\rho_0^q}\right\}.
\]
Then \eqref{eq:profile-comparison-for-initialization} and the seed estimate imply
\[
    \beta_0\Theta_{\rho_0}(x)\leq u(1,x)
    \quad (x\in\Z^d).
\]
Since $\mu>C_4$, condition \eqref{eq:rho-zero-choice} already gives
$C_4\rho_0^{-2s}<a/2$. Let $\beta$ solve
\begin{equation}\label{eq:fixed-radius-amplitude}
    \beta'
    =
    a\beta(1-\beta)-C_4\rho_0^{-2s}\beta,
    \qquad
    \beta(0)=\beta_0.
\end{equation}
Its positive equilibrium is $1-C_4\rho_0^{-2s}/a>1/2$, so there is a
finite $\tau_0>0$ for which $\beta(\tau_0)=1/2$ and
$0<\beta(t)\leq1/2$ for $0\leq t\leq\tau_0$.

Set $z(t,x):=\beta(t)\Theta_{\rho_0}(x)$. By
Lemma~\ref{lem:logistic-weighted-profile} and
\eqref{eq:fixed-radius-amplitude},
\begin{align*}
    \partial_tz+A^sz-az(1-z)
    &\leq\bigl[a\beta(1-\beta)-C_4\rho_0^{-2s}\beta\bigr]\Theta_{\rho_0}+C_4\rho_0^{-2s}\beta\Theta_{\rho_0}-a\beta\Theta_{\rho_0}\bigl(1-\beta\Theta_{\rho_0}\bigr)\\
    &=-a\beta^2\Theta_{\rho_0}\bigl(1-\Theta_{\rho_0}\bigr)\leq0.
\end{align*}
Comparison on $[0,\tau_0]$, with the time origin shifted to $t=1$, gives
\[
    u(1+\tau_0,x)\geq z(\tau_0,x)
    =\frac12\Theta_{\rho_0}(x).
\]
Hence \eqref{eq:lower-critical-initialization} holds with
$T_0=1+\tau_0$.
\end{proof}

\begin{proof}[Proof of Theorem~\ref{thm:logistic-localization}]
Take $\rho_0$ as in Lemma~\ref{lem:logistic-barrier-initialization} and use
it in Lemma~\ref{lem:coupled-logistic-barriers}. Since the logistic
polynomial is defined on all of $\R$, the bounded-range comparison principle
applies also to the supersolution $w_+$, whose values lie in $[0,2]$.
Equations \eqref{eq:upper-critical-initialization} and
\eqref{eq:lower-critical-initialization} give
\begin{equation}\label{eq:critical-two-sided-comparison}
    w_-(t,x)\leq u(T_0+t,x),
    \qquad
    u(t,x)\leq w_+(t,x)
\end{equation}
for every $t\geq0$ and $x\in\Z^d$.

Fix $0<\theta<1$. By \eqref{eq:coupled-ode-asymptotics}, for all
sufficiently large $t$,
\[
    \alpha_-(t)\geq\frac{1+\theta}{2},
    \qquad
    c\exp(at/q)\leq\rho_\pm(t)\leq C\exp(at/q)
\]
for suitable positive constants $c$ and $C$. If
\[
    |x|\leq
    \left(\frac{1-\theta}{2\theta}\right)^{1/q}\rho_-(t),
\]
then $w_-(t,x)\geq\theta$. On the other hand, since
$\alpha_+(t)\leq2$, one has $w_+(t,x)<\theta$ whenever
\[
    |x|>
    \left(\frac{2}{\theta}-1\right)^{1/q}\rho_+(t).
\]
Apply these two bounds in \eqref{eq:critical-two-sided-comparison}. For the
lower inclusion at time $t\geq T_0$, use the subsolution at time $t-T_0$.
Since
\[
    \rho_-(t-T_0)\exp(-at/q)
    =\ee^{-aT_0/q}
      \rho_-(t-T_0)\exp\left(-\frac{a(t-T_0)}{q}\right)
    \to \ee^{-aT_0/q}\ell_->0,
\]
the fixed shift $T_0$ changes only the multiplicative constant. Lattice
rounding can be absorbed into the same constants. We obtain
\eqref{eq:logistic-level-inclusions}, and taking the supremum of $|x|$ over
the superlevel set gives \eqref{eq:logistic-radius-O-one}.
\end{proof}

\begin{lemma}
\label{lem:critical-tail-barrier-initialization}
Assume \emph{(L)}, \eqref{eq:critical-tail-initial-data}, and
\eqref{eq:critical-tail-upper-bound}. Fix $\mu>C_4$. Then $\rho_0$ in
Lemma~\ref{lem:coupled-logistic-barriers} can be chosen so that
\begin{equation}\label{eq:critical-tail-upper-initialization}
    u_0(x)\leq2\Theta_{\rho_0}(x)
    \qquad (x\in\Z^d).
\end{equation}
For the same $\rho_0$, there exists $T_0>0$ such that
\begin{equation}\label{eq:critical-tail-lower-initialization}
    u(T_0,x)\geq\frac12\Theta_{\rho_0}(x)
    \qquad (x\in\Z^d).
\end{equation}
\end{lemma}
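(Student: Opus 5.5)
The plan follows the proof of Lemma~\ref{lem:logistic-barrier-initialization}, changing only the upper initialization. Compact support is no longer available, so the upper bound will come from the algebraic tail \eqref{eq:critical-tail-upper-bound} instead.

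\emph{Upper initialization.} First take $\rho_0\geq\rho_*$, so that \eqref{eq:rho-zero-choice} holds. Then enlarge $\rho_0$ further. We need $u_0\leq 2\Theta_{\rho_0}$ everywhere, and we check this in two regions.
\begin{itemize}
\item For $|x|\leq\rho_0$ we have $2\Theta_{\rho_0}(x)\geq1\geq u_0(x)$.
\item For $|x|>\rho_0$ we have $(|x|/\rho_0)^q\geq1$, hence
\[
  2\Theta_{\rho_0}(x)\geq\rho_0^q|x|^{-q}.
\]
By \eqref{eq:critical-tail-upper-bound},
\[
  u_0(x)\leq C_0(1+|x|)^{-q}\leq C_0|x|^{-q}.
\]
So it suffices that $\rho_0^q\geq C_0$.
\end{itemize}
Thus $\rho_0:=\max\{\rho_*,C_0^{1/q}\}$ gives \eqref{eq:critical-tail-upper-initialization}. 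Larger values of $\rho_0$ also work, since $\Theta_\rho$ is increasing in $\rho$.

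\emph{Seeding.} Pick $x_*$ with $u_0(x_*)>0$. Since $f\geq0$ on $[0,1]$, Duhamel's formula gives $u(1)\geq S(1)u_0$. Proposition~\ref{prop:heat-kernel} then gives
\[
  u(1,x)\geq u_0(x_*)G_1(x-x_*)\geq c_0(1+|x|^2)^{-q/2}.
\]
This is the argument of Lemma~\ref{lem:polynomial-seeding}. It used only a single positive site, not finite support, so it applies unchanged to the present data.

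\emph{Lower initialization.} This repeats the second half of the proof of Lemma~\ref{lem:logistic-barrier-initialization} for the $\rho_0$ fixed above.
\begin{enumerate}
\item Use \eqref{eq:profile-comparison-for-initialization} to choose $\beta_0\leq\min\{1/4,\,c_0/(C\rho_0^q)\}$. Then $\beta_0\Theta_{\rho_0}\leq u(1,\cdot)$.
\item Solve the fixed-radius amplitude ODE \eqref{eq:fixed-radius-amplitude}. Because $C_4\rho_0^{-2s}<a/2$, its positive equilibrium exceeds $1/2$, so $\beta$ reaches $1/2$ at some finite time $\tau_0$.
\item By Lemma~\ref{lem:logistic-weighted-profile} and the same computation as before, $z=\beta\Theta_{\rho_0}$ is a subsolution.
\item Comparison (Lemma~\ref{lem:comparison}) then gives $u(1+\tau_0)\geq\tfrac12\Theta_{\rho_0}$, which is \eqref{eq:critical-tail-lower-initialization} with $T_0=1+\tau_0$.
\end{enumerate}

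The only real change from the finite-support case is the upper initialization. It works because $\Theta_{\rho_0}$ decays at exactly the critical rate $|x|^{-q}$, so the tail hypothesis can be absorbed by enlarging $\rho_0$. The lower initialization does not care how large $\rho_0$ is, since $\beta_0$ is chosen afterwards.
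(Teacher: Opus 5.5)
Your proposal is correct and follows the paper's proof essentially step for step. You choose $\rho_0$ with $\mu\rho_0^{-2s}\le a/4$ and $\rho_0^q\ge C_0$, and check the upper bound separately for $|x|\le\rho_0$ and $|x|>\rho_0$; for the lower bound you reuse the one-site heat-kernel seed, the fixed-radius logistic amplitude ODE and comparison, which gives $T_0=1+\tau_0$.
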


\begin{proof}
Choose $\rho_0\geq1$ so large that
\begin{equation}\label{eq:critical-tail-rho-choice}
    \mu\rho_0^{-2s}\leq\frac{a}{4},
    \qquad
    \rho_0^q\geq C_0.
\end{equation}
This choice is admissible in Lemma~\ref{lem:coupled-logistic-barriers}.
If $|x|\leq\rho_0$, then
$2\Theta_{\rho_0}(x)\geq1\geq u_0(x)$. If $|x|>\rho_0$, then
\[
    2\Theta_{\rho_0}(x)
    =\frac{2\rho_0^q}{\rho_0^q+|x|^q}
    \geq\frac{\rho_0^q}{|x|^q}
    \geq C_0(1+|x|)^{-q}
    \geq u_0(x).
\]
This proves \eqref{eq:critical-tail-upper-initialization}.

For the lower barrier, choose $x_*\in\Z^d$ with
$u_0(x_*)>0$. Since $0\leq u\leq1$ and the logistic reaction is
nonnegative on $[0,1]$, Duhamel's formula and
Proposition~\ref{prop:heat-kernel} give
\[
    u(1,x)\geq u_0(x_*)G_1(x-x_*)\geq c_0(1+|x|^2)^{-q/2}
\]
for some $c_0>0$ and all $x\in\Z^d$. Using
\eqref{eq:profile-comparison-for-initialization}, choose
\begin{equation}\label{eq:critical-tail-beta-choice}
    0<\beta_0\leq
    \min\left\{\frac14,\frac{c_0}{C\rho_0^q}\right\},
\end{equation}
where $C$ is the constant in that estimate. Then
\[
    \beta_0\Theta_{\rho_0}(x)\leq u(1,x)
    \qquad (x\in\Z^d).
\]
By \eqref{eq:critical-tail-rho-choice} and $\mu>C_4$,
$C_4\rho_0^{-2s}<a/2$. Let $\beta$ solve
\begin{equation}\label{eq:critical-tail-fixed-radius-amplitude}
    \beta'
    =a\beta(1-\beta)-C_4\rho_0^{-2s}\beta,
    \qquad
    \beta(0)=\beta_0.
\end{equation}
Its positive equilibrium is $1-C_4\rho_0^{-2s}/a>1/2$; hence there exists
$\tau_0>0$ such that
\[
    \beta(\tau_0)=\frac12,
    \qquad
    0<\beta(t)\leq\frac12
    \quad (0\leq t\leq\tau_0).
\]
Set $z(t,x):=\beta(t)\Theta_{\rho_0}(x)$. By
Lemma~\ref{lem:logistic-weighted-profile} and
\eqref{eq:critical-tail-fixed-radius-amplitude},
\[
    \partial_tz+A^sz-az(1-z)\leq-a\beta^2\Theta_{\rho_0}\bigl(1-\Theta_{\rho_0}\bigr)\leq0.
\]
Hence $z$ is a subsolution. Comparison, with the time origin shifted to
$t=1$, yields
\[
    u(1+t,x)\geq z(t,x)
    \qquad (0\leq t\leq\tau_0).
\]
Taking $t=\tau_0$ proves
\eqref{eq:critical-tail-lower-initialization} with $T_0=1+\tau_0$.
\end{proof}

\begin{proof}[Proof of Theorem~\ref{thm:critical-tail-localization}]
Fix $\mu>C_4$ and choose $\rho_0$ as in
Lemma~\ref{lem:critical-tail-barrier-initialization}. Use this $\rho_0$ in
Lemma~\ref{lem:coupled-logistic-barriers}. Since the logistic polynomial is
defined on all of $\R$, the comparison principle applies to both $w_-$ and
$w_+$. Therefore
\begin{equation}\label{eq:critical-tail-two-sided-comparison}
    w_-(t,x)\leq u(T_0+t,x),
    \qquad
    u(t,x)\leq w_+(t,x)
\end{equation}
for all $t\geq0$ and $x\in\Z^d$.

Fix $0<\theta<1$. By \eqref{eq:coupled-ode-asymptotics}, for all sufficiently
large $t$,
\[
    \alpha_-(t)\geq\frac{1+\theta}{2},
    \qquad
    c\ee^{at/q}\leq\rho_\pm(t)\leq C\ee^{at/q}
\]
with positive constants $c$ and $C$. Consequently,
\[
    w_-(t,x)\geq\theta
    \quad\text{if}\quad
    |x|\leq
    \left(\frac{1-\theta}{2\theta}\right)^{1/q}\rho_-(t),
\]
whereas
\[
    w_+(t,x)<\theta
    \quad\text{if}\quad
    |x|>
    \left(\frac{2}{\theta}-1\right)^{1/q}\rho_+(t).
\]
Apply these estimates in \eqref{eq:critical-tail-two-sided-comparison}. For
the lower bound at time $t$, use $w_-(t-T_0,\cdot)$; the fixed shift only
changes the multiplicative constant because
\[
    \rho_-(t-T_0)\ee^{-at/q}
    \to \ee^{-aT_0/q}\ell_->0.
\]
After enlarging the constants to absorb lattice rounding, we obtain
\eqref{eq:critical-tail-level-inclusions}. Taking the outer radius and then
logarithms gives \eqref{eq:critical-tail-radius}.
\end{proof}

\begin{proof}[Proof of Corollary~\ref{cor:critical-tail-interface}]
The two inclusions in \eqref{eq:critical-tail-level-inclusions} imply
\[
    c_\theta\ee^{at/q}\leq r_\theta(t)
    \leq R_\theta(t)\leq C_\theta\ee^{at/q}
\]
for all sufficiently large $t$, up to an immaterial lattice-rounding
constant. This proves \eqref{eq:inner-outer-radius-asymptotics}.

Let $x\in\partial_{\rm d}E_\theta(t)$. The upper inclusion gives
$|x|\leq C_\theta\ee^{at/q}$. By definition of the discrete boundary, there
is $y\notin E_\theta(t)$ with $|x-y|_1=1$. The lower inclusion forces
$|y|>c_\theta\ee^{at/q}$, and therefore
\[
    |x|\geq |y|-|x-y|
    >c_\theta\ee^{at/q}-1.
\]
For all sufficiently large $t$, the last quantity is at least
$(c_\theta/2)\ee^{at/q}$. This proves
\eqref{eq:discrete-interface-annulus}.
\end{proof}

\begin{remark}
Theorem~\ref{thm:critical-tail-localization} may be compared directly with
Theorem~1.6 of Cabr\'e and Roquejoffre
\cite{CabreRoquejoffre2013}. In the special case $d=1$, $s=1/2$, and $a=1$,
one has $q=2$, and \eqref{eq:critical-tail-level-inclusions} gives the scale
$\ee^{t/2}$. The half-Laplacian in the continuum has an explicit Cauchy
kernel, whereas the lattice proof uses the scale-uniform weighted estimate
\eqref{eq:logistic-weighted-estimate}. A second difference is purely
discrete: the exact set $\{u(t,\cdot)=\theta\}$ may be empty on $\Z^d$.
Corollary~\ref{cor:critical-tail-interface} therefore records the analogous
interface statement through the nearest-neighbor boundary of the superlevel
set. The exact logistic algebra in \eqref{eq:coupled-barrier-identity} remains
essential; no bounded-error claim is made under assumption \emph{(F)} alone.
\end{remark}

\section{Nondecreasing initial data in one dimension}
\label{sec:monotone-data}

Throughout this section, $d=1$, assumption \emph{(F)} is in force, and $u$
denotes the solution generated by a nonzero nondecreasing initial datum with
values in $[0,1]$.

For compactly supported data in one dimension, the pointwise heat-kernel tail
gives the exponent $a/(1+2s)$. A nondecreasing datum occupies a right
half-lattice, so the cumulative tail enters in place of the pointwise tail
and changes the exponent to $a/(2s)$. This section proves
Theorem~\ref{thm:monotone-sharp-propagation}.

For $r\in\R$, write $r^-:=\max\{-r,0\}$. For $R\geq1$, define
\begin{equation}\label{eq:one-sided-profile}
    \Xi_R(r):=
    \left(1+\frac{(r^-)^2}{R^2}\right)^{-s}.
\end{equation}
Whenever $A^s\Xi_R(j)$ is written below, $\Xi_R$ denotes the restriction
of this continuous profile to the integer lattice $\Z$.
Thus $\Xi_R=1$ on $[0,\infty)$ and
$\Xi_R(r)\asymp R^{2s}|r|^{-2s}$ as $r\to-\infty$.

\begin{lemma}
\label{lem:cumulative-heat-kernel}
For $k\in\Z$, set $H_k(j):=\one_{\{j\geq k\}}$. There exist $c,C>0$
such that
\begin{equation}\label{eq:half-line-heat-lower}
    S(1)H_k(j)\geq c\Xi_1(j-k)
\end{equation}
for every $j,k\in\Z$. For the fixed admissible integer $m$ chosen after
Proposition~\ref{prop:heat-kernel}, one also has
\begin{equation}\label{eq:one-sided-semigroup-upper}
    S(t)\Xi_1(j)
    \leq C(1+t+t^m)\Xi_1(j)
\end{equation}
for every $t>0$ and $j\in\Z$.
\end{lemma}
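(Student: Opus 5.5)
Both estimates reduce, via the convolution structure $S(t)\phi(j)=\sum_y G_t(j-y)\phi(y)$ and translation invariance, to $k=0$ for the lower bound. They then follow from the two-sided heat-kernel bounds of Proposition~\ref{prop:heat-kernel} (with $d=1$, $q=1+2s$) combined with a cumulative-tail computation.

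\emph{Lower bound \eqref{eq:half-line-heat-lower}.} Take $k=0$. We have
\[
S(1)H_0(j)=\sum_{y\geq0}G_1(j-y)\geq c_2\sum_{y\geq0}(1+|j-y|)^{-1-2s}.
\]
If $j\geq0$, the single term $y=j$ already contributes $c_2$, and $\Xi_1(j)=1$. If $j<0$, then $|j-y|=|j|+y$, so the sum is
\[
\sum_{n\geq|j|}(1+n)^{-1-2s}\geq c(1+|j|)^{-2s}.
\]
Finally, $(1+|j|)^{-2s}\asymp(1+j^2)^{-s}=\Xi_1(j)$, which gives the claim with a constant independent of $j$.

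\emph{Upper bound \eqref{eq:one-sided-semigroup-upper}.} By Proposition~\ref{prop:heat-kernel}, $G_t(z)\leq C_2(t+t^m)(1+|z|)^{-1-2s}$ for $z\neq0$. The zero-jump term of \eqref{eq:compound-poisson} gives $G_t(0)\leq1$. Together with $\Xi_1\leq1$, this yields
\[
S(t)\Xi_1(j)\leq\Xi_1(j)+C(t+t^m)\sum_{y\neq j}(1+|j-y|)^{-1-2s}\,\Xi_1(y).
\]
For $j\geq0$, the sum is at most $\sum_z(1+|z|)^{-1-2s}<\infty$, and $\Xi_1(j)=1$, so we are done in this case.

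For $j<0$, set $N:=|j|$ and split the $y$-sum into three regions.
\begin{itemize}
\item $y\leq j-N/2$ or $|y-j|\leq N/2$: here $\Xi_1(y)\leq C\Xi_1(j)$, since $|y|\geq N/2$. The kernel sum is bounded, so this region contributes at most $C\Xi_1(j)$.
\item $y\geq j+N/2$: here $|j-y|\geq N/2$. We bound $\Xi_1(y)\leq1$ and use the cumulative tail $\sum_{n\geq N/2}(1+n)^{-1-2s}\leq CN^{-2s}\leq C\Xi_1(j)$.
\end{itemize}
This region is the only one where the one-sided nature matters. The mass of the right half-line, seen from distance $N$, is exactly of order $N^{-2s}$, which matches the decay of $\Xi_1$. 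So the profile exponent $2s$ is critical and the estimate closes.

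Summing the pieces gives $S(t)\Xi_1(j)\leq C(1+t+t^m)\Xi_1(j)$. The main (mild) obstacle is bookkeeping this splitting uniformly in $N\geq1$, using only $\Xi_1(y)\asymp(1+|y|)^{-2s}$ for $y<0$ and $\Xi_1(y)=1$ for $y\geq0$.
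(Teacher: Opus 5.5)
Your proof is correct and follows the paper's argument: the lower bound comes from summing the pointwise bound on $G_1$ over the half-line, and the upper bound splits at $y=j+N/2=j/2$, exactly as the paper does, with the cumulative tail $\sum_{n\geq N/2}(1+n)^{-1-2s}\leq CN^{-2s}$ controlling the far part. The differences are cosmetic: the paper bounds the near region and the case $j\geq0$ by the unit mass of $G_t$ rather than by the pointwise kernel bound, and your claim $G_t(0)\leq1$ likewise follows from that unit mass, not from the zero-jump term, which only gives a lower bound on $G_t(0)$.
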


\begin{proof}
By translation invariance,
\[
    S(1)H_k(j)=\sum_{\ell\geq k}G_1(j-\ell).
\]
Set $r=j-k$. If $r\geq0$, the term $\ell=j$ gives the lower bound
$G_1(0)>0$. If $r<0$, Proposition~\ref{prop:heat-kernel} gives
\[
    S(1)H_k(j)
    =\sum_{z\leq r}G_1(z)
    \geq c\sum_{n\geq|r|}(1+n)^{-1-2s}
    \geq c(1+|r|)^{-2s}.
\]
This proves \eqref{eq:half-line-heat-lower}.

For \eqref{eq:one-sided-semigroup-upper}, the Markov property gives
$S(t)\Xi_1(j)\leq1=\Xi_1(j)$ when $j\geq0$. Let $j<0$ and split
\[
    S(t)\Xi_1(j)
    =
    \sum_{\ell\leq j/2}G_t(j-\ell)\Xi_1(\ell)
    +
    \sum_{\ell>j/2}G_t(j-\ell)\Xi_1(\ell).
\]
In the first sum, $\Xi_1(\ell)\leq C(1+|j|)^{-2s}$, so the sum is at
most this quantity up to a constant. In the second, $z=j-\ell<j/2$.
The heat-kernel upper bound therefore yields
\[
    \sum_{\ell>j/2}G_t(j-\ell)\Xi_1(\ell)
    \leq
    C(t+t^m)\sum_{|z|\geq|j|/2}(1+|z|)^{-1-2s}
    \leq C(t+t^m)(1+|j|)^{-2s}.
\]
Since $\Xi_1(j)\asymp(1+|j|)^{-2s}$ for $j<0$, the proof is complete.
\end{proof}

\begin{lemma}
\label{lem:monotonicity-and-seeding}
The solution issued from \eqref{eq:monotone-initial-data} is nondecreasing:
\begin{equation}\label{eq:solution-monotonicity}
    u(t,j)\leq u(t,j+1)
\end{equation}
for every $t\geq0$ and $j\in\Z$. There also exist $k_*\in\Z$ and
$c_*>0$ such that
\begin{equation}\label{eq:one-sided-polynomial-seed}
    u(1,j)\geq c_*\Xi_1(j-k_*)
\end{equation}
for every $j\in\Z$.
\end{lemma}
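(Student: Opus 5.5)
Monotonicity follows from translation invariance and comparison. Set $v(t,j):=u(t,j+1)$. Because $A^s$ commutes with lattice translations, $v$ is again a classical solution of \eqref{eq:intro-equation}, and by Proposition~\ref{prop:well-posedness} it takes values in $[0,1]$. Its initial value is $v(0,j)=u_0(j+1)\geq u_0(j)$ by \eqref{eq:monotone-initial-data}. Both $u$ and $v$ lie in $C^1([0,T];\ell^\infty(\Z))$ for every $T$, and $f$ is Lipschitz on $[0,1]$; extend it as in Proposition~\ref{prop:well-posedness}. Lemma~\ref{lem:comparison} then gives $u\leq v$ on $[0,T]\times\Z$ for every $T>0$, which is \eqref{eq:solution-monotonicity}.

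The seed comes from Duhamel's formula and the half-line heat estimate. Since $u_0\not\equiv0$, pick $k_*\in\Z$ with $\mu:=u_0(k_*)>0$. Monotonicity of the datum gives $u_0(j)\geq\mu$ for all $j\geq k_*$, that is, $u_0\geq\mu H_{k_*}$ with $H_k$ as in Lemma~\ref{lem:cumulative-heat-kernel}. Since $f\geq0$ on $[0,1]$ and $0\leq u\leq1$, Duhamel's formula gives $u(1)\geq S(1)u_0$, exactly as in the proof of Lemma~\ref{lem:polynomial-seeding}. The semigroup $S(1)$ is positive because of the compound-Poisson representation \eqref{eq:compound-poisson}. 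Hence $u(1)\geq\mu\,S(1)H_{k_*}$, and \eqref{eq:half-line-heat-lower} yields
\[
u(1,j)\geq \mu c\,\Xi_1(j-k_*),\qquad j\in\Z.
\]
So \eqref{eq:one-sided-polynomial-seed} holds with $c_*:=\mu c$.

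Neither step is a real obstacle. The only point to check is that Lemma~\ref{lem:comparison} applies to two genuine solutions, one used as subsolution and one as supersolution; this follows from the bounded ranges in $[0,1]$ and the $C^1$ regularity in $\ell^\infty$ from Proposition~\ref{prop:well-posedness}. Note also that the seed uses only $u_0\geq\mu H_{k_*}$. It needs neither the left-tail bound \eqref{eq:monotone-left-tail} nor the KPP inequality.
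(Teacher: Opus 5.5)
Your proposal is correct and follows the paper's proof essentially step for step. Both arguments get monotonicity from comparing $u$ with its translate $u(t,j+1)$ via Lemma~\ref{lem:comparison}. Both get the seed from $u_0\geq\mu H_{k_*}$, the Duhamel bound $u(1)\geq S(1)u_0$, positivity of $S(1)$, and the half-line estimate \eqref{eq:half-line-heat-lower}.
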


\begin{proof}
Translation invariance shows that $v(t,j):=u(t,j+1)$ solves the same
equation as $u$. Since $v(0,j)\geq u_0(j)$, comparison proves
\eqref{eq:solution-monotonicity}.

Choose $k_*$ with $\mu_0:=u_0(k_*)>0$. Monotonicity of the datum gives
$u_0\geq\mu_0H_{k_*}$. Because $f(u)\geq0$ on the invariant interval,
Duhamel's formula, positivity of $S(t)$, and
Lemma~\ref{lem:cumulative-heat-kernel} give
\[
    u(1)\geq S(1)u_0
    \geq\mu_0S(1)H_{k_*}
    \geq c_*\Xi_1(\,\cdot-k_*).
\]
\end{proof}

\begin{lemma}
\label{lem:one-sided-weighted-profile}
There exists $C_5>0$ such that
\begin{equation}\label{eq:one-sided-weighted-estimate}
    |A^s\Xi_R(j)|
    \leq C_5R^{-2s}\Xi_R(j)
\end{equation}
for every $R\geq1$ and $j\in\Z$.
\end{lemma}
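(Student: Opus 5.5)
We follow the template of Lemmas~\ref{lem:weighted-profile} and \ref{lem:logistic-weighted-profile}, now in dimension one with $q=1+2s$ and a profile that is constant on the right half-line. By symmetry of $K_s$ we use the symmetrized form
\[
    A^s\Xi_R(j)=\frac12\sum_{z\neq0}K_s(z)\bigl(2\Xi_R(j)-\Xi_R(j+z)-\Xi_R(j-z)\bigr).
\]
We set $L:=R+|j|$, note $\Xi_R(j)\asymp R^{2s}L^{-2s}$ for every $j\in\Z$, and split the sum at $|z|=L/4$. The only new ingredient is that the far field no longer has finite mass, so the critical-exponent identity \eqref{eq:critical-exponent-identity} must be replaced by a cumulative-tail computation.

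\emph{Near field $|z|\leq L/4$.} We treat $\Xi_R$ as a function on $\R$. It is $C^{1,1}$ up to the junction at $r=0$: its derivative vanishes from both sides there, and the second derivative is bounded but jumps. We then claim
\[
    |2\Xi_R(j)-\Xi_R(j+z)-\Xi_R(j-z)|\leq C|z|^2L^{-2}\,\Xi_R(j).
\]
We prove it in two cases.
\begin{itemize}[leftmargin=*]
\item If $j\leq-2R$, the segment $[j-z,j+z]$ stays in $\{r\leq -|j|/2\}$. There, direct differentiation of $(1+r^2/R^2)^{-s}$ gives $|\Xi_R''|\leq C|j|^{-2}\Xi_R(j)$, and Taylor's formula with integral remainder applies.
\item If $j\geq-2R$, we have $\Xi_R(j)\geq c$ and $L\asymp R+j^+$. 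Here $\sup|\Xi_R''|\leq CR^{-2}$, but for large positive $j$ this is not $\lesssim L^{-2}$. We observe instead that for $j\geq0$ and $|z|\leq L/4\leq (R+j)/4$, the second difference involves only points $\geq j-(R+j)/4$. If $j\geq R/3$, these are all nonnegative, so the difference is zero. If $j<R/3$, then $L\asymp R$ and the bound $CR^{-2}$ suffices.
\end{itemize}
Summing against $K_s(z)\lesssim|z|^{-1-2s}$ over $|z|\leq L/4$ gives $CL^{-2s}\Xi_R(j)\leq CR^{-2s}\Xi_R(j)$, as in \eqref{eq:profile-near-field}.

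\emph{Far field $|z|>L/4$.} We bound $|2\Xi_R(j)-\Xi_R(j+z)-\Xi_R(j-z)|$ using the monotonicity $0\le\Xi_R\le1$, distinguishing the sign of $j$. The term $2\Xi_R(j)\sum_{|z|>L/4}K_s(z)\leq CL^{-2s}\Xi_R(j)$ is harmless.
\begin{itemize}[leftmargin=*]
\item For $j\geq0$, we have $\Xi_R(j)=1$ and the summand is $\Xi_R(j)-\Xi_R(j\pm z)\in[0,1]$. So the contribution is at most $\sum_{|z|>L/4}K_s(z)\leq CL^{-2s}\leq CR^{-2s}$, which is of the right form since $\Xi_R(j)=1$.
\item For $j<0$, we need $\sum_{|z|>L/4}K_s(z)\,\Xi_R(j\pm z)\leq CR^{-2s}\Xi_R(j)\asymp L^{-2s}$. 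The sum is at most $\sum_{|z|>L/4}K_s(z)$ because $\Xi_R\le1$, and this is already $\leq CL^{-2s}$. Since $R^{-2s}\Xi_R(j)\asymp L^{-2s}$, we are done.
\end{itemize}
Thus the crude bound $\Xi_R\leq1$ replaces the mass estimate \eqref{eq:profile-mass}, which would fail here because $\Xi_R\notin\ell^1$. This works precisely because the profile decays at the cumulative rate $L^{-2s}$.

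Combining the two fields yields \eqref{eq:one-sided-weighted-estimate}. We expect the main obstacle to be the near-field case $j\geq0$ with $j\gg R$. There, a uniform Hessian bound of order $R^{-2}$ would only give $R^{-2s}$ times a non-decaying factor. The key observation is the vanishing of second differences on the flat region, together with the $C^{1,1}$ junction at $0$ for the intermediate range.
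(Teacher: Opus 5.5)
Your proposal is correct and follows the paper's proof essentially step for step: the relative-Hessian near field for $j<0$, the crude bound $\Xi_R\le1$ combined with $L^{-2s}\le R^{-2s}\Xi_R(j)$ in the far field, and a separate treatment of the flat region $j\ge0$, where the paper bounds the unsymmetrized difference $1-\Xi_R(j+z)\le C\min\{1,|z|^2/R^2\}$ in place of your vanishing-second-difference observation. One harmless slip: $\Xi_R(j)\asymp R^{2s}L^{-2s}$ holds only for $j\le0$, not for every $j\in\Z$ (for $j\gg R$ the left side equals $1$), but you never use it for $j>0$.
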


\begin{proof}
When $j\geq0$, $\Xi_R(j)=1$ and
\[
    A^s\Xi_R(j)
    =
    \sum_{z\neq0}K_s(z)\bigl(1-\Xi_R(j+z)\bigr)\geq0.
\]
If $j\geq R$, the summand vanishes unless $j+z<0$, and the kernel-tail
estimate gives $|A^s\Xi_R(j)|\leq Cj^{-2s}\leq CR^{-2s}$. If
$0\leq j<R$, then
\[
    1-\Xi_R(j+z)
    \leq C\min\{1,|z|^2/R^2\}.
\]
Consequently,
\[
    |A^s\Xi_R(j)|
    \leq
    CR^{-2}\sum_{0<|z|\leq R}|z|^2K_s(z)
    +C\sum_{|z|>R}K_s(z)
    \leq CR^{-2s}.
\]

Suppose that $j<0$ and set $L:=R+|j|$. The continuous extension in
\eqref{eq:one-sided-profile} belongs to $C^{1,1}(\R)$. We claim that
\begin{equation}\label{eq:one-sided-relative-hessian}
    \mathop{\rm ess\,sup}_{|\vartheta|\leq1}
    |\Xi_R''(j+\vartheta z)|
    \leq CL^{-2}\Xi_R(j)
\end{equation}
whenever $|z|\leq L/4$. If $|j|\leq2R$, then
$\Xi_R(j)\geq5^{-s}$, $L\leq3R$, and
$\|\Xi_R''\|_{L^\infty}\leq CR^{-2}\leq CL^{-2}\Xi_R(j)$.
If $|j|>2R$, the relevant segment stays on the negative half-line at
distance comparable to $|j|$ from the origin, and direct differentiation
again gives \eqref{eq:one-sided-relative-hessian}.

Symmetrization, Taylor's formula, and the jump-kernel bound now imply
\[
    \sum_{0<|z|\leq L/4}K_s(z)
    |2\Xi_R(j)-\Xi_R(j+z)-\Xi_R(j-z)|
    \leq CL^{-2s}\Xi_R(j).
\]
For $|z|>L/4$, use
\[
    \sum_{|z|>L/4}K_s(z)\leq CL^{-2s}
\]
and the critical comparison
\begin{equation}\label{eq:one-sided-critical-comparison}
    L^{-2s}\leq(R^2+j^2)^{-s}=R^{-2s}\Xi_R(j).
\end{equation}
Since $0<\Xi_R\leq1$, the absolute value of the complementary sum is at
most
\[
    C\Xi_R(j)L^{-2s}+CL^{-2s}
    \leq CR^{-2s}\Xi_R(j).
\]
This proves \eqref{eq:one-sided-weighted-estimate}.
\end{proof}

\begin{proposition}
\label{prop:one-sided-expanding-barrier}
For every $\gamma\in(0,a)$, there exist $R\geq1$ and $\eta\in(0,1)$ such
that
\begin{equation}\label{eq:one-sided-expanding-barrier}
    u(1+t,j)
    \geq
    \eta\,
    \Xi_{R\exp(\gamma t/(2s))}(j-k_*)
\end{equation}
for every $t\geq0$ and $j\in\Z$.
\end{proposition}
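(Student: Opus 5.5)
The plan follows the proof of Proposition~\ref{prop:expanding-barrier} verbatim, with the one-sided profile $\Xi_R$ in place of $\Phi_R$. Fix $\gamma\in(0,a)$ and set $\sigma:=(a-\gamma)/2$. By $f'(0)=a$, choose $\eta_*>0$ with $f(r)\geq(a-\sigma)r$ for $0\leq r\leq\eta_*$. Using the constant $C_5$ of Lemma~\ref{lem:one-sided-weighted-profile}, choose $R\geq1$ so large that $C_5R^{-2s}\leq (a-\gamma)/2$.

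Next compare $\Xi_R$ with $\Xi_1$. For $r<0$,
\[
(1+r^2)^{s}\leq R^{2s}(1+r^2/R^2)^s ,
\]
so $\Xi_R\leq R^{2s}\Xi_1$ on $(-\infty,0)$, and both profiles equal $1$ on $[0,\infty)$. Hence $\Xi_R\leq R^{2s}\Xi_1$ everywhere. With $c_*,k_*$ from Lemma~\ref{lem:monotonicity-and-seeding}, take $0<\eta\leq\min\{\eta_*,c_*R^{-2s}\}$ (and $\eta<1$). This gives $\eta\Xi_R(j-k_*)\leq u(1,j)$.

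Define $R(t):=R\exp(\gamma t/(2s))$ and $\underline u(t,j):=\eta\,\Xi_{R(t)}(j-k_*)$. For $r=j-k_*<0$, write $\zeta=r^2/R(t)^2$. Then
\[
\partial_t\underline u=\underline u\cdot s\cdot\frac{\zeta}{1+\zeta}\cdot 2\,\frac{\gamma}{2s}=\gamma\underline u\,\frac{\zeta}{1+\zeta}\leq\gamma\underline u .
\]
For $r\geq0$ the time derivative vanishes. The map $t\mapsto\underline u(t,\cdot)$ is $C^1$ into $\ell^\infty$, since the derivative is bounded by $\gamma\eta$ and is uniformly continuous in $t$.

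Translation invariance of $A^s$ together with Lemma~\ref{lem:one-sided-weighted-profile} gives
\[
A^s\underline u\leq C_5R(t)^{-2s}\underline u\leq C_5R^{-2s}\underline u .
\]
Since $0<\underline u\leq\eta\leq\eta_*$, we obtain
\[
\partial_t\underline u+A^s\underline u\leq(\gamma+C_5R^{-2s})\underline u\leq(a-\sigma)\underline u\leq f(\underline u),
\]
so $\underline u$ is a global subsolution.

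Apply Lemma~\ref{lem:comparison} on each $[0,T]$, with the time origin shifted to $t=1$, taking $g$ to be a locally Lipschitz extension of $f$. Both functions take values in $[0,1]$, so the extension is irrelevant. This yields \eqref{eq:one-sided-expanding-barrier}.

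The only point needing care is that the scale-uniform weighted estimate of Lemma~\ref{lem:one-sided-weighted-profile} is applied at every radius $R(t)\geq R$. That lemma already covers all $R\geq1$, and its right-hand side decreases in $R$, so there is no genuine obstacle. The remaining details, namely the exponent bookkeeping $\gamma/(2s)$ that makes the time derivative exactly $\gamma\zeta/(1+\zeta)$ and the seeding comparison $\Xi_R\leq R^{2s}\Xi_1$, are routine.
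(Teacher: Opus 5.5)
Your proposal is correct and follows the paper's proof essentially step for step. The paper uses the same margin $(a-\gamma)/2$ (it calls it $\rho$ rather than $\sigma$), the same choices $C_5R^{-2s}\le (a-\gamma)/2$ and $\eta\le\min\{\eta_*,c_*R^{-2s}\}$, the same expanding profile $\eta\,\Xi_{R\exp(\gamma t/(2s))}(\cdot-k_*)$, and the same bound $\Xi_R\le R^{2s}\Xi_1$ for the initial ordering, followed by Lemma~\ref{lem:comparison}; your remark on the $C^1([0,T];\ell^\infty)$ regularity of the subsolution is a detail the paper leaves implicit.
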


\begin{proof}
Set $\rho:=(a-\gamma)/2$. Since $f'(0)=a$, there exists $\eta_*>0$
such that $f(r)\geq(a-\rho)r$ for $0\leq r\leq\eta_*$. Choose $R$ so
large that
\[
    C_5R^{-2s}\leq a-\rho-\gamma=\frac{a-\gamma}{2},
\]
and then choose
\[
    0<\eta\leq\min\{\eta_*,c_*R^{-2s}\}.
\]
Let $R(t):=R\exp(\gamma t/(2s))$ and
\[
    \underline u(t,j):=\eta\Xi_{R(t)}(j-k_*).
\]
Direct differentiation gives
\[
    \partial_t\underline u
    =
    \gamma
    \frac{((j-k_*)^-)^2}
    {R(t)^2+((j-k_*)^-)^2}\,\underline u
    \leq\gamma\underline u.
\]
Lemma~\ref{lem:one-sided-weighted-profile} and $R(t)\geq R$ give
\[
    A^s\underline u(t,j)
    \leq C_5R(t)^{-2s}\underline u(t,j)
    \leq C_5R^{-2s}\underline u(t,j).
\]
Hence
\[
    \partial_t\underline u+A^s\underline u
    \leq(a-\rho)\underline u
    \leq f(\underline u).
\]
Finally,
\[
    \Xi_R(r)\leq R^{2s}\Xi_1(r)
\]
for every $r\in\R$. Lemma~\ref{lem:monotonicity-and-seeding} and the
choice of $\eta$ therefore give $\underline u(0,\cdot)\leq u(1,\cdot)$.
Comparison proves the proposition.
\end{proof}

\begin{corollary}
\label{cor:one-sided-positive-plateau}
For every $\beta<a/(2s)$, there exist $\eta_\beta,T_\beta>0$ such that
\begin{equation}\label{eq:one-sided-positive-plateau}
    u(t,j)\geq\eta_\beta
\end{equation}
whenever $t\geq T_\beta$ and $j\geq-\exp(\beta t)$.
\end{corollary}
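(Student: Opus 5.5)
The corollary follows from Proposition~\ref{prop:one-sided-expanding-barrier} in the same way that Corollary~\ref{cor:positive-plateau} follows from Proposition~\ref{prop:expanding-barrier}. The only new point is that the barrier is one-sided: it equals its full amplitude on the right half-line and decays only on the left.

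Fix $\beta<a/(2s)$. If $\beta\le 0$, the region $j\geq-\exp(\beta t)$ is contained in $j\ge -1$, so it suffices to handle some positive $\beta$. We may therefore assume $\beta>0$. Choose $\gamma$ with $2s\beta<\gamma<a$ and take $R,\eta$ from Proposition~\ref{prop:one-sided-expanding-barrier}. Then for $t\ge1$ and every $j$,
\[
    u(t,j)\geq \eta\,\Xi_{R(t-1)}(j-k_*),
    \qquad
    R(t-1):=R\exp\bigl(\gamma(t-1)/(2s)\bigr).
\]

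We bound the right-hand side in two cases. If $j\ge k_*$, then $(j-k_*)^-=0$ and $\Xi=1$, so the bound equals $\eta$. If $-\exp(\beta t)\le j<k_*$, then $(j-k_*)^-\le \exp(\beta t)+|k_*|$. Hence
\[
    \frac{(j-k_*)^-}{R(t-1)}
    \le C\exp\Bigl(-\Bigl(\frac{\gamma}{2s}-\beta\Bigr)t\Bigr)\to0,
\]
uniformly in such $j$. By the definition \eqref{eq:one-sided-profile}, $\Xi_{R(t-1)}(j-k_*)\to1$ uniformly on $\{j\ge -\exp(\beta t)\}$.

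We therefore choose $T_\beta$ so that $\Xi_{R(t-1)}(j-k_*)\ge 1/2$ for all $t\ge T_\beta$ in this range. Setting $\eta_\beta:=\eta/2$ gives \eqref{eq:one-sided-positive-plateau}. There is no real obstacle here. The only care needed is to put the shift $k_*$ and the time shift by one into the constant $C$; the monotonicity from Lemma~\ref{lem:monotonicity-and-seeding} is not needed, since the barrier already covers the whole right region.
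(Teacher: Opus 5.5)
Your proposal is correct and follows the paper's proof: choose $\gamma\in(0,a)$ with $2s\beta<\gamma$, apply Proposition~\ref{prop:one-sided-expanding-barrier} at time $t-1$, observe that $(j-k_*)^-/R(t-1)\to0$ uniformly on $\{j\geq-\exp(\beta t)\}$ so the barrier tends uniformly to $\eta$, and take $\eta_\beta=\eta/2$. Your separate reduction for $\beta\leq0$ is harmless but not needed, since the same estimate already covers that case.
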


\begin{proof}
Choose $\gamma\in(0,a)$ with $\beta<\gamma/(2s)$. In the stated region,
\[
    \frac{(j-k_*)^-}{R\exp(\gamma(t-1)/(2s))}
    \leq
    C\frac{\exp(\beta t)+1}{\exp(\gamma t/(2s))}
    \to0.
\]
The right-hand side of \eqref{eq:one-sided-expanding-barrier} therefore
converges uniformly to $\eta$ there. Taking $\eta_\beta=\eta/2$ for
large $t$ proves the assertion.
\end{proof}

\begin{lemma}
\label{lem:one-sided-plateau-upgrade}
Let $\beta_0>\max\{\beta,0\}$. If there exist $\eta_0,T_0>0$ such that
\begin{equation}\label{eq:one-sided-faster-plateau}
    u(t,j)\geq\eta_0
    \quad\text{when}\quad
    t\geq T_0,\quad j\geq-\exp(\beta_0t),
\end{equation}
then
\begin{equation}\label{eq:one-sided-upgrade-conclusion}
    \lim_{t\to\infty}
    \inf_{\substack{j\in\Z\\j\geq-\exp(\beta t)}}u(t,j)=1.
\end{equation}
\end{lemma}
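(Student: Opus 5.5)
The plan is to repeat the compactness--Liouville argument of Proposition~\ref{prop:plateau-upgrade} in the one-dimensional, one-sided geometry.

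\emph{Contradiction setup.} Suppose \eqref{eq:one-sided-upgrade-conclusion} fails. Since $u\leq1$, there are $\delta>0$, times $t_n\to\infty$, and integers $j_n\geq-\exp(\beta t_n)$ with $u(t_n,j_n)\leq1-\delta$. Consider the translates
\[
U_n(\tau,z):=u(t_n+\tau,j_n+z),\qquad \tau>-t_n,\ z\in\Z .
\]

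\emph{Compactness.} By \eqref{eq:operator-bound} and $0\leq U_n\leq1$, the bound \eqref{eq:uniform-time-lipschitz} gives a uniform time-Lipschitz constant. A diagonal Arzel\`a--Ascoli argument then produces a subsequence converging, locally uniformly in $\tau$ and for every $z\in\Z$, to a function $U$ with values in $[0,1]$. To pass to the limit in $A^sU_n$, I split the jump sum into $|y|\leq M$ and $|y|>M$. The tail is at most $2\sum_{|y|>M}K_s(y)$, which is small uniformly in $n$. Taking the limit in the integral form of the equation, exactly as in Proposition~\ref{prop:plateau-upgrade}, shows that $U\in C^1(\R;\ell^\infty(\Z))$ is an entire classical solution.

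\emph{Positive lower bound on the limit.} This is the only step that differs from Proposition~\ref{prop:plateau-upgrade}, and the one I expect to need care, because $j_n$ need not be bounded. The region is one-sided: I only need $j_n+z\geq-\exp(\beta_0(t_n+\tau))$ for $|\tau|\leq T$ and $|z|\leq N$.

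Since $j_n\geq-\exp(\beta t_n)$, I have $j_n+z\geq-\exp(\beta t_n)-N$. Because $\beta_0>\max\{\beta,0\}$, the ratio
\[
\frac{\exp(\beta t_n)+N}{\exp(\beta_0(t_n-T))}
\]
tends to $0$. This remains true when $\beta\leq0$, since then $\exp(\beta t_n)\leq1$ and $\beta_0>0$. Hence $j_n+z\geq-\exp(\beta_0(t_n+\tau))$ for all large $n$. Moreover $t_n+\tau\geq T_0$ eventually, so \eqref{eq:one-sided-faster-plateau} gives $U_n\geq\eta_0$ on the cylinder $[-T,T]\times\{|z|\leq N\}$. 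Note that large positive $j_n$ pose no difficulty, since the plateau region is unbounded to the right. Passing to the limit and letting $T,N\to\infty$ gives $\eta_0\leq U\leq1$ on $\R\times\Z$.

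\emph{Conclusion.} Lemma~\ref{lem:liouville} applies: its proof uses only the comparison principle and the spatially constant ODE solution, and both are valid for $d=1$. It gives $U\equiv1$. This contradicts
\[
U(0,0)=\lim_{n\to\infty}u(t_n,j_n)\leq1-\delta,
\]
which proves \eqref{eq:one-sided-upgrade-conclusion}.
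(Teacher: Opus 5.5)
Your proposal is correct and follows essentially the same route as the paper: the contradiction sequence, the translates $U_n$, the compactness argument carried over from Proposition~\ref{prop:plateau-upgrade}, the one-sided inclusion $\exp(\beta t_n)+N\leq\exp(\beta_0(t_n-T))\leq\exp(\beta_0(t_n+\tau))$ using $\beta_0>\max\{\beta,0\}$ and $t_n-T\geq T_0$, and the closing appeal to Lemma~\ref{lem:liouville}. Your separate remarks on the case $\beta\leq0$ and on unbounded positive $j_n$ are correct and make explicit points the paper leaves implicit.
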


\begin{proof}
Suppose otherwise. Then there exist $\delta>0$, $t_n\to\infty$, and
$j_n\geq-\exp(\beta t_n)$ such that
\[
    u(t_n,j_n)\leq1-\delta.
\]
Define $U_n(\tau,z):=u(t_n+\tau,j_n+z)$. The compactness argument in
Proposition~\ref{prop:plateau-upgrade} applies without change. The uniform
time-Lipschitz bound gives local compactness at each lattice site, and the
summability of $K_s$ permits passage to the limit in $A^s$. After taking a
subsequence, $U_n$ therefore converges to an entire classical solution $U$
on $\R\times\Z$ satisfying
\[
    0\leq U\leq1,
    \qquad U(0,0)\leq1-\delta.
\]

Fix $T,N>0$. If $|\tau|\leq T$ and $|z|\leq N$, then
\[
    j_n+z\geq-\exp(\beta t_n)-N.
\]
Since $\beta_0>\max\{\beta,0\}$,
\[
    \exp(\beta t_n)+N
    \leq\exp(\beta_0(t_n-T))
    \leq\exp(\beta_0(t_n+\tau))
\]
for all sufficiently large $n$. After also requiring
$t_n-T\geq T_0$,
\eqref{eq:one-sided-faster-plateau} gives
$U_n(\tau,z)\geq\eta_0$. Passing to the limit first on each fixed cylinder and then using the arbitrariness of $T$ and $N$ yields
$\eta_0\leq U\leq1$ on $\R\times\Z$.
Lemma~\ref{lem:liouville} therefore gives $U\equiv1$, contradicting
$U(0,0)\leq1-\delta$.
\end{proof}

\begin{lemma}
\label{lem:monotone-linear-upper}
Let $m\in\N$ be the fixed admissible exponent chosen after
Proposition~\ref{prop:heat-kernel}.  There exists $C>0$ such that
\begin{equation}\label{eq:monotone-linear-tail-bound}
    u(t,j)
    \leq C\exp(at)(1+t+t^m)\Xi_1(j)
\end{equation}
for every $t>0$ and $j\in\Z$.
\end{lemma}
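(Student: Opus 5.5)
The plan is to copy the argument of Proposition~\ref{prop:upper-spreading}. We linearize the KPP inequality, then bound the linear semigroup applied to the initial datum by the one-sided profile, using Lemma~\ref{lem:cumulative-heat-kernel}.

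First, as in \eqref{eq:linear-comparison}, the inequality $f(u)\leq au$ from \eqref{eq:KPP-inequality} implies that $v(t):=\ee^{-at}u(t)$ satisfies $\partial_tv+A^sv\leq0$. The variation-of-constants formula and positivity of $S(t)$ then give
\[
    u(t)\leq\ee^{at}S(t)u_0 .
\]

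Second, we dominate $u_0$ by a multiple of $\Xi_1$.
\begin{itemize}
\item For $j\geq0$, $u_0(j)\leq1=\Xi_1(j)$.
\item For $j\leq-1$, the tail hypothesis \eqref{eq:monotone-left-tail} gives $u_0(j)\leq C_0|j|^{-2s}$. Since $\Xi_1(j)=(1+j^2)^{-s}\geq 2^{-s}|j|^{-2s}$ for $|j|\geq1$, this yields $u_0(j)\leq 2^sC_0\,\Xi_1(j)$.
\end{itemize}
Hence $u_0\leq C'\Xi_1$ pointwise on $\Z$, with $C'=\max\{1,2^sC_0\}$.

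Third, positivity of $S(t)$ gives $S(t)u_0\leq C'S(t)\Xi_1$. The estimate \eqref{eq:one-sided-semigroup-upper} of Lemma~\ref{lem:cumulative-heat-kernel} then gives
\[
    S(t)\Xi_1(j)\leq C(1+t+t^m)\Xi_1(j).
\]
Combining the three steps yields \eqref{eq:monotone-linear-tail-bound}.

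There is no real obstacle. The only care needed is the comparison in the first step on the full, possibly non-summable datum. Because $u_0$ is merely bounded, the Duhamel argument has to be carried out in $\ellinf$, where $S(t)$ is a positive contraction. This works because $A^s$ is bounded there (Remark~\ref{rem:bounded-generator}). Alternatively, one can apply Lemma~\ref{lem:comparison} with $g(r)=ar$ to $u$ and $w=\ee^{at}C'S(t)\Xi_1$, since $w$ solves $\partial_tw+A^sw=aw$ exactly.

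The nontrivial content, namely that the cumulative tail $\Xi_1$ is preserved by the semigroup up to the polynomial factor, is already contained in Lemma~\ref{lem:cumulative-heat-kernel}.
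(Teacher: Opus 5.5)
Your proposal is correct and matches the paper's proof: dominate $u_0$ by $C\Xi_1$ using $0\le u_0\le 1$ and the left-tail hypothesis, linearize via $v=\ee^{-at}u$ to obtain $u(t)\le\ee^{at}S(t)u_0$, and conclude with positivity of $S(t)$ and \eqref{eq:one-sided-semigroup-upper}. The explicit constant $\max\{1,2^sC_0\}$ and your observation that monotonicity of $u_0$ is not needed for this step are fine.
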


\begin{proof}
Conditions \eqref{eq:monotone-initial-data} and
\eqref{eq:monotone-left-tail} imply $u_0\leq C\Xi_1$. As in
\eqref{eq:linear-comparison}, set $v=\exp(-at)u$ and use
$\partial_tv+A^sv\leq0$. Positivity of $S(t)$ and
Lemma~\ref{lem:cumulative-heat-kernel} give
\[
    u(t)\leq\exp(at)S(t)u_0
    \leq C\exp(at)S(t)\Xi_1
    \leq C\exp(at)(1+t+t^m)\Xi_1.
\]
\end{proof}

\begin{proof}[Proof of Theorem~\ref{thm:monotone-sharp-propagation}]
If $\sigma>a/(2s)$ and $j\leq-\exp(\sigma t)$, then
$\Xi_1(j)\leq C\exp(-2s\sigma t)$. Hence
Lemma~\ref{lem:monotone-linear-upper} gives
\[
    u(t,j)
    \leq C(1+t+t^m)\exp\bigl((a-2s\sigma)t\bigr),
\]
which tends to zero uniformly. This proves
\eqref{eq:monotone-outer-conclusion}.

Let $\sigma<a/(2s)$ and choose
\[
    \max\{\sigma,0\}<\beta_0<\frac{a}{2s}.
\]
Corollary~\ref{cor:one-sided-positive-plateau} supplies
\eqref{eq:one-sided-faster-plateau} at exponent $\beta_0$.
Lemma~\ref{lem:one-sided-plateau-upgrade}, with $\beta=\sigma$, proves
\eqref{eq:monotone-inner-conclusion}.

By \eqref{eq:solution-monotonicity}, every nonempty superlevel set is an
upper lattice ray. Fix $0<\varepsilon<a/(2s)$ and set
\[
    \sigma_-:=\frac{a}{2s}-\varepsilon,
    \qquad
    \sigma_+:=\frac{a}{2s}+\varepsilon.
\]
For all sufficiently large $t$, the inner estimate gives
$u(t,j)>\theta$ when $j\geq-\exp(\sigma_-t)$, while the outer estimate
gives $u(t,j)<\theta$ when $j\leq-\exp(\sigma_+t)$. Thus
$X_\theta(t)$ is finite and negative, and integer rounding gives
\[
    \exp(\sigma_-t)-1
    \leq -X_\theta(t)
    \leq \exp(\sigma_+t)+1.
\]
Divide the logarithms by $t$ and let first $t\to\infty$ and then
$\varepsilon\downarrow0$ to obtain \eqref{eq:monotone-interface-law}.
\end{proof}

\begin{remark}
The left-tail assumption \eqref{eq:monotone-left-tail} is used only for the
outer estimate. The inner convergence
\eqref{eq:monotone-inner-conclusion} holds for every nonzero nondecreasing
datum with values in $[0,1]$.
\end{remark}

\begin{remark}
The exponent $2s$ is forced by the half-lattice geometry:
\[
    S(1)H_0(j)
    =\sum_{z\leq j}G_1(z)
    \asymp |j|^{-2s}
    \quad\text{as }j\to-\infty.
\]
This is the lattice counterpart of Theorem~1.5 of
\cite{CabreRoquejoffre2013}. On a lattice, an exact level set
$\{j:u(t,j)=\theta\}$ may be empty, so the threshold
$X_\theta(t)$ in \eqref{eq:monotone-interface-definition} is the natural
interface position.
\end{remark}

\section{Directional spreading and nonexistence of planar fronts}
\label{sec:traveling-fronts}

Throughout this section, assumption \emph{(F)} is in force.

The exponential spreading law rules out planar fronts of finite constant
speed. The argument works in every direction and uses only comparison and
the spreading estimate; it requires neither monotonicity of the profile nor
an arithmetic condition on the direction.

Let
\[
    \mathbb S^{d-1}:=\{e\in\R^d:|e|=1\}.
\]
An entire classical solution is a function
\[
    W\in C^1(\R;\ellinf)
\]
with values in $[0,1]$ such that
\[
    \partial_tW(t)+A^sW(t)=f(W(t))
    \quad\text{in }\ellinf
\]
for every $t\in\R$.

\begin{definition}
\label{def:planar-traveling-front}
For $e\in\mathbb S^{d-1}$ and $c\in\R$, a
classical planar traveling front in direction $e$ with speed $c$ is an
entire classical solution of the form
\begin{equation}\label{eq:planar-front-form}
    W(t,x)=\phi(x\cdot e-ct),
    \qquad t\in\R,\quad x\in\Z^d,
\end{equation}
where $\phi\in C(\R;[0,1])$ satisfies
\begin{equation}\label{eq:planar-front-limits}
    \lim_{\rho\to-\infty}\phi(\rho)=1,
    \qquad
    \lim_{\rho\to+\infty}\phi(\rho)=0.
\end{equation}
No monotonicity of $\phi$ is assumed. With this convention, a positive
speed describes motion in the direction $e$.
\end{definition}

The associated profile operator is defined, for bounded
$\phi:\R\to\R$, by
\begin{equation}\label{eq:projected-profile-operator}
    (\mathcal L_{s,e}\phi)(\rho)
    :=
    \sum_{z\neq0}K_s(z)
    \bigl(\phi(\rho)-\phi(\rho+z\cdot e)\bigr).
\end{equation}
The series converges absolutely and uniformly in $\rho$ by
\eqref{eq:kappa-definition}. A continuous function satisfying
\eqref{eq:planar-front-limits} is uniformly continuous on $\R$; hence both
$\rho\mapsto\mathcal L_{s,e}\phi(\rho)$ and
$\rho\mapsto f(\phi(\rho))$ are continuous. If $c\neq0$, then
$t\mapsto W(t,0)=\phi(-ct)$ belongs to $C^1(\R)$, because point
evaluation is a bounded linear functional on $\ellinf$. Since
$t\mapsto-ct$ is a diffeomorphism of $\R$, it follows that
$\phi\in C^1(\R)$. Taking $x=0$ and $t=-\rho/c$ in the equation for
$W$ then gives
\begin{equation}\label{eq:traveling-profile-equation}
    -c\phi'(\rho)+\mathcal L_{s,e}\phi(\rho)
    =f(\phi(\rho)),
    \qquad \rho\in\R.
\end{equation}
When $c=0$, set
\[
    \Gamma_e:=\{x\cdot e:x\in\Z^d\}.
\]
The equation for $W$ gives
\[
    \mathcal L_{s,e}\phi(\rho)=f(\phi(\rho)),
    \qquad \rho\in\Gamma_e.
\]
If $e$ is not proportional to an integer vector, then $\Gamma_e$ is
dense in $\R$, and continuity of both sides extends this identity to every
$\rho\in\R$. Indeed, an additive subgroup of $\R$ is either dense or cyclic;
if $\Gamma_e=h\Z$ for some $h>0$, then every component of $e$ is an integer
multiple of $h$, so $e$ is proportional to an integer vector.
If $e$ is proportional to an integer vector, then $\Gamma_e$ is
discrete, and the values of the continuous interpolation $\phi$ outside
$\Gamma_e$ do not affect $W$. Conversely, if
$\phi\in C^1(\R;[0,1])$, $\phi'$ is bounded and uniformly continuous,
and \eqref{eq:traveling-profile-equation} holds for every $\rho$, then
\eqref{eq:planar-front-form} is an entire classical solution. Uniform
continuity of $\phi'$ gives strong $C^1(\R;\ellinf)$ differentiability,
while uniform convergence in \eqref{eq:projected-profile-operator}
verifies the equation in $\ellinf$.

For a fixed direction $e$, let
\begin{equation}\label{eq:traveling-speed-set}
    \mathcal C_{\mathrm{tw}}(e)
    :=
    \left\{
        c\in\R:
        \text{a classical planar traveling front of speed $c$
        exists in direction $e$}
    \right\}.
\end{equation}
We define the extended minimal traveling-front threshold by
\begin{equation}\label{eq:extended-minimal-wave-speed}
    c_{\mathrm{tw}}^*(e)
    :=
    \inf\bigl(\mathcal C_{\mathrm{tw}}(e)\cap[0,\infty)\bigr),
\end{equation}
where $\inf\varnothing:=+\infty$. Thus
$c_{\mathrm{tw}}^*(e)=+\infty$ may express that the finite-speed set is
empty; it does not assert the existence of a front with infinite speed.

Fix $\eta\in(0,1)$, and denote by $u^\eta$ the solution with single-site
initial datum
\begin{equation}\label{eq:single-site-seed}
    u^\eta(0,x)=\eta\delta_0(x).
\end{equation}
For $0<\theta<1$ and $e\in\mathbb S^{d-1}$, set
\begin{equation}\label{eq:directional-level-position}
    X_{\theta,e}^{\eta}(t)
    :=
    \sup\left\{
        x\cdot e:
        x\in\Z^d,\ u^\eta(t,x)\geq\theta
    \right\},
\end{equation}
with $\sup\varnothing:=-\infty$. The next lemma shows that this number is
positive and finite for all sufficiently large $t$. Accordingly,
$\log X_{\theta,e}^{\eta}(t)$ is used only on this eventual time interval.

\begin{lemma}
\label{lem:directional-exponential-spreading}
For every $\eta,\theta\in(0,1)$ and every
$e\in\mathbb S^{d-1}$,
\begin{equation}\label{eq:directional-logarithmic-law}
    \lim_{t\to\infty}
    \frac{1}{t}\log X_{\theta,e}^{\eta}(t)
    =
    \frac{a}{d+2s}.
\end{equation}
Consequently,
\begin{equation}\label{eq:infinite-directional-linear-speed}
    \lim_{t\to\infty}
    \frac{X_{\theta,e}^{\eta}(t)}{t}
    =+\infty.
\end{equation}
\end{lemma}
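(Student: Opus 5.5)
The plan is to apply Theorem~\ref{thm:main} to the single-site datum $u^\eta(0)=\eta\delta_0$. This datum satisfies assumption \emph{(I)}: it is finitely supported, takes values in $[0,1]$, and is nonzero. The directional position $X_{\theta,e}^\eta(t)$ should then be squeezed between the radii appearing in \eqref{eq:main-lower} and \eqref{eq:main-upper}.

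\emph{Upper bound.} Fix $\varepsilon>0$. By \eqref{eq:main-upper}, for all large $t$ one has $u^\eta(t,x)<\theta$ whenever $|x|\geq\exp((a+\varepsilon)t/q)$. So every $x$ with $u^\eta(t,x)\geq\theta$ satisfies $x\cdot e\leq|x|<\exp((a+\varepsilon)t/q)$, since $|e|=1$. Hence $X_{\theta,e}^\eta(t)\leq\exp((a+\varepsilon)t/q)<\infty$ for large $t$.

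\emph{Lower bound.} Fix $\varepsilon\in(0,a)$ and set $r(t):=\exp((a-\varepsilon)t/q)$. By \eqref{eq:main-lower}, for large $t$, $u^\eta(t,x)>\theta$ at every lattice point with $|x|\leq r(t)$. The only lattice-specific issue is to exhibit such a point with large $x\cdot e$. I would take $x_t$ to be the coordinatewise rounding of $(r(t)-\sqrt d)\,e$, i.e.\ $(x_t)_j$ is the integer nearest to $(r(t)-\sqrt d)e_j$. Then $|x_t-(r(t)-\sqrt d)e|\leq\sqrt d/2$, which gives $|x_t|\leq r(t)$ and $x_t\cdot e\geq r(t)-\tfrac32\sqrt d$. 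Therefore $X_{\theta,e}^\eta(t)\geq r(t)-\tfrac32\sqrt d>0$ for large $t$. In particular $X_{\theta,e}^\eta(t)$ is positive and finite on an eventual interval, which justifies taking its logarithm.

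Taking logarithms, dividing by $t$, letting $t\to\infty$ and then $\varepsilon\downarrow0$ yields \eqref{eq:directional-logarithmic-law}. For \eqref{eq:infinite-directional-linear-speed}, the lower bound gives $X_{\theta,e}^\eta(t)/t\geq(\exp(at/(2q))-C)/t\to+\infty$. There is no real obstacle here. The only point needing care is the rounding step, which ensures a lattice point inside the ball with nearly maximal projection onto an arbitrary, possibly irrational, direction $e$.
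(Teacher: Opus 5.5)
Your proposal is correct and matches the paper's proof: both apply Theorem~\ref{thm:main} to the datum $\eta\delta_0$, get the upper bound from $x\cdot e\leq|x|$, and get the lower bound from a lattice point inside the subcritical ball that has large projection on $e$. The only difference is that point: the paper takes the coordinate-axis point $\lfloor \ee^{\beta t}\rfloor\operatorname{sgn}(e_j)\mathbf e_j$ (with $e_j\neq0$), whose projection is $|e_j|\lfloor \ee^{\beta t}\rfloor$, so your rounding step is valid but not actually needed, since a constant factor is invisible on the logarithmic scale.
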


\begin{proof}
Recall that $q=d+2s$. Fix $0<\beta<a/q$. Choose an index $j$ for
which $e_j\neq0$, let $\mathbf e_j$ be the $j$th coordinate vector,
and put
\[
    v:=\operatorname{sgn}(e_j)\mathbf e_j,
    \qquad
    x_t:=\left\lfloor\ee^{\beta t}\right\rfloor v.
\]
Then $|x_t|\leq\ee^{\beta t}$ and
\[
    x_t\cdot e
    =
    |e_j|\left\lfloor\ee^{\beta t}\right\rfloor.
\]
Apply the lower estimate in Theorem~\ref{thm:main} to $u^\eta$ with
$\varepsilon=a-q\beta>0$. It gives $u^\eta(t,x_t)\to1$, so
\[
    X_{\theta,e}^{\eta}(t)
    \geq
    |e_j|\left\lfloor\ee^{\beta t}\right\rfloor
\]
for all sufficiently large $t$. Hence
\begin{equation}\label{eq:directional-log-liminf}
    \liminf_{t\to\infty}
    \frac{1}{t}\log X_{\theta,e}^{\eta}(t)
    \geq\beta.
\end{equation}
Letting $\beta\uparrow a/q$ gives the required lower bound.

For the upper bound, fix $\gamma>a/q$ and use the upper estimate in
Theorem~\ref{thm:main} with $\varepsilon=q\gamma-a>0$. For all
sufficiently large $t$, every $x$ satisfying
$u^\eta(t,x)\geq\theta$ must obey $|x|<\ee^{\gamma t}$. Since
$x\cdot e\leq|x|$, it follows that
\[
    X_{\theta,e}^{\eta}(t)\leq\ee^{\gamma t}.
\]
Therefore
\[
    \limsup_{t\to\infty}
    \frac{1}{t}\log X_{\theta,e}^{\eta}(t)
    \leq\gamma.
\]
Letting $\gamma\downarrow a/q$ and combining this inequality with
\eqref{eq:directional-log-liminf} proves
\eqref{eq:directional-logarithmic-law}. Finally, the lower estimate
with any fixed $\beta\in(0,a/q)$ gives
\[
    \frac{X_{\theta,e}^{\eta}(t)}{t}
    \geq
    |e_j|
    \frac{\lfloor\ee^{\beta t}\rfloor}{t}
    \to+\infty,
\]
which proves \eqref{eq:infinite-directional-linear-speed}.
\end{proof}

Lemma~\ref{lem:directional-exponential-spreading} defines the directional
linear spreading speed in the extended real line:
\begin{equation}\label{eq:directional-linear-spreading-speed}
    c_{\mathrm{sp}}(e)
    :=
    \lim_{t\to\infty}
    \frac{X_{\theta,e}^{\eta}(t)}{t}
    =+\infty.
\end{equation}
The value is independent of $\eta,\theta\in(0,1)$. The corresponding
finite logarithmic spreading rate is
\begin{equation}\label{eq:directional-logarithmic-speed}
    \lambda_{\mathrm{sp}}(e)
    :=
    \lim_{t\to\infty}
    \frac{1}{t}\log X_{\theta,e}^{\eta}(t)
    =
    \frac{a}{d+2s}.
\end{equation}
The same two limits hold for the directional level position of the solution
issued from any initial datum satisfying assumption \emph{(I)}. Indeed, the
lower and upper estimates in Theorem~\ref{thm:main} apply directly to that
solution. Repeating the proof of
Lemma~\ref{lem:directional-exponential-spreading}, with the same coordinate
points $x_t$, gives both directional limits.

\begin{lemma}
\label{lem:front-linear-upper-bound}
Suppose that a classical planar traveling front in direction $e$ has a
finite speed $c\in\R$. Then, for every
$\eta,\theta\in(0,1)$, there is a constant $C$ such that
\begin{equation}\label{eq:front-induced-level-bound}
    X_{\theta,e}^{\eta}(t)\leq ct+C,
    \qquad t\geq0.
\end{equation}
In particular,
\begin{equation}\label{eq:front-spreading-inequality}
    \limsup_{t\to\infty}
    \frac{X_{\theta,e}^{\eta}(t)}{t}
    \leq c.
\end{equation}
\end{lemma}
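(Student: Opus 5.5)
Since $W(t,x)=\phi(x\cdot e-ct)$ takes values in $[0,1]$ and $\phi(\rho)\to1$ as $\rho\to-\infty$, I will place the front under a translate so that it dominates the seed $\eta\delta_0$ at time zero. Comparison (Lemma~\ref{lem:comparison}) then keeps $u^\eta$ below the front for all later times. The front moves linearly, so this gives a linear bound on the level position.

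\textbf{Step 1 (initial ordering).} Fix $\eta,\theta\in(0,1)$. Since $\phi(\rho)\to1$ as $\rho\to-\infty$, choose $\rho_0\in\R$ with $\phi(\rho)\geq\eta$ for all $\rho\leq\rho_0$. Define the shifted front $\widetilde W(t,x):=\phi(x\cdot e-ct+\rho_0)$. It is again an entire classical solution with values in $[0,1]$, because translation in the $\rho$ variable corresponds to a time shift by $-\rho_0/c$ when $c\neq0$. When $c=0$ I instead use a spatial shift by a lattice vector $y$ with $y\cdot e\leq\rho_0$, which preserves the equation by translation invariance of $A^s$. In the case $c=0$ I therefore work with $W(t,x+y)$ rather than the $\rho$-translate. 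This choice avoids evaluating $\phi$ off $\Gamma_e$ when $e$ is rational, and it is the only delicate bookkeeping point. I will also handle $c\neq0$ the same way, via a lattice shift $y=-k\,\mathrm{sgn}(e_j)\mathbf e_j$ with $k$ large, so that the argument is uniform in $c$.

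With this choice, the shifted front satisfies $\widetilde W(0,0)=\phi(y\cdot e)\geq\eta=u^\eta(0,0)$. At every other site, $\widetilde W(0,x)\geq0=u^\eta(0,x)$.

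\textbf{Step 2 (comparison).} Both $u^\eta$ and $\widetilde W$ are bounded $C^1([0,T];\ellinf)$ solutions, with $u^\eta$ supplied by Proposition~\ref{prop:well-posedness}. Lemma~\ref{lem:comparison}, applied with $g$ a locally Lipschitz extension of $f$, gives $u^\eta(t,x)\leq\phi(x\cdot e+y\cdot e-ct)$ for all $t\geq0$ and $x\in\Z^d$.

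\textbf{Step 3 (level bound).} Since $\phi(\rho)\to0$ as $\rho\to+\infty$, choose $\rho_1$ with $\phi(\rho)<\theta$ for all $\rho>\rho_1$. Now suppose $u^\eta(t,x)\geq\theta$. By Step 2, $\phi(x\cdot e+y\cdot e-ct)\geq\theta$, which forces $x\cdot e+y\cdot e-ct\leq\rho_1$. Hence $x\cdot e\leq ct+\rho_1-y\cdot e$. Taking the supremum over such $x$ yields \eqref{eq:front-induced-level-bound} with $C:=\rho_1-y\cdot e$; if the superlevel set is empty, the bound holds trivially since the supremum is $-\infty$. Dividing by $t$ and letting $t\to\infty$ gives \eqref{eq:front-spreading-inequality}.

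The main obstacle is only the bookkeeping in Step 1 ensuring the shifted front is genuinely an entire classical solution for every direction and every speed, including $c=0$ with rational $e$. Using lattice translations sidesteps this. The remaining steps are direct.
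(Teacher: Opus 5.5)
Your proposal is correct and matches the paper's proof: the paper also translates the front by an integer multiple of $\operatorname{sgn}(e_j)\mathbf e_j$ (its $z_N$ is your $-y$) so that it dominates $\eta\delta_0$ at time zero. It then applies Lemma~\ref{lem:comparison} on finite intervals and bounds the superlevel set using the finite number $M_\theta=\sup\{\rho:\phi(\rho)\geq\theta\}$, which plays the role of your $\rho_1$. Because the shift is a lattice translation, the argument also works for $c=0$ and rational directions $e$.
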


\begin{proof}
Write the front as $W(t,x)=\phi(x\cdot e-ct)$. Choose $j$ such that
$e_j\neq0$, set
\[
    v:=\operatorname{sgn}(e_j)\mathbf e_j,
    \qquad z_N:=Nv,
\]
and observe that $z_N\cdot e=N|e_j|\to+\infty$. By the first limit in
\eqref{eq:planar-front-limits}, one may choose $N$ so large that
\[
    \phi(-z_N\cdot e)\geq\eta.
\]
The integer translate
\[
    \widetilde W(t,x)
    :=
    W(t,x-z_N)
    =
    \phi(x\cdot e-ct-z_N\cdot e)
\]
is again an entire classical solution, since translation invariance follows
directly from \eqref{eq:jump-representation}. At time zero,
\[
    u^\eta(0,\cdot)\leq\widetilde W(0,\cdot).
\]
At $x=0$, this inequality follows from the choice of $N$; at $x\neq0$,
the left-hand side is zero and $\widetilde W(0,x)\geq0$. Applying
Lemma~\ref{lem:comparison} on an arbitrary finite time interval and then
letting its endpoint increase yields
\begin{equation}\label{eq:single-site-below-front}
    u^\eta(t,x)\leq\widetilde W(t,x),
    \qquad t\geq0,\quad x\in\Z^d.
\end{equation}

The number
\[
    M_\theta:=\sup\{\rho\in\R:\phi(\rho)\geq\theta\}
\]
is finite, by the two limits in \eqref{eq:planar-front-limits}. If
$u^\eta(t,x)\geq\theta$, then \eqref{eq:single-site-below-front}
implies
\[
    x\cdot e-ct-z_N\cdot e\leq M_\theta.
\]
Taking the supremum over the $\theta$-superlevel set gives
\[
    X_{\theta,e}^{\eta}(t)
    \leq ct+z_N\cdot e+M_\theta.
\]
This proves \eqref{eq:front-induced-level-bound}; division by $t$ gives
\eqref{eq:front-spreading-inequality}.
\end{proof}

\begin{theorem}
\label{thm:no-finite-speed-planar-front}
Assume \emph{(F)}. For every $e\in\mathbb S^{d-1}$, equation
\eqref{eq:intro-equation} admits no classical planar traveling front
with any finite speed $c\in\R$. Consequently,
\begin{equation}\label{eq:minimal-wave-equals-spreading-speed}
    \mathcal C_{\mathrm{tw}}(e)=\varnothing,
    \qquad
    c_{\mathrm{tw}}^*(e)
    =
    c_{\mathrm{sp}}(e)
    =
    +\infty.
\end{equation}
The corresponding finite directional logarithmic spreading rate is
\begin{equation}\label{eq:finite-logarithmic-spreading-rate}
    \lambda_{\mathrm{sp}}(e)
    =
    \frac{f'(0)}{d+2s}.
\end{equation}
\end{theorem}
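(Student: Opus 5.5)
The argument is by contradiction and uses only Lemmas~\ref{lem:directional-exponential-spreading} and \ref{lem:front-linear-upper-bound}. Fix $e\in\mathbb S^{d-1}$ and suppose that $c\in\R$ lies in $\mathcal C_{\mathrm{tw}}(e)$, so that a classical planar traveling front $W(t,x)=\phi(x\cdot e-ct)$ exists. Fix any $\eta,\theta\in(0,1)$, for instance $\eta=\theta=\tfrac12$, and consider the single-site solution $u^\eta$ from \eqref{eq:single-site-seed}.

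Lemma~\ref{lem:front-linear-upper-bound} gives $X_{\theta,e}^{\eta}(t)\leq ct+C$ for every $t\geq0$. In particular, $\limsup_{t\to\infty}X_{\theta,e}^{\eta}(t)/t\leq c<\infty$. On the other hand, \eqref{eq:infinite-directional-linear-speed} in Lemma~\ref{lem:directional-exponential-spreading} states that $X_{\theta,e}^{\eta}(t)/t\to+\infty$. These two conclusions contradict each other. Since neither lemma restricts the sign of $c$, and no monotonicity or rationality of $e$ is needed, every real speed is excluded. Hence $\mathcal C_{\mathrm{tw}}(e)=\varnothing$.

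By the convention $\inf\varnothing=+\infty$ in \eqref{eq:extended-minimal-wave-speed}, it follows that $c_{\mathrm{tw}}^*(e)=+\infty$. The equality $c_{\mathrm{sp}}(e)=+\infty$ is \eqref{eq:directional-linear-spreading-speed}, which was already derived from Lemma~\ref{lem:directional-exponential-spreading}. Finally, \eqref{eq:finite-logarithmic-spreading-rate} is the logarithmic law \eqref{eq:directional-logarithmic-law}, recalling that $a=f'(0)$.

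There is no substantial obstacle, because all the analytic content sits in the two preceding lemmas. The only point needing care is that the comparison inside Lemma~\ref{lem:front-linear-upper-bound} requires a translate of the front lying above $\eta\delta_0$. That lemma already secures this by using an integer lattice translate $z_N$ together with $\phi(\rho)\to1$ as $\rho\to-\infty$. This choice keeps the translate an entire solution on $\Z^d$ even when $e$ is irrational.
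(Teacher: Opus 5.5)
Your proposal is correct and follows the paper's proof essentially step for step. The contradiction comes from setting the linear bound $X_{\theta,e}^{\eta}(t)\leq ct+C$ of Lemma~\ref{lem:front-linear-upper-bound} against the superlinear growth in Lemma~\ref{lem:directional-exponential-spreading}. The values $c_{\mathrm{tw}}^*(e)=c_{\mathrm{sp}}(e)=+\infty$ and $\lambda_{\mathrm{sp}}(e)=f'(0)/(d+2s)$ are then read off from the convention $\inf\varnothing=+\infty$ and the established directional limits, exactly as you do.
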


\begin{proof}
Suppose, to the contrary, that a planar traveling front with a finite speed $c$ exists in direction $e$. Lemma~\ref{lem:front-linear-upper-bound} gives
\[
    \limsup_{t\to\infty}
    \frac{X_{\theta,e}^{\eta}(t)}{t}
    \leq c,
\]
whereas Lemma~\ref{lem:directional-exponential-spreading} gives
\[
    \lim_{t\to\infty}
    \frac{X_{\theta,e}^{\eta}(t)}{t}
    =+\infty.
\]
This contradiction excludes every $c\in\R$, including $c=0$ and
$c<0$. Hence $\mathcal C_{\mathrm{tw}}(e)=\varnothing$, and
\eqref{eq:extended-minimal-wave-speed} gives
$c_{\mathrm{tw}}^*(e)=+\infty$. Equations
\eqref{eq:directional-linear-spreading-speed} and
\eqref{eq:directional-logarithmic-speed} prove the remaining
assertions.
\end{proof}

\begin{remark}
\label{rem:meaning-infinite-threshold}
Equality \eqref{eq:minimal-wave-equals-spreading-speed} holds in the
extended real line. It records the failure of a constant-speed
traveling-front description and does not mean that a front exists at a
speed called $+\infty$. A profile with the reversed end states is also
excluded: replacing $(e,c,\phi(\rho))$ by
$(-e,-c,\phi(-\rho))$ reduces it to
Definition~\ref{def:planar-traveling-front}.
\end{remark}

\begin{remark}
\label{rem:no-exponential-moment}
The jump kernel has no positive exponential moment in any direction.
Indeed, for every $e\in\mathbb S^{d-1}$ and $\lambda>0$,
\begin{equation}\label{eq:no-exponential-moment}
    \sum_{z\neq0}K_s(z)\exp(\lambda z\cdot e)=+\infty.
\end{equation}
To see this, choose $j$ with $e_j\neq0$ and take
$z_n=n\operatorname{sgn}(e_j)\mathbf e_j$. Then
\eqref{eq:jump-kernel-bounds} gives
\[
    K_s(z_n)\exp(\lambda z_n\cdot e)
    \geq
    c(1+n)^{-d-2s}\exp(\lambda n|e_j|),
\]
so the corresponding subseries diverges. Thus the usual finite-valued
real exponential dispersion relation used for exponentially integrable
KPP kernels is unavailable here. This observation is consistent with,
but is not needed for, the comparison proof above.
\end{remark}

\section{The local regime}\label{tr:sec:local}

The boundedness of the lattice operator gives a direct comparison between
the fractional and local dynamics.

\begin{lemma}\label{tr:lem:operator-convergence}
For $0<s<1$,
\begin{equation}\label{tr:eq:operator-norm}
  \|A^s-A\|_{\ell^\infty\to\ell^\infty}\leq2(1-s).
\end{equation}
If $u_s$ and $u_1$ solve \eqref{eq:intro-equation} and
\eqref{tr:eq:local-equation}, respectively, with the same initial datum, then
\begin{equation}\label{tr:eq:solution-stability}
  \|u_s(t)-u_1(t)\|_\infty
  \leq\frac{2(1-s)}{a}\bigl(\ee^{at}-1\bigr),
  \qquad t\geq0.
\end{equation}
\end{lemma}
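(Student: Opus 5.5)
The plan is to write $A^s$ as an explicit power series in $P$ whose coefficients are nonnegative and sum to one. The operator bound then follows from the triangle inequality, and \eqref{tr:eq:solution-stability} follows from Duhamel's formula and Gronwall.

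\emph{Step 1: a series for $A^s$.} By \eqref{eq:ordinary-semigroup-series}, $\ee^{-rA}=\ee^{-r}\sum_{k\ge0}\frac{r^k}{k!}P^k$. Substitute this into the Balakrishnan formula \eqref{eq:balakrishnan}. The $P^0$ coefficient of $I-\ee^{-rA}$ is $1-\ee^{-r}$, and for $k\ge1$ the $P^k$ coefficient is $-\ee^{-r}r^k/k!$. For $k\ge1$ one has
\[
  c_s\int_0^\infty \ee^{-r}\frac{r^{k}}{k!}\frac{dr}{r^{1+s}}=\frac{s\,\Gamma(k-s)}{\Gamma(1-s)\,k!}=:b_k>0 .
\]
These $b_k$ are exactly the coefficients in $(1-z)^s=1-\sum_{k\ge1}b_kz^k$. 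The $P^0$ term contributes $c_s\int_0^\infty(1-\ee^{-r})r^{-1-s}\,dr=1^s=1$. Every term has a fixed sign and $\|P^k\|_{\ell^\infty\to\ell^\infty}\le1$, so Tonelli/Fubini lets us exchange the integral and the sum. This gives, in operator norm on $\ell^\infty$,
\[
  A^s=I-\sum_{k\ge1}b_kP^k .
\]
Evaluating $(1-z)^s$ at $z=1$ (Abel's theorem, since $b_k\ge0$) gives $\sum_{k\ge1}b_k=1$, and $b_1=s$.

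\emph{Step 2: the operator bound.} Since $A=I-P$,
\[
  A^s-A=(1-s)P-\sum_{k\ge2}b_kP^k .
\]
Taking norms and using $\sum_{k\ge2}b_k=1-s$ gives $\|A^s-A\|\le(1-s)+(1-s)=2(1-s)$, which is \eqref{tr:eq:operator-norm}. The only delicate point in the whole proof is the exchange of summation and integration in Step 1. It is handled by positivity of every term, so I do not expect a real obstacle.

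\emph{Step 3: stability of solutions.} Set $w:=u_s-u_1$. Then
\[
  \partial_tw+Aw=-(A^s-A)u_s+f(u_s)-f(u_1),
  \qquad w(0)=0 .
\]
The semigroup $\ee^{-tA}$ is a contraction on $\ell^\infty$. By Proposition~\ref{prop:well-posedness}, and its analogue for the local equation obtained with the same comparison argument, both $u_s$ and $u_1$ take values in $[0,1]$. On $[0,1]$ the logistic reaction $f(r)=ar(1-r)$ is Lipschitz with constant $a$, since $|f'|\le a$ there. Duhamel's formula together with \eqref{tr:eq:operator-norm} then gives
\[
  \|w(t)\|_\infty\le\int_0^t\bigl(2(1-s)+a\|w(r)\|_\infty\bigr)\,dr .
\]
Gronwall's inequality, in its linear ODE comparison form, yields $\|w(t)\|_\infty\le\frac{2(1-s)}{a}(\ee^{at}-1)$, which is \eqref{tr:eq:solution-stability}.
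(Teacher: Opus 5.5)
Your proof is correct and follows the paper's argument: the expansion $A^s=I-sP-\sum_{k\ge2}b_kP^k$ with $b_k\ge0$ and $\sum_{k\ge2}b_k=1-s$, then the triangle inequality, then variation of constants plus Gronwall. The differences are minor: you derive the series from the Balakrishnan formula, which usefully confirms that it represents $A^s$ on $\ell^\infty$ (the paper leaves this implicit), and you run Duhamel with $\ee^{-tA}$ and forcing $(A^s-A)u_s$ instead of $\ee^{-tA^s}$ and $(A^s-A)u_1$.
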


\begin{proof}
The binomial series converges in operator norm and gives
\begin{equation}\label{tr:eq:binomial}
  (I-P)^s=I-sP-\sum_{n=2}^\infty b_{n,s}P^n,
  \qquad
  b_{n,s}:=-(-1)^n\binom{s}{n}>0.
\end{equation}
Since
\[
  b_{n,s}=\frac{s\Gamma(n-s)}{\Gamma(1-s)\Gamma(n+1)},
\]
the series is summable. Evaluating its scalar counterpart at $1$ yields
$\sum_{n\geq2}b_{n,s}=1-s$. Hence
\[
  \|A^s-A\|
  \leq(1-s)\|P\|+\sum_{n=2}^\infty b_{n,s}\|P^n\|
  \leq2(1-s).
\]

Set $w=u_s-u_1$. The semigroup generated by $-A^s$ is a positive
contraction on $\ell^\infty$. Variation of constants and the fact that
$r\mapsto ar(1-r)$ is $a$-Lipschitz on $[0,1]$ give
\[
  \|w(t)\|_\infty
  \leq2(1-s)t+a\int_0^t\|w(r)\|_\infty\,dr.
\]
The integral form of Gronwall's inequality proves
\eqref{tr:eq:solution-stability}.
\end{proof}

We use the following local spreading result. Its formulation in terms of
$\mathcal W_a$ makes the later comparison transparent.

\begin{lemma}\label{tr:lem:wulff-geometry}
The set $\mathcal W_a$ is a compact convex neighborhood of the origin and
\begin{equation}\label{tr:eq:wulff-half-space-representation}
  \mathcal W_a
  =\bigcap_{e\in\mathbb S^{d-1}}
  \{v\in\R^d:v\cdot e\leq c_a(e)\}.
\end{equation}
Moreover, each supporting value is attained:
\begin{equation}\label{tr:eq:wulff-support-value}
  \max_{v\in\mathcal W_a}v\cdot e=c_a(e),
  \qquad e\in\mathbb S^{d-1}.
\end{equation}
If $h_t$ is the convolution kernel of $\ee^{-tA}$, then
\begin{equation}\label{tr:eq:local-walk-mgf}
  \sum_{x\in\Z^d}h_t(x)\ee^{\xi\cdot x}
  =\exp\left[t\left(-1+\frac1d\sum_{j=1}^d\cosh\xi_j\right)\right].
\end{equation}
Consequently, for every compact set
$F\subset\R^d\setminus\mathcal W_a$ there is $c_F>0$ such that
\begin{equation}\label{tr:eq:local-linear-large-deviation}
  \sup_{\substack{x\in\Z^d\\ x/t\in F}}\ee^{at}h_t(x)
  \leq C_F\ee^{-c_Ft},
  \qquad t\geq1.
\end{equation}
\end{lemma}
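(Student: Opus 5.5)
We prove the statements in the order: moment generating function, convex-analytic properties of $\mathcal W_a$, then the large-deviation bound. This order is natural because the Wulff geometry only uses that $\Lambda_a$ is smooth, strictly convex and superlinear.

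\emph{Moment generating function.} We start from \eqref{eq:product-kernel} and \eqref{eq:one-dimensional-kernel}. In each coordinate, the two oriented jump counts are independent Poisson variables of mean $t/(2d)$. Hence the one-dimensional factor has generating function $\exp[(t/(2d))(\ee^{\xi_j}+\ee^{-\xi_j}-2)]=\exp[(t/d)(\cosh\xi_j-1)]$. Taking the product over $j$ gives \eqref{tr:eq:local-walk-mgf}. Equivalently, $\ee^{-tA}$ applied to $x\mapsto \ee^{\xi\cdot x}$ multiplies it by $\exp(-t(1-\hat P(\xi)))$, where $\hat P(\xi)=\frac1d\sum_j\cosh\xi_j$.

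\emph{Geometry.} The function $\Lambda_a$ is smooth and strictly convex, with Hessian $\frac1d\operatorname{diag}(\cosh\xi_j)>0$. It grows like $\ee^{|\xi|_\infty}/d$, so it is superlinear. Therefore $I_a$ is finite everywhere, convex, lower semicontinuous, and coercive. Hence $\mathcal W_a$ is closed and convex. It is bounded, because $I_a(v)\geq \lambda|v|-\max_{|\xi|=\lambda}\Lambda_a(\xi)$ for every $\lambda>0$. Since $I_a(0)=-\Lambda_a(0)=-a<0$ and $I_a$ is continuous, $0$ lies in the interior.

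For the half-space representation we use Fenchel duality. Since $\Lambda_a$ is closed and convex, $\Lambda_a=I_a^*$. The support function of the sublevel set $\{I_a\leq0\}$ is then $\sigma(e)=\inf_{\lambda>0}\lambda^{-1}(I_a^*)(\lambda e)=c_a(e)$. This is the standard formula for the support function of a zero sublevel set; it applies because $\inf I_a<0$, which is a Slater condition. Concretely:
\begin{itemize}
\item \emph{Upper bound.} For $v\in\mathcal W_a$ and $\lambda>0$, we have $\lambda v\cdot e\leq I_a(v)+\Lambda_a(\lambda e)\leq\Lambda_a(\lambda e)$, so $v\cdot e\leq c_a(e)$.
\item \emph{Attainment.} $\lambda\mapsto\Lambda_a(\lambda e)/\lambda$ tends to $+\infty$ both as $\lambda\downarrow0$ (because $\Lambda_a(0)=a>0$) and as $\lambda\to\infty$. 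So the infimum is attained at some $\lambda_*$ with $\Lambda_a(\lambda_*e)=\lambda_*\,e\cdot\nabla\Lambda_a(\lambda_*e)$. Set $v_*:=\nabla\Lambda_a(\lambda_*e)$. Then $I_a(v_*)=\lambda_*e\cdot v_*-\Lambda_a(\lambda_*e)=0$, so $v_*\in\mathcal W_a$, and $v_*\cdot e=\Lambda_a(\lambda_*e)/\lambda_*=c_a(e)$.
\end{itemize}
This gives \eqref{tr:eq:wulff-support-value}. Since $\mathcal W_a$ is a compact convex set with support function $c_a$, it equals the intersection of its supporting half-spaces, which is \eqref{tr:eq:wulff-half-space-representation}.

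\emph{Large deviations.} Let $F\subset\R^d\setminus\mathcal W_a$ be compact. By lower semicontinuity, $\min_F I_a=:2c_F>0$. For each $v\in F$ choose $\xi_v$ with $\xi_v\cdot v-\Lambda_a(\xi_v)>\tfrac32 c_F$. The map $w\mapsto\xi_v\cdot w-\Lambda_a(\xi_v)$ is continuous, so this inequality persists, with $\tfrac32c_F$ replaced by $c_F$, on a neighborhood of $v$. Cover $F$ by finitely many such neighborhoods and keep the finitely many tilts $\xi_1,\dots,\xi_N$. For $x/t$ in the neighborhood attached to $\xi_i$, the Chernoff bound from \eqref{tr:eq:local-walk-mgf} gives
\[
h_t(x)\leq \ee^{-\xi_i\cdot x}\sum_yh_t(y)\ee^{\xi_i\cdot y}=\exp\bigl(-t[\xi_i\cdot(x/t)-\Lambda_a(\xi_i)+a]\bigr).
\]
Multiplying by $\ee^{at}$ yields $\ee^{at}h_t(x)\leq \ee^{-c_Ft}$, which is \eqref{tr:eq:local-linear-large-deviation}, even with $C_F=1$.

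The main obstacle is the duality step, in particular attaining the support value. The Chernoff step uses only finitely many fixed tilts and no local CLT, so it poses no difficulty.
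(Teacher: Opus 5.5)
Your proof is correct and follows essentially the same route as the paper: you use the Legendre-transform geometry, the minimizer $\lambda_e$ of $\Lambda_a(\lambda e)/\lambda$ with $v_e=\nabla\Lambda_a(\lambda_e e)$ and $I_a(v_e)=0$, the Poisson generating function, and Chernoff bounds over a finite cover of $F$. The one difference is how you get \eqref{tr:eq:wulff-half-space-representation}: you pass through the support function and the separation theorem, whereas the paper reads it off the definition ($I_a(v)\leq0$ iff $\xi\cdot v\leq\Lambda_a(\xi)$ for all $\xi$, then write $\xi=\lambda e$), so the duality step you flag as the main obstacle is actually immediate.
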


\begin{proof}
The function $\Lambda_a$ is smooth, even, and strictly convex. It tends to
infinity faster than linearly, while $\Lambda_a(0)=a$. Its Legendre
transform $I_a$ is therefore strictly convex and coercive. Since
$I_a(0)=-a<0$, the zero sublevel set is compact and contains the origin in
its interior.

By definition, $I_a(v)\leq0$ precisely when
\[
  \xi\cdot v\leq\Lambda_a(\xi)
  \qquad\text{for every }\xi\in\R^d.
\]
Writing a nonzero $\xi$ as $\lambda e$, with $\lambda>0$ and
$e\in\mathbb S^{d-1}$, this condition becomes
\[
  v\cdot e\leq
  \inf_{\lambda>0}\frac{\Lambda_a(\lambda e)}\lambda=c_a(e),
\]
which proves \eqref{tr:eq:wulff-half-space-representation}.

To prove \eqref{tr:eq:wulff-support-value}, set
\[
  \psi(\xi):=-1+\frac1d\sum_{j=1}^d\cosh\xi_j,
  \qquad \Lambda_a=a+\psi.
\]
For fixed $e\in\mathbb S^{d-1}$, the function
$\lambda\mapsto\Lambda_a(\lambda e)/\lambda$ tends to infinity both as
$\lambda\downarrow0$ and as $\lambda\to\infty$. It therefore has a minimizer
$\lambda_e>0$. With $v_e:=\nabla\psi(\lambda_e e)$, the critical-point
identity is
\[
  \lambda_e e\cdot v_e=\Lambda_a(\lambda_e e).
\]
Since $v_e=\nabla\Lambda_a(\lambda_e e)$, Legendre duality gives
\[
  I_a(v_e)=\lambda_e e\cdot v_e-\Lambda_a(\lambda_e e)=0.
\]
Thus $v_e\in\mathcal W_a$ and
$v_e\cdot e=\Lambda_a(\lambda_e e)/\lambda_e=c_a(e)$. The reverse inequality
follows from \eqref{tr:eq:wulff-half-space-representation}, proving
\eqref{tr:eq:wulff-support-value}.

For \eqref{tr:eq:local-walk-mgf}, use
$\ee^{-tA}=\ee^{-t}\ee^{tP}$. A single jump has moment generating
function $d^{-1}\sum_j\cosh\xi_j$, and the number of jumps is Poisson with
mean $t$. Chernoff's bound now gives, for every $\xi\in\R^d$,
\begin{equation}\label{tr:eq:local-chernoff}
  \ee^{at}h_t(x)
  \leq\exp\bigl(t\Lambda_a(\xi)-\xi\cdot x\bigr).
\end{equation}
If $v\notin\mathcal W_a$, some $\xi$ satisfies
$\xi\cdot v-\Lambda_a(\xi)>0$. Compactness of $F$ provides finitely many
such $\xi$ and a common positive gap. Applying
\eqref{tr:eq:local-chernoff} on the resulting finite cover proves
\eqref{tr:eq:local-linear-large-deviation}.
\end{proof}

\begin{proposition}\label{tr:prop:local-spreading}
Let $u_1$ solve \eqref{tr:eq:local-equation}.

If $u_0$ is nonzero and finitely supported, then, for every
$\delta\in(0,1)$,
\begin{align}
  \lim_{t\to\infty}
  \inf_{x\in(1-\delta)t\mathcal W_a\cap\Z^d}u_1(t,x)&=1,
  \label{tr:eq:local-wulff-inner}\\
  \lim_{t\to\infty}
  \sup_{\substack{x\in\Z^d\\
  x\notin(1+\delta)t\mathcal W_a}}u_1(t,x)&=0.
  \label{tr:eq:local-wulff-outer}
\end{align}
If \eqref{tr:eq:front-data} holds, then, for every
$\delta\in(0,c_a(e))$,
\begin{align}
  \lim_{t\to\infty}
  \sup_{\substack{x\in\Z^d\\ x\cdot e\leq-(c_a(e)+\delta)t}}
  u_1(t,x)&=0,
  \label{tr:eq:local-front-outer}\\
  \lim_{t\to\infty}
  \inf_{\substack{x\in\Z^d\\ x\cdot e\geq-(c_a(e)-\delta)t}}
  u_1(t,x)&=1.
  \label{tr:eq:local-front-inner}
\end{align}
\end{proposition}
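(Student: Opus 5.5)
The four assertions split into two outer (upper) estimates, which are linear, and two inner (lower) estimates, which need a subsolution argument. The inner estimates are where the real work lies.

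\emph{Outer estimates.} By the KPP inequality, $u_1(t)\le \ee^{at}\ee^{-tA}u_0$, exactly as in \eqref{eq:linear-comparison} with $A^s$ replaced by $A$.

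For \eqref{tr:eq:local-wulff-outer}, fix $R_0$ with $\supp u_0\subset B_{R_0}$. Then for every $\xi\in\R^d$, the Chernoff bound \eqref{tr:eq:local-chernoff} gives
\[
u_1(t,x)\le \|u_0\|_{\ell^1}\ee^{R_0|\xi|}\exp\bigl(t\Lambda_a(\xi)-\xi\cdot x\bigr).
\]
Points with $x\notin(1+\delta)t\mathcal W_a$ have $x/t$ outside $(1+\delta)\mathcal W_a$. That region is not compact, so I handle it as follows:
\begin{itemize}
\item On the compact shell $F=\{v: v\notin(1+\delta)\mathcal W_a^\circ,\ |v|\le M\}$, I use \eqref{tr:eq:local-linear-large-deviation}; the finite cover of $\xi$'s there has bounded $|\xi|$, so the shift $R_0$ costs only a constant.
\item For $|v|>M$, I take $\xi=\lambda\,x/|x|$ with $\lambda$ fixed. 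Then $t\Lambda_a(\xi)-\lambda|x|\to-\infty$ uniformly once $M>\sup_{|\xi|=\lambda}\Lambda_a/\lambda$.
\end{itemize}
For \eqref{tr:eq:local-front-outer}, I use $u_0\le H_{e,\kappa_-}$ and $\xi=-\lambda_e e$, where $\lambda_e$ is the minimizer from Lemma~\ref{tr:lem:wulff-geometry}. This gives
\[
\ee^{at}\ee^{-tA}H_{e,\kappa_-}(x)\le \sum_{y\cdot e\ge\kappa_-}h_t(x-y)\,\ee^{at}\ee^{-\lambda_e (y-x)\cdot e}\ee^{\lambda_e(\kappa_- - x\cdot e)}\cdot(\ldots).
\]
More precisely, bounding $\one_{\{y\cdot e\ge \kappa_-\}}\le \ee^{\lambda_e(y\cdot e-\kappa_-)}$ and summing via \eqref{tr:eq:local-walk-mgf} yields
\[
u_1(t,x)\le \exp\bigl(t\Lambda_a(\lambda_e e)+\lambda_e(x\cdot e-\kappa_-)\bigr)=\exp\bigl(\lambda_e(c_a(e)t+x\cdot e-\kappa_-)\bigr).
\]
This is $\le \ee^{-\lambda_e\delta t+C}$ when $x\cdot e\le-(c_a(e)+\delta)t$.

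\emph{Inner estimates.} For \eqref{tr:eq:local-front-inner}, I will first get a positive plateau and then upgrade it. The upgrade step follows Proposition~\ref{prop:plateau-upgrade}: the compactness–Liouville argument applies verbatim with $A$ in place of $A^s$, since $A$ is bounded, and Lemma~\ref{lem:liouville} holds for it. So it suffices to show that $u_1\ge\eta_0$ on $\{x\cdot e\ge-(c_a(e)-\delta/2)t\}$ for large $t$.

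For the plateau, I take $\lambda$ slightly off the minimizer. Since $c_a(e)=\inf_\lambda\Lambda_a(\lambda e)/\lambda$ and $\lambda\mapsto\Lambda_a(\lambda e)/\lambda$ is continuous, I can pick $0<\lambda_1<\lambda_e$ with speed $c_1:=\Lambda_a(\lambda_1 e)/\lambda_1\in(c_a(e),c_a(e)+\epsilon)$. For this choice, $\phi(t,x)=\ee^{\lambda_1(x\cdot e+c_1t)}$ is a solution of the linearized equation $\partial_t\phi+A\phi=a\phi$, using the lattice identity $A\ee^{\xi\cdot x}=(1-d^{-1}\sum_j\cosh\xi_j)\ee^{\xi\cdot x}$.

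These exponentials move in the wrong direction, so I would actually use the classical construction instead. The subsolution is
\[
\max\{0,\ \ee^{\lambda_1 y}-M\ee^{\lambda_2 y}\},\qquad y=x\cdot e+ct,
\]
for speeds $c<c_a(e)$. Here $\lambda_1,\lambda_2$ are chosen in the complex-root regime, or, more robustly, I take a compactly supported cosine-type subsolution of the linearization at speed $c<c_a(e)$. The cleaner route I will follow is to reduce to the localized case:
\begin{itemize}
\item Since $u_0\ge\mu H_{e,\kappa_+}$, the datum dominates translated finitely supported bumps centered anywhere in the half-space.
\item By \eqref{tr:eq:local-wulff-inner} and \eqref{tr:eq:wulff-support-value}, a bump at $y_0$ fills $y_0+(1-\delta')t\mathcal W_a$.
\item The support point $v_{-e}\in\mathcal W_a$ has $v_{-e}\cdot e=-c_a(-e)=-c_a(e)$, using the reflection symmetry $c_a(-e)=c_a(e)$.
\item Hence every $x$ with $x\cdot e\ge-(c_a(e)-\delta)t$ lies in such a filled set for a suitable lattice center $y_0$ with $y_0\cdot e\ge\kappa_++C$. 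Uniformity in $y_0$ comes from translation invariance, with the time uniform because the bump is fixed.
\end{itemize}
This gives \eqref{tr:eq:local-front-inner} directly.

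\emph{Localized inner estimate.} The main obstacle is \eqref{tr:eq:local-wulff-inner}, the lattice Aronson–Weinberger spreading with a Wulff shape. I will do it by direction, proving a positive plateau on $(1-\delta)t\mathcal W_a$ and then upgrading as above.

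For the plateau in direction $v\in\partial\mathcal W_a$ with $v=\nabla\Lambda_a(\xi_v)$, I need a compactly supported subsolution moving with velocity $(1-\delta)v$. I would build it as $\epsilon\,\ee^{\xi\cdot(x-ct)}\psi(x-ct)$. Here $\xi$ is taken slightly complexified: perturbing $\xi_v$ so that $\Lambda_a$ at complex arguments gives an oscillating factor $\cos(\omega\cdot(x-ct))$ truncated to one positive lobe. Because $(1-\delta)v$ is strictly inside, the growth rate exceeds what the cutoff loses, and a tilted principal Dirichlet eigenvalue argument on a large box gives a positive margin.

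Concretely, I will use the generalized principal eigenvalue of $-A$ tilted by $\xi$ on a box $Q_L$. It converges to $\psi(\xi)=\Lambda_a(\xi)-a$ as $L\to\infty$, and this yields a stationary-in-the-moving-frame subsolution. Since the velocity is only moving lattice-discretely, I will handle the discrete moving frame by comparing at times when $ct$ is near a lattice point, or by using piecewise-constant jumps. This uniform-in-direction subsolution construction, plus the covering over a compact set of directions, is the step I expect to be hardest.
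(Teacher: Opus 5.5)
Your outer estimates are sound. The Chernoff argument for \eqref{tr:eq:local-wulff-outer} (a compact shell of velocities, plus a fixed-modulus tilt $\xi=\lambda x/|x|$ for large $|x|/t$) is valid and more self-contained than the paper's appeal to the spreading theorem. Your bound for \eqref{tr:eq:local-front-outer} is the paper's. Your reduction of \eqref{tr:eq:local-front-inner} to the localized inner estimate, via translated bumps $\mu\delta_{y_0}\le u_0$ and the support point of $\mathcal W_a$ in direction $-e$, is also the paper's argument; aim slightly inside, as the paper does with $-(1-\eta)v_e$, so the rounded lattice point stays in a shrunken Wulff shape.

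The gap is \eqref{tr:eq:local-wulff-inner}, on which the front inner estimate also depends. You never produce the plateau, and the mechanism you propose fails. On a finite box $Q_L$ with zero exterior values, conjugation by $\ee^{\xi\cdot x}$ is a diagonal similarity. So the tilted principal Dirichlet eigenvalue equals that of $-A$ on $Q_L$, and it tends to $0$ as $L\to\infty$ for every $\xi$, not to $\psi(\xi)=-1+\frac1d\sum_j\cosh\xi_j$. The Wulff shape enters only through the drift of a moving frame: the large-box growth rate of a bump transported with velocity $v$ is $a-\sup_\xi\{\xi\cdot v-\psi(\xi)\}=-I_a(v)$.

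A profile living on a lattice box cannot be shifted by the non-lattice vector $vt$, and neither of your remedies is a subsolution argument as stated:
\begin{itemize}
\item Moving a bump to a neighboring site raises the function somewhere, so it is not an admissible (downward) jump.
\item Comparing only at selected times needs a lemma of the form $Q_T(\epsilon\phi)\ge\epsilon\phi(\cdot-z)$ for lattice $z$ near $vT$, with $Q_T$ the time-$T$ solution map. That lemma is the core of the spreading theorem, and you do not supply it.
\end{itemize}
A finite covering of directions also falls short. It controls only bounded neighborhoods of finitely many rays, while most points of $(1-\delta)t\mathcal W_a$ are at distance of order $t$ from all of them. You need a subsolution for every velocity in the region, with uniform amplitude and support. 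Separately, the upgrade step needs the plateau on a strictly larger set, such as $(1-\delta/2)t\mathcal W_a$.

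The paper sidesteps any construction. It checks that the time-one map is order preserving, translation invariant, continuous and compact in the local topology, strongly positive and subhomogeneous, with $0$ unstable and $1$ attracting among constants. It then invokes Weinberger's theorem, reads off $c_a(e)$ from $\ee^{a-A}\ee^{-\lambda x\cdot e}=\ee^{\Lambda_a(\lambda e)}\ee^{-\lambda x\cdot e}$, and identifies the invasion set with $\mathcal W_a$ through \eqref{tr:eq:wulff-half-space-representation}.

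If you want a hands-on proof, define the profile on $\R^d$. There $A$ commutes with all real translations, so no discrete frame is needed. Take $g(y)=\ee^{-\xi\cdot y}\prod_j\cos(\omega_jy_j)$ restricted to its central lobe, and set $W(t,x)=\epsilon g(x-vt)$ for $x\in\Z^d$.
\begin{itemize}
\item The sine terms in $\partial_tW+AW-(a-\nu)W$ cancel exactly when $d\,\omega_jv_j=\sinh\xi_j\sin\omega_j$.
\item The remaining condition is $v\cdot\xi+1-\frac1d\sum_j\cosh\xi_j\cos\omega_j<a-\nu$. As $\omega\to0$ it reduces to $I_a(v)<-\nu$ at $v=\nabla\Lambda_a(\xi)$.
\item This holds uniformly on $(1-\delta/2)\mathcal W_a$ once $\nu<a\delta/2$, by convexity of $I_a$ and $I_a(0)=-a$.
\item Truncating to one lobe is harmless, because lattice neighbors that leave the lobe land where $g\le0$. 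The truncated $W$ is only Lipschitz in time at lobe crossings, but its one-sided time derivatives still satisfy the inequality.
\end{itemize}
Taking $v=x/t$ gives the plateau on all of $(1-\delta/2)t\mathcal W_a$, and the compactness--Liouville argument finishes the proof.
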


\begin{proof}
Let $Q$ denote the time-one solution map on the order interval
$[0,1]^{\Z^d}$, equipped with the topology of uniform convergence on finite
subsets of $\Z^d$. The comparison principle makes $Q$ order preserving, and
translation invariance of the equation implies that $Q$ commutes with every
lattice translation. Continuity in the local topology follows by writing the
variation-of-constants formula, first restricting the nearest-neighbor heat
kernel to a large finite set and then using that its remaining mass tends to
zero uniformly on bounded time intervals. The compactness condition in this
topology is automatic on the discrete habitat: every bounded sequence has a
subsequence converging on each finite subset, by a diagonal argument. The
nearest-neighbor heat kernel is positive at every lattice point for positive
time, so a nonzero nonnegative datum is mapped to a strictly positive
function. The concavity of $r\mapsto ar(1-r)$ also gives
\begin{equation}\label{tr:eq:local-map-subhomogeneity}
  Q(\theta\phi)\geq\theta Q(\phi),
  \qquad 0\leq\theta\leq1,\qquad 0\leq\phi\leq1.
\end{equation}
Indeed, if $u$ starts from $\phi$, then $\theta u$ is a subsolution with
initial value $\theta\phi$. Finally, the constant states are governed by
$q'=aq(1-q)$, so that $0$ is unstable and every constant initial value in
$(0,1]$ converges to $1$.

These facts verify the hypotheses of the multidimensional spreading theorem
for translation-invariant monotone recursions in \cite{Weinberger1982}; the
monotone-semiflow formulation is given in \cite{LiangZhao2010}. It remains to
identify the associated linear speed. The derivative of $Q$ at zero is
$\ee^{a-A}$, and
\[
  \ee^{a-A}\bigl(\ee^{-\lambda x\cdot e}\bigr)
  =\exp\bigl(\Lambda_a(\lambda e)\bigr)
   \ee^{-\lambda x\cdot e}.
\]
The KPP inequalities
\[
  ar(1-r)\leq ar,
  \qquad
  ar(1-r)\geq(a-\eta)r
\]
on a sufficiently small interval give the matching upper and lower
linearizations. The directional speed is therefore
$\inf_{\lambda>0}\Lambda_a(\lambda e)/\lambda=c_a(e)$. The multidirectional
part of the same spreading theorem identifies the invasion set as the
intersection of the corresponding directional half-spaces. By
Lemma~\ref{tr:lem:wulff-geometry}, this intersection is
$\mathcal W_a=\{I_a\leq0\}$. This proves
\eqref{tr:eq:local-wulff-inner}--\eqref{tr:eq:local-wulff-outer}.

We derive the front-like limits from these compact-support spreading
estimates. Since $ar(1-r)\leq ar$ for $r\in[0,1]$, comparison with the
linearized equation and the upper bound in \eqref{tr:eq:front-data} give
\[
  u_1(t,x)\leq \ee^{at}\bigl(h_t*H_{e,\kappa_-}\bigr)(x).
\]
For every $\lambda>0$, symmetry of $h_t$ and
\eqref{tr:eq:local-walk-mgf} imply
\[
  \ee^{at}\bigl(h_t*H_{e,\kappa_-}\bigr)(x)
  \leq \exp\bigl(t\Lambda_a(\lambda e)
       +\lambda(x\cdot e-\kappa_-)\bigr).
\]
Choose $\lambda>0$ such that
$\Lambda_a(\lambda e)/\lambda<c_a(e)+\delta/2$. If
$x\cdot e\leq-(c_a(e)+\delta)t$, the last display is bounded by
$\exp(-\lambda\delta t/2-\lambda\kappa_-)$, which proves
\eqref{tr:eq:local-front-outer}.

For the lower bound, let $v_e\in\mathcal W_a$ be the point constructed in
the proof of Lemma~\ref{tr:lem:wulff-geometry}. By symmetry,
$-v_e\in\mathcal W_a$, and $(-v_e)\cdot e=-c_a(e)$. Choose
$\eta\in(0,1)$ so small that $\eta c_a(e)<\delta/2$, and set
$v:=-(1-\eta)v_e$. Since the origin is an interior point of
$\mathcal W_a$, we have $v\in\operatorname{int}\mathcal W_a$. Choose
$k_t\in\Z^d$ with $|k_t-tv|\leq\sqrt d$. There is $\rho\in(0,1)$,
independent of $t$, such that
\[
  k_t\in(1-\rho)t\mathcal W_a
\]
for all sufficiently large $t$.

Let $w$ solve \eqref{tr:eq:local-equation} with initial datum
$w(0)=\mu\delta_0$. The first part of the proposition yields
$w(t,k_t)\to1$. If
$x\cdot e\geq-(c_a(e)-\delta)t$, put $y=x-k_t$. The choice of $v$ gives
\[
  y\cdot e
  \geq \bigl(\delta-\eta c_a(e)\bigr)t-\sqrt d
  \geq \frac\delta2t-\sqrt d.
\]
Hence $y\cdot e\geq\kappa_+$ for all sufficiently large $t$, and
$\mu\delta_y\leq u_0$ by \eqref{tr:eq:front-data}. Translation invariance and
comparison now give
\[
  u_1(t,x)\geq w(t,x-y)=w(t,k_t).
\]
The right-hand side is independent of $x$ and tends to one. This proves
\eqref{tr:eq:local-front-inner}.
\end{proof}

The same estimate gives convergence on a growing time interval, a form that
is useful when the observation time is not fixed in advance.

\begin{corollary}\label{tr:cor:uniform-local-window}
Let $T_s\geq0$ satisfy
\begin{equation}\label{tr:eq:uniform-local-window-condition}
  \tau_s-aT_s\to\infty
  \qquad\text{as }s\uparrow1.
\end{equation}
For any initial datum $0\leq u_0\leq1$ independent of $s$,
\begin{equation}\label{tr:eq:uniform-local-window-convergence}
  \sup_{0\leq t\leq T_s}
  \|u_s(t)-u_1(t)\|_\infty\to0.
\end{equation}
\end{corollary}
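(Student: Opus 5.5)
The plan is to apply the stability estimate \eqref{tr:eq:solution-stability} of Lemma~\ref{tr:lem:operator-convergence} directly. It holds for any datum $0\leq u_0\leq1$, since it uses only the contraction property of the semigroup generated by $-A^s$ and the $a$-Lipschitz bound of $r\mapsto ar(1-r)$ on $[0,1]$. Proposition~\ref{prop:well-posedness} keeps both $u_s$ and $u_1$ in $[0,1]$; for $u_1$ the same comparison argument applies with $A$ in place of $A^s$.

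For $0\leq t\leq T_s$, the right-hand side of \eqref{tr:eq:solution-stability} is nondecreasing in $t$. Taking the supremum therefore gives
\[
  \sup_{0\leq t\leq T_s}\|u_s(t)-u_1(t)\|_\infty
  \leq\frac{2(1-s)}{a}\bigl(\ee^{aT_s}-1\bigr)
  \leq\frac2a\,(1-s)\ee^{aT_s}.
\]
By the definition \eqref{tr:eq:transition-time}, $1-s=\ee^{-\tau_s}$. The bound is thus $\frac2a\exp(-(\tau_s-aT_s))$, which tends to zero by \eqref{tr:eq:uniform-local-window-condition}.

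The only points needing care are routine. First, the estimate in Lemma~\ref{tr:lem:operator-convergence} needs no assumption on the support of $u_0$. Second, \eqref{tr:eq:uniform-local-window-condition} forces $s$ close enough to one, and the condition $T_s\geq0$ ensures the window is nonempty. I do not expect a genuine obstacle: the corollary is the quantitative form of Lemma~\ref{tr:lem:operator-convergence}, once $(1-s)\ee^{aT_s}$ is rewritten as $\ee^{-(\tau_s-aT_s)}$.
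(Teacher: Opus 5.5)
Your proposal is correct and is essentially the paper's own proof. Both use the monotonicity in $t$ of the right-hand side of \eqref{tr:eq:solution-stability}, evaluate it at $t=T_s$, and rewrite $(1-s)\ee^{aT_s}=\exp(-(\tau_s-aT_s))$, which tends to zero by \eqref{tr:eq:uniform-local-window-condition}.
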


\begin{proof}
The right-hand side of \eqref{tr:eq:solution-stability} is increasing in $t$.
At $t=T_s$ it is bounded by
\[
  \frac2a\exp(-\tau_s+aT_s),
\]
which tends to zero by \eqref{tr:eq:uniform-local-window-condition}.
\end{proof}

\section{Estimates uniform near \texorpdfstring{$s=1$}{s=1}}
\label{tr:sec:uniform}

Fix once and for all $s_*\in(1/2,1)$. Unless otherwise stated, the constants
in this section are uniform for $s\in[s_*,1)$.

This section isolates the estimates needed after the local regime. All
constants are uniform near $s=1$, and the factor $1-s$ is kept explicit in
every term arising from a long jump.

Let $p_n(z)$ be the convolution kernel of $P^n$. Formula
\eqref{tr:eq:binomial} gives, for $z\neq0$,
\begin{equation}\label{tr:eq:jump-decomposition}
  K_s(z)=s p_1(z)+\widetilde K_s(z),
  \qquad
  \widetilde K_s(z):=\sum_{n=2}^\infty b_{n,s}p_n(z),
\end{equation}
Let
\[
  S_s(t):=\ee^{-tA^s},
  \qquad
  (S_s(t)\phi)(x)=\sum_{y\in\Z^d}G_{s,t}(x-y)\phi(y).
\]

For $R\geq1$, $e\in\mathbb S^{d-1}$, and $\kappa\in\R$, define
\begin{align}
  \Theta_{R,s}(x)
  &:=\frac1{1+(|x|/R)^{d+2s}},
  \label{tr:eq:radial-profile}\\
  \Psi_{R,e,\kappa,s}(x)
  &:=\left(1+\frac{((x\cdot e-\kappa)^-)^2}{R^2}\right)^{-s},
  \qquad r^-:=\max\{-r,0\}.
  \label{tr:eq:front-profile}
\end{align}

The transition argument requires estimates that remain uniform in $s$.
In particular, the factor $1-s$ cannot be absorbed into an unspecified
constant, because it determines the transition time.

\subsection{The long-jump kernel}

Lemma~\ref{lem:one-dimensional-walk} and \eqref{eq:product-kernel} provide
the required nearest-neighbor estimates. We refine the subordinated kernel
bound so that its constants remain uniform as $s\uparrow1$.

The binomial decomposition \eqref{tr:eq:jump-decomposition} is especially
convenient for short jumps.  For long jumps, the subordinated form
\begin{equation}\label{tr:eq:subordinated-long-kernel}
  K_s(z)=c_s\int_0^\infty h_r(z)\frac{dr}{r^{1+s}},
  \qquad c_s=\frac{s}{\Gamma(1-s)},
  \qquad z\neq0,
\end{equation}
makes the coefficient $1-s$ visible.  Notice that
\begin{equation}\label{tr:eq:cs-comparison}
  c(1-s)\leq c_s\leq C(1-s),
  \qquad s_*\leq s<1.
\end{equation}

\begin{lemma}\label{tr:lem:uniform-long-jumps}
For $s\in[s_*,1)$ and $z\neq0$,
\begin{equation}\label{tr:eq:long-jump-two-sided}
  c(1-s)(1+|z|)^{-d-2s}
  \leq\widetilde K_s(z)
  \leq C(1-s)(1+|z|)^{-d-2s}.
\end{equation}
Moreover, if $\kappa_s=\sum_{z\neq0}K_s(z)$, then
\begin{equation}\label{tr:eq:jump-rate-bounds}
  0<c\leq\kappa_s\leq C<\infty.
\end{equation}
\end{lemma}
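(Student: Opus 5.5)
We prove the two halves of \eqref{tr:eq:long-jump-two-sided} separately and then deduce \eqref{tr:eq:jump-rate-bounds}.

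\emph{Upper bound.} We start from the subordinated formula \eqref{tr:eq:subordinated-long-kernel}, where the prefactor $c_s\le C(1-s)$ by \eqref{tr:eq:cs-comparison}. We rerun the proof of Proposition~\ref{prop:jump-kernel}, now tracking $s$.

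For $n=|z|_1$ and $r\le\alpha n$, the Poisson bound \eqref{eq:small-time-poisson} gives at most $(\ee\alpha)^n n^{-s}/(n-s)$. For $n\ge2$ this is at most $C(\ee\alpha)^n$, uniformly in $s<1$. For $r\ge\alpha n$, the Gaussian bound of Lemma~\ref{lem:one-dimensional-walk} gives $Cn^{-d-2s}\int_0^\infty\rho^{s+d/2-1}\ee^{-c\rho}\,d\rho$. This is uniform for $s\in[s_*,1)$, since the Gamma integral is bounded there.

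Thus $K_s(z)\le C(1-s)(1+|z|)^{-d-2s}$ for $|z|_1\ge2$. There $p_1(z)=0$, so $\widetilde K_s=K_s$ and the bound holds for $\widetilde K_s$ directly.

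\emph{Neighbours, $|z|_1=1$.} Here the small-$r$ integral is of order $\int_0^\alpha r^{-s}\,dr\sim 1/(1-s)$, and $c_s$ cancels the $(1-s)$. So $K_s(z)$ itself is of order one, consistent with $sp_1(z)=s/(2d)$. We therefore bound $\widetilde K_s$ directly from the series \eqref{tr:eq:jump-decomposition}: $\widetilde K_s(z)\le\sum_{n\ge2}b_{n,s}=1-s$. The same summation gives $\widetilde K_s(z)\le 1-s$ for every $z$. Since $|z|$ is bounded on the neighbours, this yields the upper bound there.

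\emph{Lower bound.} For $|z|$ bounded we use the series. Take $n\in\{2,3\}$ with $n\equiv|z|_1\pmod 2$ and $n\ge|z|_1$. Then $\widetilde K_s(z)\ge b_{n,s}p_n(z)\ge c(1-s)$, because
\[
b_{n,s}=s\Gamma(n-s)/(\Gamma(1-s)n!)\ge c(1-s).
\]
This covers $|z|_1\le 3$.

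For large $z$ with $|z|_1\ge2$ we use the subordinated formula together with the lower heat-kernel bound, exactly as in \eqref{eq:jump-kernel-lower-proof}. This gives
\[
\widetilde K_s(z)=K_s(z)\ge c_s\,c(1+n)^{-d-2s}\ge c(1-s)(1+|z|)^{-d-2s}.
\]
The constants in \eqref{eq:jump-kernel-lower-proof} depend on $s$ only through $r^{-1-s}$ on a window $r\asymp n^2$, so they are uniform. The intermediate range $2\le|z|_1\le3$ is already covered by the series bound.

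\emph{Jump rate.} We have $\kappa_s=s\sum_z p_1(z)+\sum_z\widetilde K_s(z)=s+(1-s)=1$ by the binomial identity, so \eqref{tr:eq:jump-rate-bounds} holds trivially. Alternatively, it follows from $\kappa_s\ge s\ge s_*$ and the summed upper bound.

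\emph{Main obstacle.} The delicate step is uniformity of the small-$r$ estimate \eqref{eq:jump-small-time} as $s\uparrow1$. The factor $1/(n-s)$ blows up only when $n=1$, and that case is avoided by the identity $\widetilde K_s=K_s$ off the neighbours combined with the crude bound $\widetilde K_s\le1-s$.
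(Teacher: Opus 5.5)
Your argument is correct and is essentially the paper's proof: the subordinated split at $r=\alpha n$ with $c_s\asymp 1-s$ for $|z|_1\ge2$ (where $\widetilde K_s=K_s$), the bound $\sum_{n\ge2}b_{n,s}=1-s$ and the three-step term $b_{3,s}p_3(z)$ at the nearest neighbours, and the heat-kernel window $r\asymp n^2$ for the lower bound.

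The one error is the identity $\kappa_s=1$. The sums run over $z\neq0$, and $p_{2k}(0)>0$, so in fact $\sum_{z\neq0}\widetilde K_s(z)=\sum_{n\ge2}b_{n,s}(1-p_n(0))<1-s$ and $\kappa_s=1-\sum_{k\ge1}b_{2k,s}p_{2k}(0)<1$. Keep only your alternative bound $s_*\le s\le\kappa_s\le s+(1-s)=1$, which is exactly what the paper uses.
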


\begin{proof}
First suppose $n=|z|_1\geq2$.  By
Lemma~\ref{lem:one-dimensional-walk} and
\eqref{eq:product-kernel},
\[
  h_r(z)\geq cr^{-d/2}
  \quad\text{for}\quad C(1+n^2)\leq r\leq2C(1+n^2).
\]
Substitution in \eqref{tr:eq:subordinated-long-kernel}, together with
\eqref{tr:eq:cs-comparison}, gives
\begin{equation}\label{tr:eq:long-kernel-lower-calculation}
  K_s(z)\geq c(1-s)
  \int_{C(1+n^2)}^{2C(1+n^2)}r^{-1-s-d/2}\,dr
  \geq c(1-s)(1+n)^{-d-2s}.
\end{equation}
Since $p_1(z)=0$ for $n\geq2$, this is the required lower bound for
$\widetilde K_s$.

For the upper bound, choose $\alpha>0$ with $\ee\alpha<1$.  When
$0<r\leq\alpha n$, the walk needs at least $n$ jumps to reach $z$, so the
Poisson tail estimate gives
\[
  h_r(z)\leq\mathbb P\{\operatorname{Poisson}(r)\geq n\}
  \leq\left(\frac{\ee r}{n}\right)^n.
\]
Consequently,
\begin{equation}\label{tr:eq:small-time-kernel-integral}
  c_s\int_0^{\alpha n}h_r(z)\frac{dr}{r^{1+s}}
  \leq C(1-s)(\ee\alpha)^n n^{-1-s}
  \leq C(1-s)(1+n)^{-d-2s}.
\end{equation}
For $r\geq\alpha n$, equations \eqref{eq:one-dimensional-upper} and
\eqref{eq:product-kernel} imply
\[
  h_r(z)\leq Cr^{-d/2}\exp(-cn^2/r).
\]
After the change of variables $\rho=n^2/r$,
\begin{align}
  c_s\int_{\alpha n}^\infty h_r(z)\frac{dr}{r^{1+s}}
  &\leq C(1-s)n^{-d-2s}
  \int_0^{n/\alpha}\rho^{s+d/2-1}\ee^{-c\rho}\,d\rho \notag\\
  &\leq C(1-s)n^{-d-2s}.
  \label{tr:eq:large-time-kernel-integral}
\end{align}
This proves the upper bound when $|z|_1\geq2$.

It remains to treat the finitely many nearest-neighbor points.  The upper
bound follows from $\sum_{n\geq2}b_{n,s}=1-s$.  For the lower bound, a
nearest-neighbor point can be reached in three steps, and hence
\[
  \widetilde K_s(z)\geq b_{3,s}p_3(z)\geq c(1-s)
  \qquad (|z|_1=1).
\]
This completes \eqref{tr:eq:long-jump-two-sided}.  Finally,
$K_s=s p_1+\widetilde K_s$, the total mass of $p_1$ is one, and the total
mass of $\widetilde K_s$ is at most $1-s$.  These facts give
\eqref{tr:eq:jump-rate-bounds}.
\end{proof}

\subsection{A two-scale decomposition before the transition time}

The full jump rate stays bounded away from zero as $s\uparrow1$, since its
nearest-neighbor part converges to the original walk. Only the nonlocal part
becomes rare. Separating these two components replaces the global
operator-norm estimate near the endpoint $at=\tau_s$.

Set
\begin{equation}\label{tr:eq:long-jump-rate}
  \widetilde\kappa_s:=\sum_{z\neq0}\widetilde K_s(z),
  \qquad
  \widetilde j_s(z):=\widetilde\kappa_s^{-1}\widetilde K_s(z).
\end{equation}
Lemma~\ref{tr:lem:uniform-long-jumps} and
$\sum_{n\geq2}b_{n,s}=1-s$ imply
\begin{equation}\label{tr:eq:long-jump-rate-comparison}
  c(1-s)\leq\widetilde\kappa_s\leq1-s,
  \qquad
  \widetilde j_s(z)\leq C(1+|z|)^{-d-2s}.
\end{equation}
Let $H_{s,t}(z):=h_{st}(z)$, the heat kernel of the nearest-neighbor walk
with total rate $s$, and let
\begin{equation}\label{tr:eq:rare-jump-kernel}
  Q_{s,t}:=\ee^{-\widetilde\kappa_st}
  \sum_{\ell=0}^\infty
  \frac{(\widetilde\kappa_st)^\ell}{\ell!}
  \widetilde j_s^{*\ell}.
\end{equation}

\begin{lemma}\label{tr:lem:two-scale-kernel}
For $s\in[s_*,1)$ and $t>0$,
\begin{equation}\label{tr:eq:two-scale-factorization}
  G_{s,t}=H_{s,t}*Q_{s,t}.
\end{equation}
If $\widetilde\kappa_st\leq1$, then, with $q_s=d+2s$,
\begin{align}
  Q_{s,t}(z)&\leq C(1-s)t(1+|z|)^{-q_s},
  &&z\neq0,
  \label{tr:eq:rare-jump-pointwise}\\
  \sum_{|z|\geq R}Q_{s,t}(z)
  &\leq C(1-s)tR^{-2s},
  &&R\geq1,
  \label{tr:eq:rare-jump-radial-tail}\\
  \sum_{z\cdot e\leq-R}Q_{s,t}(z)
  &\leq C_e(1-s)tR^{-2s},
  &&R\geq1.
  \label{tr:eq:rare-jump-half-space-tail}
\end{align}
Moreover,
\begin{equation}\label{tr:eq:rare-jump-probability}
  \sum_{z\neq0}Q_{s,t}(z)\leq\widetilde\kappa_st.
\end{equation}
\end{lemma}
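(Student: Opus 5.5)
The factorization \eqref{tr:eq:two-scale-factorization} follows from the splitting of the generator. By \eqref{tr:eq:jump-decomposition} and \eqref{eq:jump-representation}, for bounded $\phi$ we have
\[
  A^s\phi=s(I-P)\phi+\widetilde A_s\phi,
  \qquad
  \widetilde A_s\phi(x):=\sum_{z\neq0}\widetilde K_s(z)\bigl(\phi(x)-\phi(x+z)\bigr).
\]
Here I use that $\sum_{z\neq0}p_1(z)=1$, so the nearest-neighbor part is exactly $sA$. Both summands are bounded convolution operators on $\ellinf$, and convolution operators commute. Hence $\ee^{-tA^s}=\ee^{-tsA}\ee^{-t\widetilde A_s}$, with all exponentials given by norm-convergent series. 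The kernel of $\ee^{-tsA}$ is $h_{st}=H_{s,t}$. Writing $\widetilde A_s=\widetilde\kappa_s(I-\widetilde j_s*\cdot)$ exactly as in the proof of Proposition~\ref{prop:heat-kernel} identifies the kernel of $\ee^{-t\widetilde A_s}$ with the compound-Poisson series $Q_{s,t}$ in \eqref{tr:eq:rare-jump-kernel}. This proves \eqref{tr:eq:two-scale-factorization}.

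Next I would prove \eqref{tr:eq:rare-jump-probability}. The $\ell=0$ term of $Q_{s,t}$ is $\ee^{-\widetilde\kappa_st}\delta_0$, which vanishes off the origin. Each $\widetilde j_s^{*\ell}$ has mass one, so
\[
  \sum_{z\neq0}Q_{s,t}(z)\leq1-\ee^{-\widetilde\kappa_st}\leq\widetilde\kappa_st.
\]

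For \eqref{tr:eq:rare-jump-pointwise} I would repeat the convolution bound \eqref{eq:convolution-bound}, now for $\widetilde j_s$. The only input there was the pointwise bound $\widetilde j_s(z)\leq C(1+|z|)^{-q_s}$, which is \eqref{tr:eq:long-jump-rate-comparison}, with $C$ uniform in $s\in[s_*,1)$. This gives $\widetilde j_s^{*\ell}(z)\leq C\ell^{q_s+1}(1+|z|)^{-q_s}$, uniformly in $s$ because $q_s+1\leq d+3$. Summing the series,
\[
  Q_{s,t}(z)\leq C(1+|z|)^{-q_s}\,\ee^{-\lambda}\sum_{\ell\geq1}\frac{\lambda^\ell}{\ell!}\ell^{d+3},
  \qquad \lambda:=\widetilde\kappa_st\leq1.
\]
For $\lambda\leq1$ the Poisson sum is at most $C\lambda$; the Stirling-number bound $C_m(\lambda+\lambda^m)$ with $m=d+3$ suffices. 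The upper bound $\widetilde\kappa_s\leq1-s$ then yields $C(1-s)t(1+|z|)^{-q_s}$.

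The two tail bounds follow by summing this pointwise bound. For \eqref{tr:eq:rare-jump-radial-tail}, a shell decomposition gives $\sum_{|z|\geq R}(1+|z|)^{-d-2s}\leq C R^{-2s}/(2s)\leq CR^{-2s}$, uniformly since $s\geq s_*>1/2$. For the half-space bound \eqref{tr:eq:rare-jump-half-space-tail}, the condition $z\cdot e\leq-R$ forces $|z|\geq R$ because $|e|=1$, so this case reduces to the radial one; the constant does not even depend on $e$.

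The main obstacle is the uniformity in $s$. Every constant must be traced back to uniform inputs, namely Lemma~\ref{tr:lem:uniform-long-jumps}, the bound $q_s\leq d+2$, and $2s\geq 2s_*$. The factor $1-s$ must enter only through $\widetilde\kappa_s$, and never through the normalized kernel $\widetilde j_s$.
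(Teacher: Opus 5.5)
Your proposal is correct and follows essentially the same route as the paper. You split $-A^s$ into the commuting generators $s(P-I)$ and the long-jump compound-Poisson part to obtain $G_{s,t}=H_{s,t}*Q_{s,t}$, then rerun the one-large-increment convolution bound for $\widetilde j_s$ against a Poisson moment of order $\lambda=\widetilde\kappa_st\leq1$; summing over shells (with the half-space reduced to the radial case via $z\cdot e\leq-R\Rightarrow|z|\geq R$) and using $1-\ee^{-\lambda}\leq\lambda$ finishes the argument, with the factor $1-s$ entering only through $\widetilde\kappa_s$, exactly as in the paper.
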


\begin{proof}
The jump representation and \eqref{tr:eq:jump-decomposition} give
\[
  -A^s=s(P-I)+
  \sum_{z\neq0}\widetilde K_s(z)(T_z-I),
  \qquad (T_z\phi)(x)=\phi(x+z).
\]
The two convolution generators commute. Exponentiating them proves
\eqref{tr:eq:two-scale-factorization}.

To estimate the convolution powers, consider a decomposition
$z=z_1+\cdots+z_\ell$. At least one increment satisfies
$|z_i|\geq |z|/\ell$. We use
$\widetilde j_s(y)\leq C(1+|y|)^{-q_s}$ for that increment, sum over its
position, and then sum the remaining $\ell-1$ variables, whose total mass is
one. This gives
\begin{equation}\label{tr:eq:rare-jump-convolution}
  \widetilde j_s^{*\ell}(z)
  \leq C\ell^{q_s+1}(1+|z|)^{-q_s}.
\end{equation}
Write $\lambda=\widetilde\kappa_st$. For $0<\lambda\leq1$, the Poisson
moment in \eqref{tr:eq:rare-jump-kernel} satisfies
\[
  \ee^{-\lambda}\sum_{\ell\geq1}
  \frac{\lambda^\ell}{\ell!}\ell^{q_s+1}
  \leq C\lambda,
\]
uniformly for $q_s\in[d+2s_*,d+2]$. Equations
\eqref{tr:eq:long-jump-rate-comparison} and
\eqref{tr:eq:rare-jump-convolution} prove
\eqref{tr:eq:rare-jump-pointwise}. Summing this estimate over radial shells
or over a lattice half-space gives
\eqref{tr:eq:rare-jump-radial-tail} and
\eqref{tr:eq:rare-jump-half-space-tail}. Finally, the probability that the
compound-Poisson process makes at least one long jump is
$1-\ee^{-\lambda}\leq\lambda$. This proves
\eqref{tr:eq:rare-jump-probability}; returns to the origin can only decrease
the left-hand side.
\end{proof}

The nearest-neighbor factor has the same large-deviation geometry as the
local equation. The only change is its jump rate $s$, which converges to
one.

\begin{lemma}\label{tr:lem:uniform-nearest-neighbor-deviation}
Fix $\delta>0$. There are $s_\delta<1$ and $c_\delta,C_\delta>0$ such
that, for $s\in[s_\delta,1)$ and $t\geq1$,
\begin{align}
  \sup_{\substack{z\in\Z^d\\
  z\notin(1+\delta)t\mathcal W_a}}
  \ee^{at}H_{s,t}(z)&\leq C_\delta\ee^{-c_\delta t},
  \label{tr:eq:uniform-nn-wulff-tail}\\
  \ee^{at}\sum_{z\cdot e\leq-(c_a(e)+\delta)t}H_{s,t}(z)
  &\leq C_{\delta,e}\ee^{-c_{\delta,e}t}.
  \label{tr:eq:uniform-nn-directional-tail}
\end{align}
For every $b,T_0>0$ there are $c,C>0$ such that
\begin{equation}\label{tr:eq:uniform-nn-ballistic-concentration}
  \sum_{|z|\geq bt}H_{s,\ell}(z)\leq C\ee^{-ct}
\end{equation}
whenever $s\in[s_*,1)$, $t\geq T_0$, and $0\leq\ell\leq T_0^{-1}t$.
The same conclusion holds with $|z|\geq bt$ replaced by
$|z\cdot e|\geq bt$.
\end{lemma}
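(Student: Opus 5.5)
The plan rests on exponential Chebyshev (Chernoff) bounds for the rate-$s$ nearest-neighbour walk. By \eqref{tr:eq:local-walk-mgf} with $t$ replaced by $st$, for every $\xi\in\R^d$
\[
  \sum_{z}H_{s,t}(z)\ee^{\xi\cdot z}=\exp\bigl(st\,\psi(\xi)\bigr),
  \qquad \psi(\xi)=-1+\frac1d\sum_j\cosh\xi_j .
\]
Hence $\ee^{at}H_{s,t}(z)\leq\exp\bigl(t(a+s\psi(\xi))-\xi\cdot z\bigr)$. Since $\psi\geq0$ and $s<1$, we have $a+s\psi(\xi)\leq\Lambda_a(\xi)$. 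So the bound $\ee^{at}H_{s,t}(z)\leq\exp(t\Lambda_a(\xi)-\xi\cdot z)$, which is \eqref{tr:eq:local-chernoff}, holds verbatim for every $s\in(0,1)$. In fact the rate-$s$ walk is dominated, and no restriction $s\geq s_\delta$ is really needed beyond what makes the statement convenient.

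For \eqref{tr:eq:uniform-nn-wulff-tail} I argue by scaling and convexity. If $z\notin(1+\delta)t\mathcal W_a$, then $v=z/t\notin(1+\delta)\mathcal W_a$. By \eqref{tr:eq:wulff-half-space-representation} there is $e$ with $v\cdot e>(1+\delta)c_a(e)$. Choose $\lambda_e$ as in the proof of Lemma~\ref{tr:lem:wulff-geometry}, so that $\Lambda_a(\lambda_e e)=\lambda_e c_a(e)$. Taking $\xi=\lambda_e e$ gives the exponent
\[
  t\bigl(\Lambda_a(\lambda_e e)-\lambda_e v\cdot e\bigr)\leq-\delta\lambda_e c_a(e)t .
\]
Next I need uniformity in $e$. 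The minimizer $\lambda_e$ is bounded above and below, and $c_a(e)\geq c>0$, because $\mathcal W_a$ contains a ball around the origin. Both facts follow from continuity and compactness of $\mathbb S^{d-1}$, using that $\Lambda_a(\lambda e)/\lambda\to\infty$ at $0$ and at $\infty$ uniformly in $e$. This yields a uniform $c_\delta$. No compactness in $v$ is needed, since the exponent only improves as $|v|$ grows.

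For the summed directional bound \eqref{tr:eq:uniform-nn-directional-tail} I sum over the half-space. I pick $\lambda>0$ with $\Lambda_a(\lambda e)/\lambda<c_a(e)+\delta/2$ and use the Chebyshev inequality with $\xi=-\lambda e$, which is legitimate by evenness of $\psi$. This gives
\[
  \ee^{at}\sum_{z\cdot e\leq-(c_a(e)+\delta)t}H_{s,t}(z)\leq\exp\bigl(t\Lambda_a(\lambda e)-\lambda(c_a(e)+\delta)t\bigr)\leq\ee^{-\lambda\delta t/2} .
\]
This is exactly the argument used in the front part of Proposition~\ref{tr:prop:local-spreading}.

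For the ballistic concentration \eqref{tr:eq:uniform-nn-ballistic-concentration}, the reaction factor is absent, so I use the plain Chernoff bound. In one coordinate direction,
\[
  \sum_{z_j\geq bt/\sqrt d}H_{s,\ell}(z)\leq\exp\bigl(\ell\psi(\theta\mathbf e_j)-\theta bt/\sqrt d\bigr),
\]
with $s\ell\leq\ell\leq t/T_0$. I choose $\theta$ large, depending on $b$, $d$ and $T_0$, so that $\theta b/\sqrt d\geq 2\psi(\theta\mathbf e_j)/T_0$. This is possible because $\psi(\theta\mathbf e_j)=(\cosh\theta-1)/d$; for instance take $\theta$ small instead, so that $(\cosh\theta-1)/(dT_0)\leq\theta b/(2\sqrt d)$. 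The exponent is then at most $-\theta bt/(2\sqrt d)$. Summing over the $2d$ signed coordinates covers $|z|\geq bt$. For $|z\cdot e|\geq bt$, the same bound with $\xi=\pm\theta e$ works directly. The only step needing care is the uniformity in $e$ for \eqref{tr:eq:uniform-nn-wulff-tail}; everything else is routine Chernoff estimation.
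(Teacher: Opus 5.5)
Your proof is correct, and for \eqref{tr:eq:uniform-nn-wulff-tail} it takes a genuinely different and simpler route than the paper.

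The paper treats $\Lambda_{a,s}=a+s\psi$ as a perturbation of $\Lambda_a$. It uses local uniform convergence $\Lambda_{a,s}\to\Lambda_a$ to make the finite-cover argument of Lemma~\ref{tr:lem:wulff-geometry} uniform on a bounded velocity region. It then handles large $|z|$ separately, by a coordinate Chernoff bound with a fixed parameter; this is why it needs $s\geq s_\delta$.

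You instead use the one-sided domination $a+s\psi\leq\Lambda_a$, which holds because $\psi\geq0$. Hence \eqref{tr:eq:local-chernoff} holds verbatim for every $s\in(0,1)$. The half-space representation \eqref{tr:eq:wulff-half-space-representation}, with the minimizers $\lambda_e$, then produces a gap for each $z$ individually, with no compactness in the velocity variable. Your uniformity step can be shortened: $\lambda_e c_a(e)=\Lambda_a(\lambda_e e)\geq a$. So the bound is simply $\ee^{-a\delta t}$ with $C_\delta=1$, valid for all $t>0$ and all $s\in(0,1)$.

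Your route buys explicit constants and removes $s_\delta$ altogether. The paper's perturbative route is more robust: it would still work if the perturbed exponent were not dominated by $\Lambda_a$, for instance with a jump rate above one or a sign-changing correction.

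Your directional and ballistic estimates are the same exponential-Markov arguments as in the paper, with domination again replacing local uniform convergence in the directional case.

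One slip to fix: in the ballistic estimate you first write ``choose $\theta$ large''. That fails, since $\cosh\theta-1$ grows exponentially while $\theta b/\sqrt d$ grows linearly. The correct choice, which you then give, is $\theta$ small, using $(\cosh\theta-1)/\theta\to0$.
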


\begin{proof}
The moment generating function of $H_{s,t}$ is
\begin{equation}\label{tr:eq:rate-s-mgf}
  \sum_{z\in\Z^d}H_{s,t}(z)\ee^{\xi\cdot z}
  =\exp\left[st\left(-1+\frac1d
  \sum_{j=1}^d\cosh\xi_j\right)\right].
\end{equation}
After multiplication by $\ee^{at}$, its exponent is
\[
  \Lambda_{a,s}(\xi)
  :=a+s\left(-1+\frac1d\sum_{j=1}^d\cosh\xi_j\right).
\]
This function converges locally uniformly to $\Lambda_a$. On a fixed ball
in the velocity variable, the finite-cover argument in
Lemma~\ref{tr:lem:wulff-geometry} is therefore uniform for $s$ close to one.
Outside a sufficiently large ball, choose a coordinate $j$ with
$|z_j|\geq|z|/\sqrt d$ and apply \eqref{tr:eq:rate-s-mgf} with
$\xi=\lambda\operatorname{sgn}(z_j)\mathbf e_j$, where $\lambda>0$ is
fixed. The term $-\xi\cdot z$ then dominates both $at$ and the
moment-generating term. These two regions prove
\eqref{tr:eq:uniform-nn-wulff-tail}. For the directional estimate, choose
$\lambda>0$ such that
\[
  \Lambda_a(\lambda e)-\lambda(c_a(e)+\delta)<0.
\]
Exponential Markov inequality applied to $-z\cdot e$, followed by local
uniform convergence of $\Lambda_{a,s}$, gives
\eqref{tr:eq:uniform-nn-directional-tail}.

For \eqref{tr:eq:uniform-nn-ballistic-concentration}, apply exponential
Markov inequality to each coordinate with a fixed sufficiently small
parameter. Since $\ell\leq T_0^{-1}t$, the negative contribution from the
threshold $bt$ dominates the moment-generating term after the parameter is
chosen in terms of $b$ and $T_0$. A union bound over the coordinates proves
the radial estimate; the projected estimate follows directly from
\eqref{tr:eq:rate-s-mgf}.
\end{proof}

\begin{proposition}\label{tr:prop:early-linear-estimates}
Let $s\uparrow1$ and $t\to\infty$ under the restriction $at<\tau_s$.
If $\phi\geq0$ is finitely supported, then, for every $\delta>0$,
\begin{equation}\label{tr:eq:early-linear-localized-upper}
  \sup_{\substack{x\in\Z^d\\
  x\notin(1+\delta)t\mathcal W_a}}
  \ee^{at}(S_s(t)\phi)(x)\to0.
\end{equation}
For every $e\in\mathbb S^{d-1}$, $\kappa\in\R$, and $\delta>0$,
\begin{equation}\label{tr:eq:early-linear-front-upper}
  \sup_{\substack{x\in\Z^d\\ x\cdot e\leq-(c_a(e)+\delta)t}}
  \ee^{at}S_s(t)H_{e,\kappa}(x)\to0.
\end{equation}
\end{proposition}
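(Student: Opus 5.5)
We use the two-scale factorization $G_{s,t}=H_{s,t}*Q_{s,t}$ of Lemma~\ref{tr:lem:two-scale-kernel}. The restriction $at<\tau_s$ means $\ee^{at}(1-s)<1$. Since $\widetilde\kappa_s\le 1-s$ and $t\le\tau_s/a$, we have $\widetilde\kappa_st\le(1-s)\tau_s/a\to0$, so the hypothesis $\widetilde\kappa_st\le1$ of that lemma eventually holds. We split $Q_{s,t}=Q_{s,t}(0)\delta_0+Q_{s,t}\one_{\{z\neq0\}}$ and write accordingly
\[
  \ee^{at}S_s(t)\phi=\ee^{at}Q_{s,t}(0)\,H_{s,t}*\phi+\ee^{at}H_{s,t}*\bigl(Q_{s,t}\one_{\{z\ne0\}}\bigr)*\phi .
\]
The first term involves no long jump and has $Q_{s,t}(0)\le1$. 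The second term carries at least one long jump and is therefore weighted by a factor $(1-s)t$.

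\emph{Localized case.} Let $\supp\phi\subset B_{R_0}$. For the no-jump term, take $x\notin(1+\delta)t\mathcal W_a$ and $|y|\le R_0$. Then $x-y\notin(1+\delta/2)t\mathcal W_a$ for large $t$, because $\mathcal W_a$ contains a ball around the origin. Estimate \eqref{tr:eq:uniform-nn-wulff-tail} then gives a bound $C\ee^{-ct}\|\phi\|_{\ell^1}$.

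For the jump term, we bound $\ee^{at}\sum_w H_{s,t}(w)Q_{s,t}(x-w-y)\phi(y)$ by splitting on $w$. If $w\notin(1+\delta/2)t\mathcal W_a$, we use \eqref{tr:eq:uniform-nn-wulff-tail} together with $\sum Q_{s,t}\le1$, which gives $C\ee^{-ct}$. If $w\in(1+\delta/2)t\mathcal W_a$, then $|x-w-y|\ge c_\delta t$, since the sets $(1+\delta)t\mathcal W_a$ and $(1+\delta/2)t\mathcal W_a$ are separated by a distance comparable to $t$. In this region we use the pointwise bound \eqref{tr:eq:rare-jump-pointwise}, $Q_{s,t}(z)\le C(1-s)t(1+|z|)^{-q_s}$, and sum $H_{s,t}$ to at most $1$. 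This yields
\[
  C\ee^{at}(1-s)\,t\,t^{-q_s}\le C t^{1-q_s}\to0,
\]
using $\ee^{at}(1-s)<1$ and $q_s>1$. This is the key point: the polynomial loss $t$ from the Poisson mean is beaten by the algebraic decay at distance of order $t$, and the exponential factor is exactly compensated by $1-s$.

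\emph{Front case.} The same decomposition applies with $\phi=H_{e,\kappa}$, now summed over a half-space. The no-jump term is $\ee^{at}\sum_{z\cdot e\le x\cdot e-\kappa}H_{s,t}(z)$. For large $t$ we have $x\cdot e-\kappa\le-(c_a(e)+\delta/2)t$, so \eqref{tr:eq:uniform-nn-directional-tail} applies.

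For the jump term we write it as $\ee^{at}\sum_w H_{s,t}(w)\sum_{z\cdot e\le x\cdot e-\kappa-w\cdot e}Q_{s,t}(z)$. If $w\cdot e\le-(c_a(e)+\delta/2)t$, the directional tail bound together with $\sum Q\le1$ gives exponential decay. Otherwise the threshold satisfies $x\cdot e-\kappa-w\cdot e\le-\delta t/4$, and the half-space tail \eqref{tr:eq:rare-jump-half-space-tail} gives a bound
\[
  C(1-s)\,t\,(\delta t/4)^{-2s}\,\ee^{at}\le Ct^{1-2s}\to0,
\]
since $s\ge s_*>1/2$.

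\emph{Main obstacle.} The main difficulty is making the nearest-neighbor large-deviation bounds uniform in $s$ and stated as sums over half-spaces, and handling the small-$t$ issues. Both are already provided by Lemma~\ref{tr:lem:uniform-nearest-neighbor-deviation}. Beyond that, one only needs to check that the pointwise $Q$ bound is used strictly away from $z=0$; the contribution from $z=0$ is handled by the separated no-jump term.
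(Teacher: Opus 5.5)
Your proposal is correct and follows essentially the same route as the paper: the factorization $G_{s,t}=H_{s,t}*Q_{s,t}$, a split according to whether the nearest-neighbor displacement lies in $(1+\delta/2)t\mathcal W_a$ (or, for fronts, beyond $-(c_a(e)+\delta/2)t$ in direction $e$), and the pointwise or half-space tail of $Q_{s,t}$ combined with $(1-s)\ee^{at}\leq1$, giving $t^{1-d-2s}$ and $t^{1-2s}$ respectively. Separating off $Q_{s,t}(0)$ is harmless but unnecessary. In the complementary region the argument $x-y$ of $Q_{s,t}$ is automatically at distance of order $t$ from the origin, which is why the paper splits only on the nearest-neighbor variable.
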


\begin{proof}
Since $at<\tau_s$,
\begin{equation}\label{tr:eq:pretransition-small-intensity}
  (1-s)\ee^{at}\leq1,
  \qquad
  (1-s)t\leq a^{-1}(1-s)\tau_s\to0.
\end{equation}
In particular, $\widetilde\kappa_st\leq1$ for $s$ close to one.

It suffices first to take $\phi=\delta_0$. By
\eqref{tr:eq:two-scale-factorization},
\[
  G_{s,t}(x)=\sum_{y\in\Z^d}H_{s,t}(y)Q_{s,t}(x-y).
\]
Split the sum according to whether
$y\notin(1+\delta/2)t\mathcal W_a$. The first part, multiplied by
$\ee^{at}$, is bounded by $C\ee^{-ct}$ by
\eqref{tr:eq:uniform-nn-wulff-tail}. Since $\mathcal W_a$ contains a
neighborhood of the origin, homothety and convexity give
\[
  \operatorname{dist}\bigl(
  \R^d\setminus(1+\delta)t\mathcal W_a,
  (1+\delta/2)t\mathcal W_a\bigr)\geq c_\delta t.
\]
For the remaining terms, \eqref{tr:eq:rare-jump-pointwise} yields
\[
  \ee^{at}Q_{s,t}(x-y)
  \leq C\ee^{at}(1-s)t^{1-d-2s}
  \leq Ct^{1-d-2s}\to0.
\]
Summation against $H_{s,t}$ proves the assertion for $\delta_0$. A finite
number of translations handles every finitely supported $\phi$.

For the half-space datum, write the convolution probabilistically as the
sum of an independent nearest-neighbor displacement $Y$ and a long-jump
displacement $Z$. A fixed shift $\kappa$ is negligible on the scale $t$.
If $Y+Z$ has projection at most $-(c_a(e)+\delta)t$, then either
\[
  Y\cdot e\leq-(c_a(e)+\delta/2)t
  \quad\text{or}\quad
  Z\cdot e\leq-\delta t/2
\]
for all large $t$. The first event is controlled by
\eqref{tr:eq:uniform-nn-directional-tail}. By
\eqref{tr:eq:rare-jump-half-space-tail}, the second contributes at most
\[
  C\ee^{at}(1-s)t(\delta t)^{-2s}
  \leq C_\delta t^{1-2s}\to0.
\]
This proves \eqref{tr:eq:early-linear-front-upper}.
\end{proof}

\begin{proof}[Proof of the early parts of
Theorems~\ref{tr:thm:localized-transition} and \ref{tr:thm:front-transition}]
The KPP inequality $au(1-u)\leq au$ gives
\begin{equation}\label{tr:eq:early-linear-comparison}
  u_s(t)\leq\ee^{at}S_s(t)u_0.
\end{equation}
For a finitely supported datum, the outer estimate
\eqref{tr:eq:early-localized-outer} follows from
\eqref{tr:eq:early-linear-localized-upper}. Under
\eqref{tr:eq:front-data}, comparison with $H_{e,\kappa_-}$ and
\eqref{tr:eq:early-linear-front-upper} proves
\eqref{tr:eq:early-front-outer}.

We next prove the localized inner estimate. Fix $\delta\in(0,1)$ and
choose $\gamma,\eta\in(0,1)$ so small that
\begin{equation}\label{tr:eq:localized-intermediate-choice}
  1-\frac\delta2<(1-\eta)(1-\gamma).
\end{equation}
Put $t_0=(1-\gamma)t$ and
\[
  K_t=(1-\delta/2)t\mathcal W_a\cap\Z^d.
\]
By \eqref{tr:eq:localized-intermediate-choice}, $K_t$ is contained in
$(1-\eta)t_0\mathcal W_a$ for all $t$. The local spreading theorem gives
\begin{equation}\label{tr:eq:local-intermediate-plateau}
  \inf_{x\in K_t}u_1(t_0,x)\to1.
\end{equation}
Although the global stability estimate is not small at the final time, it
is small at $t_0$. Indeed, $at<\tau_s$ implies
\begin{equation}\label{tr:eq:intermediate-time-stability}
  \|u_s(t_0)-u_1(t_0)\|_\infty
  \leq\frac2a\exp(-\tau_s+at_0)
  \leq\frac2a\exp(-\gamma\tau_s)\to0.
\end{equation}
Thus the left-hand side of \eqref{tr:eq:local-intermediate-plateau} remains
unchanged if $u_1$ is replaced by $u_s$.

Since $\mathcal W_a$ contains a Euclidean ball centered at the origin,
there is $b_\delta>0$ such that
\begin{equation}\label{tr:eq:wulff-interior-distance}
  \operatorname{dist}\bigl((1-\delta)t\mathcal W_a,
  \R^d\setminus(1-\delta/2)t\mathcal W_a\bigr)
  \geq b_\delta t.
\end{equation}
Let $\ell=t-t_0=\gamma t$. Because the reaction is nonnegative on
$[0,1]$, Duhamel's formula yields
\begin{equation}\label{tr:eq:retention-semigroup-lower}
  u_s(t)\geq S_s(\ell)u_s(t_0).
\end{equation}
The probability of at least one long jump during this interval is at most
\[
  \widetilde\kappa_s\ell\leq(1-s)t\to0
\]
by \eqref{tr:eq:pretransition-small-intensity}. If no long jump occurs,
\eqref{tr:eq:uniform-nn-ballistic-concentration} and
\eqref{tr:eq:wulff-interior-distance} show that a displacement large enough
to leave $K_t$ has probability $O(\ee^{-ct})$. Combining this with
\eqref{tr:eq:local-intermediate-plateau}--
\eqref{tr:eq:retention-semigroup-lower} gives
\[
  \inf_{x\in(1-\delta)t\mathcal W_a\cap\Z^d}u_s(t,x)\to1,
\]
which is \eqref{tr:eq:early-localized-inner}.

For front-like data, fix $\delta\in(0,c_a(e))$ and choose
$\gamma>0$ and $\eta\in(0,\delta/2)$ so that
\begin{equation}\label{tr:eq:front-intermediate-choice}
  c_a(e)-\frac\delta2
  <(c_a(e)-\eta)(1-\gamma).
\end{equation}
At time $t_0=(1-\gamma)t$, the local front estimate and
\eqref{tr:eq:intermediate-time-stability} imply
\begin{equation}\label{tr:eq:front-intermediate-plateau}
  \inf_{\substack{x\in\Z^d\\
  x\cdot e\geq-(c_a(e)-\delta/2)t}}u_s(t_0,x)\to1.
\end{equation}
The target half-space
$\{x:x\cdot e\geq-(c_a(e)-\delta)t\}$ lies at distance
$\delta t/2$ from the complementary boundary in
\eqref{tr:eq:front-intermediate-plateau}. During the remaining time
$\gamma t$, the probability of a long jump tends to zero, while
\eqref{tr:eq:uniform-nn-ballistic-concentration} makes the probability of a
nearest-neighbor displacement of projected length $\delta t/2$
exponentially small. Applying \eqref{tr:eq:retention-semigroup-lower} once
more proves \eqref{tr:eq:early-front-inner}.
\end{proof}

\begin{remark}\label{tr:rem:why-two-scale}
The intermediate time $t_0=(1-\gamma)t$ is needed only for the inner
estimates. At the final time, Lemma~\ref{tr:lem:operator-convergence} gives an
error of order $(1-s)\ee^{at}$, which need not vanish when
$at=\tau_s-O(1)$. At time $t_0$ the same error is at most
$C\ee^{-\gamma\tau_s}$. The remaining interval is handled by the
retention estimate \eqref{tr:eq:retention-semigroup-lower}, where the relevant
quantity is the unamplified long-jump intensity $(1-s)t\to0$. This is the
step that extends the early regime from $\tau_s-at\to\infty$ to the full
range $at<\tau_s$.
\end{remark}

\subsection{The heat kernel}

The semigroup generated by $-A^s$ is that of a compound-Poisson process. We
retain the proof in order to track both the coefficient of the one-jump term
and the uniformity of the convolution estimate.

\begin{lemma}\label{tr:lem:uniform-heat-kernel}
There are $c,C>0$ and an integer $m\geq d+4$ such that, for
$s\in[s_*,1)$,
\begin{align}
  G_{s,1}(z)&\geq c(1-s)(1+|z|)^{-d-2s},
  &&z\in\Z^d,
  \label{tr:eq:heat-one-jump-lower}\\
  G_{s,t}(z)&\leq C(t+t^m)(1+|z|)^{-d-2s},
  &&t>0,\quad z\neq0.
  \label{tr:eq:heat-polynomial-upper}
\end{align}
\end{lemma}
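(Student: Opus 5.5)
I will repeat the proof of Proposition~\ref{prop:heat-kernel}, using the uniform kernel bounds of Lemma~\ref{tr:lem:uniform-long-jumps} in place of Proposition~\ref{prop:jump-kernel}. With $j_s=\kappa_s^{-1}K_s$ on $\Z^d\setminus\{0\}$, the compound-Poisson representation \eqref{eq:compound-poisson} gives
\[
  G_{s,t}=\ee^{-\kappa_st}\sum_{\ell\geq0}\frac{(\kappa_st)^\ell}{\ell!}j_s^{*\ell},
\]
and by \eqref{tr:eq:jump-rate-bounds} the total rate $\kappa_s$ lies in a fixed compact subinterval of $(0,\infty)$ for $s\in[s_*,1)$. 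Thus every factor $\ee^{-\kappa_s}$ and $\kappa_s^{\pm1}$ is bounded above and below by constants independent of $s$.

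For the lower bound \eqref{tr:eq:heat-one-jump-lower}, I will keep only two terms. The zero-jump term gives $G_{s,1}(0)\geq\ee^{-\kappa_s}\geq c$, and $c\geq c(1-s)$. For $z\neq0$, the one-jump term gives $G_{s,1}(z)\geq\ee^{-\kappa_s}K_s(z)\geq\ee^{-C}\widetilde K_s(z)$. This uses $K_s=sp_1+\widetilde K_s\geq\widetilde K_s$ from \eqref{tr:eq:jump-decomposition}. The lower half of \eqref{tr:eq:long-jump-two-sided} then yields $c(1-s)(1+|z|)^{-d-2s}$. The factor $1-s$ is therefore genuinely present, coming from the nearest-neighbor points and from long jumps alike, and it cannot be improved in general.

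For the upper bound \eqref{tr:eq:heat-polynomial-upper}, I first need $K_s(z)\leq C(1+|z|)^{-d-2s}$ uniformly in $s$. This follows from $sp_1\leq p_1$, which is supported on $|z|_1=1$ and bounded by $1$, together with the upper half of \eqref{tr:eq:long-jump-two-sided}. Dividing by $\kappa_s\geq c$ gives $j_s(z)\leq C(1+|z|)^{-q_s}$ with a uniform constant. The largest-increment argument used for \eqref{eq:convolution-bound} then gives
\[
  j_s^{*\ell}(z)\leq C\ell^{q_s+1}(1+|z|)^{-q_s},
\]
and since $q_s+1\leq d+3$ the constant $C\ell^{d+3}$ is uniform in $s$. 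Finally, for an integer $m\geq d+4$ (so that certainly $m\geq q_s+1$), the Poisson moment bound $\mathbb E[N_\lambda^m]\leq C_m(\lambda+\lambda^m)$ with $\lambda=\kappa_st\in[ct,Ct]$ gives the factor $C(t+t^m)$.

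The main obstacle is only bookkeeping uniformity. One must check that no constant depends on $s$ through $c_s$ (handled by \eqref{tr:eq:cs-comparison} inside Lemma~\ref{tr:lem:uniform-long-jumps}) or through the exponent $q_s$. The latter is controlled because $q_s\in[d+2s_*,d+2]$, so $(1+|z|/\ell)^{-q_s}\leq\ell^{q_s}(1+|z|)^{-q_s}\leq\ell^{d+2}(1+|z|)^{-q_s}$ for $\ell\geq1$.
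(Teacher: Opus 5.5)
Your proposal is correct and follows the paper's proof essentially line by line. Both use the compound-Poisson representation with $\kappa_s$ confined to a compact subinterval of $(0,\infty)$. For the lower bound, both use the zero-jump and one-jump terms together with $K_s\geq\widetilde K_s$ and Lemma~\ref{tr:lem:uniform-long-jumps}. For the upper bound, both use the uniform largest-increment convolution estimate followed by the Poisson moment bound.

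You also make explicit the uniform estimate $j_s(z)\leq C(1+|z|)^{-q_s}$, obtained from $sp_1\leq p_1$ and the upper half of \eqref{tr:eq:long-jump-two-sided}, which the paper uses without comment. Your side remark that the factor $1-s$ ``comes from the nearest-neighbor points'' is inaccurate, since there $K_s\geq s/(2d)$, but it plays no role in the argument.
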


\begin{proof}
Put $j_s(0)=0$ and $j_s(z)=\kappa_s^{-1}K_s(z)$ for $z\neq0$.  Then
\begin{equation}\label{tr:eq:compound-poisson-detailed}
  G_{s,t}=\ee^{-\kappa_st}
  \sum_{\ell=0}^\infty\frac{(\kappa_st)^\ell}{\ell!}j_s^{*\ell}.
\end{equation}
For $z\neq0$, the term $\ell=1$ and
Lemma~\ref{tr:lem:uniform-long-jumps} prove
\eqref{tr:eq:heat-one-jump-lower}.  At $z=0$, the zero-jump term is bounded
below by a positive constant, which is stronger than the stated estimate.

We claim that, with $q_s=d+2s$,
\begin{equation}\label{tr:eq:uniform-convolution-bound}
  j_s^{*\ell}(z)
  \leq C\ell^{q_s+1}(1+|z|)^{-q_s},
  \qquad \ell\geq1,\quad z\neq0.
\end{equation}
Indeed, in every decomposition $z=z_1+\cdots+z_\ell$, at least one
increment has length at least $|z|/\ell$.  After choosing this increment,
the other $\ell-1$ variables determine it.  Since
$j_s(y)\leq C(1+|y|)^{-q_s}$ uniformly and the other factors have total
mass one,
\[
  j_s^{*\ell}(z)
  \leq C\ell(1+|z|/\ell)^{-q_s}
  \leq C\ell^{q_s+1}(1+|z|)^{-q_s}.
\]
The constants remain uniform because $q_s\in[d+2s_*,d+2]$.

Choose an integer $m\geq d+4$.  Since $q_s+1\leq d+3\leq m$, insertion
of \eqref{tr:eq:uniform-convolution-bound} into
\eqref{tr:eq:compound-poisson-detailed} gives
\[
  G_{s,t}(z)\leq C(1+|z|)^{-q_s}
  \mathbb E[N_{\kappa_st}^{m}],
\]
where $N_\lambda$ is Poisson with mean $\lambda$.  Its $m$th moment is a
polynomial in $\lambda$ without constant term.  The bounds
\eqref{tr:eq:jump-rate-bounds} therefore imply
$\mathbb E[N_{\kappa_st}^{m}]\leq C(t+t^m)$, proving
\eqref{tr:eq:heat-polynomial-upper}.
\end{proof}

\subsection{Uniform lattice tails}

Both radial and one-sided estimates follow from the same elementary lattice
sum. They are stated separately because the one-sided estimate accounts for
the absence of $d$ from the front-like exponent.

\begin{lemma}\label{tr:lem:uniform-lattice-sums}
There are constants $c,C>0$, uniform for $s\in[s_*,1)$, such that
\begin{align}
  \sum_{|z|\geq R}(1+|z|)^{-d-2s}
  &\leq CR^{-2s},
  \label{tr:eq:uniform-radial-tail-sum}\\
  \sum_{1\leq|z|\leq R}|z|^{2-d-2s}
  &\leq C\frac{R^{2-2s}}{1-s},
  \label{tr:eq:uniform-second-moment-sum}
\end{align}
for $R\geq1$.  For every $e\in\mathbb S^{d-1}$ there are
$c_e,C_e>0$ such that
\begin{equation}\label{tr:eq:uniform-half-space-tail}
  c_eR^{-2s}
  \leq\sum_{z\cdot e\leq-R}(1+|z|)^{-d-2s}
  \leq C_eR^{-2s},
  \qquad R\geq1.
\end{equation}
\end{lemma}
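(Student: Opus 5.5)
The approach is to compare every lattice sum with the corresponding radial (or slab) integral, using dyadic or unit-shell decompositions. Throughout, all exponents stay in the compact range $d+2s\in[d+2s_*,d+2]$, and $2s\geq 2s_*>1$. This keeps every constant uniform, with one exception: the single place where $1-s$ appears.

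\emph{Radial tail.} For $k\geq0$, let $\mathcal S_k:=\{z\in\Z^d: k\leq |z|<k+1\}$. The number of lattice points satisfies $\#\mathcal S_k\leq C_d(1+k)^{d-1}$, with $C_d$ independent of $s$; this follows by comparing with the volume of unit cubes centred at lattice points. Then
\[
\sum_{|z|\geq R}(1+|z|)^{-d-2s}\leq C_d\sum_{k\geq\lfloor R\rfloor}(1+k)^{-1-2s}\leq C\int_{R-1}^\infty (1+r)^{-1-2s}\,dr\leq \frac{C}{2s}R^{-2s},
\]
and $1/(2s)\leq 1/(2s_*)$ is uniform. The same shell count gives \eqref{tr:eq:uniform-second-moment-sum}:
\[
\sum_{1\leq|z|\leq R}|z|^{2-d-2s}\leq C\sum_{1\leq k\leq R}k^{1-2s}\leq C\Bigl(1+\int_1^R r^{1-2s}\,dr\Bigr)\leq C\Bigl(1+\frac{R^{2-2s}-1}{2-2s}\Bigr)\leq C\frac{R^{2-2s}}{1-s}.
\]
The last step uses $R^{2-2s}\geq1$ and $1\leq (1-s)^{-1}$. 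This is the only step where a singular factor appears, and it is sharp as $s\uparrow1$, which is why it is isolated in the statement.

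\emph{Half-space tail.} For the upper bound, note that $z\cdot e\leq -R$ forces $|z|\geq R$, so the estimate follows directly from the radial tail, even with $C_e$ independent of $e$.

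The lower bound in \eqref{tr:eq:uniform-half-space-tail} is the main point, since we must exhibit a uniformly large cone of lattice points. I would choose a coordinate index $j$ with $|e_j|\geq d^{-1/2}$ and set $v:=-\operatorname{sgn}(e_j)\mathbf e_j$. I would then pick a fixed small aperture $\epsilon_e>0$ and consider the lattice points of the cone
\[
\Gamma:=\{z\in\Z^d: z\cdot v\geq R/|e_j|',\ |z-(z\cdot v)v|\leq\epsilon_e\, z\cdot v\},
\]
where $|e_j|'$ is adjusted so that every $z\in\Gamma$ satisfies $z\cdot e\leq -R$. Concretely, $z\cdot e\leq -|e_j|\,z\cdot v+\epsilon_e\sqrt d\,z\cdot v\leq-\tfrac12|e_j|z\cdot v$ once $\epsilon_e\sqrt d\leq |e_j|/2$, so it suffices to require $z\cdot v\geq 2R/|e_j|$.

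The cone $\Gamma$ contains at least $c_e k^{d-1}$ lattice points at axial height $k$, for every integer $k\geq 2R/|e_j|$ with $k\geq k_e$, where $k_e$ is a threshold ensuring that the cross-section of radius $\epsilon_e k$ contains that many lattice points. On $\Gamma$ we also have $|z|\leq 2k$. Hence
\[
\sum_{z\cdot e\leq -R}(1+|z|)^{-d-2s}\geq c_e\sum_{k\geq \max\{2R/|e_j|,k_e\}}k^{d-1}(1+2k)^{-d-2s}\geq c_e\max\{R,k_e\}^{-2s}\geq c_eR^{-2s}.
\]
The last inequality holds for $R\geq1$, after shrinking $c_e$ to absorb $k_e^{-2s}\geq k_e^{-2}$. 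Uniformity in $s$ comes from bounding $2^{-d-2s}$ and $1/(2s)$ uniformly.

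The only mildly delicate step is the lattice-point count in the cone cross-sections. I would handle it with the crude observation that the cross-section contains the integer cube $\{0\}\times[-\lfloor\epsilon_e k/\sqrt d\rfloor,\lfloor\epsilon_e k/\sqrt d\rfloor]^{d-1}$ in the coordinates orthogonal to $\mathbf e_j$. This cube has at least $c(\epsilon_e k)^{d-1}$ points once $\epsilon_e k\geq 2\sqrt d$. When $d=1$ the cone degenerates to a ray and the count is trivially one point per height.
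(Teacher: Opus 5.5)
Your proposal is correct and follows the paper's proof. The first two bounds come from the shell decomposition compared with radial integrals, where $(2-2s)^{-1}$ is the only singular factor. The upper half-space bound comes from $\{z\cdot e\le -R\}\subseteq\{|z|\ge R\}$, and the lower bound uses the same coordinate direction $v=-\operatorname{sgn}(e_j)\mathbf e_j$ with $|e_j|\ge d^{-1/2}$. The only difference is cosmetic: the paper uses a single lattice cube of side $c_en$ centered at $nv$ with $n\asymp R$, whereas you sum over all cross-sections of a cone about $v$, which gives the same $R^{-2s}$ lower bound.
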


\begin{proof}
The first two bounds follow by decomposing $\Z^d$ into shells
$n\leq|z|<n+1$ and comparing the resulting sums with radial integrals.
For \eqref{tr:eq:uniform-second-moment-sum}, the integral is
$(R^{2-2s}-1)/(2-2s)$; this is the only apparent singularity as
$s\uparrow1$.

The upper bound in \eqref{tr:eq:uniform-half-space-tail} follows from
$z\cdot e\leq-R\Rightarrow |z|\geq R$.  For the reverse inequality,
choose $j$ with $|e_j|\geq d^{-1/2}$ and put
$v=-\operatorname{sgn}(e_j)\mathbf e_j$.  For each large $R$, choose an
integer $n\asymp R/|e_j|$ with $n|e_j|\geq2R$.  A lattice cube of side
$c_en$ centered at $nv$ lies in $\{z:z\cdot e\leq-R\}$, contains at
least $c_en^d$ points, and satisfies $|z|\leq C_en$ on that cube.  Its
contribution is bounded below by
$c_en^d(1+n)^{-d-2s}\geq c_eR^{-2s}$.  Adjusting the constants covers the
bounded range of $R$.
\end{proof}

\subsection{Algebraic comparison profiles}

The radial estimate is critical in a useful sense: the mass of
$\Theta_{R,s}$ is of order $R^d$, while the long-jump kernel has exponent
$d+2s$.  Their product supplies precisely the factor $R^{-2s}$.

\begin{lemma}\label{tr:lem:uniform-radial-profile}
There is $C>0$ such that
\begin{equation}\label{tr:eq:radial-profile-bound-detailed}
  |A^s\Theta_{R,s}(x)|
  \leq CR^{-2s}\Theta_{R,s}(x)
\end{equation}
for $s\in[s_*,1)$, $R\geq1$, and $x\in\Z^d$.
\end{lemma}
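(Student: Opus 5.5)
The plan is to redo the proof of Lemma~\ref{lem:logistic-weighted-profile} while tracking the $s$-dependence. The one place where that proof is not uniform is the near-field moment $\sum_{0<|z|\leq L/4}|z|^\alpha K_s(z)\leq CL^{\alpha-2s}$. Its constant behaves like $(\alpha-2s)^{-1}$ and degenerates as $s\uparrow1$ for any fixed $\alpha<2$. To avoid this, I will use the decomposition \eqref{tr:eq:jump-decomposition}, $K_s=sp_1+\widetilde K_s$, and write
\[
  A^s\Theta_{R,s}=s(I-P)\Theta_{R,s}+\frac12\sum_{z\neq0}\widetilde K_s(z)\bigl(2\Theta_{R,s}(x)-\Theta_{R,s}(x+z)-\Theta_{R,s}(x-z)\bigr).
\]
I will estimate the two parts separately, with $L:=R+|x|$ as before.

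\emph{Step 1: uniform second-order bound for the profile.} Write $\Theta_{R,s}(x)=F_s(x/R)$ with $F_s(y)=(1+|y|^{q_s})^{-1}$. Since $q_s\geq d+2s_*>d+1\geq2$, the function $|y|^{q_s}$ is $C^2$ with Hessian bounded on fixed balls uniformly for $s\in[s_*,1)$. Hence there is no need for the Hölder exponent $\alpha$ here. I will prove
\[
  \sup_{|\vartheta|\leq1}|D^2\Theta_{R,s}(x+\vartheta z)|\leq CL^{-2}\Theta_{R,s}(x),\qquad |z|\leq L/4,
\]
with $C$ uniform in $s$. The argument uses the same two cases as before. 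When $|x|\leq2R$, we have $\Theta_{R,s}(x)\geq(1+2^{d+2})^{-1}$ and $|D^2\Theta_{R,s}|\leq CR^{-2}$. When $|x|>2R$, one differentiates directly away from the origin. The exponent $q_s$ lies in the compact range $[d+2s_*,d+2]$, so all constants are uniform.

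\emph{Step 2: the nearest-neighbour part.} If $L\geq4$, Taylor's formula with $|z|=1\leq L/4$ and Step~1 give
\[
  |s(I-P)\Theta_{R,s}(x)|\leq CL^{-2}\Theta_{R,s}(x)\leq CR^{-2s}\Theta_{R,s}(x),
\]
using $L\geq R\geq1$ and $2\geq2s$. If $L<4$, then $R$ and $|x|$ are bounded. In that case $\Theta_{R,s}(x)$ is bounded below, and the trivial bound $|(I-P)\Theta|\leq2$ suffices after enlarging $C$.

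\emph{Step 3: the long-jump part.} This is the main point, and the factor $1-s$ is what rescues uniformity. Lemma~\ref{tr:lem:uniform-long-jumps} gives $\widetilde K_s(z)\leq C(1-s)(1+|z|)^{-q_s}$. For the near field $|z|\leq L/4$, Step~1 and \eqref{tr:eq:uniform-second-moment-sum} give
\[
  CL^{-2}\Theta_{R,s}(x)\,(1-s)\sum_{1\leq|z|\leq L/4}|z|^{2-d-2s}\leq CL^{-2}\Theta_{R,s}(x)\,L^{2-2s}=CL^{-2s}\Theta_{R,s}(x).
\]
So the $(1-s)^{-1}$ singularity of the second moment cancels exactly.

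For the far field $|z|>L/4$, I will argue as before. First, \eqref{tr:eq:uniform-radial-tail-sum} gives $\sum_{|z|>L/4}\widetilde K_s\leq CL^{-2s}$. Second, on this range $\widetilde K_s(z)\leq CL^{-q_s}$. Third, the profile mass satisfies $\sum_y\Theta_{R,s}(y)\leq CR^d$ uniformly, because $q_s-d=2s\geq2s_*$ keeps the radial integral $\int_0^\infty r^{d-1}(1+r^{q_s})^{-1}dr$ bounded. Fourth, the critical comparison $R^dL^{-q_s}\leq R^{-2s}\Theta_{R,s}(x)$ follows from $R^{q_s}+|x|^{q_s}\leq L^{q_s}$. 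Together these bound the far-field contribution by $CR^{-2s}\Theta_{R,s}(x)$.

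Adding the contributions of Steps~2 and~3 proves \eqref{tr:eq:radial-profile-bound-detailed}. I expect the only delicate point to be Step~3's near field, namely checking that every constant in Step~1 and in the shell sums is uniform over $s\in[s_*,1)$. The rest is a transcription of the fixed-$s$ argument.
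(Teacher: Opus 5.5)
Your proposal is correct and follows essentially the same route as the paper. It splits off the nearest-neighbour part $sp_1$, uses the relative Hessian bound $CL^{-2}\Theta_{R,s}(x)$ on $|z|\le L/4$ (justified directly because $q_s>2$), and lets the factor $1-s$ in $\widetilde K_s$ cancel the $(2-2s)^{-1}$ in \eqref{tr:eq:uniform-second-moment-sum}. It then controls the far field by the tail mass, the profile mass $CR^d$ and the critical comparison $R^dL^{-q_s}\le R^{-2s}\Theta_{R,s}(x)$, and handles the bounded range $L<4$ trivially, exactly as in the paper's proof.
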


\begin{proof}
Write $q_s=d+2s$ and $L=R+|x|$.  Symmetry of $K_s$ gives
\begin{equation}\label{tr:eq:symmetrized-radial-operator}
  A^s\Theta_{R,s}(x)=\frac12\sum_{z\neq0}K_s(z)
  \bigl(2\Theta_{R,s}(x)-\Theta_{R,s}(x+z)
  -\Theta_{R,s}(x-z)\bigr).
\end{equation}
Direct differentiation of the continuous extension shows that
\begin{equation}\label{tr:eq:radial-relative-hessian}
  \sup_{|\vartheta|\leq1}
  |D^2\Theta_{R,s}(x+\vartheta z)|
  \leq CL^{-2}\Theta_{R,s}(x)
\end{equation}
whenever $|z|\leq L/4$.  The constant is uniform because $q_s$ ranges in
a compact interval.  For $|x|\leq2R$ this follows from the scale $R$;
for $|x|>2R$ the segment in \eqref{tr:eq:radial-relative-hessian} stays at
distance comparable with $|x|$ from the origin.

Separate the nearest-neighbor part in
\eqref{tr:eq:jump-decomposition}.  Taylor's formula,
Lemma~\ref{tr:lem:uniform-long-jumps}, and
\eqref{tr:eq:uniform-second-moment-sum} show that the contribution of
$0<|z|\leq L/4$ is at most
\begin{align}
  &CL^{-2}\Theta_{R,s}(x)
  \left(1+(1-s)
  \sum_{1\leq|z|\leq L/4}|z|^{2-d-2s}\right) \notag\\
  &\hspace{35mm}\leq CL^{-2s}\Theta_{R,s}(x).
  \label{tr:eq:radial-near-field-uniform}
\end{align}
The cancellation of $(1-s)$ against $(2-2s)^{-1}$ is what makes the last
constant uniform.

For the complementary sum,
Lemma~\ref{tr:lem:uniform-long-jumps} gives
\begin{equation}\label{tr:eq:radial-far-tail-uniform}
  \sum_{|z|>L/4}\widetilde K_s(z)\leq C(1-s)L^{-2s}.
\end{equation}
The nearest-neighbor part is absent when $L>4$; the bounded range $L\leq4$
follows directly from $\|A^s\|\leq2$.  Since $q_s>d$,
\begin{equation}\label{tr:eq:radial-profile-mass-uniform}
  \sum_{y\in\Z^d}\Theta_{R,s}(y)\leq CR^d.
\end{equation}
If $|z|>L/4$, then
$\widetilde K_s(z)\leq C(1-s)L^{-q_s}$, and hence
\[
  \sum_{|z|>L/4}\widetilde K_s(z)\Theta_{R,s}(x\pm z)
  \leq C(1-s)R^dL^{-q_s}.
\]
Finally,
\begin{equation}\label{tr:eq:radial-critical-comparison-uniform}
  R^dL^{-q_s}\leq CR^{-2s}\Theta_{R,s}(x).
\end{equation}
Combining \eqref{tr:eq:radial-near-field-uniform}--
\eqref{tr:eq:radial-critical-comparison-uniform} with
\eqref{tr:eq:symmetrized-radial-operator} proves the lemma.
\end{proof}

\begin{lemma}\label{tr:lem:uniform-directional-profile}
For each $e\in\mathbb S^{d-1}$ there is $C_e>0$ such that
\begin{equation}\label{tr:eq:directional-profile-bound-detailed}
  |A^s\Psi_{R,e,\kappa,s}(x)|
  \leq C_eR^{-2s}\Psi_{R,e,\kappa,s}(x)
\end{equation}
for $s\in[s_*,1)$, $R\geq1$, $\kappa\in\R$, and $x\in\Z^d$.
Moreover,
\begin{align}
  S_s(1)H_{e,\kappa}(x)
  &\geq c_e(1-s)\Psi_{1,e,\kappa,s}(x),
  \label{tr:eq:cumulative-one-jump-lower}\\
  S_s(t)H_{e,\kappa}(x)
  &\leq C_e(1+t+t^m)\Psi_{1,e,\kappa,s}(x).
  \label{tr:eq:cumulative-heat-upper-detailed}
\end{align}
\end{lemma}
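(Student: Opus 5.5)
The plan has three parts: the weighted profile bound \eqref{tr:eq:directional-profile-bound-detailed}, the one-jump lower bound \eqref{tr:eq:cumulative-one-jump-lower}, and the cumulative upper bound \eqref{tr:eq:cumulative-heat-upper-detailed}. Each transplants the one-dimensional arguments of Lemmas~\ref{lem:one-sided-weighted-profile} and \ref{lem:cumulative-heat-kernel} to the projected variable $r=x\cdot e-\kappa$, while keeping the constants uniform in $s\in[s_*,1)$. Throughout, write $\Psi=\Psi_{R,e,\kappa,s}$ and $\psi(r)=(1+(r^-)^2/R^2)^{-s}$, so that $\Psi(x)=\psi(x\cdot e-\kappa)$ and $\psi\in C^{1,1}(\R)$.

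For \eqref{tr:eq:directional-profile-bound-detailed}, we symmetrize as in \eqref{tr:eq:symmetrized-radial-operator}. The second difference $2\Psi(x)-\Psi(x+z)-\Psi(x-z)$ equals the one-dimensional second difference of $\psi$ at $r$ with step $z\cdot e$, and $|z\cdot e|\leq|z|$.
\begin{itemize}
\item Case $r\geq0$. Here $\Psi(x)=1$ and $A^s\Psi(x)\geq0$. If $r\geq R$, only $z$ with $z\cdot e<-r$ contribute, and \eqref{tr:eq:uniform-radial-tail-sum} bounds their total by $Cr^{-2s}\leq CR^{-2s}$. If $0\leq r<R$, we use $1-\psi(r+z\cdot e)\leq C\min\{1,|z|^2/R^2\}$.
\item Case $r<0$. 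Set $L=R+|r|$. The relative Hessian bound $\sup|\psi''(r+\vartheta z\cdot e)|\leq CL^{-2}\psi(r)$ for $|z|\leq L/4$ holds uniformly in $s$ by direct differentiation.
\end{itemize}
The near-field sum is then split into the nearest-neighbour part $sp_1$ and $\widetilde K_s$, exactly as in \eqref{tr:eq:radial-near-field-uniform}. The factor $(1-s)$ from Lemma~\ref{tr:lem:uniform-long-jumps} cancels the $(1-s)^{-1}$ in \eqref{tr:eq:uniform-second-moment-sum}, giving $CL^{-2s}\psi(r)$. For the far field we use $\sum_{|z|>L/4}K_s\leq CL^{-2s}$ (nearest-neighbour part absent once $L>4$, and $\|A^s\|\leq2$ otherwise), $0<\psi\leq1$, and the critical comparison $L^{-2s}\leq(R^2+r^2)^{-s}=R^{-2s}\psi(r)$. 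Here $C_e$ is in fact independent of $e$.

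For \eqref{tr:eq:cumulative-one-jump-lower}, we write $S_s(1)H_{e,\kappa}(x)=\sum_{y\cdot e\geq\kappa}G_{s,1}(x-y)$.
\begin{itemize}
\item If $r=x\cdot e-\kappa\geq0$, the term $y=x$ gives $G_{s,1}(0)\geq c\geq c(1-s)$.
\item If $r<0$, we apply \eqref{tr:eq:heat-one-jump-lower} to get $S_s(1)H_{e,\kappa}(x)\geq c(1-s)\sum_{w\cdot e\leq r}(1+|w|)^{-d-2s}$, where $w=x-y$.
\end{itemize}
The main obstacle is that the half-space $\{w\cdot e\leq r\}$ is not centered at a lattice point when $r$ is non-integral. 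Since it is a lattice translate-invariant region up to bounded shift, it contains $\{w\cdot e\leq -|r|-1\}$. The lower bound in \eqref{tr:eq:uniform-half-space-tail} with $R=|r|+1$ then gives $c_e(1+|r|)^{-2s}\geq c_e\psi_1(r)$, where $\psi_1$ denotes $\psi$ with $R=1$, i.e.\ $\Psi_{1,e,\kappa,s}(x)=\psi_1(r)$. The same half-space bound also covers bounded $|r|$.

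For \eqref{tr:eq:cumulative-heat-upper-detailed}, we use $H_{e,\kappa}\leq1$. When $r\geq0$ the bound is trivial, since $S_s(t)$ is Markov and $\Psi_{1,e,\kappa,s}(x)=1$. When $r<0$, the condition $y\cdot e\geq\kappa$ forces $(x-y)\cdot e\leq r$, hence $|x-y|\geq|r|$. If $|r|\geq1$, the heat-kernel bound \eqref{tr:eq:heat-polynomial-upper} together with \eqref{tr:eq:uniform-radial-tail-sum} gives $C(t+t^m)|r|^{-2s}\leq C(t+t^m)\Psi_{1,e,\kappa,s}(x)$, using $\psi_1(r)\asymp(1+|r|)^{-2s}$ uniformly in $s$. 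If $|r|<1$, the value is at most $1\leq 2\Psi_{1,e,\kappa,s}(x)$. In both cases the constants are uniform because $q_s$ ranges over $[d+2s_*,d+2]$.
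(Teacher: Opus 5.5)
Your proposal is correct and follows essentially the paper's route: the same three-case analysis in $r=x\cdot e-\kappa$, the same nearest-neighbour/long-jump splitting to keep constants uniform in $s$, and the same cumulative heat-kernel sums for the two semigroup bounds. The differences are cosmetic: you use the radial tail for $r\ge R$, so the profile constant does not depend on $e$, and you take $R=|r|+1$ in the half-space lower bound instead of the paper's fixed lattice vector $z_e$ for $-1<r<0$; also, $(1+|r|)^{-2s}\ge\psi_1(r)$ holds only up to the harmless factor $2^{-s}$, since $(1+|r|)^2\le 2(1+r^2)$.
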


\begin{proof}
Put $r=x\cdot e-\kappa$.  If $r\geq R$, the summand in the jump
representation vanishes unless $r+z\cdot e<0$.  The nearest-neighbor term
either vanishes or is covered by enlarging the constant when $R$ is
bounded.  Equations \eqref{tr:eq:long-jump-two-sided} and
\eqref{tr:eq:uniform-half-space-tail} then give
\[
  |A^s\Psi_{R,e,\kappa,s}(x)|
  \leq C_e(1-s)r^{-2s}\leq C_eR^{-2s}.
\]
If $0\leq r<R$, the continuous one-dimensional profile belongs to
$C^{1,1}$ and its second derivative is bounded by $CR^{-2}$.  Splitting at
$|z|=R$ and using Lemma~\ref{tr:lem:uniform-lattice-sums} gives
\[
  |A^s\Psi_{R,e,\kappa,s}(x)|
  \leq CR^{-2}\left(1+(1-s)
  \sum_{1\leq|z|\leq R}|z|^{2-d-2s}\right)
  +C(1-s)R^{-2s}\leq CR^{-2s}.
\]
In these two cases the profile equals one.

Suppose $r<0$ and let $L=R+|r|$.  For $|z|\leq L/4$, the same
relative Hessian argument as in \eqref{tr:eq:radial-relative-hessian} gives
\[
  |2\Psi(x)-\Psi(x+z)-\Psi(x-z)|
  \leq C\Psi(x)\frac{|z|^2}{L^2},
\]
where $\Psi=\Psi_{R,e,\kappa,s}$.  Thus the near part is bounded by
$CL^{-2s}\Psi(x)$.  On the far part, the tail mass contributes
$C(1-s)L^{-2s}\Psi(x)$.  Since the shifted profiles are at most one,
their contribution is bounded by the same tail mass without the factor
$\Psi(x)$, namely $C(1-s)L^{-2s}$.  Since
\begin{equation}\label{tr:eq:directional-critical-comparison}
  L^{-2s}\leq C_eR^{-2s}\Psi_{R,e,\kappa,s}(x),
\end{equation}
we obtain \eqref{tr:eq:directional-profile-bound-detailed}.

For the semigroup estimates, observe that
\[
  S_s(t)H_{e,\kappa}(x)
  =\sum_{z\cdot e\leq r}G_{s,t}(z).
\]
If $r\geq0$, the zero-jump term proves the lower bound, and the Markov
property proves the upper bound.  If $r<0$, combine
\eqref{tr:eq:heat-one-jump-lower} or \eqref{tr:eq:heat-polynomial-upper} with
\eqref{tr:eq:uniform-half-space-tail}.  For bounded negative $r$ the
one-jump estimate at a fixed admissible lattice vector supplies the same
lower bound. More precisely, for $-1<r<0$, choose $z_e\in\Z^d$ with
$z_e\cdot e\leq-1$ and use the term $G_{s,1}(z_e)$ in the cumulative sum.
Since
$\Psi_{1,e,\kappa,s}(x)\asymp(1+|r|)^{-2s}$ for $r<0$, this proves
\eqref{tr:eq:cumulative-one-jump-lower} and
\eqref{tr:eq:cumulative-heat-upper-detailed}.
\end{proof}

\begin{proposition}\label{tr:prop:uniform-estimates}
Fix $s_*\in(1/2,1)$. There exist $c,C>0$ and an integer $m\geq d+4$
such that, for every $s\in[s_*,1)$, the following estimates hold:
\begin{align}
  c(1-s)(1+|z|)^{-d-2s}
  &\leq\widetilde K_s(z)
  \leq C(1-s)(1+|z|)^{-d-2s},
  &&z\neq0,
  \label{tr:eq:uniform-long-kernel}\\
  G_{s,1}(z)
  &\geq c(1-s)(1+|z|)^{-d-2s},
  &&z\in\Z^d,
  \label{tr:eq:uniform-fixed-time-lower}\\
  G_{s,t}(z)
  &\leq C(t+t^m)(1+|z|)^{-d-2s},
  &&t>0,\quad z\neq0,
  \label{tr:eq:uniform-heat-upper}\\
  |A^s\Theta_{R,s}(x)|
  &\leq CR^{-2s}\Theta_{R,s}(x),
  &&R\geq1,
  \label{tr:eq:uniform-radial-weight}\\
  |A^s\Psi_{R,e,\kappa,s}(x)|
  &\leq C_eR^{-2s}\Psi_{R,e,\kappa,s}(x),
  &&R\geq1.
  \label{tr:eq:uniform-front-weight}
\end{align}
In addition,
\begin{equation}\label{tr:eq:uniform-cumulative-upper}
  S_s(t)H_{e,\kappa}(x)
  \leq C_e(1+t+t^m)\Psi_{1,e,\kappa,s}(x)
\end{equation}
for $t>0$ and $x\in\Z^d$.
\end{proposition}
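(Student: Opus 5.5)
This proposition gathers estimates already proved in this section. The proof assembles them and checks that all constants are uniform for $s\in[s_*,1)$.

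\textbf{Kernel and heat-kernel bounds.}
\begin{itemize}
\item Estimate \eqref{tr:eq:uniform-long-kernel} is exactly \eqref{tr:eq:long-jump-two-sided} of Lemma~\ref{tr:lem:uniform-long-jumps}. Its proof used only the one-dimensional walk bounds of Lemma~\ref{lem:one-dimensional-walk}, which are independent of $s$, and the comparison \eqref{tr:eq:cs-comparison}. The exponent $s+d/2$ in the $\rho$-integral \eqref{tr:eq:large-time-kernel-integral} ranges over a compact interval, so the Gamma-type integral is bounded uniformly.
\item Estimates \eqref{tr:eq:uniform-fixed-time-lower} and \eqref{tr:eq:uniform-heat-upper} are \eqref{tr:eq:heat-one-jump-lower} and \eqref{tr:eq:heat-polynomial-upper} of Lemma~\ref{tr:lem:uniform-heat-kernel}.
\item Take the integer $m\geq d+4$ from that lemma and use it throughout. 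Since $q_s+1\leq d+3\leq m$, a single $m$ works for every $s$.
\item The jump-rate bounds \eqref{tr:eq:jump-rate-bounds} keep the factor $\ee^{-\kappa_s}$ and the Poisson moments uniform.
\end{itemize}

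\textbf{Profile estimates.}
\begin{itemize}
\item Estimate \eqref{tr:eq:uniform-radial-weight} is Lemma~\ref{tr:lem:uniform-radial-profile}.
\item Estimate \eqref{tr:eq:uniform-front-weight} is \eqref{tr:eq:directional-profile-bound-detailed} of Lemma~\ref{tr:lem:uniform-directional-profile}.
\item The cumulative bound \eqref{tr:eq:uniform-cumulative-upper} is \eqref{tr:eq:cumulative-heat-upper-detailed} of the same lemma.
\end{itemize}

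\textbf{Assembling the constants.} Take $c$ as the minimum of the finitely many lower constants and $C$ as the maximum of the upper ones. The direction-dependent constants stay as $C_e$, as the statement permits.

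\textbf{Main obstacle.} The one point needing care is the near-field second-moment sum in the profile estimates. The sum $\sum_{|z|\leq L}|z|^2K_s(z)$ blows up like $(1-s)^{-1}L^{2-2s}$. Uniformity therefore relies on the splitting \eqref{tr:eq:jump-decomposition}:
\begin{itemize}
\item The nearest-neighbor part $sp_1$ contributes only $O(L^{-2})$.
\item The long-jump part carries the factor $1-s$ from \eqref{tr:eq:uniform-long-kernel}, which cancels the $(2-2s)^{-1}$ in \eqref{tr:eq:uniform-second-moment-sum}.
\end{itemize}
I would re-examine that this cancellation was performed in both profile lemmas. I would also check that the far-field constants depend on $q_s$ only through the compact range $[d+2s_*,d+2]$, and similarly for the relative Hessian bounds \eqref{tr:eq:radial-relative-hessian}. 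Once this is confirmed, the proposition follows by citation.
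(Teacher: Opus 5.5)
Your proposal is correct and is essentially the paper's proof, which likewise obtains the proposition by citing Lemmas~\ref{tr:lem:uniform-long-jumps}, \ref{tr:lem:uniform-heat-kernel}, \ref{tr:lem:uniform-radial-profile} and \ref{tr:lem:uniform-directional-profile}; the uniformity rests on the $(1-s)$ versus $(2-2s)^{-1}$ cancellation you identify. One wording fix: the crude bound $\sum_{|z|\leq L}|z|^2(1+|z|)^{-d-2s}$ grows like $(1-s)^{-1}L^{2-2s}$, but once the splitting \eqref{tr:eq:jump-decomposition} is used, the true second moment $\sum_{0<|z|\leq L}|z|^2K_s(z)$ is $O(L^{2-2s})$ uniformly in $s$.
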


\begin{proof}
The kernel estimate is Lemma~\ref{tr:lem:uniform-long-jumps}.  The two
heat-kernel bounds follow from Lemma~\ref{tr:lem:uniform-heat-kernel}, and the
profile estimates are Lemmas~\ref{tr:lem:uniform-radial-profile} and
\ref{tr:lem:uniform-directional-profile}.  Equation
\eqref{tr:eq:uniform-cumulative-upper} is
\eqref{tr:eq:cumulative-heat-upper-detailed}.
\end{proof}

\section{The accelerating regime}\label{tr:sec:acceleration}

The initial algebraic tail has amplitude $1-s$. It reaches a fixed height
after a time comparable with $\tau_s$; from then on, the profiles in
\eqref{tr:eq:radial-profile} and \eqref{tr:eq:front-profile} expand with
constants independent of $s$. The scalar calculation below applies to both
radial and front-like data and raises the amplitude without changing the
spatial scale.

\begin{lemma}\label{tr:lem:fixed-scale-amplification}
Let $0<\Phi\leq1$ and suppose that
\begin{equation}\label{tr:eq:abstract-weighted-bound}
  |A^s\Phi(x)|\leq\delta\Phi(x)
  \qquad (x\in\Z^d)
\end{equation}
for some $0<\delta\leq a/4$.  If $0<\beta_0<1/2$ and
\begin{equation}\label{tr:eq:abstract-logistic-ode}
  \beta'=(a-\delta)\beta-a\beta^2,
  \qquad \beta(0)=\beta_0,
\end{equation}
then $\beta(t)\Phi$ is a subsolution of \eqref{eq:intro-equation}.  The first
time $T$ at which $\beta(T)=1/2$ satisfies
\begin{equation}\label{tr:eq:abstract-amplification-time}
  T\leq C_a(1+|\log\beta_0|).
\end{equation}
The constant is independent of $s$, $\delta$, and $\Phi$.
\end{lemma}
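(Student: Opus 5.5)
The plan mirrors the fixed-radius amplification already carried out in Lemma~\ref{lem:logistic-barrier-initialization}, now with an abstract profile and an explicit time bound. The lemma has two separate claims: the subsolution property and the time estimate.

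\emph{Subsolution property.} Set $z(t,x):=\beta(t)\Phi(x)$. I first check that $0<\beta(t)\leq 1/2$ on the interval of interest. The ODE \eqref{tr:eq:abstract-logistic-ode} is logistic with positive equilibrium $1-\delta/a\geq 3/4$. Starting from $\beta_0\in(0,1/2)$, the solution is increasing and stays in $(0,1-\delta/a)$. On this range
\[
\beta'\geq (a-\delta)\beta-a\beta^2=\beta\bigl(a(1-\beta)-\delta\bigr)>0 .
\]
Using \eqref{tr:eq:abstract-weighted-bound}, I compute
\[
\partial_tz+A^sz-az(1-z)\leq \bigl[(a-\delta)\beta-a\beta^2\bigr]\Phi+\delta\beta\Phi-a\beta\Phi+a\beta^2\Phi^2
=-a\beta^2\Phi(1-\Phi)\leq0,
\]
since $0<\Phi\leq1$. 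This is exactly the calculation in Lemma~\ref{lem:logistic-barrier-initialization}, and it gives a subsolution for all $t\geq0$. The reaction is understood as the logistic one, as in this section. If a general $f$ were intended, I would replace $a$ by a lower linearization on $[0,1/2]$; I expect the logistic reading is the intended one.

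\emph{Time bound.} Because $\delta\leq a/4$ and $\beta\leq1/2$ before time $T$, we have
\[
\beta'\geq \beta\bigl(a-\tfrac a4-\tfrac a2\bigr)=\tfrac a4\beta .
\]
Hence $\beta(t)\geq\beta_0e^{at/4}$ for as long as $\beta\leq1/2$. The level $1/2$ is therefore reached no later than $t=\frac4a\log\bigl(1/(2\beta_0)\bigr)$, which gives
\[
T\leq \frac4a\log\frac1{2\beta_0}\leq \frac4a|\log\beta_0| .
\]
Existence of a finite first hitting time follows from this bound together with $\beta_0<1/2<1-\delta/a$. The constant $C_a=4/a$ does not depend on $s$, $\delta$ or $\Phi$.

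There is no substantial obstacle in this lemma. The only points needing care are that the equilibrium lies strictly above $1/2$, which is where $\delta\leq a/4$ enters, and that the differential inequality uses $\Phi\leq1$ and nothing about the profile beyond \eqref{tr:eq:abstract-weighted-bound}.
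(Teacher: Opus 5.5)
Your proposal is correct and is essentially the paper's proof. The subsolution check is the same pointwise computation: the paper writes it as $a\beta\Phi(1-\beta)\leq a\beta\Phi(1-\beta\Phi)$, which is your identity $-a\beta^2\Phi(1-\Phi)\leq0$ rearranged. The only difference is the time bound: the paper solves the logistic equation explicitly, with equilibrium $b_*=(a-\delta)/a\geq3/4$, whereas you use the differential inequality $\beta'\geq(a/4)\beta$ while $\beta\leq1/2$, and both give $T\leq C_a(1+|\log\beta_0|)$ with $C_a$ depending only on $a$.
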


\begin{proof}
Since $0<\Phi\leq1$, equations \eqref{tr:eq:abstract-weighted-bound} and
\eqref{tr:eq:abstract-logistic-ode} give
\begin{align*}
  \partial_t(\beta\Phi)+A^s(\beta\Phi)
  &\leq \bigl((a-\delta)\beta-a\beta^2+\delta\beta\bigr)\Phi\\
  &=a\beta\Phi(1-\beta)
  \leq a\beta\Phi(1-\beta\Phi).
\end{align*}
The positive equilibrium in \eqref{tr:eq:abstract-logistic-ode} is
$b_*=(a-\delta)/a\geq3/4$, and
\begin{equation}\label{tr:eq:explicit-logistic-solution}
  \beta(t)=
  \frac{b_*}{1+(b_*/\beta_0-1)\ee^{-(a-\delta)t}}.
\end{equation}
Solving $\beta(T)=1/2$ in this formula and using
$a-\delta\geq3a/4$ proves \eqref{tr:eq:abstract-amplification-time}.
\end{proof}

\begin{lemma}\label{tr:lem:localized-amplification}
Assume the hypotheses of Theorem~\ref{tr:thm:localized-transition}. There are
$C_0>0$, $\rho_0\geq1$, and, for each $s\in[s_*,1)$, a time
\begin{equation}\label{tr:eq:start-time-bound}
  T_s\leq C_0(1+\tau_s)
\end{equation}
such that
\begin{equation}\label{tr:eq:radial-start}
  u_s(T_s,x)\geq\frac12\Theta_{\rho_0,s}(x),
  \qquad x\in\Z^d.
\end{equation}
For every $\eta\in(0,a)$ and $\theta\in(0,1)$ there are
$c_{\eta,\theta},T_{\eta,\theta}>0$, independent of $s$, such that
\begin{equation}\label{tr:eq:uniform-radial-expansion}
  u_s(T_s+t,x)\geq\theta
\end{equation}
whenever
\begin{equation}\label{tr:eq:uniform-radial-region}
  t\geq T_{\eta,\theta},
  \qquad
  |x|\leq c_{\eta,\theta}
  \exp\left(\frac{(a-\eta)t}{d+2s}\right).
\end{equation}
\end{lemma}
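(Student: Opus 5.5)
The proof has two stages: first seed the algebraic profile $\Theta_{\rho_0,s}$ at amplitude $1/2$ by time $T_s$, then let it expand at a rate independent of $s$.

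\emph{Seeding.} Fix $\rho_0\geq1$, independent of $s$, so large that the constant $C$ of \eqref{tr:eq:uniform-radial-weight} satisfies $C\rho_0^{-2s}\leq C\rho_0^{-2s_*}\leq a/4$. Choose $x_*$ with $u_0(x_*)>0$. Since the reaction is nonnegative on $[0,1]$, Duhamel's formula and \eqref{tr:eq:uniform-fixed-time-lower} give
\[
  u_s(1,x)\geq u_0(x_*)G_{s,1}(x-x_*)\geq c(1-s)(1+|x|^2)^{-q_s/2}.
\]
As in \eqref{eq:profile-comparison-for-initialization}, with constants uniform because $q_s\in[d+2s_*,d+2]$, we have $\Theta_{\rho_0,s}\leq C\rho_0^{q_s}(1+|x|^2)^{-q_s/2}$. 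Hence $u_s(1,\cdot)\geq\beta_0\Theta_{\rho_0,s}$ with $\beta_0=c'(1-s)$, where $c'$ does not depend on $s$. Now apply Lemma~\ref{tr:lem:fixed-scale-amplification} with $\Phi=\Theta_{\rho_0,s}$ and $\delta=C\rho_0^{-2s}$. It shows that $\beta(t)\Theta_{\rho_0,s}$ is a subsolution, and that it reaches amplitude $1/2$ after a time at most $C_a(1+|\log\beta_0|)\leq C(1+\tau_s)$. Comparison (Lemma~\ref{lem:comparison}) started at time $1$ then gives \eqref{tr:eq:radial-start}, with $T_s=1+T\leq C_0(1+\tau_s)$.

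\emph{Expansion.} From time $T_s$, I would reuse the coupled subsolution of Lemma~\ref{lem:coupled-logistic-barriers}, now with $C_4$ replaced by the uniform constant $C$ of \eqref{tr:eq:uniform-radial-weight}. Set $w_-=\alpha_-\Theta_{\rho_-,s}$ with
\[
  \alpha_-(0)=\tfrac12,\qquad \rho_-(0)=\rho_0,\qquad \mu>C,\qquad \mu\rho_0^{-2s_*}\leq a/4 .
\]
The invariant-region argument and the exact identity \eqref{tr:eq:coupled-barrier-identity} go through verbatim, with $q$ replaced by $q_s$. This yields $w_-(t)\leq u_s(T_s+t)$ and $\alpha_-\in[1/2,1]$. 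The point to check is that the estimates are uniform in $s$. From
\[
  q_s\frac{\rho_-'}{\rho_-}=a\alpha_--\mu\rho_-^{-2s}
\]
one gets $q_s(\log\rho_-)'\geq a/4$ and hence $\rho_-^{-2s}\leq\rho_0^{-2s}e^{-s_*at/(2(d+2))}$. The equation for $D_-=1-\alpha_-$ then gives $D_-(t)\leq Ce^{-ct}$, with $c,C$ depending only on $a,d,s_*,\mu,\rho_0$. Integrating,
\[
  \log\rho_-(t)\geq\log\rho_0+\frac{at}{q_s}-\frac1{q_s}\int_0^\infty\bigl(aD_-+\mu\rho_-^{-2s}\bigr)\,dt\geq\frac{at}{q_s}-C',
\]
uniformly in $s\in[s_*,1)$. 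This is in fact stronger than needed, since $\eta>0$ is allowed.

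\emph{Conclusion.} Given $\theta$, choose $T_{\eta,\theta}$ such that $\alpha_-(t)\geq(1+\theta)/2$ for $t\geq T_{\eta,\theta}$; this is possible by the uniform decay of $D_-$. Then $w_-\geq\theta$ whenever
\[
  |x|\leq\Bigl(\frac{1-\theta}{2\theta}\Bigr)^{1/q_s}\rho_-(t),
\]
and the lower bound on $\rho_-$ gives the region \eqref{tr:eq:uniform-radial-region} with $c_{\eta,\theta}=e^{-C'}\min_{q\in[d+2s_*,d+2]}\bigl((1-\theta)/(2\theta)\bigr)^{1/q}$.

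The main obstacle is the uniformity bookkeeping. One must check that $\rho_*$, the invariant region, and the decay rates in the proof of Lemma~\ref{lem:coupled-logistic-barriers} depend on $s$ only through quantities bounded on $[s_*,1)$. One must also check that the seed constant $\beta_0$ carries exactly the factor $1-s$ and nothing worse, so that $|\log\beta_0|\leq\tau_s+C$. Both points reduce to Proposition~\ref{tr:prop:uniform-estimates}.
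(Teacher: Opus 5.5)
Your proposal is correct and follows the paper's proof essentially step for step: a one-jump seed of size $c(1-s)$ placed under $\Theta_{\rho_0,s}$, the fixed-radius logistic amplification reaching amplitude $1/2$ in time $O(1+\tau_s)$, and then the coupled amplitude--radius subsolution $\alpha\Theta_{\rho,s}$ with $1-\alpha$ and $\rho^{-2s}$ decaying exponentially, uniformly in $s$. The differences are cosmetic. You bound $\rho_-$ by integrating the uniformly integrable correction $aD_-+\mu\rho_-^{-2s}$, which even gives $\rho_-(t)\geq c\,\ee^{at/q_s}$ with no $\eta$-loss, whereas the paper cuts off at a time $t_\eta$. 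You should also fix $\mu>C$ before choosing $\rho_0$ (as the paper does with $M$), so that a single $\rho_0$ serves both the seeding and the expansion steps.
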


\begin{proof}
Choose $y_0$ with $u_0(y_0)>0$. Since the reaction is nonnegative on
$[0,1]$, comparison with the linear diffusion equation and
\eqref{tr:eq:uniform-fixed-time-lower} give
\begin{equation}\label{tr:eq:one-jump-seed}
  u_s(1,x)
  \geq u_0(y_0)G_{s,1}(x-y_0)
  \geq c(1-s)(1+|x|)^{-d-2s}.
\end{equation}
Let $C$ be the constant in \eqref{tr:eq:uniform-radial-weight}, and fix
$M>C$. Choose $\rho_0$ so large that
\begin{equation}\label{tr:eq:rho-choice}
  M\rho_0^{-2s}\leq\frac a4
\end{equation}
for $s\in[s_*,1)$. Since
\[
  \Theta_{\rho_0,s}(x)
  \leq C\rho_0^{d+2}(1+|x|)^{-d-2s},
\]
equation \eqref{tr:eq:one-jump-seed} implies
\[
  \beta_{0,s}\Theta_{\rho_0,s}(x)\leq u_s(1,x),
  \qquad
  \beta_{0,s}=c_0(1-s),
\]
after decreasing $c_0$.

Let
\begin{equation}\label{tr:eq:fixed-radius-ode}
  \beta'=a\beta(1-\beta)-C\rho_0^{-2s}\beta,
  \qquad \beta(0)=\beta_{0,s}.
\end{equation}
By \eqref{tr:eq:uniform-radial-weight},
$\beta(t)\Theta_{\rho_0,s}$ is a subsolution. The positive equilibrium of
\eqref{tr:eq:fixed-radius-ode} is at least $3/4$. Its explicit logistic formula
shows that the first time at which $\beta=1/2$ is bounded by
$C(1+|\log\beta_{0,s}|)$. This proves
\eqref{tr:eq:start-time-bound} and \eqref{tr:eq:radial-start}.

Expansion is obtained by allowing the radius to vary. With the choices of
$M$ and $\rho_0$ already fixed, let
$(\alpha,\rho)$ solve
\begin{align}
  \alpha'&=a\alpha(1-\alpha)-M\rho^{-2s}\alpha,
  \label{tr:eq:coupled-alpha}\\
  (d+2s)\frac{\rho'}\rho&=a\alpha-M\rho^{-2s},
  \label{tr:eq:coupled-radius}
\end{align}
with $\alpha(0)=1/2$ and $\rho(0)=\rho_0$. Then
$1/2\leq\alpha<1$ and $\rho$ is increasing. Indeed, on the two boundaries
of the amplitude interval,
\[
  \alpha'=\frac a4-\frac M2\rho^{-2s}\geq\frac a8
  \quad\text{at }\alpha=\frac12,
  \qquad
  \alpha'=-M\rho^{-2s}<0
  \quad\text{at }\alpha=1.
\]
Within this interval,
\begin{equation}\label{tr:eq:uniform-radius-first-growth}
  (d+2s)\frac{\rho'}\rho
  \geq\frac a2-M\rho_0^{-2s}\geq\frac a4.
\end{equation}
There can therefore be no first exit from
$[1/2,1]\times[\rho_0,\infty)$.  The vector field has at most linear
growth in $\rho$, so the solution is global.

For the barrier calculation, differentiation of
\eqref{tr:eq:radial-profile} gives
\begin{equation}\label{tr:eq:radial-profile-time-derivative}
  \partial_t\Theta_{\rho(t),s}(x)
  =(d+2s)\frac{\rho'}\rho
  \Theta_{\rho(t),s}(x)
  \bigl(1-\Theta_{\rho(t),s}(x)\bigr).
\end{equation}
Substitution of \eqref{tr:eq:coupled-alpha} and
\eqref{tr:eq:coupled-radius} gives the exact identity
\begin{equation}\label{tr:eq:radial-coupled-identity}
  \partial_t(\alpha\Theta_{\rho,s})
  -a\alpha\Theta_{\rho,s}(1-\alpha\Theta_{\rho,s})
  =-M\rho^{-2s}\alpha\Theta_{\rho,s}
  (2-\Theta_{\rho,s}).
\end{equation}
Since $2-\Theta_{\rho,s}\geq1$, the weighted estimate
\eqref{tr:eq:radial-profile-bound-detailed} and $M>C$ imply
\[
  \partial_t(\alpha\Theta_{\rho,s})
  +A^s(\alpha\Theta_{\rho,s})
  -a\alpha\Theta_{\rho,s}(1-\alpha\Theta_{\rho,s})
  \leq-(M-C)\rho^{-2s}\alpha\Theta_{\rho,s}.
\]
Thus $\alpha(t)\Theta_{\rho(t),s}$ is a subsolution.

It remains to quantify the radius.  From
\eqref{tr:eq:uniform-radius-first-growth},
\begin{equation}\label{tr:eq:uniform-forcing-decay}
  \rho(t)^{-2s}
  \leq\rho_0^{-2s}
  \exp\left(-\frac{as}{2(d+2s)}t\right)
  \leq C\ee^{-c t},
\end{equation}
where $c>0$ is independent of $s\in[s_*,1)$.  With
$D=1-\alpha$, the amplitude equation gives
\[
  D'=-a\alpha D+M\rho^{-2s}\alpha
  \leq-\frac a2D+M\rho^{-2s}.
\]
Variation of constants and \eqref{tr:eq:uniform-forcing-decay} show that
$D(t)\leq C\ee^{-c_1t}$, with uniform constants.  In particular,
$\alpha(t)\to1$ uniformly and
$\int_0^\infty(1-\alpha(t))\,dt<\infty$ uniformly in $s$.

Given $\eta>0$, choose $t_\eta$ independently of $s$ so that
$a(1-\alpha(t))+M\rho(t)^{-2s}\leq\eta$ for $t\geq t_\eta$.
The radius equation then yields
\[
  \qquad
  \rho(t)\geq \rho(t_\eta)
  \exp\left(\frac{(a-\eta)(t-t_\eta)}{d+2s}\right).
\]
After absorbing the fixed shift $t_\eta$ into a constant, this becomes
\begin{equation}\label{tr:eq:uniform-radius-final-growth}
  \rho(t)\geq c_\eta
  \exp\left(\frac{(a-\eta)t}{d+2s}\right).
\end{equation}
Comparison with \eqref{tr:eq:radial-start} gives
$u_s(T_s+t,x)\geq\alpha(t)\Theta_{\rho(t),s}(x)$.  Once
$\alpha(t)\geq(1+\theta)/2$, the product is at least $\theta$ whenever
\[
  |x|\leq\rho(t)
  \left(\frac{1-\theta}{2\theta}\right)^{1/(d+2s)}.
\]
Together with \eqref{tr:eq:uniform-radius-final-growth}, this proves
\eqref{tr:eq:uniform-radial-expansion}--\eqref{tr:eq:uniform-radial-region}.
\end{proof}

\begin{proof}[Proof of the late part of
Theorem~\ref{tr:thm:localized-transition}]
The KPP upper bound gives
\[
  u_s(t)\leq\ee^{at}S_s(t)u_0.
\]
Since $u_0$ has finite support, \eqref{tr:eq:uniform-heat-upper} yields
\begin{equation}\label{tr:eq:localized-upper}
  u_s(t,x)
  \leq C\ee^{at}(t+t^m)(1+|x|)^{-d-2s}
\end{equation}
outside a fixed ball. If
$|x|\geq\exp(\sigma_+t)$, the right-hand side is bounded by
\[
  C(t+t^m)
  \exp\bigl(-((d+2s)\sigma_+-a)t\bigr).
\]
Since $\sigma_+>a/(d+2)$, there are $\nu_+>0$ and $s_+<1$ such that
$(d+2s)\sigma_+-a\geq\nu_+$ for $s\in[s_+,1)$. This proves
\eqref{tr:eq:late-localized-outer}.

For the lower bound, choose $\eta\in(0,a)$ so that
\[
  \nu_-:=a-\eta-(d+2)\sigma_->0.
\]
Fix $\theta\in(0,1)$ and apply
Lemma~\ref{tr:lem:localized-amplification} with this $\eta$ and $\theta$.
For $s\in[s_*,1)$,
\[
  a-\eta-(d+2s)\sigma_-\geq\nu_-.
\]
Since $T_s\leq C_0(1+\tau_s)$, we may choose
$C_{\sigma_-,\sigma_+}>0$, independent of $s$ and $\theta$, so that
\begin{equation}\label{tr:eq:localized-rate-absorption}
  (a-\eta)(t-T_s)
  \geq(d+2s)\sigma_-t+\frac{\nu_-}{2}t
\end{equation}
whenever $t\geq C_{\sigma_-,\sigma_+}\tau_s$ and $s$ is sufficiently
close to one. The fixed factor $c_{\eta,\theta}$ in
\eqref{tr:eq:uniform-radial-region} is absorbed by the last term in
\eqref{tr:eq:localized-rate-absorption}. Hence the region in
\eqref{tr:eq:uniform-radial-region}, with time $t-T_s$, contains
$\{|x|\leq\ee^{\sigma_-t}\}$ for all sufficiently large $s$ and $t$.
The lower limit in \eqref{tr:eq:late-localized-inner} is therefore at least
$\theta$. Letting $\theta\uparrow1$ proves the assertion.
\end{proof}

We turn to front-like data. Summation over the transverse variables changes
the pointwise exponent $d+2s$ into the cumulative exponent $2s$.

\begin{lemma}\label{tr:lem:front-amplification}
Assume \eqref{tr:eq:front-data}. There are $C_1>0$ and, for each
$s\in[s_*,1)$, a time $\widehat T_s\leq C_1(1+\tau_s)$ such that, for every
$\gamma\in(0,a)$ and $\theta\in(0,1)$, there exist
$c_{\gamma,\theta},T_{\gamma,\theta}>0$, independent of $s$, for which
\begin{equation}\label{tr:eq:uniform-front-expansion}
  u_s(\widehat T_s+t,x)\geq\theta
\end{equation}
whenever
\begin{equation}\label{tr:eq:uniform-front-region}
  t\geq T_{\gamma,\theta},
  \qquad
  x\cdot e\geq-c_{\gamma,\theta}
  \exp\left(\frac{\gamma t}{2s}\right).
\end{equation}
\end{lemma}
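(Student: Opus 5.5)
I will follow the proof of Lemma~\ref{tr:lem:localized-amplification}, replacing the radial profile $\Theta_{\rho,s}$ by the one-sided profile $\Psi_{R,e,\kappa,s}$ from \eqref{tr:eq:front-profile} and the one-jump pointwise seed by the cumulative seed \eqref{tr:eq:cumulative-one-jump-lower}. The main difference is that the coupled amplitude--radius system of the radial case is no longer available, because $\partial_t\Psi$ has a different algebraic form. Instead I will use a single expanding profile with a small fixed amplitude, as in Proposition~\ref{prop:one-sided-expanding-barrier}, and then upgrade it to height $\theta$ by a scalar ODE comparison at a fixed spatial scale.

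\emph{Step 1 (seed and fixed-scale amplification).} By \eqref{tr:eq:front-data}, $u_0\geq\mu H_{e,\kappa_+}$. Since the reaction is nonnegative, Duhamel's formula and \eqref{tr:eq:cumulative-one-jump-lower} give
\[
u_s(1)\geq\mu S_s(1)H_{e,\kappa_+}\geq c\mu(1-s)\Psi_{1,e,\kappa_+,s}.
\]
Let $C_e$ be the constant in \eqref{tr:eq:uniform-front-weight}. Fix $R_0\geq1$ with $C_eR_0^{-2s}\leq a/4$ for all $s\in[s_*,1)$. The inequality $\Psi_{R_0}\leq R_0^{2s}\Psi_1\leq R_0^2\Psi_1$ then yields $u_s(1)\geq\beta_{0,s}\Psi_{R_0,e,\kappa_+,s}$ with $\beta_{0,s}=c(1-s)$. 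Lemma~\ref{tr:lem:fixed-scale-amplification}, applied with $\Phi=\Psi_{R_0}$ and $\delta=C_eR_0^{-2s}$, produces a time $\widehat T_s=1+T\leq C_1(1+\tau_s)$ such that
\[
u_s(\widehat T_s)\geq\tfrac12\Psi_{R_0,e,\kappa_+,s}.
\]
This time depends on neither $\gamma$ nor $\theta$, as the statement requires.

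\emph{Step 2 (expanding subsolution).} Given $\gamma\in(0,a)$, I will take
\[
\underline u(t)=\eta\Psi_{R(t),e,\kappa_+,s},\qquad R(t)=R_\gamma\ee^{\gamma t/(2s)}.
\]
A direct computation gives $\partial_t\underline u\leq\gamma\underline u$. By \eqref{tr:eq:uniform-front-weight}, $A^s\underline u\leq C_eR_\gamma^{-2s}\underline u$. I choose $R_\gamma\geq R_0$ so that $C_eR_\gamma^{-2s}\leq(a-\gamma)/2$ uniformly in $s$; this is possible because $R^{-2s}\leq R^{-2s_*}$. I then choose $\eta\leq\eta_*$, where $ar(1-r)\geq\frac{a+\gamma}{2}r$ on $[0,\eta_*]$. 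With these choices $\underline u$ is a subsolution, exactly as in Proposition~\ref{prop:one-sided-expanding-barrier}. The initial ordering requires $\eta\Psi_{R_\gamma}\leq\frac12\Psi_{R_0}$, which holds once $\eta\leq\frac12(R_0/R_\gamma)^{2}$. All of these constants are independent of $s$. Comparison (Lemma~\ref{lem:comparison}) therefore gives
\[
u_s(\widehat T_s+t,x)\geq\eta
\quad\text{whenever}\quad
x\cdot e-\kappa_+\geq-R(t).
\]

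\emph{Step 3 (upgrade from $\eta$ to $\theta$).} This is the main obstacle. The compactness--Liouville argument of Lemma~\ref{lem:one-sided-plateau-upgrade} gives no rate that is uniform in $s$. I will instead use a quantitative argument. Let $\gamma'\in(\gamma,a)$ and apply Step 2 with $\gamma'$, which gives a plateau of height $\eta'$ on the half-space
\[
\{x\cdot e-\kappa_+\geq-R_{\gamma'}\ee^{\gamma'(t-\tau)/(2s)}\}\quad\text{at time }t-\tau.
\]
On the fixed window $[t-\tau,t]$, I compare $u_s$ from below with $v(\tau')\Psi_{\rho,e,\kappa',s}$ for a fixed large $\rho$. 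Here $\kappa'$ is chosen so that the support region of $\Psi_\rho$ lies inside the plateau up to a loss, and $v$ solves $v'=av(1-v)-C_e\rho^{-2s}v$ with $v(0)=\eta'$.

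Choose $\rho$ so large that the equilibrium of this ODE exceeds $(1+\theta)/2$, and then choose $\tau$ so that $v(\tau)\geq(1+\theta)/2$. Both choices depend only on $\theta$ and $\eta'$. Lemma~\ref{tr:lem:fixed-scale-amplification} (or its obvious variant) makes $v\Psi_\rho$ a subsolution. For the initial ordering I need $\eta'\Psi_\rho\leq u_s(t-\tau)$. Since $\Psi_\rho\leq1$, this reduces to checking it on the complement of the plateau, where $\Psi_\rho$ is tiny. There I use the lower bound
\[
\Psi_\rho\leq(\rho/|r|)^{2s}\leq(\rho/R(t-\tau))^{2}\cdot\text{const}
\]
together with the algebraic tail lower bound $u_s\geq\eta'\Psi_{R(t-\tau)}$ supplied by Step 2 (for $\gamma'$). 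This allows shifting $\kappa'=\kappa_+-R(t-\tau)/2$ or similar.

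Then $u_s(t,x)\geq v(\tau)\Psi_\rho\geq\theta$ wherever $\Psi_\rho\geq2\theta/(1+\theta)$, that is, on $x\cdot e\geq\kappa'-c_\theta\rho$. This set contains $\{x\cdot e\geq-c_{\gamma,\theta}\ee^{\gamma t/(2s)}\}$ for $t\geq T_{\gamma,\theta}$, because $\gamma'>\gamma$ absorbs the fixed delay $\tau$ and the constant factors.

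If the tail matching in Step 3 proves awkward, my fallback is a coupled $(\alpha,R)$ system mimicking \eqref{tr:eq:coupled-alpha}--\eqref{tr:eq:coupled-radius}. For that I would use the identity
\[
\partial_t\Psi_R=2s\frac{R'}{R}\Psi_R\bigl(1-\Psi_R^{1/s}\bigr),
\]
with the amplitude driven toward $1$; it is the factor $\Psi^{1/s}$ here that breaks the exact logistic algebra.
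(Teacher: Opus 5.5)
Your proposal is correct and follows essentially the paper's route: the cumulative one-jump seed, a fixed-scale logistic amplification giving $\widehat T_s\le C_1(1+\tau_s)$ independent of $\gamma,\theta$, the same expanding subsolution $\eta\Psi_{R_\gamma e^{\gamma t/(2s)},e,\kappa_+,s}$, and a quantitative scalar-ODE upgrade over a time window of fixed length. The only real difference is in the upgrade step. The paper uses the radius $\widehat R=R(t-L)$ itself for the fixed-radius profile, so the initial ordering is exactly the expanding bound and the loss $C_e\widehat R^{-2s}$ is automatically small; this makes your shift $\kappa'$, the tail matching and the auxiliary $\gamma'$ unnecessary. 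One small correction: the plateau from Step 2 has height $2^{-s}\eta$, not $\eta$, so your $v$ should start at, e.g., $\eta'/2$.
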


\begin{proof}
The lower inequality in \eqref{tr:eq:front-data}, the one-jump term, and
\eqref{tr:eq:uniform-long-kernel} give
\begin{equation}\label{tr:eq:front-seed}
  u_s(1,x)
  \geq c(1-s)\Psi_{1,e,\kappa_+,s}(x).
\end{equation}
Here we used the uniform lattice half-space estimate
\[
  c_eR^{-2s}
  \leq\sum_{z\cdot e\leq-R}(1+|z|)^{-d-2s}
  \leq C_eR^{-2s},
  \qquad R\geq1,
\]
which follows by summing over lattice shells for the upper bound and over a
fixed cone about $-e$ for the lower bound.
Choose a fixed $R_*$ so large that
$C_eR_*^{-2s}\leq a/4$. The fixed-radius scalar subsolution used in
\eqref{tr:eq:fixed-radius-ode}, now multiplied by
$\Psi_{R_*,e,\kappa_+,s}$, raises the amplitude in
\eqref{tr:eq:front-seed} to a fixed number $\eta_0>0$ in at most
$C(1+\tau_s)$ time.  More precisely,
\[
  \Psi_{R_*,e,\kappa_+,s}
  \leq R_*^{2s}\Psi_{1,e,\kappa_+,s},
\]
so the scalar equation may be started at
$c(1-s)R_*^{-2s}$.  Its positive equilibrium is bounded away from zero
uniformly in $s$, and the explicit logistic formula gives the asserted
time bound.  We have therefore obtained
\begin{equation}\label{tr:eq:front-fixed-amplitude-start}
  u_s(\widehat T_s,x)
  \geq\eta_0\Psi_{R_*,e,\kappa_+,s}(x).
\end{equation}

For the expanding profile, fix $\gamma\in(0,a)$ and put
$\delta=(a-\gamma)/3$. Choose $R_\gamma\geq R_*$ so large that
\begin{equation}\label{tr:eq:directional-radius-choice}
  C_eR_\gamma^{-2s}\leq a-\delta-\gamma
  \qquad (s_*\leq s<1).
\end{equation}
Choose $\eta_\gamma>0$ so that
\[
  a\eta_\gamma\leq\delta,
  \qquad
  \eta_\gamma\leq
  \eta_0\left(\frac{R_*}{R_\gamma}\right)^2.
\]
Since
\[
  \Psi_{R_\gamma,e,\kappa_+,s}
  \leq\left(\frac{R_\gamma}{R_*}\right)^{2s}
  \Psi_{R_*,e,\kappa_+,s}
  \leq\left(\frac{R_\gamma}{R_*}\right)^2
  \Psi_{R_*,e,\kappa_+,s},
\]
the new profile is ordered below the right-hand side of
\eqref{tr:eq:front-fixed-amplitude-start} at time $\widehat T_s$.
Set
\[
  R(t)=R_\gamma\exp\left(\frac{\gamma t}{2s}\right),
  \qquad
  \underline u(t,x)=
  \eta_\gamma\Psi_{R(t),e,\kappa_+,s}(x).
\]
Writing $r=x\cdot e-\kappa_+$, direct differentiation gives
\begin{equation}\label{tr:eq:directional-profile-time-derivative}
  \partial_t\underline u
  =\gamma\frac{(r^-)^2}{R(t)^2+(r^-)^2}\,\underline u
  \leq\gamma\underline u.
\end{equation}
By \eqref{tr:eq:directional-profile-bound-detailed} and
\eqref{tr:eq:directional-radius-choice},
\[
  \partial_t\underline u+A^s\underline u
  \leq(\gamma+C_eR_\gamma^{-2s})\underline u
  \leq(a-\delta)\underline u.
\]
On the other hand, $0\leq\underline u\leq\eta_\gamma$ and therefore
$a\underline u(1-\underline u)\geq(a-\delta)\underline u$.
Thus $\underline u$ is a subsolution.  Its initial value lies below the
right-hand side of \eqref{tr:eq:front-fixed-amplitude-start}, so comparison
gives
\begin{equation}\label{tr:eq:expanding-directional-subsolution}
  u_s(\widehat T_s+t,x)
  \geq\eta_\gamma\Psi_{R(t),e,\kappa_+,s}(x),
  \qquad t\geq0.
\end{equation}
In particular, since $2^{-s}\geq1/2$, the solution is at least
$\eta_\gamma/2$ whenever
$x\cdot e\geq\kappa_+-R(t)$.

We finally raise this plateau to the prescribed level $\theta$. Choose
$L_{\gamma,\theta}>0$ so that the solution of $b'=ab(1-b)$ starting at
$\eta_\gamma$ exceeds $(1+\theta)/2$ at time $L_{\gamma,\theta}$. If $t$ is
large enough, set
\[
  \widehat R=R(t-L_{\gamma,\theta}).
\]
The loss $C_e\widehat R^{-2s}$ is then so small, uniformly in $s$, that
the solution of
\[
  \beta'=a\beta(1-\beta)-C_e\widehat R^{-2s}\beta,
  \qquad \beta(0)=\eta_\gamma,
\]
satisfies $\beta(L_{\gamma,\theta})\geq(1+\theta)/2$. Starting this
fixed-radius subsolution at time
$\widehat T_s+t-L_{\gamma,\theta}$ is allowed by
\eqref{tr:eq:expanding-directional-subsolution}.  Hence at time
$\widehat T_s+t$ the solution is at least $\theta$ wherever
\[
  \Psi_{\widehat R,e,\kappa_+,s}(x)
  \geq\frac{2\theta}{1+\theta}.
\]
This region contains
\[
  x\cdot e\geq\kappa_+-c_\theta\widehat R
  \geq-c_{\gamma,\theta}
  \exp\left(\frac{\gamma t}{2s}\right),
\]
after a uniform enlargement of the lower time bound.  This proves
\eqref{tr:eq:uniform-front-expansion}--\eqref{tr:eq:uniform-front-region}.
\end{proof}

\begin{proof}[Proof of the late part of
Theorem~\ref{tr:thm:front-transition}]
The upper half-space in \eqref{tr:eq:front-data}, linear comparison, and
\eqref{tr:eq:uniform-cumulative-upper} give
\begin{equation}\label{tr:eq:front-upper}
  u_s(t,x)
  \leq C\ee^{at}(1+t+t^m)\Psi_{1,e,\kappa_-,s}(x).
\end{equation}
For $x\cdot e\leq-\exp(\sigma_+t)$, the right-hand side of
\eqref{tr:eq:front-upper} is at most
\[
  C(1+t+t^m)\exp\bigl(-(2s\sigma_+-a)t\bigr).
\]
Since $\sigma_+>a/2$, the exponent is bounded above by $-\nu_+t$ for
some $\nu_+>0$ and all $s$ sufficiently close to one. This proves
\eqref{tr:eq:late-front-outer}.

Choose $\gamma$ so that $2\sigma_-<\gamma<a$, and set
$\nu_-:=\gamma-2\sigma_->0$. Fix $\theta\in(0,1)$ and apply
Lemma~\ref{tr:lem:front-amplification} with this $\gamma$ and $\theta$.
Since $\widehat T_s\leq C_1(1+\tau_s)$ and
$\gamma-2s\sigma_-\geq\nu_-$, one can choose
$\widehat C_{\sigma_-,\sigma_+}>0$, independent of $s$ and $\theta$, so
that
\[
  \gamma(t-\widehat T_s)
  \geq2s\sigma_-t+\frac{\nu_-}{2}t
\]
whenever $t\geq\widehat C_{\sigma_-,\sigma_+}\tau_s$ and $s$ is
sufficiently close to one. The fixed factor $c_{\gamma,\theta}$ in
\eqref{tr:eq:uniform-front-region} is absorbed by the remaining exponential
factor. Thus the half-space in \eqref{tr:eq:uniform-front-region}, with time
$t-\widehat T_s$, contains
$\{x:x\cdot e\geq-\ee^{\sigma_-t}\}$ for all sufficiently large $s$ and
$t$. The lower limit in \eqref{tr:eq:late-front-inner} is at least $\theta$.
Letting $\theta\uparrow1$ completes the proof.
\end{proof}

\section{Level-set formulation and scale separation}
\label{tr:sec:level-set-formulation}

The transition theorems are stated as uniform estimates on expanding
regions. Their equivalent logarithmic formulation is recorded here.

For localized data and $\theta\in(0,1)$, define
\begin{align}
  R^{\mathrm{in}}_{\theta,s}(t)
  &:=\sup\left\{R\geq1:
  \inf_{\substack{x\in\Z^d\\ |x|\leq R}}u_s(t,x)\geq\theta\right\},
  \label{tr:eq:inner-level-radius}\\
  R^{\mathrm{out}}_{\theta,s}(t)
  &:=\inf\left\{R\geq1:
  \sup_{\substack{x\in\Z^d\\ |x|\geq R}}u_s(t,x)\leq\theta\right\}.
  \label{tr:eq:outer-level-radius}
\end{align}
The usual conventions are used when one of the defining sets is empty.
For front-like data, put
\begin{align}
  Q^{\mathrm{in}}_{\theta,s}(t)
  &:=\sup\left\{R\geq1:
  \inf_{\substack{x\in\Z^d\\ x\cdot e\geq-R}}
  u_s(t,x)\geq\theta\right\},
  \label{tr:eq:inner-front-radius}\\
  Q^{\mathrm{out}}_{\theta,s}(t)
  &:=\inf\left\{R\geq1:
  \sup_{\substack{x\in\Z^d\\ x\cdot e\leq-R}}
  u_s(t,x)\leq\theta\right\}.
  \label{tr:eq:outer-front-radius}
\end{align}
The estimates in Theorems~\ref{tr:thm:localized-transition} and
\ref{tr:thm:front-transition} show that, in the limits considered below, these
radii are finite and at least one for all sufficiently large times.

\begin{corollary}\label{tr:cor:late-logarithmic-radii}
Let $s\uparrow1$ and $t\to\infty$, with
\begin{equation}\label{tr:eq:well-after-transition}
  \frac{t}{\tau_s}\to\infty.
\end{equation}
For every $\theta\in(0,1)$ and every nonzero finitely supported datum,
\begin{align}
  \frac1t\log R^{\mathrm{in}}_{\theta,s}(t)
  -\frac{a}{d+2s}&\to0,
  \label{tr:eq:inner-radius-log-limit}\\
  \frac1t\log R^{\mathrm{out}}_{\theta,s}(t)
  -\frac{a}{d+2s}&\to0.
  \label{tr:eq:outer-radius-log-limit}
\end{align}
Under \eqref{tr:eq:front-data},
\begin{align}
  \frac1t\log Q^{\mathrm{in}}_{\theta,s}(t)
  -\frac{a}{2s}&\to0,
  \label{tr:eq:inner-front-log-limit}\\
  \frac1t\log Q^{\mathrm{out}}_{\theta,s}(t)
  -\frac{a}{2s}&\to0.
  \label{tr:eq:outer-front-log-limit}
\end{align}
\end{corollary}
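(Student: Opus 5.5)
The plan is to deduce the corollary from the late parts of Theorems~\ref{tr:thm:localized-transition} and \ref{tr:thm:front-transition}, used with rates placed arbitrarily close to the critical ones. Fix $\varepsilon>0$ small and set $\sigma_\pm:=a/(d+2)\pm\varepsilon$ in the localized case and $\sigma_\pm:=a/2\pm\varepsilon$ in the front case. Condition \eqref{tr:eq:well-after-transition} implies $t\geq C_{\sigma_-,\sigma_+}\tau_s$, respectively $t\geq\widehat C_{\sigma_-,\sigma_+}\tau_s$, eventually along the joint limit. Hence the conclusions \eqref{tr:eq:late-localized-inner}--\eqref{tr:eq:late-localized-outer} and \eqref{tr:eq:late-front-outer}--\eqref{tr:eq:late-front-inner} hold for this fixed pair of rates.

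\emph{Inner radius, localized case.} By \eqref{tr:eq:late-localized-inner}, $\inf_{|x|\leq\exp(\sigma_-t)}u_s(t,x)>\theta$ eventually, so $R^{\mathrm{in}}_{\theta,s}(t)\geq\exp(\sigma_-t)$. For the reverse bound, $\sup_{|x|\geq\exp(\sigma_+t)}u_s<\theta$ eventually by \eqref{tr:eq:late-localized-outer}. If $R>\exp(\sigma_+t)+\sqrt d$, the ball $B_R$ contains a lattice point with $|x|\geq\exp(\sigma_+t)$, for instance a point on a coordinate axis, so the infimum over $B_R$ is below $\theta$. Thus $R^{\mathrm{in}}_{\theta,s}(t)\leq\exp(\sigma_+t)+\sqrt d$.

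\emph{Outer radius, localized case.} The same outer estimate gives $R^{\mathrm{out}}_{\theta,s}(t)\leq\exp(\sigma_+t)$. For the lower bound, any $R<\exp(\sigma_-t)$ leaves a lattice point $x$ with $R\leq|x|\leq\exp(\sigma_-t)$, again taken on a coordinate axis. At that point $u_s>\theta$, so $R$ is not admissible, and $R^{\mathrm{out}}_{\theta,s}(t)\geq\exp(\sigma_-t)-1$.

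Taking logarithms and dividing by $t$, both quantities lie eventually in $[\sigma_- - o(1),\sigma_+ + o(1)]$. Since $|a/(d+2s)-a/(d+2)|\to0$ as $s\uparrow1$, every limit point of $t^{-1}\log R-a/(d+2s)$ lies in $[-\varepsilon,\varepsilon]$. Letting $\varepsilon\downarrow0$ proves \eqref{tr:eq:inner-radius-log-limit}--\eqref{tr:eq:outer-radius-log-limit}. This is a sequential argument: along any admissible sequence $(s_n,t_n)$, each fixed $\varepsilon$ yields eventual bounds.

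\emph{Front case.} The argument is identical with half-spaces $\{x\cdot e\geq-R\}$ in place of balls. The only lattice point needed is the existence of points $x$ with $x\cdot e$ in any interval of length at least $\sqrt d$ near $-R$. For this, take $x=n v$ with $v=-\operatorname{sgn}(e_j)\mathbf e_j$ and $|e_j|\geq d^{-1/2}$; the projections $x\cdot e$ then form a progression of step at most one. This gives $Q^{\mathrm{in}},Q^{\mathrm{out}}\in[\exp(\sigma_-t)-C,\exp(\sigma_+t)+C]$ eventually, and the same logarithmic argument with $a/(2s)\to a/2$ yields \eqref{tr:eq:inner-front-log-limit}--\eqref{tr:eq:outer-front-log-limit}.

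\emph{Main obstacle.} The only genuinely delicate point is the quantifier structure. The transition constants depend on $\varepsilon$, as Remark~\ref{tr:rem:sigma-quantifiers} stresses, so a single constant cannot be used for all rates. This is exactly why the hypothesis $t/\tau_s\to\infty$ is needed: it absorbs every such constant, and a diagonal argument in $\varepsilon$ is then unnecessary, because each fixed $\varepsilon$ is handled along the given sequence. The remaining lattice-rounding effects contribute only additive $O(1)$ terms and are negligible after dividing $\log R$ by $t$.
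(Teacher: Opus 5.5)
Your proposal is correct and follows the paper's own proof: you fix a margin around the critical rate, use $t/\tau_s\to\infty$ to get past the transition constant $C_{\sigma_-,\sigma_+}$ (or $\widehat C_{\sigma_-,\sigma_+}$), sandwich the radii between $\exp(\sigma_\pm t)$, and then let the margin shrink using $a/(d+2s)\to a/(d+2)$ and $a/(2s)\to a/2$. Your explicit lattice-rounding bounds spell out what the paper calls an immaterial lattice error; one small fix is that in the lower bound for the outer radius you should take $R<\exp(\sigma_-t)-1$, not merely $R<\exp(\sigma_-t)$, to guarantee an axis point with $R\leq|x|\leq\exp(\sigma_-t)$, and this is consistent with the bound $\exp(\sigma_-t)-1$ you actually state.
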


\begin{proof}
Fix $\delta>0$ sufficiently small and put
\[
  \sigma_-:=\frac{a}{d+2}-\delta,
  \qquad
  \sigma_+:=\frac{a}{d+2}+\delta.
\]
Condition \eqref{tr:eq:well-after-transition} implies
$t\geq C_{\sigma_-,\sigma_+}\tau_s$ in the stated limit. Theorem
\ref{tr:thm:localized-transition} shows that the solution is larger than
$\theta$ on the ball of radius $\ee^{\sigma_-t}$ and smaller than
$\theta$ outside the ball of radius $\ee^{\sigma_+t}$. Hence both radii
in \eqref{tr:eq:inner-level-radius}--\eqref{tr:eq:outer-level-radius} lie
between these two scales, up to an immaterial lattice error. Divide their
logarithms by $t$ and let $\delta\downarrow0$. This gives convergence to
$a/(d+2)$. Since $a/(d+2s)\to a/(d+2)$, equations
\eqref{tr:eq:inner-radius-log-limit} and
\eqref{tr:eq:outer-radius-log-limit} follow.

For the front-like data, take
$\sigma_-=a/2-\delta$ and $\sigma_+=a/2+\delta$ and apply
Theorem~\ref{tr:thm:front-transition}. Letting $\delta\downarrow0$ gives the
limit $a/2$, which differs from $a/(2s)$ by a quantity tending to zero.
This proves \eqref{tr:eq:inner-front-log-limit} and
\eqref{tr:eq:outer-front-log-limit}.
\end{proof}

The factor $1-s$ can also be read directly from the linearized equation.
The one-jump term in \eqref{tr:eq:compound-poisson-detailed} shows that a
localized seed creates, after one unit of time, a tail comparable with
\[
  (1-s)(1+|x|)^{-d-2s}.
\]
Linear growth multiplies its amplitude by $\ee^{at}$. At bounded spatial
distance this contribution becomes order one when $at\approx\tau_s$. At a
large radius $R$, the same balance reads
\begin{equation}\label{tr:eq:radial-balance-heuristic}
  (1-s)\ee^{at}R^{-d-2s}\asymp1.
\end{equation}
Once the factor $1-s$ has been absorbed during the initialization time,
\eqref{tr:eq:radial-balance-heuristic} gives
$\log R\sim at/(d+2s)$. Summation over an occupied half-space replaces
$R^{-d-2s}$ by $R^{-2s}$ through
\eqref{tr:eq:uniform-half-space-tail}; this gives the front-like exponent
$a/(2s)$. These balances do not replace the comparison proof, but they
explain why the two exponents and the same transition time occur together.

\begin{remark}\label{tr:rem:intermediate-window}
The proof separates the ranges $at<\tau_s$ and $t\geq C\tau_s$. It does
not determine the level-set geometry between these two time scales. Near
$at=\tau_s$, the amplified coefficient of the long-jump tail is no longer
small in operator norm, although its spatial decay is still relevant on the
ballistic scale. Determining the level-set geometry in the remaining interval
would require matching the local large-deviation profile with the emerging
algebraic tail and then following the latter until the uniform expanding
barrier reaches its asymptotic rate.
\end{remark}

\section{Further remarks}

\begin{remark}
The exponent $d+2s$ is determined by the tail of the L\'evy kernel and not by a diffusive scaling argument. The lower estimate uses the one-jump contribution to create a tail of order $|x|^{-d-2s}$, while the upper estimate shows that repeated jumps introduce only a polynomial factor in time. Therefore neither side changes the leading balance
\[
    \ee^{at}|x|^{-d-2s}\asymp1.
\]
\end{remark}

\begin{remark}
In Proposition~\ref{prop:plateau-upgrade}, the positive plateau must be available at an exponent $b_0$ strictly larger than the target exponent $b$. This guarantees that a fixed space--time neighborhood of a point satisfying $|x_n|\leq\ee^{bt_n}$ remains inside the plateau after translation, including at negative translated times. An estimate only at the same exponent does not provide this uniform inclusion near the boundary of the exponential ball.
\end{remark}

\begin{remark}
The proof uses four properties of the jump generator: summability and symmetry
of the kernel, the tail $|z|^{-d-\alpha}$, a fixed-time lower bound of the
same order, and a weighted estimate for a slowly varying algebraic profile.
The same argument applies to translation-invariant lattice jump generators
for which these estimates and the comparison principle hold. For regularly
varying kernels, see \cite{BinghamGoldieTeugels1987}; the corresponding
propagation exponent is $f'(0)/(d+\alpha)$.
\end{remark}

\end{document}